\newcommand{\tikzAngleOfLine}{\tikz@AngleOfLine}
\def\tikz@AngleOfLine(#1)(#2)#3{%
\pgfmathanglebetweenpoints{%
\pgfpointanchor{#1}{center}}{%
\pgfpointanchor{#2}{center}}
\pgfmathsetmacro{#3}{\pgfmathresult}%
}
\newcommand{\gr}{\text{\normalfont gr-}}
\newcommand{\gap}{\hspace{1pt}}
\newcommand{\ZZ}{\mathbb{Z}}
\newcommand{\NN}{\mathbb{N}}
\renewcommand{\AA}{\mathbb{A}}
\newcommand{\DD}{\mathbb{D}}
\renewcommand{\mod}{\text{\normalfont mod-}}
\DeclareMathOperator{\GKdim}{GKdim}
\DeclareMathOperator{\im}{im}
\DeclareMathOperator{\gldim}{gl.\hspace{-1pt}dim}
\DeclareMathOperator{\idim}{i.\hspace{-1pt}dim}
\DeclareMathOperator{\End}{End}
\DeclareMathOperator{\Spec}{Spec}
\DeclareMathOperator{\Ext}{Ext}
\DeclareMathOperator{\hilb}{hilb}
\DeclareMathOperator{\GL}{GL}
\DeclareMathOperator{\SL}{SL}
\DeclareMathOperator{\Tr}{Tr}
\DeclareMathOperator{\tr}{tr}
\DeclareMathOperator{\hdet}{hdet}
\DeclareMathOperator{\Autgr}{Aut_{gr}}
\newcommand{\hash}{\hspace{1pt} \# \hspace{1pt}}
\numberwithin{equation}{section}
\theoremstyle{definition}
\newtheorem{defn}[equation]{Definition}
\newtheorem{example}[equation]{Example}
\newtheorem*{graphing*}{Graphing a Function by Plotting Points}
\newtheorem*{completing*}{Completing the Square}
\newtheorem*{propofpoly*}{Properties of Polynomial Functions}
\newtheorem*{expgrowth*}{The Exponential Growth Function}
\newtheorem*{firstprops*}{First Properties of Logarithms}
\newtheorem*{secondprops*}{Product, Quotient, and Power Properties of Logarithms}
\newtheorem*{changebase*}{Change of Base Theorem}
\newtheorem*{limitconstant*}{Limit of a Constant Function}
\newtheorem*{limitidentity*}{Limit of the Identity Function}
\newtheorem*{limitprops*}{Properties of Limits}
\newtheorem*{limitpoly*}{Limits of Polynomials}
\newtheorem*{limitrat*}{Limits of Rational Functions}
\theoremstyle{plain}
\newtheorem{thm}[equation]{Theorem}
\newtheorem{prop}[equation]{Proposition}
\newtheorem{lem}[equation]{Lemma}
\newtheorem{cor}[equation]{Corollary}
\newtheorem*{thm*}{Theorem}
\newtheorem*{ver*}{Vertical Line Test}
\theoremstyle{remark}
\newtheorem{rem}[equation]{Remark}
\newcommand\restr[2]{{
  \left.\kern-\nulldelimiterspace 
  #1 
  \right|_{#2} 
  }}
\newcounter{sarrow}
\newcounter{darrow}
\renewcommand*\env@matrix[1][\arraystretch]{%
  \edef\arraystretch{#1}%
  \hskip -\arraycolsep
  \let\@ifnextchar\new@ifnextchar
  \array{*\c@MaxMatrixCols c}}
\def\bib{\ifthenelse{\boolean{printBibInSubfiles}}
           { \bibliographystyle{amsalpha} \bibliography{thesisbib} }
       {}
}
\tikzset{
  column sep/.code=\def\pgfmatrixcolumnsep{\pgf@matrix@xscale*(#1)},
  row sep/.code   =\def\pgfmatrixrowsep{\pgf@matrix@yscale*(#1)},
  matrix xscale/.code=%
    \pgfmathsetmacro\pgf@matrix@xscale{\pgf@matrix@xscale*(#1)},
  matrix yscale/.code=%
    \pgfmathsetmacro\pgf@matrix@yscale{\pgf@matrix@yscale*(#1)},
  matrix scale/.style={/tikz/matrix xscale={#1},/tikz/matrix yscale={#1}}}
\def\pgf@matrix@xscale{1}
\def\pgf@matrix@yscale{1}
\definecolor{mygray}{gray}{0.8}
\def\dotfill#1{\cleaders\hbox to #1{.}\hfill}
\def\myrulefill{\leavevmode\leaders\hrule height .7ex width 1ex depth -0.6ex\hfill\kern\z@}
\title[Actions of Small Groups on Two-Dimensional AS Regular Algebras]{Actions of Small Groups on Two-Dimensional Artin-Schelter Regular Algebras}
\author{Simon Crawford}
\address{Department of Pure Mathematics, University of Waterloo, 200 University Ave W, Waterloo, ON N2L 3G1, Canada}
\email{simon.crawford@uwaterloo.ca}
\date{\today}
\subjclass[2010]{14J17, 16S35, 16W22.}
\begin{document}
\begin{abstract}
In commutative invariant theory, a classical result due to Auslander says that if $R = \Bbbk[x_1, \dots, x_n]$ and $G$ is a finite subgroup of $\Autgr(R) \cong \GL(n,\Bbbk)$ which contains no reflections, then there is a natural graded isomorphism $R \hash G \cong \End_{R^G}(R)$. In this paper, we show that a version of Auslander's Theorem holds if we replace $R$ by an Artin-Schelter regular algebra $A$ of global dimension 2, and $G$ by a finite subgroup of $\Autgr(A)$ which contains no quasi-reflections. This extends work of Chan--Kirkman--Walton--Zhang. As part of the proof, we classify all such pairs $(A,G)$, up to conjugation of $G$ by an element of $\Autgr(A)$. In all but one case, we also write down explicit presentations for the invariant rings $A^G$, and show that they are isomorphic to factors of AS regular algebras.
\end{abstract}
\maketitle

\section{Introduction}
\noindent Throughout let $\Bbbk$ be an algebraically closed field of characteristic 0. A classical theorem of Auslander is the following:

\begin{thm} \label{auslandersthm}
Let $R = \Bbbk[x_1, \dots, x_n]$ and let $G$ be a finite subgroup of $\Autgr(R) = \GL(n,\Bbbk)$. Consider the graded ring homomorphism
\begin{align*}
\phi : R \hash G \to \End_{R^G}(R), \quad \phi(rg)(s) = r (g \cdot s).
\end{align*}
Then $\phi$ is an isomorphism if and only if $G$ is small.
\end{thm}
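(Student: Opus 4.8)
The plan is to treat $\phi$ as a morphism of reflexive modules over the invariant ring $A := R^G$ and to reduce the ``iff'' to a ramification computation at height-one primes, where ``$G$ small'' will translate into ``$R$ is unramified over $A$ in codimension one.'' First I would record the formal facts that make this reduction legitimate. Injectivity of $\phi$ holds unconditionally: if $\phi\big(\sum_{g} r_g g\big)=0$ then $\sum_g r_g(g\cdot s)=0$ for every $s\in R$, and passing to $L:=\Frac(R)$ the distinct automorphisms $g$ are $L$-linearly independent (Dedekind's lemma on independence of characters), forcing every $r_g=0$. Writing $K:=\Frac(A)=L^G$, the extension $L/K$ is $G$-Galois of degree $|G|$, so by the normal basis theorem $L\hash G\cong \End_K(L)$; since $R$ is a finitely generated, hence finitely presented, $A$-module, tensoring $\phi$ with $K$ recovers exactly this isomorphism, so $\phi$ is injective with torsion cokernel and both sides have generic rank $|G|^2$. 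Finally $A$ is a normal Cohen--Macaulay domain, $R=\bigcap_{\height\mathfrak{p}=1}R_{\mathfrak{p}}$ is a reflexive $A$-module, and hence $R\hash G\cong R^{|G|}$ and $\End_A(R)=\Hom_A(R,R)$ are both reflexive. A morphism of reflexive modules over a normal domain is an isomorphism as soon as it is one after localizing at every height-one prime, so it suffices to analyse $\phi_{\mathfrak{p}}$ for $\height\mathfrak{p}=1$.

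The second step connects reflections with ramification. A reflection $g\in G$ fixes a hyperplane $H=V(\ell)$ pointwise, so $g$ lies in the inertia group of the height-one prime $(\ell)$ of $R$; conversely, by diagonalizing, any element with nontrivial inertia at a height-one prime of $R$ must fix a hyperplane and hence be a reflection. Thus $G$ is small if and only if $A\subseteq R$ is unramified in codimension one. For the ``if'' direction, fix a height-one prime $\mathfrak{p}$ of $A$, set $B:=A_{\mathfrak{p}}$ (a discrete valuation ring) and $S:=R\otimes_A B$ (a semilocal Dedekind domain on which $G$ acts with $S^G=B$). Smallness makes $B\subseteq S$ finite and unramified, hence a $G$-Galois extension in the sense of Chase--Harrison--Rosenberg, and their characterization of Galois extensions of commutative rings says precisely that the canonical map $S\hash G\to\End_B(S)$, which is $\phi_{\mathfrak{p}}$, is an isomorphism. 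As this holds at every height-one prime, reflexivity upgrades it to the statement that $\phi$ itself is an isomorphism.

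For the ``only if'' direction, suppose $G$ contains a reflection and let $\mathfrak{p}$ be the height-one prime of $A$ lying under the ramified hyperplane. I would show $\phi_{\mathfrak{p}}$ is not surjective by computing the image directly: presenting $S$ as a free $B$-module adapted to the ramification, the operators ``multiply by $r$'' and ``apply $g$'' only produce matrices whose entries across the ramification are forced to lie in $\mathfrak{p}B$, so the image is a proper full-rank $B$-submodule of $\End_B(S)\cong M_{|G|}(B)$ and $\coker\phi_{\mathfrak{p}}\ne 0$. Hence $\phi$ fails to be surjective. The one-dimensional case $R=\Bbbk[x]$ with $G=\langle x\mapsto\zeta x\rangle$ already exhibits this ``wrap-around'' defect transparently and is the model for the general bookkeeping.

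I expect the ramification analysis at height-one primes to be the crux: making precise that smallness is equivalent to unramifiedness in codimension one (via purity of the branch locus together with the eigenvalue argument), and then pinning down the surjectivity defect at a ramified prime rather than merely observing that the generic ranks agree. Everything else --- injectivity, the generic Galois isomorphism, and the reflexivity reduction --- is formal.
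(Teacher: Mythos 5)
The paper does not prove this statement: it is quoted as a classical theorem of Auslander (and invoked again in Section \ref{commutativecase} as a known result), so there is no in-paper argument to compare against. Judged on its own, your outline is the standard proof and is essentially correct. The reduction is sound: injectivity via linear independence of distinct automorphisms over $L = \Frac(R)$, the generic isomorphism $L \hash G \cong \End_K(L)$ from Galois theory, reflexivity of both $R\hash G \cong R^{|G|}$ and $\End_{R^G}(R)$ over the normal domain $R^G$, and the resulting reduction to height-one primes; the identification of smallness with unramifiedness in codimension one via the eigenvalue/fixed-hyperplane argument is also right, and the Chase--Harrison--Rosenberg criterion does give the isomorphism at unramified height-one primes. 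The one place you should tighten is the ``only if'' direction: the claim that the image of $\phi_{\mathfrak{p}}$ has entries ``forced to lie in $\mathfrak{p}B$ across the ramification'' is only a gesture at the computation, and as written it does not yet constitute a proof. You can either carry out that matrix computation honestly (the $\Bbbk[x]$, $x\mapsto \zeta x$ model does generalize, but the bookkeeping with a uniformizer of the ramified prime and the inertia group must actually be done), or, more efficiently, invoke the full equivalence in Chase--Harrison--Rosenberg: surjectivity of $\phi_{\mathfrak{p}}$ already forces $S/B$ to be a Galois (hence unramified) extension, contradicting the ramification produced by the reflection. Either repair closes the gap; everything else is formal, as you say.
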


\noindent By \emph{small}, we mean that $G$ contains no reflections, in the sense described in Section \ref{commutativecase}. For example, if $G$ is a finite subgroup of $\SL(2,\Bbbk)$ acting on $\Bbbk[u,v]$ (meaning that $\Bbbk[u,v]^G$ is a Kleinian singularity), then $G$ is small and therefore the map $\phi$ is an isomorphism. \\
\indent Recently, a number of authors have studied when noncommutative generalisations of this result hold. More specifically, one can replace the polynomial ring $R$ by an Artin-Schelter (AS) regular algebra $A$ and consider the action of a finite group $G$ of graded automorphisms on $A$, and ask whether the natural map
\begin{align*}
\phi : A \hash G \to \End_{A^G}(A), \quad \phi(ag)(b) = a (g \cdot b),
\end{align*}
is an isomorphism. If this is the case, then we say that \emph{the Auslander map is an isomorphism for the pair} $(A,G)$. More generally, one can replace $G$ by a finite dimensional semisimple Hopf algebra $H$, and ask the same question. In \cite{bhz}, the authors provided a useful criterion for determining when the Auslander map is an isomorphism, which allows one to check specific examples by hand. We state only the group-theoretic version of their result below; technical terminology is defined in Section \ref{prelimsec}.

\begin{thm}[{\cite[Theorem 0.3]{bhz}}] \label{introbhz}
Let $G$ be a finite group acting on an AS regular, GK-Cohen-Macaulay algebra $A$ with $\GKdim A \geqslant 2$. Let $\overline{g} = \sum_{g \in G} g$, viewed as an element of $A \hash G$. Then the Auslander map is an isomorphism for the pair $(A,G)$ if and only if 
\begin{align*}
\GKdim \big((A \hash G)/\langle \overline{g} \rangle \big) \leqslant \GKdim A - 2.
\end{align*}
\end{thm}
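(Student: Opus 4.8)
The plan is to recast the Auslander map through the idempotent $e = |G|^{-1}\overline{g}$ and then extract both injectivity and surjectivity from the homological algebra of the two-sided ideal $\langle \overline{g}\rangle$. Write $B = A \hash G$ and $R = A^G$, and set $d = \GKdim A$. Since $B = \bigoplus_{g \in G} Ag$ is free of finite rank as a left $A$-module we have $\GKdim B = d$, and since $A$ is a finitely generated $R$-module (the Reynolds operator exists as $\operatorname{char}\Bbbk = 0$) we also have $\GKdim R = d$. The element $e$ is an idempotent with $ge = eg = e$ for all $g \in G$, and a direct computation gives $eBe \cong R$, $Be \cong A$ as $(A,R)$-bimodules, and $\langle \overline{g}\rangle = BeB$. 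Under these identifications the Auslander map $\phi$ becomes the natural left-multiplication homomorphism $B \to \End_R(Be) \cong \End_R(A)$, $b \mapsto (x \mapsto bx)$, and $\overline{B} := (A\hash G)/\langle \overline{g}\rangle = B/BeB$ is exactly the obstruction module to be controlled.

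First I would dispose of injectivity. The kernel of $\phi$ is the set of elements of $B$ acting as $0$ on $A \cong Be$. Because $A$ is prime (indeed a domain, $A$ being AS regular with $\GKdim A \geqslant 2$) and $G$ acts faithfully, the distinct automorphisms in $G$ are linearly independent over $A$, so $B$ acts faithfully on $A$ and $\ker\phi = 0$. Consequently $\phi$ is an isomorphism if and only if $\coker\phi = 0$, and the whole theorem reduces to a vanishing criterion for $\coker\phi$.

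The heart of the argument is to compute $\coker\phi$ via the idempotent ideal $I := BeB = \langle \overline{g}\rangle$. The key identification is $\End_R(A) \cong \Hom_B(I, B)$, under which $\phi$ is the map induced by the inclusion $I \hookrightarrow B$. Applying $\Hom_B(-,B)$ to the short exact sequence $0 \to I \to B \to \overline{B} \to 0$ of left $B$-modules then produces
\[ 0 \to \Ext^0_B(\overline{B}, B) \to B \xrightarrow{\phi} \End_R(A) \to \Ext^1_B(\overline{B}, B) \to 0, \]
so $\ker\phi = \Ext^0_B(\overline{B}, B)$ and $\coker\phi = \Ext^1_B(\overline{B}, B)$. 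The term $\Ext^0_B(\overline{B}, B) = \Hom_B(\overline{B}, B)$ vanishes because $\overline{B}$ has GK-dimension $< d$ while $B$ is prime, re-deriving the injectivity above; and $\phi$ is an isomorphism precisely when $\Ext^1_B(\overline{B}, B) = 0$. It then remains to recognise this last vanishing as the GK-dimension inequality: GK-Cohen-Macaulayness supplies an Auslander-type duality with $\operatorname{grade}_B M = d - \GKdim M$, where $\operatorname{grade}_B M = \min\{ j : \Ext^j_B(M,B) \neq 0\}$. Since $\Ext^0_B(\overline{B}, B) = 0$ we have $\operatorname{grade}_B \overline{B} \geqslant 1$, whence $\Ext^1_B(\overline{B}, B) = 0$ is equivalent to $\operatorname{grade}_B \overline{B} \geqslant 2$, i.e. to $\GKdim \overline{B} \leqslant d - 2$.

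The main obstacle is the input that makes this last step rigorous: establishing the identification $\End_R(A) \cong \Hom_B(BeB, B)$ compatibly with $\phi$, and, above all, verifying that $B = A\hash G$ is itself GK-Cohen-Macaulay, so that both the grade/GK-dimension duality and the equality $\GKdim B = \GKdim A$ apply. One must show the Cohen-Macaulay property descends from $A$ to the smash product; this, together with the hypothesis $\GKdim A \geqslant 2$ (which guarantees there is codimension $2$ available, so that the grade-$\geqslant 2$ condition is not vacuous), is exactly what forces the clean equivalence and cannot be dropped.
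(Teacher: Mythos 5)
The paper offers no proof of this statement: it is quoted directly from \cite[Theorem 0.3]{bhz}, so there is nothing internal to compare against. Your outline does follow the same route as that source: pass to the idempotent $e=|G|^{-1}\overline{g}$, identify $eBe\cong A^G$ and $Be\cong A$, realise the Auslander map as multiplication $B\to\End_{eBe}(Be)$, dualise the sequence $0\to BeB\to B\to \overline{B}\to 0$ into $B$, and convert the vanishing of $\Hom_B(\overline{B},B)$ and $\Ext^1_B(\overline{B},B)$ into the GK-dimension bound via the Cohen--Macaulay equality between grade and codimension. Of the two inputs you flag as obstacles, one is standard: the Cohen--Macaulay property passes from $A$ to $B=A\hash G$ because $A\subseteq A\hash G$ is a free Frobenius extension, so $\Ext^i_B(M,B)\cong\Ext^i_A(M,A)$ and $\GKdim_B M=\GKdim_A M$ for every finitely generated $B$-module $M$.

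The step that is genuinely incomplete, and not merely unverified, is the identification $\End_{A^G}(A)\cong\Hom_B(BeB,B)$. What is automatic is only $\Hom_B(Be\otimes_{eBe}eB,\,B)\cong\End_{eBe}(Be)$; dualising the multiplication surjection $\mu:Be\otimes_{eBe}eB\twoheadrightarrow BeB$ exhibits $\Hom_B(BeB,B)$ merely as a subring of $\End_{eBe}(Be)$. Consequently your four-term sequence computes the cokernel of $B\to\Hom_B(BeB,B)$, not of the Auslander map, and the ``if'' direction does not yet close: $\coker\phi$ has a second constituent, namely the image of $\End_{eBe}(Be)/\Hom_B(BeB,B)$ inside $\Hom_B(\ker\mu,B)$. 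The missing idea is that $\ker\mu$ is annihilated by $e$ (because $\mu$ restricts to the canonical isomorphism $Be\otimes_{eBe}eBe\cong Be$), hence is a module over $\overline{B}=B/BeB$, hence satisfies $\GKdim\ker\mu\leqslant\GKdim\overline{B}\leqslant\GKdim A-2$, hence has positive grade and $\Hom_B(\ker\mu,B)=0$; only then do the two cokernels coincide and the equivalence with $\Ext^1_B(\overline{B},B)=0$ hold. Two smaller cautions: ``$A$ is a domain because it is AS regular'' is an open conjecture in general (though true in dimension $2$, which is all this paper needs), so injectivity is better extracted from the $\Hom_B(\overline{B},B)$ term of your own sequence together with primeness of $B$ for an outer action; and the theorem as stated does not assume $G$ acts faithfully, a degenerate case your linear-independence argument silently excludes.
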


Using this, the Auslander map has been shown to be an isomorphism in the following cases:
\begin{itemize}[topsep=0pt,leftmargin=25pt]
\item Actions of small subgroups of $\text{Aut}_{\text{Lie}}(\mathfrak{g})$ on universal enveloping algebras of finite dimensional Lie algebras $U(\mathfrak{g})$, \cite[Theorem 0.4]{bhz2};
\item Actions by finite groups on noetherian graded down-up algebras, \cite[Theorem 0.6]{bhz2} and \cite[Theorem 4.3]{won};
\item Permutation actions on $\Bbbk_{-1}[x_1, \dots, x_n]$, \cite[Theorem 2.4]{won};
\item Actions of semisimple Hopf algebras on AS regular algebras of dimension 2, such that the action has trivial homological determinant, \cite[Theorem 4.1]{ckwz}.
\end{itemize}
The last of these examples can be viewed as a noncommutative generalisation of the fact that the Auslander map is an isomorphism for Kleinian singularities. Here, the ``trivial homological determinant'' condition serves as a noncommutative analogue of requiring that the finite subgroup of $\Autgr(\Bbbk[u,v]) = \GL(2,\Bbbk)$ lie inside $\SL(2,\Bbbk)$. In fact, it is conjectured that the Auslander map is an isomorphism whenever the action has trivial homological determinant; the result of Chan--Kirkman--Walton--Zhang establishes this in the dimension 2 case. \\
\indent The notion of smallness has a noncommutative generalisation, and so it is natural to ask whether a version of Theorem \ref{auslandersthm} holds in the noncommutative setting. 
One of the main results of this paper is to show that one direction of this result generalises to the noncommutative setting in dimension 2:

\begin{thm}[Theorem \ref{diagausthm}, Theorem \ref{nondiagausthm}] \label{introauslanderthm}
Suppose that $A$ is a two-dimensional AS regular algebra and that $G$ is a small subgroup of $\Autgr(A)$. Then the Auslander map is an isomorphism for the pair $(A,G)$.
\end{thm}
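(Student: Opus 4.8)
The plan is to apply the Bao--He--Zhang criterion of Theorem \ref{introbhz}. Two-dimensional AS regular algebras are noetherian domains of GK dimension $2$ and are GK-Cohen-Macaulay, so the hypotheses of Theorem \ref{introbhz} are met; moreover $A\hash G$ is free of rank $|G|$ over $A$, so $\GKdim(A\hash G) = \GKdim A = 2$. The criterion therefore reduces the entire statement to the single GK-dimension computation
\[
\GKdim\big((A\hash G)/\langle\overline{g}\rangle\big) \leqslant 0,
\]
that is, to showing that the quotient $(A\hash G)/\langle\overline{g}\rangle$ is finite dimensional. The remainder of the proof is this computation, which I would organise according to the classification of the pairs $(A,G)$.

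First I would invoke the classification of two-dimensional AS regular algebras: up to isomorphism these are the quantum planes $\Bbbk_q[x,y]$ (with the commutative polynomial ring the case $q=1$) and the Jordan plane, together with their graded automorphism groups. For generic $q$ every element of $\Autgr(A)$ is diagonal, so any finite $G$ is simultaneously diagonalisable; the exceptional algebras---$q = \pm 1$, the Jordan plane, and the full $\GL(2,\Bbbk)$ in the commutative case---admit automorphisms mixing the generators. This dichotomy motivates splitting the argument into a \emph{diagonal} case, where $G$ may be taken to scale $x$ and $y$, and a \emph{non-diagonal} case gathering the remaining pairs (Theorem \ref{diagausthm} and Theorem \ref{nondiagausthm} respectively). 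In each case I would translate smallness into an eigenvalue condition on $A_1$: in dimension two, $G$ has no quasi-reflections precisely when every non-identity $g$ acts without the eigenvalue $1$, so that in the diagonal case $\lambda_g \neq 1$ and $\mu_g \neq 1$ for all $g \neq \id$, where $g\cdot x = \lambda_g x$ and $g\cdot y = \mu_g y$.

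For the diagonal case I would work directly with the PBW-type monomial basis $\{x^i y^j\, g\}$ of $A\hash G$ and exploit relations such as $\overline{g}\,x^i y^j\,\overline{g} = \big(\sum_{h\in G}\lambda_h^{\,i}\mu_h^{\,j}\big)\,x^i y^j\,\overline{g}$, which follow from $h\,\overline{g} = \overline{g}$ and the diagonal action. The coefficient $\sum_{h}\lambda_h^{\,i}\mu_h^{\,j}$ is a nonzero multiple of $|G|$ exactly when the character $h\mapsto\lambda_h^{\,i}\mu_h^{\,j}$ is trivial, and the smallness hypothesis is what forces the eigenvalue pattern so that, outside a finite set of exponents $(i,j)$, the relevant monomials are driven into $\langle\overline{g}\rangle$. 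Assembling these relations should show that only finitely many monomials survive in the quotient, giving finite-dimensionality; I expect this case to reduce to bookkeeping once the monomial relations are recorded.

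The main obstacle is the non-diagonal case, where elements of $G$ mix the generators and the clean monomial analysis breaks down. Here I would treat the finitely many exceptional families individually. A useful reduction is to pass to a normal abelian subgroup $N\trianglelefteq G$ acting diagonally and to analyse $(A\hash G)/\langle\overline{g}\rangle$ through the intermediate quotient by $\langle\overline{n}\rangle$ where $\overline{n}=\sum_{n\in N} n$, controlling the remaining extension by the finite group $G/N$; for the order-two swap automorphisms at $q=\pm 1$ and for the unipotent-type automorphisms of the Jordan plane one can instead compute the quotient by hand. The delicate point throughout is to confirm that smallness alone---rather than the stronger trivial-homological-determinant condition of \cite{ckwz}---already kills the GK dimension, since this is exactly where the result extends beyond the Kleinian-type setting.
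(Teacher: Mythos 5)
Your skeleton matches the paper's: invoke the Bao--He--Zhang criterion, reduce to showing $(A\hash G)/\langle\overline{g}\rangle$ is finite dimensional, and split according to whether $G$ is diagonal or contains generator-mixing elements. But both halves of the actual computation have problems as written. In the diagonal case, the identity you propose, $\overline{g}\,x^iy^j\,\overline{g}=\big(\sum_h\lambda_h^i\mu_h^j\big)x^iy^j\,\overline{g}$, is true but vacuous: its right-hand side already lies in $\langle\overline{g}\rangle$ because of the factor $\overline{g}$, so it proves nothing about the monomial $x^iy^j$ itself. The orthogonality idea is the right one, but it must be deployed by varying the \emph{splitting} of a fixed monomial around $\overline{g}$: the paper computes $\sum_{j=0}^{n-1}u^{n-1-j}\,\overline{g}\,u^j=\sum_i\big(\sum_j\omega^{ij}\big)u^{n-1}g^i=n\,u^{n-1}$, so that orthogonality isolates the identity component of the group algebra and lands the \emph{pure} element $u^{n-1}$ (and likewise $v^{n-1}$) in the ideal, whence $A_{\geqslant 2(n-1)}\subseteq\langle\overline{g}\rangle$. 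Your version never strips off the group factor.

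The non-diagonal case is where the real difficulty lies, and your proposed reduction does not work. Writing $\overline{g}=\overline{n}\cdot\overline{c}$ for a normal diagonal subgroup $N$ and coset representatives $c$ shows $\langle\overline{g}\rangle\subseteq\langle\overline{n}\rangle$, which is the wrong containment: knowing $(A\hash N)/\langle\overline{n}\rangle$ is small says nothing about the (a priori smaller) ideal $\langle\overline{g}\rangle$, and there is no "intermediate quotient" to pass through. The only non-diagonal pairs are $A=\Bbbk_{-1}[u,v]$ with $G=G_{n,k}$ (the Jordan plane admits no non-diagonal finite-order automorphisms, so that family is already covered by the diagonal case), and the paper's treatment of $G_{n,k}$ is genuinely involved: it introduces character-twisted partial sums $G_\ell$ and $H_\ell$ over the diagonal and antidiagonal parts of $G$, establishes how they commute past powers of $u$ and $v$, and through a chain of lemmas shows successively that $u^kv^kG_0$, then $u^{(n+1)k}v^{(n+1)k}$, then $(u^s\pm v^s)G_\ell$, and finally $u^{2s}$ and $v^{2s}$ lie in $\langle\overline{g}\rangle$. "Compute the quotient by hand" substantially underestimates this step, and as proposed your argument has no mechanism to carry it out.
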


If a finite group $G$ acts on a two dimensional AS regular algebra with trivial homological determinant, then one can check that $G$ is small (in fact, this follows from Theorem \ref{introthmclassify} below), and so the above theorem can be viewed as an extension of \cite[Theorem 4.1]{ckwz}. \\
\indent Our method for proving Theorem \ref{introauslanderthm} is to first classify all possible actions of small groups on two-dimensional AS regular algebras and to then verify that the criterion of Theorem \ref{introbhz} is met in each case. The following is the main result of Section \ref{smallactionssec}, where $\Bbbk_q[u,v]$ and $\Bbbk_J[u,v]$ are the quantum plane and Jordan plane, respectively, whose definitions can be found in Section \ref{definitions}. We also write $\omega_n$ for a primitive $n$th root of unity.

\begin{thm}[Theorem \ref{smallclassification}] \label{introthmclassify}
Suppose that $A$ is a two-dimensional AS regular algebra which is not commutative and that $G$ is a small subgroup of $\Autgr(A)$. Then, up to conjugation of $G$ by an element of $\Autgr(A)$, the possible pairs $(A,G)$ are as follows:
\begin{figure}[h]
{\tabulinesep=1.5pt
\begin{tabu}{c|c|c|c|l}
\normalfont{Case} & $A$ & $G$ & \normalfont{Generators} & \normalfont{Conditions} \\ \hline
\normalfont{(i)} &  $\Bbbk_q[u,v]$ & $\frac{1}{n}(1,a)$ & \hspace{-3pt} $\begin{pmatrix} \omega_n & 0 \\ 0 & \omega_n^a \end{pmatrix} \hspace{-3pt} $ & $q \neq 1$, $1 \leqslant a < n$, $\gcd(a,n) = 1$. \\ \hline
\normalfont{(ii)} &  $\Bbbk_{-1}[u,v]$ & $G_{n,k}$ & $ \hspace{-3pt} \begin{pmatrix} \omega_n & 0 \\ 0 & \omega_n^{-1} \end{pmatrix}$, $\begin{pmatrix} 0 & \omega_{2k} \\ \omega_{2k} & 0 \end{pmatrix} \hspace{-3pt}$ & $k \not\equiv 2 \text{\normalfont{ mod }} 4$, $\gcd(n,k) = 1$. \\ \hline
\normalfont{(iii)} &  $\Bbbk_J[u,v]$ & $\frac{1}{n}(1,1)$ & $ \hspace{-3pt} \begin{pmatrix} \omega_n & 0 \\ 0 & \omega_n \end{pmatrix} \hspace{-3pt} $ & $n \geqslant 2$.  \\
\end{tabu}}
\end{figure}
\end{thm}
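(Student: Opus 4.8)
The plan is to combine the known classification of two-dimensional AS regular algebras with an explicit description of their graded automorphism groups, and then to carve out the small subgroups using trace functions. Recall that a noncommutative two-dimensional AS regular algebra is isomorphic either to a quantum plane $\Bbbk_q[u,v]$ with $q \neq 1$ or to the Jordan plane $\Bbbk_J[u,v]$; I would treat these in turn, separating within the quantum plane case the generic $q \neq \pm 1$ from $q = -1$, since $q=-1$ is the only value admitting non-diagonal graded automorphisms. For each algebra I would first pin down $\Autgr(A)$ by writing a general degree-one map $u \mapsto au + bv$, $v \mapsto cu + dv$ and imposing that it preserve the defining relation. For $\Bbbk_q[u,v]$ with $q \neq \pm 1$ this forces $b = c = 0$, so $\Autgr(A)$ is the diagonal torus $(\Bbbk^\times)^2$; for $q = -1$ one also allows the antidiagonal maps, giving the monomial group $(\Bbbk^\times)^2 \rtimes \ZZ/2$; and for the Jordan plane one obtains $\Autgr(A) = \{u \mapsto \alpha u, \; v \mapsto \alpha v + \beta u : \alpha \in \Bbbk^\times, \; \beta \in \Bbbk\}$.

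Next I would reduce the possible finite subgroups. For the Jordan plane, a short calculation shows that an element with $\beta \neq 0$ is a scalar times a nontrivial unipotent, which has infinite order in characteristic zero; hence every finite subgroup consists of scalar matrices and is cyclic of the form $\tfrac1n(1,1) = \langle \omega_n I \rangle$. For the diagonal torus the key observation is that a diagonal $G$ is small precisely when both coordinate projections $\pi_1, \pi_2 : G \to \Bbbk^\times$ are injective (an element of $\ker\pi_i \setminus \{1\}$ has exactly one eigenvalue equal to $1$, i.e.\ is a quasi-reflection). Since a finite subgroup of $\Bbbk^\times$ is cyclic, injectivity of $\pi_1$ forces $G = \langle \operatorname{diag}(\omega_n, \omega_n^a) \rangle$, and injectivity of $\pi_2$ becomes exactly $\gcd(a,n) = 1$; this is case (i). For $q = -1$, a non-diagonal small subgroup $G$ meets the torus in an index-two subgroup $G_0$, which is itself a small diagonal group and hence cyclic; conjugating the coset $G_0 s$ by a diagonal matrix (which fixes $G_0$ pointwise but rescales the antidiagonal entries while preserving $s^2$) lets me normalise the generating antidiagonal element to $\begin{pmatrix} 0 & \omega_{2k} \\ \omega_{2k} & 0 \end{pmatrix}$, and imposing swap-stability of $G_0$ produces $G_0 = \langle \operatorname{diag}(\omega_n, \omega_n^{-1}), \omega_k I \rangle$, yielding the group $G_{n,k}$.

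The smallness conditions themselves come from the trace functions, using that a non-identity $g$ is a quasi-reflection exactly when $\Tr(g,t)$ has a pole of order one at $t=1$. For a diagonal $\operatorname{diag}(\lambda,\mu)$ one computes $\Tr(g,t) = [(1-\lambda t)(1-\mu t)]^{-1}$, so $g$ is a quasi-reflection iff exactly one of $\lambda,\mu$ equals $1$; this gives $\gcd(a,n)=1$ in case (i) and, after accounting for the mixing by the scalars $\omega_k I$, the condition $\gcd(n,k)=1$ in case (ii). The delicate point is the antidiagonal elements of $G_{n,k}$: because $\Bbbk_{-1}[u,v]$ is noncommutative, such an element permutes the monomial basis with signs, all odd graded pieces contribute zero to the trace, and one finds $\Tr(g,t) = (1 + \omega_k^{2m+1} t^2)^{-1}$ rather than the reciprocal of a characteristic polynomial. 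Its pole at $t=1$ appears iff $\omega_k^{2m+1} = -1$ for some $m$, which is solvable precisely when $k \equiv 2 \pmod 4$; this is why smallness forces $k \not\equiv 2 \pmod 4$. Finally, for the Jordan plane the scalars satisfy $\Tr(\omega_n^j I, t) = (1 - \omega_n^j t)^{-2}$, whose only pole at $t=1$ occurs when $\omega_n^j = 1$, so no nontrivial power is ever a quasi-reflection and every $\tfrac1n(1,1)$ with $n \geq 2$ is automatically small, giving case (iii).

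I expect the main obstacle to be case (ii). Assembling the whole group from $G_0$ and the coset $G_0 s$, verifying that conjugation by $\Autgr(A)$ genuinely normalises every such $G$ to a standard $G_{n,k}$, and ruling out sporadic swap-stable diagonal parts (for instance those coming from square roots of $1$ modulo $N$ other than $\pm 1$) require care: one must check that such alternative $G_0$ either fail to extend to a small $G$ because some element of the antidiagonal coset becomes a quasi-reflection, or else are already recorded as diagonal groups in case (i). Getting the noncommutative trace computation for the antidiagonal elements exactly right is what makes these exclusions work; the diagonal and Jordan cases are comparatively routine once the projection and torsion arguments are in place.
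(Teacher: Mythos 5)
Your treatment of cases (i) and (iii), and all of your trace computations, essentially reproduce the paper's: the diagonal trace $[(1-\lambda t)(1-\mu t)]^{-1}$, the injectivity-of-both-projections criterion for a diagonal group to be small, and the antidiagonal trace $(1+bc\,t^2)^{-1}$ (which the paper obtains via the Koszul dual of $\Bbbk_{-1}[u,v]$, and which does yield $k\not\equiv 2\bmod 4$ exactly as you say) all match. Your direct check that a scalar $\lambda I$ acting on $\Bbbk_J[u,v]$ has trace $(1-\lambda t)^{-2}$ is a fine substitute for the paper's citation that $\Autgr(\Bbbk_J[u,v])$ contains no quasi-reflections.

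The genuine gap is in case (ii), precisely where you predicted trouble, and your proposed resolution points the wrong way. You intersect $G$ with the diagonal torus; the resulting $G_0$ is a small cyclic diagonal group $\langle\mathrm{diag}(\omega_N,\omega_N^a)\rangle$, and swap-stability forces only $a^2\equiv 1\pmod N$, not $a\equiv\pm1$. You propose to \emph{rule out} the sporadic square roots of $1$, but they must be kept: for odd $N$ the square roots of $1$ modulo $N$ correspond via CRT to coprime factorisations $N=nk$ (with $a\equiv-1\bmod n$ and $a\equiv 1\bmod k$), and these $G_0$ are exactly the diagonal parts $\langle\mathrm{diag}(\omega_n,\omega_n^{-1}),\,\omega_k I\rangle$ of the groups $G_{n,k}$ with $n,k>1$; discarding them would lose most of case (ii), and they are certainly not "recorded in case (i)" since the ambient $G$ is nondiagonal. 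The paper sidesteps this entirely by intersecting with $\SL(2,\Bbbk)$ instead of the torus: smallness forces every antidiagonal element to have determinant $\neq 1$ (as $\det=-bc$ and quasi-reflection $\iff bc=-1$), so $G\cap\SL(2,\Bbbk)$ is automatically diagonal and of the single standard form $\langle\mathrm{diag}(\omega_n,\omega_n^{-1})\rangle$, and cyclicity of $G/(G\cap\SL(2,\Bbbk))$ under the determinant map hands you one antidiagonal generator $h$, with $k$ appearing as half its order. If you keep your torus decomposition you must instead prove the CRT matching above. You are also missing the redundancy step at the end: smallness of $G_{n,k}$ only forces $\gcd(n,k)\leqslant 2$, and the paper needs the separate identity $G_{n,k}=G_{\frac{n}{2},k}$ for $n\equiv 2$, $k\equiv 0\pmod 4$ before the table's normalisation $\gcd(n,k)=1$ is legitimate; your proposal asserts coprimality without justification.
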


We remark that if $A$ is a commutative AS regular algebra of dimension $n$ then it is isomorphic to $\Bbbk[x_1, \dots, x_n]$. In this case, the classification of small subgroups when $n=2$ (i.e. of small subgroups of $\GL(2,\Bbbk)$) can be found in \cite{brie}. \\
\indent Establishing the reverse implication in Theorem \ref{introauslanderthm} appears to be more difficult, since if one wishes to utilise Theorem \ref{introbhz}, it is necessary to show that $\GKdim \big((A \hash G)/\langle \overline{g}\rangle \big) \geqslant 1$ whenever $G \leqslant \Autgr(A)$ is not small. In general, this is more difficult to show than showing that an algebra has GK dimension 0 (i.e. that it is finite dimensional). However, the reverse implication appears to hold based off the examples we have computed. \\
\indent The final part of this paper is devoted to writing down presentations for the invariant rings $A^G$, where $A$ and $G$ appear in the classification of Theorem \ref{introthmclassify}. There is a long-standing belief that invariant rings arising from group actions (or, more generally, Hopf actions) on AS regular algebras can be written as factors of AS regular algebras. Assuming this is true, this gives a strong motivation for writing down a presentation for $A^G$, as from this one might be able to construct new examples of AS regular algebras. For cases (i) and (ii), we are able to write the invariant rings as factors of AS regular algebras:

\begin{thm}[Corollary \ref{jordancor}, Corollary \ref{kqcor}]
Let $(A,G)$ be a pair from case \emph{(i)} or case \emph{(iii)} of Theorem \ref{introthmclassify}. Then $A^G$ is isomorphic to a factor of an AS regular algebra.
\end{thm}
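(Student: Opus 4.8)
The plan is to reduce both cases to the classical commutative situation by realising each algebra $A$ as a graded twist of the polynomial ring $\Bbbk[u,v]$ in a way compatible with the $G$-action, and then to transport a commutative presentation through the twist. The point is that the two noncommutative two-dimensional AS regular algebras both arise as Zhang twists of $\Bbbk[u,v]$ by a graded automorphism $\sigma$ that commutes with the relevant $G$: in case (i) one takes $\sigma = \mathrm{diag}(1,q)$, and in case (iii) one takes the unipotent $\sigma$ with $\sigma(u)=u$, $\sigma(v)=u+v$. Since $G$ is diagonal in case (i) and scalar in case (iii), in both cases $G$ and $\sigma$ commute inside $\Autgr(\Bbbk[u,v])$.

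First I would record the compatibility lemma: if $\sigma \in \Autgr(\Bbbk[u,v])$ commutes with every element of a finite $G \leqslant \Autgr(\Bbbk[u,v])$, then each $g \in G$ is still an automorphism of the twist $\Bbbk[u,v]^{\sigma}$, the automorphism $\sigma$ restricts to the $\sigma$-stable subalgebra $\Bbbk[u,v]^{G}$, and taking invariants commutes with twisting, so that $(\Bbbk[u,v]^{\sigma})^{G} = (\Bbbk[u,v]^{G})^{\sigma}$ as graded algebras. The verification is a direct computation with the twisted product $f * h = f\,\sigma^{\deg f}(h)$ using $g\sigma = \sigma g$. Consequently $A^{G} \cong (\Bbbk[u,v]^{G})^{\sigma}$, and it suffices to twist a presentation of the commutative invariant ring.

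Next I would use the commutative input, which is classical. The ring $\Bbbk[u,v]^{G}$ is a finitely generated connected graded commutative algebra, hence a factor $P \twoheadrightarrow \Bbbk[u,v]^{G}$ of a (weighted) polynomial ring $P$, which is AS regular (after regrading the generators to degree one if one insists on the degree-one-generated convention). In case (i) this is the toric binomial presentation of the cyclic quotient singularity $\frac{1}{n}(1,a)$, and in case (iii) the determinantal presentation of the cone over the rational normal curve of degree $n$; these supply the \emph{explicit} presentations the corollaries advertise. I would then lift the restricted $\sigma$ to a graded automorphism $\tau$ of $P$ acting on the minimal generators so that the surjection intertwines $\tau$ and $\sigma$, and twist: because a graded surjection intertwining the twisting automorphisms twists to a graded surjection, I obtain $P^{\tau} \twoheadrightarrow (\Bbbk[u,v]^{G})^{\sigma} \cong A^{G}$. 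As Zhang twists of AS regular algebras are AS regular, $P^{\tau}$ is AS regular and $A^{G}$ is a factor of it.

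The hard part is not the existence statement, which the twist machinery renders nearly formal once the compatibility lemma is established, but producing the explicit presentations. For case (iii) this means computing the relations among the degree-$n$ monomials $u^{n-i}v^{i}$ directly from $vu = uv + u^{2}$, recognising them as a Jordanian deformation of the $2\times2$ minors of a Hankel matrix, and matching Hilbert series; for case (i) the obstacle is combinatorial, since the minimal generators of the monoid $\{(i,j): i + aj \equiv 0 \bmod n\}$ are governed by the Hirzebruch--Jung continued fraction of $n/a$, so the number of generators and relations is not uniform in $(n,a)$. In both cases twisting only decorates the commutative relations with scalar (case (i)) or unipotent (case (iii)) corrections, so the real work is organising this combinatorial data, while AS regularity of the twisted target is immediate from Zhang's theorem.
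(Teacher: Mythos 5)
Your proposal is correct, but it takes a genuinely different route from the paper. The paper proves Corollaries \ref{jordancor} and \ref{kqcor} by direct computation: it writes down explicit generators (the rescaled monomials $y_i = \frac{(-1)^i}{i!}u^{n-i}v^i$ for the Jordan plane in Section \ref{jordanplaneinvsec}, and the Riemenschneider monomials $x_k = u^{i_k}v^{j_k}$ for the quantum plane in Section \ref{kqinvsec}), derives the relations from the defining relation of $A$ (via Chu--Vandermonde-type binomial identities in the Jordan case, and by tracking powers of $q$ in the quantum case), and proves completeness of each presentation by comparing leading words and Hilbert series against the known commutative presentation of $\Bbbk[u,v]^G$; the ambient AS regular algebra is then read off as a quantum affine space in case (i) and as Lecoutre's algebra $R(n,-n)$ in case (iii). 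Your Zhang-twist argument replaces all of this with the observations that $\Bbbk_q[u,v]$ and $\Bbbk_J[u,v]$ are twists of $\Bbbk[u,v]$ by graded automorphisms commuting with the (diagonal, resp.\ scalar) group $G$, that taking invariants commutes with twisting when the automorphisms commute, and that twisting preserves AS regularity; this yields the existence statement essentially formally, and in case (i) it even recovers the same ambient algebra (the twist of the weighted polynomial ring on the $X_k$ by a diagonal automorphism is a quantum affine space matching Theorem \ref{kqinvariantspres}, up to the choice of twisting convention). What your route does not deliver without redoing the paper's combinatorics is the explicit presentations --- which are the main content of Sections \ref{jordanplaneinvsec} and \ref{kqinvsec} --- nor the identification of the ambient algebra in case (iii) with Lecoutre's quantisation; on the other hand it explains conceptually why the noncommutative relations are merely ``decorated'' versions of the commutative ones. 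Two small points to nail down if you write this up: verify that your chosen homogeneous lift $\tau$ of $\sigma|_{\Bbbk[u,v]^G}$ to the polynomial cover $P$ is genuinely invertible (immediate here, since $\sigma$ acts on the span of the minimal generators diagonally in case (i) and unipotently in case (iii)), and fix the sign in the Jordan twist (take $\sigma(v)=v-u$ with the product $f*h=f\,\sigma^{\deg f}(h)$) so as to land on the paper's relation $vu=uv+u^2$.
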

For case (i), this is not a particularly surprising result, and the AS regular algebra of which $A^G$ is a factor is a quantum polynomial ring. Case (iii) is more interesting: the AS algebra of which $A^G$ is a factor is a quantisation of a Poisson structure on a commutative polynomial ring. These rings were first studied in \cite{lecoutre}. \\
\indent For the remaining case, case (ii) of Theorem \ref{introthmclassify}, even writing down a set of generators for the invariant ring $A^G$ is a nontrivial task. When $n$ or $k$ is even, $A^G$ is commutative, and turns out to be a ring which is already well understood. If instead $n$ and $k$ are both odd, then $A^G$ is not commutative, and we are only able to write down a set of generators.

\begin{thm}
Suppose that $A = \Bbbk_{-1}[u,v]$ and $G = G_{n,k}$, where $n$ and $k$ are coprime and $k \not\equiv 2 \text{\normalfont{ mod }} 4$.
\begin{enumerate}[{\normalfont (1)},leftmargin=*,topsep=0pt,itemsep=2pt]
\item \emph{(Proposition \ref{commvsnoncomm})} The invariant ring $A^G$ is commutative if and only if $n$ or $k$ is even.
\item \emph{(Propositions \ref{nequals1}, \ref{nequals2}, \ref{noddiso}, and \ref{neveniso})} If $A^G$ is commutative, then it is a surface quotient singularity of type $\mathbb{A}$ or type $\mathbb{D}$.
\item \emph{(Theorems \ref{ngtkgens} and \ref{nltkgens})} If $A^G$ is not commutative, then there is a formula to write down its generators which depends on the Hirzebruch-Jung continued fraction expansion of $\frac{n}{\frac{1}{2}(n+k)}$.
\end{enumerate}
\end{thm}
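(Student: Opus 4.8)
The plan is to compute $A^G$ in stages, exploiting the structure of $G = G_{n,k}$. Writing $g$ and $h$ for the two generators in the table of Theorem~\ref{introthmclassify}, one checks that $h^2 = \omega_k I$ is a scalar matrix and that $h g h^{-1} = g^{-1}$, so that $N := \langle g, h^2\rangle$ is an abelian (diagonal) normal subgroup of order $nk$ with $G/N \cong \ZZ/2$ generated by the image of $h$. Since $N$ acts diagonally, $A^N$ is spanned by those monomials $u^a v^b$ with $a \equiv b \pmod n$ and $a + b \equiv 0 \pmod k$; it is the semigroup algebra of the corresponding subsemigroup $S \subseteq \NN^2$. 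I would then obtain $A^G = (A^N)^{\langle h\rangle}$ by symmetrising under the residual involution, using that $h$ sends $u^a v^b$ to $\omega_{2k}^{a+b}(-1)^{ab}\,u^b v^a$. All three parts reduce to understanding $S$ and the action of $h$ on it.

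For part (1) the key tool is the $\ZZ/2 \times \ZZ/2$-grading of $A = \Bbbk_{-1}[u,v]$ by the parities of $(\deg_u, \deg_v)$, together with the elementary fact that two monomials $u^a v^b$ and $u^c v^d$ commute if and only if $ad \equiv bc \pmod 2$. If $n$ or $k$ is even, then every monomial of $A^N$ satisfies $a \equiv b \pmod 2$ (from $a \equiv b \pmod n$, respectively from $a + b \equiv 0 \pmod k$), so $A^G$ lies in the sum of the $(0,0)$- and $(1,1)$-components, which is a commutative subalgebra; hence $A^G$ is commutative. Conversely, when $n$ and $k$ are both odd I would exhibit invariants supported on mixed-parity components and show they fail to commute. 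Concretely, the off-diagonal monomials $u^{(n+k)/2}v^{(k-n)/2}$ and $u^{(k-n)/2}v^{(n+k)/2}$ (say $k > n$) lie in $A^N$ and have mod-$2$ pairing $\tfrac14\big((n+k)^2 - (k-n)^2\big) = nk$, which is odd, so by the criterion above they do not commute; assembling them, together with the symmetrisation of $u^{nk}$, into honest $\langle h\rangle$-invariants then yields a pair of elements of $A^G$ whose commutator is nonzero.

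For part (2), suppose $A^G$ is commutative, so it is a two-dimensional normal Cohen--Macaulay domain. I would produce three explicit algebra generators $X, Y, Z$ from the symmetrised monomials of the previous step, compute the single relation $F(X,Y,Z) = 0$ that they satisfy, and recognise $F$ as (a coordinate change of) the defining equation $XY = Z^{m}$ of a type-$\AA$ singularity or $X^2 + Y^2 Z + Z^{m} = 0$ of a type-$\DD$ singularity; equivalently one may identify $A^G$ with a classical invariant ring $\Bbbk[s,t]^{\Gamma}$ for a cyclic or binary dihedral $\Gamma$ and invoke Klein's classification of surface quotient singularities. The values $n = 1$ and $n = 2$ degenerate and are treated separately (hence Propositions \ref{nequals1} and \ref{nequals2}), while the generic commutative cases split according to the parity of $n$.

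The genuinely hard case, and the main obstacle, is part (3), where $n$ and $k$ are both odd and $A^G$ is noncommutative. Here the minimal generators of the monomial algebra $A^N$ --- equivalently, the minimal generators of the semigroup $S$ --- are described by the Hirzebruch--Jung (negative) continued fraction expansion of $\tfrac{n}{(n+k)/2}$, exactly as for the minimal generators of a cyclic quotient singularity in toric geometry; note that $(n+k)/2 \in \ZZ$ precisely because $n$ and $k$ are both odd. I would first read off these generators from the continued fraction, and then determine how the involution $h$ permutes them, replacing each $h$-orbit by the appropriate symmetric and antisymmetric combinations to obtain a generating set for $A^G$. The delicate points are setting up the cone and lattice so that the continued fraction of $\tfrac{n}{(n+k)/2}$ really controls $S$, and tracking the effect of the symmetrisation, which behaves differently depending on whether $n > k$ or $n < k$; this dichotomy is what produces the two separate formulas in Theorems \ref{ngtkgens} and \ref{nltkgens}.
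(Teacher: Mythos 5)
Your reduction to the diagonal normal subgroup $N=\langle g,h^2\rangle$ and the residual involution is sound, and part (1) of your plan is essentially correct and close to the paper's argument: the paper uses the smaller subgroup $\langle -I\rangle$ (noting $g^{n/2}=-I$ or $h^k=-I$ when $n$ or $k$ is even, so $A^G\subseteq\Bbbk[u^2,v^2,uv]$), and for the converse it exhibits the explicit non-commuting invariants $u^iv^j-u^jv^i$ and $u^{3i}v^{3j}-u^{3j}v^{3i}$; your proposed witness pair (the symmetrisation of $u^{(n+k)/2}v^{(k-n)/2}$ against $u^{nk}-v^{nk}$) does work, but you assert rather than verify the nonvanishing of the commutator, which is the actual content. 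In part (2), however, you are conflating general quotient singularities with Du Val singularities: the commutative invariant rings here are $\Bbbk[u,v]^{\frac{1}{2k}(1,k+1)}$, $\Bbbk[u,v]^{\frac{1}{4k}(1,2k+1)}$ and $\Bbbk[u,v]^{\DD_{m,q}}$ for small (generally non-$\SL_2$) groups, and these are generically \emph{not} hypersurfaces --- already $\frac{1}{8}(1,5)$ needs four generators --- so "three generators $X,Y,Z$ with a single relation $XY=Z^m$ or $X^2+Y^2Z+Z^m$" and "Klein's classification" cannot be the right framework; the paper instead builds explicit isomorphisms onto these non-Gorenstein quotient singularities through intermediate rings such as $\Bbbk[x,y,z]/\langle xy-z^n\rangle$, matching the residual actions.

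The serious gap is in part (3). Your plan is to take the minimal semigroup generators of $A^N$ (read off from a Hirzebruch--Jung expansion) and symmetrise each $h$-orbit. This fails for two reasons. First, the fraction $\frac{n}{\frac{1}{2}(n+k)}$ does not control the minimal generators of the semigroup $S$ of $A^N$: $N$ is cyclic of order $nk$, so those are governed by an expansion with numerator $nk$ (for $n=7$, $k=3$ one gets $\frac{1}{21}(1,13)$ and $\frac{21}{8}=[3,3,3]$, with minimal semigroup generators $(21,0),(8,1),(3,3),(1,8),(0,21)$). Second, and fatally, symmetrising the minimal generators of $A^N$ does not generate $A^G$: in that same example the monomial $(3,3)$, i.e.\ $(uv)^3$, is \emph{anti}-invariant under $h$, so its symmetrisation is zero, and consequently the invariant $u^{11}v^4-u^4v^{11}=(u^7-v^7)(uv)^4$ --- which sits over the non-minimal semigroup element $(11,4)=(8,1)+(3,3)$ --- cannot be written as a product of lower-degree invariants and is a genuinely required generator of $A^G$ (it is the generator $a$ of degree $15$ in the paper's $n=7,k=3$ presentation, and no product of the degree-$9$, $12$, $21$ generators lands in degree $15$). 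The paper avoids this trap entirely: it parametrises the invariant basis by triples $(r,s,t)$ with $2r+ns=kt$, builds the candidate generators from a \emph{modified} expansion of $\frac{n}{\frac{1}{2}(n+k)}$ via bespoke recursions for $r_i,s_i,t_i$, and proves generation by a decomposition lemma for such triples followed by an induction on $s$. That combinatorial machinery is the actual content of Theorems \ref{ngtkgens} and \ref{nltkgens}, and your proposal does not supply a substitute for it.
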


In the setting of the above theorem, it is difficult to write down an explicit presentation for the invariant rings $A^G$ when they are not commutative. We anticipate that they can always be written as a factor of an AS regular algebra, and show that this is the case for a specific example. Writing down presentations for arbitrary $n$ and $k$, and studying further properties of these rings in general, is the topic of work in progress. \\
\indent We now explain how our results fit into a broader picture. Let $R = \Bbbk[u,v]$ and let $G$ be a small subgroup of $\GL(2,\Bbbk)$. In this case, $\Spec R^G$ is a (generically non-Gorenstein) surface quotient singularity. There are strong connections between properties of the rings $R^G$, representation theory of the group $G$, and geometric properties of $\Spec R^G$. The results in this paper can be viewed as a first step towards better understanding a noncommutative version of this setup: by Corollary \ref{gorensteincor}, the invariant rings $A^G$ under consideration are generically non-Gorenstein (they are Gorenstein precisely when $G$ acts with trivial homological determinant). Moreover, they are graded isolated singularities in the sense of \cite{ample}. It is natural to ask how far the analogy can be pushed: for example, how much of the Auslander--Reiten theory for commutative surface quotient singularities holds in the noncommutative setting? Can one identify analogues of ``exceptional curves'' in a noncommutative resolution of $A^G$? Answering versions of these questions is the subject of work in progress. \\

\noindent \textbf{Organisation of the paper.} This paper is organised as follows. In Section \ref{prelimsec}, we fix the notation used throughout this paper, and recall some basic definitions and results. In Section \ref{smallactionssec} we prove Theorem \ref{introthmclassify}, and then in Section \ref{auslandersec} we use this to prove Theorem \ref{introauslanderthm}. The remainder of the paper is dedicated to determining the invariant rings $A^G$: we treat case (iii) from Theorem \ref{introthmclassify} in Section \ref{jordanplaneinvsec}, case (i) in Section \ref{kqinvsec}, and case (ii) in Sections \ref{comminvsec} and \ref{noncomminvsec}. 


\section{Preliminaries} \label{prelimsec}

\subsection{Conventions}
Throughout $\Bbbk$ will denote an algebraically closed field of characteristic 0. Let $R$ be an $\NN$-graded ring. We write $\gr R$ (respectively, $R\text{-gr}$) for the category of finitely generated $\ZZ$-graded right (respectively, left) $R$-modules with degree-preserving morphisms. Given $M \in \gr R$, we define $M[i]$ to be the graded module which is isomorphic to $M$ as an ungraded module, but which satisfies $M[i]_n = M_{i+n}$. If $M, N \in \gr R$ then $\End_R(M,N)$ has a natural grading given by $\bigoplus_{i \in \ZZ} \End_{\gr R}(M,N[i])$ as graded vector spaces. In this paper, we shall use right modules unless otherwise stated. We write $\idim M$ for the injective dimension of $M \in \mod R$, and $\gldim R$ for the global dimension of $R$. We write $\omega_n$ for a primitive $n$th root of unity.

\subsection{Actions of Small Groups on Polynomial Rings} \label{commutativecase}
We begin by recalling some of the definitions and basic results from commutative algebraic geometry which we seek to generalise to a noncommutative setting. Suppose that $R \coloneqq \Bbbk[x_1, \dots, x_n]$ and that $g \in \Autgr(R) = \GL(n,\Bbbk)$. Then $g$ is said to be a \emph{reflection} if the fixed subspace $\{ v \in \Bbbk^n \mid gv = v \}$ has dimension $n-1$. If $G$ is a finite subgroup of $\Autgr(R)$, then it is said to be \emph{small} if it contains no reflections. As stated in the introduction, it is a classical result due to Auslander that the natural map
\begin{align*}
\phi : R \hash G \to \End_{R^G}(R), \quad \phi(rg)(s) = r(g \cdot s)
\end{align*}
is an isomorphism if and only if $G$ is small. \\
\indent The classification of small groups is easy to write down in the two dimensional case. These were classified by Brieskorn in \cite{brie}, but we follow the notation of \cite{riemen}, which lends itself more naturally to the study of the invariant rings. An abridged version of the classification is as follows:


\begin{figure}[h]
{\tabulinesep=1.5pt
\begin{tabu}{c|c|c|l}
Type & Group & Generators & Conditions \\ \hline
$\mathbb{A}$ & $\frac{1}{n}(1,a)$ & $\begin{pmatrix} \omega_n & 0 \\ 0 & \omega_n^a \end{pmatrix}$ & $1 \leqslant a < n$, $\gcd(a,n) = 1$. \\ \hline
$\mathbb{D}$ & $\mathbb{D}_{m,q}$ & $\begin{pmatrix} \omega_{2q} & 0 \\ 0 & \omega_{2q}^{-1} \end{pmatrix}$, $\begin{pmatrix} 0 & \omega_{4(m-q)} \\ \omega_{4(m-q)} & 0 \end{pmatrix}$ & $1 < q < m$, $\gcd(m,q) = 1$.
\end{tabu}}
\end{figure}

\noindent We remark that the generators we have chosen differ from those that appear elsewhere in the literature, but it is straightforward to show that these yield the same groups. In both cases, the ``type'' refers to the dual graph of the exceptional divisor in the minimal resolution of $\Spec R^G$. There are three other infinite families that we have omitted since they will play no role in this paper, called types $\mathbb{T}$, $\mathbb{O}$, and $\mathbb{I}$. \\
\indent Given a non-negative rational number $x$, its \emph{Hirzebruch-Jung continued fraction expansion} is the unique representation of $x$ in the form
\begin{align*}
x = a_1 - \frac{1}{\displaystyle{a_2 - \frac{1}{\displaystyle{\ddots - \frac{1}{a_n}}}}} \eqqcolon [a_1, a_2, \dots, a_n]
\end{align*}
where the $a_i$ are all integers and $a_1 \geqslant 1$ and $a_2, \dots, a_n \geqslant 2$. Somewhat surprisingly, ring-theoretic, representation-theoretic and geometric properties of the invariant rings $R^G$ are controlled by the Hirzebruch-Jung continued fraction expansion of certain rational numbers which depend on $n$ and $a$ for type $\AA$, or $m$ and $q$ for type $\DD$. We briefly summarise some of the results that will be needed later in this paper regarding the invariant rings $R^G$ in these two cases. 
\subsubsection{Type $\AA$} \label{typeApres}
Fix integers $a$ and $n$ satisfying $1 \leqslant a < n$ and $\gcd(a,n) = 1$, and set $G = \frac{1}{n}(1,a)$. Write $[\alpha_1, \dots, \alpha_N]$ for the Hirzebruch-Jung continued fraction expansion of $\frac{n}{a}$. By \cite[Satz 8]{riemen}, the dual graph of the minimal resolution of of $\Spec R^G$ is as follows:

\begin{figure}[h]
\centering
\begin{tikzpicture}[-,thick,scale=1]

\node (1) at (0,0) {$\bullet$};
\node (2) at (1.5,0) {$\bullet$};
\node (3) at (3,0) {$\phantom{3}$};
\node (4) at (3.5,0) {$\cdots$};
\node (5) at (4,0) {$\phantom{2}$};
\node (6) at (5.5,0) {$\bullet$};
\node (7) at (7,0) {$\bullet$};

\node at (0,0.4) {$-\alpha_1$};
\node at (1.5,0.4) {$-\alpha_2$};
\node at (5.5,0.4) {$-\alpha_{N-1}$};
\node at (7,0.4) {$-\alpha_N$};

\draw (1) to (2);
\draw (2) to (3);
\draw (5) to (6);
\draw (6) to (7);

\end{tikzpicture}
\end{figure}
\noindent where each vertex corresponds to a curve isomorphic to $\mathbb{P}^1$, and the label $-\alpha_i$ give the self-intersection numbers of the corresponding curve. 
In the case when $a=n-1$ (so that $G \leqslant \SL(2,\Bbbk)$), $\Spec R^G$ is a Kleinian singularity, and the Hirzebruch-Jung continued fraction expansion of $\frac{n}{n-1}$ is $[2, 2, \dots, 2]$, where there are $n-1$ repetitions of $2$. Therefore this recovers the well-known result that the dual graph of the minimal resolution of a type $\mathbb{A}$ Kleinian singularity consists of a chain of $\mathbb{P}^1$, each having self-intersection $-2$. \\
\indent One can also determine generators for the invariant ring $R^G$ using Hirzebruch-Jung continued fractions. Write 
\begin{align*}
\frac{n}{n-a} = [\beta_1, \dots, \beta_{d-2}].
\end{align*}
Now define two series of integers $i_1, \dots, i_d$ and $j_1, \dots, j_d$ as follows:
\begin{gather*}
\begin{array}{llllll}
i_1 = n, & i_2 = n-a & & \text{and} & & i_k = \beta_{k-2} i_{k-1} - i_{k-2} \text{ for } 3 \leqslant k \leqslant d, \\[2pt]
j_1 = 0, & j_2 = 1 & & \text{and} & & j_k = \beta_{k-2} j_{k-1} - j_{k-2} \text{ for } 3 \leqslant k \leqslant d.
\end{array}
\end{gather*}
\noindent Then, by \cite[Satz 1]{riemen}, the invariant ring $R^G$ is minimally generated by the $d$ elements
\begin{align*}
x_k \coloneqq u^{i_k} v^{j_k}, \quad 1 \leqslant k \leqslant d.
\end{align*}
One can also use the $\beta_k$ to write down a minimal set of relations between the $x_i$ \cite[Satz 8]{riemen}:
\begin{gather*}
\begin{array}{lcl}
x_{k-1} x_{k+1} = x_k^{\beta_{k-1}} & & \text{for } 2 \leqslant k \leqslant d-1, \\[6pt]
x_k x_\ell = x_{k+1}^{\beta_{k} - 1} x_{k+2}^{\beta_{k+1} - 2} \dots x_{\ell-2}^{\beta_{\ell-3} - 2}  x_{\ell-1}^{\beta_{\ell-2} - 1} & & \text{for }  2 \leqslant k+1 < \ell-1 \leqslant d-1.
\end{array}
\end{gather*}
We draw attention to the fact that, on the right hand side of the relation on the second line, the first and last exponents have the form $\beta_m-1$, while the remaining exponents have the form $\beta_m-2$.

\subsubsection{Type $\DD$} Now fix integers $m$ and $q$ satisfying $1 < q < m$ and $\gcd(m,q) = 1$, and set $G = \mathbb{D}_{m,q}$. Write $[\alpha_1, \dots, \alpha_N]$ for the Hirzebruch-Jung continued fraction expansion of $\frac{m}{q}$. By \cite[Satz 8]{riemen}, the dual graph of the minimal resolution of of $\Spec R^G$ is as follows:

\begin{figure}[H]
\centering
\begin{tikzpicture}[-,thick,scale=1]

\node (-1) at (-1.061,1.061) {$\bullet$};
\node (0) at (-1.061,-1.061) {$\bullet$};
\node (1) at (0,0) {$\bullet$};
\node (2) at (1.5,0) {$\bullet$};
\node (3) at (3,0) {$\phantom{3}$};
\node (4) at (3.5,0) {$\cdots$};
\node (5) at (4,0) {$\phantom{2}$};
\node (6) at (5.5,0) {$\bullet$};

\node at (0.05,0.4) {$-\alpha_1$};
\node at (1.5,0.4) {$-\alpha_2$};
\node at (5.5,0.4) {$-\alpha_{N}$};
\node at (-1.5,1.061) {$-2$};
\node at (-1.5,-1.061) {$-2$};

\draw (-1) to (1);
\draw (0) to (1);
\draw (1) to (2);
\draw (2) to (3);
\draw (5) to (6);

\end{tikzpicture}
\end{figure}
\noindent where we use the same notation as in the type $\mathbb{A}$ case. If $q = m-1$ then $R^G$ is a type $\mathbb{D}$ Kleinian singularity, and $\alpha_i = 2$ for all $i$. \\
\indent As in the type $\AA$ case, one is able to write down a minimal set of generators for $R^G$ using Hirzebruch-Jung continued fractions. Write
\begin{align*}
\frac{m}{m-q} = [\beta_1, \dots, \beta_{d-2}],
\end{align*}
and define three series of integers $r_1, \dots, r_{d-1}$, $s_1, \dots, s_{d-1}$, and $t_1, \dots, t_{d-1}$  as follows:
\begin{gather*}
\begin{array}{lll | llll}
s_1 = 1, & s_2 = 1 & & & s_3 = \beta_2, & & s_k = \beta_{k-1} s_{k-1} - s_{k-2} \text{ for } 4 \leqslant k \leqslant d-1, \\[2pt]
t_1 = \beta_1, & t_2 = \beta_1 - 1 & & & t_3 = \beta_2(\beta_1 - 1) -1, & & t_k = \beta_{k-1} t_{k-1} - t_{k-2} \text{ for } 4 \leqslant k \leqslant d-1,
\end{array} \\
r_k = (m-q) t_k - q s_k \text{ for } 1 \leqslant k \leqslant d-1,
\end{gather*}
\noindent where the entries to the right of the vertical line only exist when $d > 3$, which happens if and only if $q < m-1$, if and only if $\DD_{m,q}$ is not a subgroup of $\SL(2,\Bbbk)$. By \cite[Satz 2]{riemen}, the invariant ring $R^G$ is minimally generated by the $d$ elements
\begin{align*}
x_k \coloneqq (u^{2 q s_k} + (-1)^{t_k} v^{2 q s_k}) (uv)^{r_k}, \quad 1 \leqslant k \leqslant d-1, \quad \text{and} \quad x_d \coloneqq (uv)^{2(n-q)}.
\end{align*}
As in the type $\AA$ case, a minimal set of relations can be written down using the $\beta_k$; see \cite[Satz 8]{riemen}.

\subsection{Definitions and Basic Results} \label{definitions}
Suppose that $A$ is a $\Bbbk$-algebra. We say that $A$ is \emph{connected graded} if there is a direct sum decomposition $A = \bigoplus_{i \geqslant 0} A_i$ such that $A_0 = \Bbbk$ and $A_i \cdot A_j \subseteq A_{i+j}$. If, moreover, $A$ is finitely generated as a $\Bbbk$-algebra, then it is called \emph{finitely graded}. If this is the case, then $\dim_\Bbbk A_i < \infty$ for all $i$, and if $M \in \gr A$ then $\dim_\Bbbk M_i < \infty$ for all $i \in \mathbb{Z}$. We then define the \emph{Hilbert series} of $M$ (which of course allows $M = A_A$) to be the formal Laurent series
\begin{align*}
\hilb M \coloneqq \sum_{i \in \ZZ} (\dim_\Bbbk M_i) \gap t^i.
\end{align*}
Given an $\NN$-graded algebra $A$, its \emph{$n$th Veronese} is the subring $A^{(n)} \coloneqq \bigoplus_{i \geqslant 0} A_{ni}$. \\
\indent The algebras of interest in this paper are particular examples of finitely graded $\Bbbk$-algebras which have additional properties. These algebras are defined as follows.

\begin{defn} \label{ASregdef}
Let $A$ be a finitely graded $\Bbbk$-algebra, and also write $\Bbbk = A/A_{\geqslant 1}$ for the trivial module. We say that $A$ is \emph{Artin-Schelter Gorenstein} (or AS Gorenstein) \emph{of dimension $d$} if:
\begin{enumerate}[{\normalfont (1)},topsep=1pt,itemsep=1pt,leftmargin=35pt]
\item $\idim A_A = \idim {}_A A = d < \infty$, and
\item $\Ext^i_{\gr A}(\Bbbk_A,A_A) \cong \left \{
\begin{array}{cl}
0 & \text{if } i \neq d \\
{}_A \Bbbk[\ell] & \text{if } i = d
\end{array}
\right . $ \hspace{4pt} as left $A$-modules, for some integer $\ell$.
\end{enumerate}
We call $\ell$ the \emph{AS index} of $A$. If moreover
\begin{enumerate}[{\normalfont (1)},topsep=1pt,itemsep=1pt,leftmargin=35pt]
\item[(3)] $\gldim A = d$, and
\item[(4)] $A$ has finite GK dimension,
\end{enumerate}
then we say that $A$ is \emph{Artin-Schelter regular} (or AS regular) \emph{of dimension $d$}.
\end{defn}

If $A$ is a commutative AS regular algebra, then it is a polynomial ring. All known AS regular algebras are noetherian domains, and this is conjectured to always be the case. AS regular algebras are viewed as noncommutative analogues of commutative polynomial rings since they share many ring-theoretic and homological properties.  \\
\indent In dimension 2, which is the case of interest to us, it is easy to classify all AS regular algebras that are generated in degree one; up to isomorphism, they are the \emph{quantum plane} and \emph{Jordan plane}, which have respective presentations:
\begin{align*}
\Bbbk_q[u,v] = \frac{\Bbbk \langle u,v \rangle}{\langle vu - quv \rangle}, \quad q \in \Bbbk^\times,  \qquad \text{and} \qquad \Bbbk_J[u,v] = \frac{\Bbbk \langle u,v \rangle}{\langle vu - uv - u^2 \rangle}.
\end{align*}
\indent Given an AS regular algebra $A$, we are interested in actions of finite subgroups $G$ of $\Autgr(A)$ on $A$. In dimension $2$, all such groups can be viewed as subgroups of $\GL(2,\Bbbk)$, and in this case the action of $g = \begin{psmallmatrix} a & b \\ c & d \end{psmallmatrix}$ on $u$ and $v$ in the algebras above is given by
\begin{align*}
g \cdot u = au + cv, \quad g \cdot v = bu + dv.
\end{align*}
\indent In particular, we are interested in actions by groups which contain no \emph{quasi-reflections}, so we now recall the relevant definitions:

\begin{defn}
Suppose that $A$ is finitely graded and let $M \in \gr A$ (so that $M$ is left-bounded, in the sense that $M_i = 0$ for $i \ll 0$). Let $g$ be a graded endomorphism of $M$. Then the \emph{trace} of $g$ on $M$ is
\begin{align*}
\Tr_M(g) \coloneqq \sum_{i \in \ZZ} \tr(\restr{g}{M_i}) \gap t^i \in \Bbbk \llbracket t, t^{-1} \rrbracket,
\end{align*}
where $\tr(\restr{g}{M_i})$ is the usual trace of the linear map $\restr{g}{M_i} : M_i \to M_i$. \\
\indent Now assume that $A$ is AS regular, and that its Hilbert series has the form
\begin{align*}
\hilb A = \frac{1}{(1-t)^n f(t)},
\end{align*}
where $f(1) \neq 0$ (and hence $\GKdim A = n$). We say that $g \in \Autgr(A)$ is a \emph{quasi-reflection} if
\begin{align*}
\Tr_A(g) = \frac{1}{(1-t)^{n-1} p(t)},
\end{align*}
where $p(1) \neq 0$. We say that a finite subgroup $G \leqslant \Autgr(A)$ is \emph{small} if it contains no quasi-reflections.
\end{defn}

We remark that, if $A$ is a commutative polynomial ring, then $g$ is a quasi-reflection if and only if $g$ is a reflection in the classical sense. In the dimension $2$ case at hand, since both $\Bbbk_q[u,v]$ and $\Bbbk_J[u,v]$ have Hilbert series $(1-t)^{-2}$, a graded automorphism $g$ is a quasi-reflection if its trace has the form $\frac{1}{(1-t)(1-\lambda t)}$ for some $\lambda \neq 1$. \\
\indent We give a brief example which demonstrates the dependency of the trace on the algebra $A$.

\begin{example}
Let $h = \begin{psmallmatrix} 0 & 1 \\ 1 & 0 \end{psmallmatrix}$. Then $h$ can be viewed as a graded automorphism of both $R = \Bbbk[u,v]$ and $A = \Bbbk_{-1}[u,v]$. Noting that both $R$ and $A$ have $\Bbbk$-bases
\begin{align*}
\bigcup_{n \geqslant 0} \{ u^{n-i} v^i \mid 0 \leqslant i \leqslant n \},
\end{align*}
it is straightforward to calculate that
\begin{gather*}
\Tr_R(h) = 1 + t^2 + t^4 + t^6 + \dots = \frac{1}{1-t^2} = \frac{1}{(1-t)(1+t)}, \\
\Tr_A(h) = 1 - t^2 + t^4 - t^6 + \dots = \frac{1}{1+t^2}.
\end{gather*}
Hence $h$ is a quasi-reflection when it acts on $R$, but it is not a quasi-reflection when it acts on $A$.
\end{example}

We also note that the trace can be used to determine the Hilbert series of an invariant ring, giving a result which can be viewed as a noncommutative version of Molien's Theorem:

\begin{thm}[{\cite[Lemma 5.2]{jing}}]
Suppose that $A$ is finitely graded and $G$ is a finite subgroup of $\Autgr(A)$. Then
\begin{align*}
\hilb A^G = \frac{1}{|G|} \sum_{g \in G} \Tr_A(g).
\end{align*}
\end{thm}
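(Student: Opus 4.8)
The plan is to realise $A^G$ as the image of an idempotent linear operator on $A$ and to compute its graded trace. Since $\Bbbk$ has characteristic $0$, the order $|G|$ is invertible in $\Bbbk$, so I can define the averaging (Reynolds) operator
\begin{align*}
\pi \coloneqq \frac{1}{|G|} \sum_{g \in G} g \, : \, A \to A.
\end{align*}
Because each $g \in G$ acts as a graded automorphism, $\pi$ is a degree-preserving $\Bbbk$-linear map, so it restricts to an endomorphism $\pi_i \coloneqq \restr{\pi}{A_i}$ of each graded piece $A_i$, which is finite-dimensional since $A$ is finitely graded.

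First I would check that $\pi$ is a projection onto $A^G$. For any $a \in A$ and $h \in G$, reindexing the sum (as $g$ ranges over $G$, so does $hg$) gives $h \cdot \pi(a) = \pi(a)$, so $\pi(a) \in A^G$; conversely, if $a \in A^G$ then $\pi(a) = a$. Hence $\pi^2 = \pi$ and $\im \pi = A^G$. Restricting to each degree, $\pi_i$ is an idempotent endomorphism of $A_i$ with image $(A^G)_i = A_i \cap A^G$.

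The key step is then the standard fact that the trace of an idempotent linear operator equals the dimension of its image. Applying this to $\pi_i$ yields
\begin{align*}
\dim_\Bbbk (A^G)_i = \tr(\pi_i) = \frac{1}{|G|} \sum_{g \in G} \tr(\restr{g}{A_i}).
\end{align*}
Multiplying by $t^i$, summing over $i \in \ZZ$, and interchanging the two sums (the inner one over $G$ being finite) gives
\begin{align*}
\hilb A^G = \sum_{i \in \ZZ} \dim_\Bbbk(A^G)_i \, t^i = \frac{1}{|G|} \sum_{g \in G} \sum_{i \in \ZZ} \tr(\restr{g}{A_i}) \, t^i = \frac{1}{|G|} \sum_{g \in G} \Tr_A(g),
\end{align*}
as claimed.

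There is no serious obstacle here: this is a graded refinement of the classical averaging argument behind Molien's theorem. The only points requiring care are the use of characteristic $0$ to invert $|G|$ (without which $\pi$ need not exist as a $\Bbbk$-linear operator), and the observation that each $g$ preserves the grading, so that the entire computation takes place degree by degree on the finite-dimensional spaces $A_i$. The latter is what makes each $\tr(\pi_i)$ finite and legitimises regrouping the double sum into the formal Laurent series defining $\Tr_A(g)$.
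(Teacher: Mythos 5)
Your proof is correct: the paper does not prove this statement but simply quotes it from Jing--Zhang (\cite[Lemma 5.2]{jing}), and your argument --- the graded trace of the idempotent Reynolds operator $\pi = \frac{1}{|G|}\sum_{g \in G} g$, computed degree by degree on the finite-dimensional pieces $A_i$ --- is exactly the standard proof of that cited lemma. The two points you flag (characteristic $0$ to invert $|G|$, and gradedness of the action so everything restricts to each $A_i$) are indeed the only hypotheses the argument uses.
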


\indent Initially, one might think that focusing attention on only small groups is quite restrictive. However, from the perspective of invariant theory, one can always assume that a finite subgroup $G$ of $\Autgr(A)$ is small. The proof of the following lemma is contained in the proof of \cite[Proposition 1.5 (a)]{stc}, where we note that the authors assume that $A$ is noetherian and AS regular, but these properties are not needed to establish what follows.

\begin{lem} \label{conjugateinvlem}
Suppose that $A$ is finitely graded. If $G$ is a finite subgroup of $\Autgr(A)$, then there exists a small group $G' \leqslant \Autgr(A)$ such that $A^G \cong A^{G'}$.
\end{lem}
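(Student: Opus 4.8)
The plan is to isolate the quasi-reflections inside $G$, factor them out via a normal subgroup, and then recognise the resulting invariant ring as a copy of $A$ carrying a small group action.

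First I would set $N \leqslant G$ to be the subgroup generated by all quasi-reflections lying in $G$. The key preliminary observation is that the property of being a quasi-reflection is conjugation-invariant: for any $h \in \Autgr(A)$ one has $\Tr_A(hgh^{-1}) = \Tr_A(g)$, since the ordinary trace on each graded piece $A_i$ is unchanged under conjugation, and the quasi-reflection condition is phrased entirely in terms of $\Tr_A$. Hence the set of quasi-reflections in $G$ is stable under $G$-conjugation, so $N$ is a normal subgroup of $G$. Consequently $G/N$ acts by graded automorphisms on the invariant ring $A^N$, and normality of $N$ gives the standard identity $(A^N)^{G/N} = A^G$.

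The second step is to recognise $A^N$ as a copy of $A$. Here I would invoke the noncommutative Shephard--Todd--Chevalley theory of \cite{stc}: since $N$ is generated by quasi-reflections, the relevant part of \cite[Proposition 1.5]{stc} supplies a graded algebra isomorphism $\theta : A^N \xrightarrow{\sim} A$, together with the fact that the induced action of $G/N$ on $A^N$ contains no quasi-reflections. Transporting the $G/N$-action across $\theta$ produces a homomorphism $G/N \to \Autgr(A)$; let $G'$ be its image. Because $\theta$ is a graded isomorphism, $\Tr_{A^N}(\bar g) = \Tr_A(\theta \bar g \theta^{-1})$, so $\theta$ carries the (absence of) quasi-reflections on $A^N$ to the (absence of) quasi-reflections on $A$; thus $G'$ is small. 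Applying $\theta$ to the identity of the first step then yields
\begin{align*}
A^{G'} = \theta\big((A^N)^{G/N}\big) = \theta(A^G) \cong A^G,
\end{align*}
which is exactly the desired conclusion, with $G' \leqslant \Autgr(A)$.

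The main obstacle is the second step: producing the isomorphism $A^N \cong A$ and verifying that the quotient action of $G/N$ is small. This is the genuine content of the noncommutative Shephard--Todd--Chevalley result and is where the real work lies; the point of the present lemma is that, granting this input, the reduction to a small subgroup of $\Autgr(A)$ needs only the elementary normality and transport-of-structure arguments above, and in particular requires neither noetherianity nor AS regularity of $A$ directly. The remaining verifications --- normality of $N$, the identity $(A^N)^{G/N} = A^G$, and the transfer of smallness through $\theta$ --- are routine, relying only on conjugation-invariance of the graded trace.
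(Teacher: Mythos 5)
Your opening step is sound, and it coincides with how the argument that the paper points to begins (the paper gives no self-contained proof; it cites the proof of \cite[Proposition 1.5 (a)]{stc} and observes that the noetherian and AS regular hypotheses there are not needed): since $\Tr_A(hgh^{-1}) = \Tr_A(g)$, the quasi-reflections in $G$ form a union of conjugacy classes, so the subgroup $N$ they generate is normal, $G/N$ acts on $A^N$, and $(A^N)^{G/N} = A^G$. The gap is your second step. No result in \cite{stc} supplies a graded isomorphism $\theta \colon A^N \xrightarrow{\sim} A$ --- citing \cite[Proposition 1.5]{stc} at this point is in any case circular, since that proposition \emph{is} the statement under proof --- and the claimed isomorphism is in fact false. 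Take $A = \Bbbk_q[u,v]$ with $q$ generic and $N = \langle \operatorname{diag}(-1,1)\rangle$, a group generated by a single quasi-reflection: then $A^N = \Bbbk\langle u^2, v\rangle \cong \Bbbk_{q^2}[x,y]$ with generators in degrees $2$ and $1$, which is not isomorphic to $A$ as a graded algebra (the Hilbert series differ) nor even as an ungraded algebra (quantum planes with parameters $q$ and $q^2$ are non-isomorphic for generic $q$). The Shephard--Todd--Chevalley theorem proved in \cite{stc} applies to skew polynomial rings, not to arbitrary finitely graded algebras as in the present lemma, and even there it concludes only that the invariant ring of a group generated by quasi-reflections is again AS regular --- never that it is isomorphic to $A$; the mystic reflections on $\Bbbk_{-1}[u,v]$, whose invariant ring is a commutative polynomial ring, already show the stronger claim fails. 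So the transport-of-structure step, and with it your construction of $G' \leqslant \Autgr(A)$, collapses.

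What the cited proof does instead is purely trace-theoretic and never leaves the pair $(A^N, G/N)$: one shows directly that the induced action of $G/N$ on $A^N$ contains no quasi-reflections. Writing $e_N = \frac{1}{|N|}\sum_{h \in N} h$, normality gives $\Tr_{A^N}(\bar g) = \frac{1}{|N|}\sum_{h \in N}\Tr_A(gh)$ for $g \in G$; if $g \notin N$, the coset $gN$ contains neither the identity nor any quasi-reflection of $A$ (a quasi-reflection of $A$ lying in $G$ would belong to $N$), so every summand --- and hence $\Tr_{A^N}(\bar g)$ --- has a pole at $t=1$ of order at most $n-2$, while $\hilb A^N$ retains a pole of order $n$ from the identity term. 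Thus $\bar g$ is not a quasi-reflection of $A^N$, and $G/N$ is small as a group of graded automorphisms of the finitely graded algebra $A^N$. The price, which your proposal was structured to avoid, is that the ambient algebra changes: the reduction realises $A^G$ as the invariant ring of a small group acting on $A^N$, not on $A$ itself (and the example above shows this is unavoidable: using the main theorem of \cite{stc}, no nontrivial small subgroup of $\Autgr(\Bbbk_q[u,v])$ has AS regular invariants, so none can have invariant ring isomorphic to $\Bbbk_{q^2}[x,y]$). This is also exactly how the lemma functions in the paper: one replaces the pair $(A,G)$ by the pair $(A^N, G/N)$ with its small action, which suffices for all the invariant-theoretic applications.
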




\indent If we restrict to the case where $G$ is small, one can easily detect whether the invariant ring $A^G$ is AS Gorenstein. To be able to state this result, we first recall a definition:

\begin{defn}
Suppose that $A$ is AS regular and $g \in \Autgr(A)$. Then $\Tr_A(g)$ has a series expansion in $\Bbbk((t^{-1}))$ of the form
\begin{align*}
\Tr_A(g) = (-1)^d c^{-1} t^{-\ell} + \text{lower order terms},
\end{align*}
for some $c \in \Bbbk$, where $\ell$ is as in Definition \ref{ASregdef}. We call this constant $c$ the \emph{homological determinant of $g$}, which we denote by $\hdet(g)$. If $\hdet(g) = 1$ for all $g \in G$, then we say that the action of $G$ on $A$ has \emph{trivial homological determinant}.
\end{defn}

It is shown in \cite[Proposition 2.5]{jorgensen} that this assignment gives rise to a group homomorphism 
\begin{align*}
\hdet : G \to \Bbbk^\times.
\end{align*}
It is also shown in \cite[Section 2]{jorgensen} that if $A$ is the commutative polynomial ring $\Bbbk[x_1, \dots, x_n]$, then $\hdet(g) = \det(g)$ for all $g \in \GL(n,\Bbbk)$, justifying the terminology. \\
\indent We remark that the homological determinant is usually defined using local cohomology (see \cite[Definition 2.3]{jorgensen}), but the above definition is equivalent to the usual one by \cite[Lemma 2.6]{jorgensen}, and is sufficient for our needs. \\
\indent In dimension 2, by \cite[Theorem 2.1]{ckwz2} it is straightforward to calculate the homological determinant of a graded automorphism of an AS regular algebra.

\begin{lem} \label{homologicaldetlem}
Suppose that $A$ is a two-dimensional AS regular algebra, so either $A = \Bbbk_q[u,v]$ or $A = \Bbbk_J[u,v]$, and let $g \in \Autgr(A)$. Lift the action of $G$ on $A$ to an action on the free algebra $\Bbbk \langle u,v \rangle$. Let $r$ be the defining relation for $A$ so that $\Bbbk r$ is a one-dimensional submodule of $\Bbbk \langle u,v \rangle$. Then $g$ acts as scalar multiplication by $\hdet(g)$ on $\Bbbk r$.
\end{lem}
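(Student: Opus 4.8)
The plan is to compute $\Tr_A(g)$ in closed form from the minimal free resolution of the trivial module, and then read off $\hdet(g)$ directly from its definition. Write $V = A_1 = \Bbbk u \oplus \Bbbk v$ for the degree-one part and $W = \Bbbk r \subseteq V \otimes_\Bbbk V$ for the one-dimensional relation space, so that $A$ is the quotient of the tensor algebra $T(V)$ by $\langle W \rangle$. Since $A$ is AS regular of global dimension $2$ and generated in degree one, it is Koszul, and the minimal free resolution of $\Bbbk$ has the Koszul shape
\begin{align*}
0 \to W \otimes_\Bbbk A \to V \otimes_\Bbbk A \to A \to \Bbbk \to 0,
\end{align*}
which as graded right modules reads $0 \to A[-2] \to A[-1]^2 \to A \to \Bbbk \to 0$.

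First I would observe that this resolution can be chosen $g$-equivariantly. A graded automorphism of $A$ lifts to an automorphism of $T(V)$ preserving $V$, and hence preserves the relation space $W$; as $\dim_\Bbbk W = 1$, the map $g$ acts on $W = \Bbbk r$ by a scalar $\lambda$, i.e. $g \cdot r = \lambda r$. This already yields the first assertion of the lemma (that $g$ acts by scalar multiplication on $\Bbbk r$) and reduces everything to showing $\lambda = \hdet(g)$. The differentials in the resolution are then $g$-equivariant, where $g$ acts as $g|_V \otimes g$ on $V \otimes_\Bbbk A$ and as $g|_W \otimes g$ on $W \otimes_\Bbbk A$.

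Next I would take $g$-traces along the resolution. Since the alternating sum of traces over a bounded complex equals the corresponding sum over its homology, and here the homology is $\Bbbk$ concentrated in degree $0$ with $\Tr_\Bbbk(g) = 1$, and since $V$ sits in degree $1$ and $W$ in degree $2$ (so $\Tr_{V \otimes A}(g) = \tr(g|_V)\, t\, \Tr_A(g)$ and $\Tr_{W \otimes A}(g) = \lambda\, t^2\, \Tr_A(g)$), I obtain
\begin{align*}
\Tr_A(g)\bigl(1 - \tr(g|_V)\, t + \lambda\, t^2\bigr) = 1, \qquad \text{hence} \qquad \Tr_A(g) = \frac{1}{1 - \tr(g|_V)\, t + \lambda\, t^2}.
\end{align*}
Expanding in $\Bbbk((t^{-1}))$, the leading term as $t \to \infty$ is $\lambda^{-1} t^{-2}$. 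Comparing with the defining expansion $\Tr_A(g) = (-1)^d \hdet(g)^{-1} t^{-\ell} + (\text{lower order})$ with $d = 2$, and noting that the same formula applied to $g = \id$ gives $\Tr_A(\id) = \hilb A = (1-t)^{-2}$, whose leading term $t^{-2}$ forces $\ell = 2$ for both algebras, I conclude $\hdet(g)^{-1} = \lambda^{-1}$, i.e. $\hdet(g) = \lambda$, as required.

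The main obstacle is bookkeeping rather than any deep difficulty: one must confirm that both the quantum plane and the Jordan plane genuinely admit the Koszul resolution above with a $g$-equivariant structure (this is where Koszulity of global-dimension-two AS regular algebras enters), and one must handle the Laurent expansion at $t = \infty$ carefully so that the leading term is matched against the $\hdet$ convention correctly, in particular verifying $\ell = 2$ via the identity automorphism. A useful sanity check is the swap $h = \begin{psmallmatrix} 0 & 1 \\ 1 & 0 \end{psmallmatrix}$ acting on $\Bbbk_{-1}[u,v]$: here $h \cdot r = r$, so $\lambda = 1$, matching $\Tr_A(h) = (1+t^2)^{-1}$ and $\hdet(h) = 1$ while $\det(h) = -1$ — illustrating that the scalar $\lambda$ is genuinely the homological determinant and not the ordinary one.
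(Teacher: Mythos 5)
Your argument is correct. Note, however, that the paper does not prove this lemma at all: it is quoted directly from \cite[Theorem 2.1]{ckwz2}, so there is no in-paper proof to compare against. What you have written is a legitimate self-contained derivation, and it is essentially the standard one: since $g$ preserves $V = A_1$ and hence the one-dimensional relation space $W = \Bbbk r \subseteq V \otimes V$, it acts there by a scalar $\lambda$; the minimal free resolution $0 \to W \otimes A \to V \otimes A \to A \to \Bbbk \to 0$ (valid for both $\Bbbk_q[u,v]$ and $\Bbbk_J[u,v]$, and $g$-equivariant for the diagonal actions) gives $\Tr_A(g) = \bigl(1 - \tr(g|_V)\,t + \lambda t^2\bigr)^{-1}$, whose expansion in $\Bbbk((t^{-1}))$ has leading term $\lambda^{-1}t^{-2}$; matching this against the paper's trace-expansion definition of $\hdet$ (with $d = \ell = 2$, the latter confirmed via $g = \id$) yields $\hdet(g) = \lambda$. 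This is compatible with the machinery the paper does use elsewhere (the Koszul-dual trace formula \cite[Corollary 4.4]{jing} in Lemma \ref{qreflem} encodes exactly the same rational function), and your sanity check on $h = \begin{psmallmatrix} 0 & 1 \\ 1 & 0 \end{psmallmatrix}$ acting on $\Bbbk_{-1}[u,v]$ agrees with the paper's computation $\Tr_A(h) = (1+t^2)^{-1}$. What your route buys is independence from the local-cohomology definition of $\hdet$: you only ever use the trace-expansion characterisation that the paper adopts as its working definition, so the lemma becomes an elementary Euler-characteristic computation rather than an imported black box.
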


\indent This allows us to state the result to which we previously alluded:

\begin{thm}[{\cite[Theorem 4.10]{kkzGor}}] \label{ausgorthm}
Suppose that $A$ is AS regular and that $G$ is a finite subgroup of $\Autgr(A)$ which is small. Then $A^G$ is AS Gorenstein if and only if $\hdet(g) = 1$ for all $g \in G$.
\end{thm}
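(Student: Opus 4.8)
The statement is a noncommutative analogue of Watanabe's theorem, and my plan is to follow the Watanabe strategy: compute the canonical (dualizing) module of $B \coloneqq A^G$ and show that it is a shift of $B$ precisely when $\hdet$ is trivial. Since $\operatorname{char} \Bbbk = 0$, the Reynolds operator $\tfrac{1}{|G|}\sum_{g \in G} g$ realises $B$ as a graded bimodule direct summand of $A$; in particular $A$ is a finitely generated $B$-module, $B$ is noetherian and Cohen-Macaulay, and $A$ is a maximal Cohen-Macaulay $B$-module. I also record the homological meaning of $\hdet$: by its definition through the leading term of $\Tr_A(g)$, together with the identification of the top Ext group in Definition \ref{ASregdef} and Lemma \ref{homologicaldetlem}, the automorphism $g$ acts by the scalar $\hdet(g)$ on the one-dimensional module $\Ext^{d}_{\gr A}(\Bbbk, A) \cong \Bbbk[\ell]$, where $d = \GKdim A$.

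First I would compute $\omega_B$. For a character $\chi$ of $G$ write $A_\chi \coloneqq \{\, a \in A : g \cdot a = \chi(g)\, a \text{ for all } g \in G \,\}$ for the module of $\chi$-semi-invariants, a graded $B$-module, so that $A_{\mathbf{1}} = B$. Because $|G|$ is invertible, taking $G$-invariants is exact and commutes with local cohomology, so with $\mathfrak{m}, \mathfrak{n}$ the augmentation ideals of $A, B$ one has
\begin{align*}
H^{d}_{\mathfrak{n}}(B) \;\cong\; H^{d}_{\mathfrak{m}}(A)^{G}.
\end{align*}
Passing to graded Matlis duals and using that the dualizing module $\omega_A$ is a shift $A[\ell]$ of $A$ on which $G$ acts through $\hdet$, this produces the key structural identity
\begin{align*}
\omega_B \;\cong\; \big( \omega_A \otimes \hdet^{-1} \big)^{G} \;\cong\; A_{\hdet^{-1}}[\ell]
\end{align*}
of graded $B$-modules, the noncommutative incarnation of the fact that the canonical module of an invariant ring is the twist of $\omega_A$ by the inverse determinant character. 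The precise sign is immaterial below, since $\hdet$ is trivial if and only if $\hdet^{-1}$ is.

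The easy direction is now immediate: if $\hdet \equiv 1$ then $A_{\hdet^{-1}} = B$, so $\omega_B \cong B[\ell]$, and since $B$ is Cohen-Macaulay this self-duality yields the Ext conditions of Definition \ref{ASregdef}, so $B$ is AS Gorenstein. The substantive direction is the converse, and this is where I expect the main obstacle. I must show that when $\hdet$ is a nontrivial character the rank-one reflexive $B$-module $A_{\hdet^{-1}}$ is \emph{not} isomorphic to any shift of $B$, so that $\omega_B$ is non-free and $B$ fails to be AS Gorenstein. The decisive input is smallness: exactly as in the commutative result $\operatorname{Cl}(R^G) \cong \widehat{G}$ for small $G$, I would show that $\chi \mapsto A_\chi$ induces an \emph{injective} map from the characters of $G$ to the isomorphism classes of graded rank-one reflexive $B$-modules, sending $\mathbf{1}$ to $B$; nontriviality of $\hdet$ then gives $A_{\hdet^{-1}} \not\cong B[\,\cdot\,]$.

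The crux, and the genuinely hard step, is establishing this injectivity, and it is precisely here that the quasi-reflection-free hypothesis enters. Smallness ensures that $A$ is unramified over $B$ in codimension one, so that an isomorphism $A_\chi \cong B[\,\cdot\,]$ of reflexive modules — defined a priori only away from the irrelevant ideal — extends to an everywhere-defined isomorphism and forces $\chi$ to be trivial. A quasi-reflection, by contrast, acts trivially on a codimension-one piece and would collapse a nontrivial character onto the trivial class; this is the mechanism by which, classically, $R^G$ can be a polynomial ring, hence Gorenstein, even when $G \not\subseteq \SL(n,\Bbbk)$. Thus the role of smallness is exactly to make the reflexive-module/character correspondence faithful, and verifying this faithfulness in the AS setting — controlling rank-one reflexive modules and the codimension-one behaviour of $A$ over $B$ — is the technical heart of the argument.
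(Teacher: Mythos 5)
The paper does not prove this statement at all: it is imported verbatim from \cite[Theorem 4.10]{kkzGor} and used as a black box, so there is no internal argument to measure your proposal against. Judged on its own, your forward implication is essentially sound and standard: exactness of $(-)^G$ in characteristic $0$, the identification $H^d_{\mathfrak{n}}(B) \cong H^d_{\mathfrak{m}}(A)^G$, and the fact that $g$ acts on $\Ext^d_{\gr A}(\Bbbk,A)$ through $\hdet(g)^{\pm 1}$ do give $\omega_B \cong A_{\hdet^{-1}}[\ell]$ up to a twist by the Nakayama automorphism of $A$ (a bookkeeping point you suppress, but one that does not affect the one-sided Ext condition in Definition \ref{ASregdef}), and triviality of $\hdet$ then yields that $B$ is AS Gorenstein. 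Note that this direction never uses smallness; it is the content of the Hopf-level theorem of the same authors.

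The converse is where the theorem earns its smallness hypothesis, and it is exactly the step you label ``the technical heart'' without carrying out. You reduce everything to the claim that $\chi \mapsto A_\chi$ is injective from characters of $G$ to isomorphism classes of graded rank-one reflexive $B$-modules, by analogy with $\operatorname{Cl}(R^G) \cong \widehat{G}$ for small $G$ in the commutative case. But in the situations this paper actually needs the theorem for, $B = A^G$ is frequently noncommutative (case (ii) of Theorem \ref{smallclassification} with $n,k$ odd), so there is no divisor class group, no height-one ramification theory, and no ready-made meaning for ``$A$ is unramified over $B$ in codimension one'' or for ``rank-one reflexive'' that makes the commutative argument transcribe; each of these notions would have to be constructed and then shown to detect quasi-reflections (rather than classical reflections) in the right way. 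As written, the proposal therefore proves one implication and replaces the other by an unverified analogy, so it does not constitute a proof of the stated equivalence.
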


In the next section, we will determine the small subgroups of $\Autgr(A)$ when $A$ is AS regular of dimension 2. By the above theorem, it is then easy to check when the resulting invariant rings are AS Gorenstein.

\section{Small Subgroups of $\Autgr(A)$ when $A$ is AS Regular of Dimension 2} \label{smallactionssec} 
\noindent In this section, we will determine the small subgroups of $\Autgr(A)$ when $A$ is AS regular of dimension 2, i.e. $A$ is either a quantum plane or the Jordan plane.
In general, the graded automorphism group of $A$ is strictly contained in $\GL(2,\Bbbk)$, and so it will turn out that there are relatively few possibilities for $G$. We remark that, if $G$ and $H$ are finite subgroups of $\Autgr(A)$ which are conjugate, then $A^G$ and $A^H$ are isomorphic, so we only need to classify the groups up to conjugation. \\
\indent Throughout this section we will assume that $A$ is not commutative, i.e. if $A = \Bbbk_q[u,v]$, then $q \neq 1$. We will also assume that $G$ is nontrivial. The cases when $A = \Bbbk_q[u,v]$ ($q \neq -1$) or $A = \Bbbk_J[u,v]$ are easiest to analyse, and so we begin with these. The final case, when $A = \Bbbk_{-1}[u,v]$, is the most involved, and it is to this case that the majority of this section is devoted.

\subsection{Actions of Small Groups on the Jordan Plane}
It is straightforward to check that the graded automorphism group of the Jordan plane $A = \Bbbk_J[u,v]$ consists of automorphisms of the form
\begin{align*}
u \mapsto au, \quad v \mapsto bu + av,
\end{align*}
where $a \in \Bbbk^\times$ and $b \in \Bbbk$. Viewed as a subgroup of $\GL(2,\Bbbk)$, we therefore have 
\begin{align}
\Autgr(A) = \left \{ \begin{pmatrix} a & b \\ 0 & a \end{pmatrix} \bigg| \hspace{3pt} a \in \Bbbk^\times, b \in \Bbbk \right\}. \label{jordanaut}
\end{align}
\indent We now wish to identify all small subgroups of $\Autgr(A)$. By \cite[p. 7]{downup}, it is known that $\Autgr(A)$ contains no quasi-reflections, and so every finite subgroup of $\Autgr(A)$ is a small group, so we only need to classify the finite subgroups of $\Autgr(A)$. \\
\indent From (\ref{jordanaut}), it is clear that an element of $\Autgr(A)$ has finite order if and only if $b = 0$ and $a$ is a root of unity. 
We summarise the above discussion in the following lemma:

\begin{lem}
Let $A = \Bbbk_J[u,v]$. Then the small subgroups of $\Autgr(A)$ are 
\begin{align*}
\frac{1}{n}(1,1) \coloneqq \bigg\langle \hspace{-3pt} \begin{pmatrix} \omega_n & 0 \\ 0 & \omega_n \end{pmatrix} \hspace{-3pt} \bigg\rangle,
\end{align*}
where $n \geqslant 2$.
\end{lem}

\subsection{Actions of Small Groups on the Quantum Plane, $q \neq \pm 1$} \label{not-1classification} Now let $A = \Bbbk_q[u,v]$, where $q \neq \pm 1$ (the case $q=1$ is excluded to ensure that $A$ is not commutative). In this case, an easy calculation shows that every graded automorphism of $A$ is of the form 
\begin{align*}
u \mapsto au, \quad v \mapsto dv,
\end{align*}
where $a,b \in \Bbbk^\times$; that is,
\begin{align}
\Autgr(A) = \left \{ \begin{pmatrix} a & 0 \\ 0 & d \end{pmatrix} \bigg| \hspace{3pt} a, d \in \Bbbk^\times \right\}. \label{kqaut}
\end{align}
\indent We now wish to determine all small subgroups of $\Autgr(A)$. Clearly an element of $\Autgr(A)$ has finite order if and only if $a$ and $d$ are both (possibly distinct) roots of unity. Moreover, noting that the trace of an element $g$ of the form given in (\ref{kqaut}) is independent of $q$, we find that
\begin{align*}
\Tr_A(g) = \Tr_{\Bbbk[u,v]}(g) = \frac{1}{(1-at)(1-dt)}, 
\end{align*}
and so $g$ is a quasi-reflection if an only if precisely one of $a$ or $d$ is equal to 1. \\
\indent Now let $G$ be a small subgroup of $\Autgr(A)$, so that the only element of $G$ with at least one $1$ on the diagonal is the identity. Consider the map 
\begin{align*}
\phi : G \to \Bbbk^\times, \quad  \begin{pmatrix} a & 0 \\ 0 & d \end{pmatrix} \mapsto a.
\end{align*}
This map has trivial kernel (since if $a=1$ then necessarily $d=1$), and the image is a finite subgroup of $\Bbbk^\times$, which is necessarily cyclic of order $n$, say. Therefore $G \cong \im \phi$ is cyclic of order $n$. It is straightforward to show that this forces $G$ to be generated by an element of the form
\begin{align*}
g = \begin{pmatrix}
\omega_n & 0 \\ 0 & \omega_n^a \end{pmatrix},
\end{align*} 
for some integers $a$ and $n$ satisfying $1 \leqslant a < n$. To ensure that $G$ contains no quasi-reflections, we also require $\gcd(a,n) = 1$. 
Again, we summarise our findings:

\begin{lem}
Let $A = \Bbbk_q[u,v]$, where $q \neq \pm 1$. Then the small subgroups of $\Autgr(A)$ are 
\begin{align*}
\frac{1}{n}(1,a) \coloneqq \bigg\langle \hspace{-3pt} \begin{pmatrix} \omega_n & 0 \\ 0 & \omega_n^a \end{pmatrix} \hspace{-3pt} \bigg\rangle,
\end{align*}
where $1 \leqslant a < n$ and $\gcd(a,n) = 1$.
\end{lem}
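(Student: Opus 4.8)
The preceding discussion has already supplied the two ingredients a proof needs: by (\ref{kqaut}) every element of $\Autgr(A)$ is diagonal, $g = \mathrm{diag}(a,d)$ with $a,d \in \Bbbk^\times$, and such a $g$ is a quasi-reflection precisely when exactly one of $a,d$ equals $1$, since $\Tr_A(g) = 1/\big((1-at)(1-dt)\big)$. The plan is to feed these two facts into a classification of the finite subgroups $G \leqslant \Autgr(A)$ that contain no quasi-reflection.

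First I would show that any such $G$ is cyclic. Consider the projection $\phi : G \to \Bbbk^\times$, $\mathrm{diag}(a,d) \mapsto a$. If $\phi(g) = 1$, then the top-left entry of $g$ is $1$; were the bottom-right entry $d \neq 1$, then $g$ would be a quasi-reflection, contradicting smallness. Hence $d = 1$ and $g = I$, so $\phi$ is injective. Therefore $G$ embeds as a finite subgroup of $\Bbbk^\times$, which must be cyclic, of order $n$ say. Choosing a generator $g$ with $\phi(g) = \omega_n$ a primitive $n$th root of unity and noting that its bottom-right entry is also an $n$th root of unity, I obtain $g = \mathrm{diag}(\omega_n, \omega_n^a)$ for a unique $a$ with $0 \leqslant a < n$.

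The remaining task, and the only step with genuine content, is to determine exactly which exponents $a$ yield a small group. For $k \in \ZZ$ we have $g^k = \mathrm{diag}(\omega_n^k, \omega_n^{ak})$, whose entries equal $1$ exactly when $n \mid k$ and $n \mid ak$ respectively; thus $g^k$ is a quasi-reflection if and only if precisely one of these two divisibilities holds. When $\gcd(a,n) = 1$ we have $n \mid ak \iff n \mid k$, so the two conditions coincide for every $k$ and no power of $g$ is a quasi-reflection; since $G$ is nontrivial we have $n \geqslant 2$, and coprimality then forces $1 \leqslant a < n$. Conversely, if $e \coloneqq \gcd(a,n) > 1$, then taking $k = n/e$ gives $\omega_n^k \neq 1$ while $\omega_n^{ak} = 1$ (as $e \mid a$), exhibiting a quasi-reflection in $G$. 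Hence $G$ is small if and only if $\gcd(a,n) = 1$, which identifies $G$ with $\frac{1}{n}(1,a)$ as claimed.

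The whole argument is elementary once the shape of $\Autgr(A)$ and the trace criterion are available, so I do not anticipate a serious obstacle; the one point that must be handled carefully is the equivalence in the last paragraph, where the power computation $g^k = \mathrm{diag}(\omega_n^k, \omega_n^{ak})$ is used to show that freedom from quasi-reflections is precisely the coprimality condition $\gcd(a,n) = 1$. The same divisibility check also confirms the converse direction, that each listed group $\frac{1}{n}(1,a)$ is indeed small, so the resulting classification is both exhaustive and non-redundant.
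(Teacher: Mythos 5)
Your proposal is correct and follows the paper's argument essentially verbatim: the same diagonal description of $\Autgr(A)$, the same trace criterion for quasi-reflections, and the same injective map $\phi : G \to \Bbbk^\times$ forcing $G$ to be cyclic with generator $\mathrm{diag}(\omega_n,\omega_n^a)$. The only difference is that you spell out the divisibility check showing smallness is equivalent to $\gcd(a,n)=1$, a step the paper leaves as a remark; your verification of it is correct.
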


\subsection{Actions of Small Groups on the $(-1)$-Quantum Plane}
Throughout this subsection, let $A = \Bbbk_{-1}[u,v]$. We now seek to classify all small subgroups of $\Autgr(A)$. We first note that
\begin{align*}
\Autgr(A) = \left \{ \begin{pmatrix} a & 0 \\ 0 & d \end{pmatrix} \bigg| \hspace{3pt} a,d \in \Bbbk^\times \right\} \cup \left \{ \begin{pmatrix} 0 & b \\ c & 0 \end{pmatrix} \bigg| \hspace{3pt} b,c \in \Bbbk^\times \right \} \cong (\Bbbk^*)^2 \rtimes C_2,
\end{align*}
where we identify automorphisms with elements of $\GL(2,\Bbbk)$ in the usual way. We call elements in the second set \emph{antidiagonal}. \\
\indent For $\Bbbk_J[u,v]$ and $\Bbbk_q[u,v]$ ($q \neq \pm 1$), we have seen that it is possible to write down all of the small subgroups of $\Autgr(A)$, not just up to conjugation. For $\Bbbk_{-1}[u,v]$ it is more convenient to classify the small subgroups of $\Autgr(A)$ up to conjugation which, by Lemma \ref{conjugateinvlem}, has no ill-effects from the perspective of invariant theory.
Note that conjugation by an element of $\Autgr(A)$ has no effect on the diagonal elements in $G$, so the presence (or absence) of antidiagonal elements in $G$ is not affected by conjugation by diagonal elements. \\
\indent Our first step is to identify precisely when an element of $\Autgr(A)$ is a quasi-reflection.

\begin{lem} \label{qreflem}
Suppose that $g = \begin{psmallmatrix} a & b \\ c & d \end{psmallmatrix} \in \Autgr(A)$. If $g$ is diagonal, then
\begin{align*}
\Tr_A(g) = \frac{1}{(1-at)(1-dt)},
\end{align*}
while if $g$ is antidiagonal, then
\begin{align*}
\Tr_A(g) = \frac{1}{1+bc \gap t^2}.
\end{align*}
In particular, if $g$ is diagonal (respectively, antidiagonal), then it is a quasi-reflection if and only if $a=1$ or $d=1$, but not both (respectively, $bc=-1$).
\end{lem}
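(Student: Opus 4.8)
The plan is to compute the graded trace $\Tr_A(g)$ directly from the definition by finding, for each degree $n$, how $g$ acts on the degree-$n$ component $A_n$ and then extracting the diagonal of that action to get $\tr(\restr{g}{A_n})$. The key structural fact I would use is that $A = \Bbbk_{-1}[u,v]$ has the monomial $\Bbbk$-basis $\{u^{n-i}v^i : 0 \leqslant i \leqslant n\}$ in each degree $n$ (the same basis noted in the earlier example), so $\dim_\Bbbk A_n = n+1$ and I can read off the matrix of $\restr{g}{A_n}$ with respect to this basis. Since trace only sees diagonal entries, I only need to track which basis monomials are sent to scalar multiples of themselves.

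For the \textbf{diagonal case} $g = \begin{psmallmatrix} a & 0 \\ 0 & d \end{psmallmatrix}$, each monomial is an eigenvector: $g \cdot (u^{n-i}v^i) = a^{n-i}d^i\, u^{n-i}v^i$, so the action is already diagonal and $\tr(\restr{g}{A_n}) = \sum_{i=0}^n a^{n-i}d^i$. Summing $\Tr_A(g) = \sum_{n \geqslant 0}\left(\sum_{i=0}^n a^{n-i}d^i\right)t^n$ gives the Cauchy product of $\sum (at)^j$ and $\sum (dt)^k$, which is exactly $\tfrac{1}{(1-at)(1-dt)}$; this recovers the formula already quoted in Section \ref{not-1classification}, and is independent of the relation sign, as expected. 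For the \textbf{antidiagonal case} $g = \begin{psmallmatrix} 0 & b \\ c & 0 \end{psmallmatrix}$, the automorphism swaps $u \leftrightarrow$ scalar multiples of the other generator, sending $u \mapsto cv$ and $v \mapsto bu$; here I must be careful to use the relation $vu = -uv$ to reorder the image back into normal form. The crucial observation is that $g$ sends $u^{n-i}v^i$ to a scalar times $u^{i}v^{n-i}$ (up to the $\pm$ sign picked up from reordering), so the only monomials fixed up to scalar are those with $i = n-i$, i.e. $n$ even and $i = n/2$. Thus only even-degree components contribute to the trace, and each contributes a single diagonal entry.

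The \textbf{main obstacle}, and the step requiring genuine care, is correctly computing that single surviving diagonal coefficient in the antidiagonal case: for $n = 2m$ I must evaluate $g \cdot (u^m v^m)$, track the scalar $b^m c^m$ coming from the $m$ applications of each off-diagonal entry, and account for the sign produced by straightening $g \cdot (u^m v^m)$ back to $u^m v^m$ using $vu = -uv$. I expect the sign and scalar to combine to $(bc)^m(-1)^m = (-bc)^m$, giving $\tr(\restr{g}{A_{2m}}) = (-bc)^m$ and $\tr(\restr{g}{A_{2m+1}}) = 0$. Summing the geometric series $\Tr_A(g) = \sum_{m \geqslant 0}(-bc)^m t^{2m} = \tfrac{1}{1 + bc\,t^2}$ then yields the claimed formula. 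Finally, the \textbf{quasi-reflection criterion} follows immediately by comparing with the shape $\tfrac{1}{(1-t)(1-\lambda t)}$ with $\lambda \neq 1$ identified earlier: in the diagonal case $\tfrac{1}{(1-at)(1-dt)}$ has this form precisely when exactly one of $a,d$ equals $1$; in the antidiagonal case $\tfrac{1}{1+bc\,t^2}$ factors as $\tfrac{1}{(1-t)(1+t)}$, matching the quasi-reflection shape, exactly when $bc = -1$, since a genuine factor of $(1-t)$ forces the quadratic $1 + bc\,t^2$ to vanish at $t=1$.
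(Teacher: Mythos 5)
Your proof is correct, but it takes a genuinely different route from the paper. The paper handles the antidiagonal case by passing to the Koszul dual: since $A=\Bbbk_{-1}[u,v]$ is Koszul with $A^!\cong \Bbbk[x,y]/\langle x^2,y^2\rangle$, one computes the (finite) trace $\Tr_{A^!}(g^!)=1+bc\,t^2$ on the four-dimensional dual and then invokes the reciprocity formula $\Tr_A(g)=1/\Tr_{A^!}(g^!)(-t)$ from Jing--Zhang. You instead compute $\Tr_A(g)$ directly on the monomial basis $\{u^{n-i}v^i\}$ (which the paper has already exhibited as a basis in the example of Section 2.3), observing that an antidiagonal $g$ sends $u^{n-i}v^i$ to a scalar multiple of $u^iv^{n-i}$, so only the middle monomial $u^mv^m$ in even degree $2m$ contributes, with coefficient $(-1)^{m^2}(bc)^m=(-bc)^m$ after straightening $v^mu^m$; summing the geometric series gives the same answer. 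Your approach is more elementary and self-contained (no Koszulity or dual-trace formula needed), at the cost of a slightly more delicate sign bookkeeping in the straightening step, which you carry out correctly; the paper's approach trades that for citing two external results but reduces the computation to a four-dimensional algebra. The diagonal case and the final quasi-reflection criterion are handled essentially identically in both arguments, and your observation that a factor of $(1-t)$ forces $1+bc=0$ is a clean way to settle the antidiagonal criterion.
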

\begin{proof}
The trace of a diagonal automorphism was noted in Section \ref{not-1classification}, so suppose that $g = \begin{psmallmatrix} 0 & b \\ c & 0 \end{psmallmatrix}$ is antidiagonal. By \cite[Corollary 4.4]{jing}, since $A$ is Koszul, the trace of $g$ on $A$ can be computed using the Koszul dual $A^!$ of $A$. It is easy to check that $A^! \cong \Bbbk[x,y]/\langle x^2, y^2 \rangle$, and that the induced map $g^!$ acts via $g^!(x) = cy$, $g^!(y) = bx$. From this one calculates
\begin{align*}
\Tr_{A^!}(g^!) = 1 + bc \gap t^2,
\end{align*}
and then \cite[Corollary 4.4]{jing} says that
\begin{align*}
\Tr_{A}(g) = \frac{1}{1 + bc \gap (-t)^2} = \frac{1}{1 + bc \gap t^2}.
\end{align*}
\indent Finally, since $g$ is a quasi-reflection if and only if its trace is given by 
\begin{align*}
\frac{1}{(1-t)(1-\lambda t)}
\end{align*}
for some $\lambda \neq 1$, the final claim follows.
\end{proof}

\indent To begin the classification, suppose that $G$ is a small subgroup of $\Autgr(A)$. If $G$ contains no antidiagonal elements then every non-identity element is a diagonal matrix. By the same argument as in Section \ref{not-1classification}, $G = \frac{1}{n}(1,a)$, where $1 \leqslant a < n$ and $\gcd(a,n) = 1$. \\
\indent So now suppose that $G$ contains at least one antidiagonal element. There exists a short exact sequence of groups
\begin{align*}
1 \to G \cap \SL(2,\Bbbk) \to G \xrightarrow{\det} \langle \omega_m \rangle \to 1,
\end{align*}
for some $m \geqslant 2$, where $G \cap \SL(2,\Bbbk)$ is a finite subgroup of $\SL(2,\Bbbk)$. However, since $G$ contains no quasi-reflections, by Lemma \ref{qreflem} all of its antidiagonal elements have determinant not equal to 1, so $G \cap \SL(2,\Bbbk)$ consists of diagonal matrices of determinant 1. Therefore the only possibility is that 
\begin{align*}
G \cap \SL(2,\Bbbk) = \bigg\langle \hspace{-3pt} \begin{pmatrix}
\omega_n & 0 \\ 0 & \omega_n^{-1}
\end{pmatrix} \hspace{-3pt} \bigg\rangle
\end{align*}
for some integer $n \geqslant 1$ (if $n=1$, this group is trivial); call the generator of this group $g$. Since $G/(G \cap \SL(2,\Bbbk))$ is cyclic, it follows that there exists an antidiagonal element $h \in G$ such that $G = \langle g,h \rangle$. If $h = \begin{psmallmatrix} 0 & b \\ c & 0 \end{psmallmatrix}$ has (necessarily even) order $2k$ then $bc$ is a primitive $k$th root of unity, and conjugating by $\begin{psmallmatrix} \sqrt{b} & 0 \\ 0 & \sqrt{c} \end{psmallmatrix}$ yields $\begin{psmallmatrix} 0 & \sqrt{bc} \\ \sqrt{bc} & 0 \end{psmallmatrix} = \begin{psmallmatrix} 0 & \omega_{2k} \\ \omega_{2k} & 0 \end{psmallmatrix}$. So we can assume $h = \begin{psmallmatrix} 0 & \omega_{2k} \\ \omega_{2k} & 0 \end{psmallmatrix}$ for some $k \geqslant 1$. \\
\indent However, not all values of $n$ and $k$ give rise to small subgroups of $\Autgr(A)$: for example, if $k=2$ then $h = \begin{psmallmatrix} 0 & \omega_4 \\ \omega_4 & 0 \end{psmallmatrix}$, which is a quasi-reflection. We now seek to eliminate those $n$ and $k$ which result in groups $G$ containing quasi-reflections. Henceforth, it will be convenient to let $\omega$ be a primitive $(2nk)$th root of unity, so that $w^{2k}$ is a primitive $n$th root of unity and $w^n$ is a primitive $(2k)$th root of unity, and to write
\begin{align}
G_{n,k} = \langle g,h \rangle, \qquad \text{where} \qquad g = \begin{pmatrix}\omega^{2k} & 0 \\ 0 & \omega^{-2k} \end{pmatrix} \quad \text{and} \quad h = \begin{pmatrix}0 & \omega^{n} \\ \omega^{n} & 0 \end{pmatrix}. \label{Ghyp}
\end{align}

\begin{lem} \label{2mod4lem}
Suppose that $G = G_{n,k}$ is as in \emph{(\ref{Ghyp})} and $k \equiv 2 \normalfont{\text{ mod }} 4$. Then $G$ contains a quasi-reflection.
\end{lem}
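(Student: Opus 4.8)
The plan is to reduce the statement to the trace criterion already established in Lemma \ref{qreflem}: an antidiagonal element $\begin{psmallmatrix} 0 & b \\ c & 0 \end{psmallmatrix}$ of $\Autgr(A)$ is a quasi-reflection if and only if $bc = -1$. So it suffices to produce a single antidiagonal element of $G = G_{n,k}$ whose off-diagonal entries multiply to $-1$. This turns the problem into a concrete root-of-unity computation inside the group generated by $g$ and $h$ as in (\ref{Ghyp}).

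First I would describe the antidiagonal elements of $G$. Since $g$ is diagonal and $h$ is antidiagonal, and since a direct computation gives $h^2 = \omega^{2n} I$ (a scalar matrix, with $\omega^{2n}$ a primitive $k$th root of unity because $\omega$ is a primitive $(2nk)$th root of unity), every antidiagonal element of $G$ has the form $g^i h^m$ with $m$ odd. Such an element works out to $\begin{psmallmatrix} 0 & \omega^{2ki+nm} \\ \omega^{-2ki+nm} & 0 \end{psmallmatrix}$, so the product of its off-diagonal entries is $\omega^{2nm}$, independent of $i$. Thus I only need to choose an odd $m$ making $\omega^{2nm} = -1$.

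The key observation is that $\omega^{nk} = -1$, since $\omega$ is a primitive $(2nk)$th root of unity and $nk$ is exactly half of $2nk$. Hence I want $\omega^{2nm} = \omega^{nk}$ with $m$ odd, i.e. $2m \equiv k \pmod{2k}$, equivalently $m \equiv k/2 \pmod{k}$. When $k \equiv 2 \ \mathrm{mod}\ 4$ the integer $k/2$ is odd, so I may take $i = 0$ and $m = k/2$: the element $h^{k/2}$ is antidiagonal (because $k/2$ is odd), its off-diagonal entries are both $\omega^{nk/2}$, and their product is $\omega^{nk} = -1$. By Lemma \ref{qreflem}, $h^{k/2} \in G$ is then a quasi-reflection, which proves the claim.

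The computation itself is elementary, so there is no serious obstacle; the only point requiring care — and the genuine content of the hypothesis — is that $k \equiv 2 \ \mathrm{mod}\ 4$ is precisely the condition guaranteeing that $k/2$ is odd. This is exactly what ensures the relevant power of $h$ is antidiagonal (hence eligible to be an antidiagonal quasi-reflection) rather than diagonal, and it is why the other residues of $k$ modulo $4$ do not force a quasi-reflection into $G$ by this argument.
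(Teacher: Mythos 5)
Your proof is correct and is essentially identical to the paper's: both exhibit $h^{k/2}$, note that it is antidiagonal because $k/2$ is odd when $k \equiv 2 \bmod 4$, compute that the product of its antidiagonal entries is $\omega^{nk} = -1$, and conclude via Lemma \ref{qreflem}. The extra discussion of general antidiagonal elements $g^i h^m$ is fine but not needed, since a single quasi-reflection suffices.
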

\begin{proof}
If $k \equiv 2 \normalfont{\text{ mod }} 4$ then $\frac{k}{2}$ is odd and so
\begin{align*}
h^{\frac{k}{2}} = 
\begin{pmatrix}
0 & \omega^{\frac{nk}{2}} \\ \omega^{\frac{nk}{2}} & 0
\end{pmatrix}.
\end{align*}
The product of the antidiagonal elements of $h^{\frac{k}{2}}$ is $\omega^{nk} = -1$, and hence $h^{\frac{k}{2}}$ is a quasi-reflection by Lemma \ref{qreflem}.
\end{proof}

\begin{lem} \label{nkleq2}
Suppose that $G=G_{n,k}$ is as in \emph{(\ref{Ghyp})}. Then $G$ contains no quasi-reflections if and only if $k \not\equiv 2 \normalfont{\text{ mod }} 4$ and $\gcd(n,k) \leqslant 2$. 
\end{lem}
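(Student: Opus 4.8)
The plan is to enumerate the elements of $G_{n,k}$ explicitly and read off which ones are quasi-reflections using Lemma \ref{qreflem}. First I would record the two structural relations $hgh^{-1} = g^{-1}$ and $h^2 = \omega^{2n}I$; since $\omega^{2n}$ is a primitive $k$th root of unity, $h^2$ is a central scalar of order $k$. Consequently the diagonal subgroup $D = \langle g, h^2\rangle$ is normal of index $2$ in $G$, and $G = D \sqcup Dh$. Writing a general element of $D$ as $g^i(h^2)^j = \text{diag}(\omega^{2ki+2nj},\,\omega^{-2ki+2nj})$ and a general antidiagonal element as $g^i(h^2)^j h = \begin{psmallmatrix} 0 & \omega^{2ki+2nj+n} \\ \omega^{-2ki+2nj+n} & 0\end{psmallmatrix}$, Lemma \ref{qreflem} reduces everything to elementary congruences in the exponents modulo $2nk$.

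For the antidiagonal elements, the product of the two off-diagonal entries is $\omega^{2n(2j+1)}$, so by Lemma \ref{qreflem} such an element is a quasi-reflection precisely when $\omega^{2n(2j+1)} = -1 = \omega^{nk}$, i.e. when $2n(2j+1) \equiv nk \pmod{2nk}$. Cancelling $n$ turns this into the single linear congruence $4j + 2 \equiv k \pmod{2k}$, and I would then check that it is solvable in $j$ iff $\gcd(4,2k)$ divides $k-2$. A short parity analysis ($\gcd(4,2k) = 2$ when $k$ is odd and $4$ when $k$ is even) shows this happens iff $k \equiv 2 \pmod 4$, recovering and sharpening Lemma \ref{2mod4lem}: $G$ contains an antidiagonal quasi-reflection iff $k \equiv 2 \pmod 4$.

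For the diagonal elements, Lemma \ref{qreflem} says $\text{diag}(\omega^{2ki+2nj}, \omega^{-2ki+2nj})$ is a quasi-reflection iff exactly one entry equals $1$. Treating the case where the first entry is $1$ (the second being symmetric, since $D$ is closed under swapping the two diagonal entries via $i \mapsto -i$), the condition $\omega^{2ki+2nj}=1$ is solvable for $j$ iff $n \mid ki$, and under this constraint the second entry becomes $\omega^{-4ki}$, which is $\neq 1$ iff $n \nmid 2i$. Setting $d = \gcd(n,k)$ and substituting $i = (n/d)s$ (the general solution of $n \mid ki$) collapses the two requirements to the single condition that $d \nmid 2s$ for some $s$. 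This is visibly possible iff $d \geq 3$, so $G$ contains a diagonal quasi-reflection iff $\gcd(n,k) > 2$.

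Combining the two cases, $G$ contains no quasi-reflection iff it contains neither a diagonal nor an antidiagonal one, that is, iff $\gcd(n,k) \leq 2$ and $k \not\equiv 2 \pmod 4$, as claimed. I expect the main obstacle to be the bookkeeping in the diagonal case: one must correctly identify when the defining congruences are simultaneously solvable, and the reduction via $d = \gcd(n,k)$ together with the parity of $2s$ is exactly where the hypothesis $\gcd(n,k) \leq 2$ (rather than $\gcd(n,k)=1$) genuinely enters, since $d \in \{1,2\}$ are precisely the values forcing $d \mid 2s$ for all $s$. The antidiagonal case is comparatively routine once the off-diagonal product has been computed.
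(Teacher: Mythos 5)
Your argument is correct and follows essentially the same route as the paper: both split $G$ into its diagonal part $\{g^i h^{2j}\}$ and antidiagonal part $\{g^i h^{2j+1}\}$, compute the relevant exponents of $\omega$, and reduce the quasi-reflection condition of Lemma \ref{qreflem} to congruences modulo $2nk$ (your $4j+2 \equiv k \pmod{2k}$ and $n \mid ki$, $n \nmid 2i$ conditions are exactly the paper's, and your witness $i = n/d$ in the case $d \geqslant 3$ is the element $g^{n/d}h^{2k/d}$ used there). Your presentation is slightly more uniform in that it characterises all quasi-reflections at once rather than treating the two implications separately, but the underlying computations are identical.
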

\begin{proof}
\indent ($\Rightarrow$) Lemma \ref{2mod4lem} already shows that if $k \equiv 2 \normalfont{\text{ mod }} 4$ then $G$ contains a quasi-reflection, so write $d = \gcd(n,k)$ and suppose $d > 2$. Setting $i = \frac{n}{d}$ and $j = \frac{k}{d}$, direct calculation gives
\begin{align*}
g^i h^{2j} = \begin{pmatrix} \omega^{2(nj+ki)} & 0 \\ 0 & \omega^{2(nj-ki)} \end{pmatrix}.
\end{align*}
Now, $2(nj+ki) = 2(\frac{nk}{d} + \frac{kn}{d}) = \frac{4nk}{d} < 2nk$, where the inequality follows since $d > 2$. Therefore $\omega^{2(nj+ki)} \neq 1$. On the other hand, $2(nj-ki) = 2(\frac{nk}{d} - \frac{kn}{d}) = 0$, so $\omega^{2(nj-ki)} = 1$. In particular, by Lemma \ref{qreflem}, $g^i h^{2j}$ is a quasi-reflection. \\
\indent ($\Leftarrow$) Now assume that $k \not\equiv 2 \normalfont{\text{ mod }} 4$ and $\gcd(n,k) \leqslant 2$, and consider a diagonal element of $G$, which therefore has the form $g^i h^{2j}$ for some $i$ and $j$. As above,
\begin{align*}
g^i h^{2j} = \begin{pmatrix} \omega^{2(nj+ki)} & 0 \\ 0 & \omega^{2(nj-ki)} \end{pmatrix}.
\end{align*}
Now suppose $\omega^{2(nj+ki)} = 1$, so $nj+ki = nkr$ for some integer $r$. Rearranging and dividing through by $d = \gcd(n,k)$, we find $\frac{k}{d} i = \frac{n}{d}(kr-j)$, so that $\frac{n}{d}$ divides $\frac{k}{d}i$. Since $\frac{n}{d}$ and $\frac{k}{d}$ are coprime this implies that $\frac{n}{d} \mid i$. Therefore $n \mid di$, where $d$ is either $1$ or $2$, so $n \mid 2i$. Since $nj+ki = nkr$, we have $nj-ki = nkr - 2ki = k(nr-2i)$. By the above argument, we see that $n$ divides the term $nr-2i$, and hence $2(nj-ki) = 2k(nr-2i)$ is an integer multiple of $2nk$. Therefore $\omega^{2(nj-ki)} = 1$, and so $g^i h^{2j}$ is the identity. A similar analysis shows that if $\omega^{2(nj-ki)} = 1$ then also $\omega^{2(nj+ki)} = 1$, and so $G$ contains no diagonal elements which are quasi-reflections. \\
\indent Now consider an antidiagonal element of $G$, which must have the form $g^i h^{2j+1}$ for some $i$ and $j$. Direct calculation gives
\begin{align*}
g^i h^{2j+1} = \begin{pmatrix} 0 & \omega^{(2j+1)n+2ki} \\ \omega^{(2j+1)n-2ki} & 0 \end{pmatrix}
\end{align*}
and the product of the diagonal elements is $\omega^{2n(2j+1)}$. By Lemma \ref{qreflem}, it follows that $g^i h^{2j+1}$ is a quasi-reflection if and only if $\omega^{2n(2j+1)} = -1$, which happens if and only if $2(2j+1) = k(2r+1)$ for some integer $r$. Since $k \not \equiv 2 \normalfont{\text{ mod }} 4$, this has no solutions, and hence $g^i h^{2j+1}$ is not a quasi-reflection.
\end{proof}

We have therefore determined precisely when $G_{n,k}$ contains no quasi-reflections. However, some values of $n$ and $k$ give rise to the same groups:

\begin{lem} \label{equalgroups}
In the notation of \emph{(\ref{Ghyp})}, if $n \equiv 2 \normalfont{\text{ mod }} 4$ and $k \equiv 0 \normalfont{\text{ mod }} 4$, then $G_{n,k} = G_{\frac{n}{2},k}$.
\end{lem}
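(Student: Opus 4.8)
The plan is to prove the two inclusions $G_{n/2,k} \subseteq G_{n,k}$ and $G_{n,k} \subseteq G_{n/2,k}$ directly, after first matching the two presentations via a compatible choice of roots of unity. Let $\omega$ be the fixed primitive $(2nk)$th root used in (\ref{Ghyp}) to define $G_{n,k} = \langle g,h\rangle$, and construct $G_{n/2,k}$ using the primitive $(nk)$th root $\omega^2$. Its generators are then $g' = \begin{psmallmatrix} (\omega^2)^{2k} & 0 \\ 0 & (\omega^2)^{-2k}\end{psmallmatrix} = g^2$ and $h' = \begin{psmallmatrix} 0 & (\omega^2)^{n/2} \\ (\omega^2)^{n/2} & 0 \end{psmallmatrix} = \begin{psmallmatrix} 0 & \omega^n \\ \omega^n & 0\end{psmallmatrix} = h$, using that $n$ is even so $(\omega^2)^{n/2} = \omega^n$. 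Hence $G_{n/2,k} = \langle g^2, h\rangle$, and the inclusion $G_{n/2,k} \subseteq G_{n,k}$ is immediate.

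For the reverse inclusion it suffices to show $g \in G_{n/2,k}$, since $h$ is already a generator. The key observation is that the scalar $-I$ lies in $G_{n/2,k}$ and is simultaneously a power of $g$. Indeed, since $n$ is even, $g^{n/2} = \begin{psmallmatrix}\omega^{nk} & 0 \\ 0 & \omega^{-nk}\end{psmallmatrix} = -I$, because $\omega^{nk}$ has order $2$ and hence equals $-1$. On the other hand, since $k$ is even, $h^k = (h^2)^{k/2} = (\omega^{2n} I)^{k/2} = \omega^{nk} I = -I$, and as $h \in G_{n/2,k}$ this shows $-I \in G_{n/2,k}$. This is exactly where the hypotheses are used: $n \equiv 2 \pmod 4$ makes $n$ even (giving $g^{n/2} = -I$) while also forcing $n/2$ to be odd, and $k \equiv 0 \pmod 4$ makes $k$ even (giving $h^k = -I$).

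Finally I would recover $g$ itself. Both $g^2$ and $g^{n/2} = -I$ lie in $G_{n/2,k}$, and since $n/2$ is odd we have $\gcd(2, n/2) = 1$; choosing integers $a,b$ with $2a + (n/2)b = 1$ gives $g = (g^2)^a (g^{n/2})^b \in G_{n/2,k}$. Therefore $G_{n,k} = \langle g,h\rangle \subseteq G_{n/2,k}$, and combined with the first inclusion this yields $G_{n,k} = G_{n/2,k}$.

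I expect the only real subtlety — rather than a genuine obstacle — to be the bookkeeping around the roots of unity: one must check that $\omega^2$ is a legitimate primitive $(nk)$th root and that under this choice the antidiagonal generator of $G_{n/2,k}$ coincides \emph{exactly} with $h$ (not merely up to conjugacy), after which everything reduces to the single identity $-I = g^{n/2} = h^k$ together with $\gcd(2,n/2)=1$, all direct consequences of the parity hypotheses. As an alternative to recovering $g$ explicitly, one could instead compute $|G_{n,k}| = |G_{n/2,k}| = nk$ — in each group the diagonal matrices form an index-two subgroup, whose order follows from a short calculation of the relevant intersection of cyclic subgroups (equal to $\{\pm I\}$ for $G_{n,k}$ but trivial for $G_{n/2,k}$, precisely because $n/2$ is odd) — and then conclude equality from the containment of finite groups of equal order.
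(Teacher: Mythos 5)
Your proof is correct and takes essentially the same route as the paper: after identifying the generators of $G_{\frac{n}{2},k}$ (built from $\omega^2$) as $g^2$ and $h$, both arguments reduce to exhibiting $g$ inside $\langle g^2,h\rangle$ using $h^k=-I$ and the parity of $\frac{n}{2}$. Indeed, taking $a=\frac{n+2}{4}$, $b=-1$ in your Bezout identity $2a+\frac{n}{2}b=1$ recovers exactly the paper's explicit element $(g^2)^{\frac{n+2}{4}}h^k=g$.
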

\begin{proof}
Letting $\omega$ be a primitive $(2nk)$th root of unity, we need to show that
\begin{align*}
\Bigg\langle \hspace{-3pt} \begin{pmatrix} \omega^{2k} & 0 \\ 0 & \omega^{-2k} \end{pmatrix}, \begin{pmatrix}0 & \omega^{n} \\ \omega^{n} & 0 \end{pmatrix} \hspace{-3pt} \Bigg\rangle = \Bigg\langle \hspace{-3pt} \begin{pmatrix} \omega^{4k} & 0 \\ 0 & \omega^{-4k} \end{pmatrix}, \begin{pmatrix}0 & \omega^{n} \\ \omega^{n} & 0 \end{pmatrix} \hspace{-3pt} \Bigg\rangle.
\end{align*}
The fact that the right hand side is contained in the left hand side is obvious. 
For the reverse inclusion, it suffices to show that $\begin{psmallmatrix} \omega^{2k} & 0 \\ 0 & \omega^{-2k} \end{psmallmatrix}$ lies in the right hand side. Indeed, we have
\begin{align*}
\begin{pmatrix} \omega^{4k} & 0 \\ 0 & \omega^{-4k} \end{pmatrix}^{\frac{n+2}{4}}
\begin{pmatrix}[1.1] 0 & \omega^n \\ \omega^n & 0 \end{pmatrix}^k
=
\begin{pmatrix} \omega^{k(n+2)} & 0 \\ 0 & \omega^{-k(n+2)} \end{pmatrix}
\begin{pmatrix}[1.1] 0 & \omega^{nk} \\ \omega^{nk} & 0 \end{pmatrix} 
= 
\begin{pmatrix} \omega^{2k} & 0 \\ 0 & \omega^{-2k} \end{pmatrix}, 
\end{align*}
as required.
\end{proof}

By Lemma \ref{equalgroups}, if $k$ is even, we can assume that $n$ is odd. However, since Lemma \ref{nkleq2} also tells us that if $G$ contains no quasi-reflections then necessarily  $\gcd(n,k) \leqslant 2$, $n$ and $k$ can be assumed to be coprime. To summarise, we have shown the following:

\begin{thm} \label{classifyG}
Let $A = \Bbbk_{-1}[u,v]$. Then, up to conjugation by elements of $\Autgr(A)$, the small subgroups of $\Autgr(A)$ are 
\begin{align*}
\frac{1}{n}(1,a) = \bigg\langle \hspace{-3pt}
\begin{pmatrix} \omega_n & 0 \\ 0 & \omega_n^{a} \end{pmatrix}
\hspace{-3pt}\bigg\rangle ,
\end{align*}
where $1 \leqslant a < n$ and $\gcd(a,n) = 1$, and
\begin{align*}
G_{n,k} = \bigg\langle \hspace{-3pt}
\begin{pmatrix} \omega_n & 0 \\ 0 & \omega_n^{-1} \end{pmatrix},
\begin{pmatrix} 0 & \omega_{2k} \\ \omega_{2k} & 0 \end{pmatrix}
\hspace{-3pt}\bigg\rangle ,
\end{align*}
where $\gcd(n,k) = 1$ and $k \not\equiv 2 \normalfont{\text{ mod }} 4$.
\end{thm}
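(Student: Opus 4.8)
The plan is to assemble the preceding lemmas into a clean dichotomy according to whether the small subgroup $G \leqslant \Autgr(A)$ contains an antidiagonal element. Since conjugation by a diagonal matrix preserves the diagonal/antidiagonal decomposition of $\Autgr(A) \cong (\Bbbk^*)^2 \rtimes C_2$, and by Lemma \ref{conjugateinvlem} conjugation has no effect on the invariant-theoretic content, this split is stable under the normalizations carried out below. The theorem is essentially a summary: all of the substantive work lives in the lemmas, and what remains is to combine them and tidy up the parameter ranges.

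First I would dispose of the case where $G$ contains no antidiagonal element. Then every nonidentity element is diagonal, so $G$ sits inside the diagonal torus, and the argument of Section \ref{not-1classification} applies verbatim (note that by Lemma \ref{qreflem} the trace of a diagonal automorphism of $\Bbbk_{-1}[u,v]$ has the same form as in the $q \neq \pm 1$ case, independent of $q$). This yields $G = \frac{1}{n}(1,a)$ with $1 \leqslant a < n$ and $\gcd(a,n) = 1$, giving the first family.

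Next, suppose $G$ does contain an antidiagonal element. Here I would use the short exact sequence $1 \to G \cap \SL(2,\Bbbk) \to G \xrightarrow{\det} \langle \omega_m \rangle \to 1$ together with Lemma \ref{qreflem}: smallness forces every antidiagonal element of $G$ to have determinant $\neq 1$, so $G \cap \SL(2,\Bbbk)$ consists of diagonal matrices of determinant $1$ and hence equals $\langle \mathrm{diag}(\omega_n, \omega_n^{-1}) \rangle$. Since the quotient is cyclic, I may pick an antidiagonal $h$ completing a generating set, and conjugating by $\mathrm{diag}(\sqrt b, \sqrt c)$ normalizes $h$ to $\begin{psmallmatrix} 0 & \omega_{2k} \\ \omega_{2k} & 0 \end{psmallmatrix}$, so that $G = G_{n,k}$ in the notation of (\ref{Ghyp}). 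Lemma \ref{nkleq2} then characterizes smallness of $G_{n,k}$ by the two conditions $k \not\equiv 2 \pmod 4$ and $\gcd(n,k) \leqslant 2$.

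The one remaining point — and the only place requiring genuine bookkeeping — is to eliminate the possibility $\gcd(n,k) = 2$, reducing the parameter list to coprime pairs. If $\gcd(n,k) = 2$ then $n$ and $k$ are both even; combined with $k \not\equiv 2 \pmod 4$ this gives $k \equiv 0 \pmod 4$, which together with $\gcd(n,k) = 2$ forces $n \equiv 2 \pmod 4$ (otherwise $4$ would divide the gcd). This is exactly the hypothesis of Lemma \ref{equalgroups}, so $G_{n,k} = G_{n/2,k}$, and writing $n = 2m$ with $m$ odd a short divisibility check shows $\gcd(n/2,k) = 1$ while $k \not\equiv 2 \pmod 4$ persists. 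Replacing $(n,k)$ by $(n/2,k)$ therefore lands in the coprime case, yielding the second family. I expect this final reduction — confirming that halving $n$ genuinely produces coprime parameters and that no group is lost in the process — to be the main (though still routine) obstacle; everything else is a direct citation of the lemmas established above.
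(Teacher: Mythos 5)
Your proposal is correct and follows essentially the same route as the paper: the dichotomy on the presence of antidiagonal elements, the determinant short exact sequence forcing $G \cap \SL(2,\Bbbk)$ to be the diagonal cyclic group, the normalization of $h$ by a diagonal conjugation, and the final reduction via Lemma \ref{equalgroups} from $\gcd(n,k)=2$ to the coprime case. The divisibility bookkeeping you flag as the main remaining step (that $\gcd(n,k)=2$ and $k\not\equiv 2 \bmod 4$ force $n \equiv 2 \bmod 4$, and that $\gcd(n/2,k)=1$ afterwards) is exactly what the paper does, just stated more tersely there.
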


We record the following lemma, which is easy to prove:

\begin{lem} \label{Gpropslem}
Suppose that $G_{n,k}$ is as in Theorem \ref{classifyG}. Then this group has order $2nk$ and has the presentation
\begin{align*}
G_{n,k} \cong \langle \gap g,h \mid g^n, h^{2k}, hg = g^{n-1} h \gap \rangle.
\end{align*}
\end{lem}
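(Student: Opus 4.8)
The plan is to produce an explicit surjection from the abstractly presented group onto the matrix group $G_{n,k}$ and then to show that both have order $2nk$, which forces the surjection to be an isomorphism. Write $\Gamma = \langle \, g, h \mid g^n, h^{2k}, hg = g^{n-1}h \, \rangle$ for the abstract group. First I would verify that the matrix generators of $G_{n,k}$ from Theorem \ref{classifyG} satisfy these three relations: $g^n = 1$ is immediate since $\omega_n$ has order $n$; for $h$ we compute $h^2 = \omega_{2k}^2 I$, a scalar of order $k$, so $h^{2k} = 1$ (and in fact $h$ has order exactly $2k$); and a direct $2 \times 2$ multiplication shows $hg = g^{-1}h = g^{n-1}h$. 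By von Dyck's theorem there is then a surjective homomorphism $\pi : \Gamma \to G_{n,k}$ sending the abstract generators to the matrices $g$ and $h$.

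Next I would bound $|\Gamma|$ from above. Since $g^n = 1$ we have $g^{n-1} = g^{-1}$, so the third relation reads $hg = g^{-1}h$, i.e. $hgh^{-1} = g^{-1}$; this lets one move every occurrence of $g$ to the left of every $h$ in an arbitrary word. Hence every element of $\Gamma$ can be put in the normal form $g^i h^j$ with $0 \le i < n$ and $0 \le j < 2k$, giving $|\Gamma| \le 2nk$.

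It then remains to show $|G_{n,k}| = 2nk$, after which $\pi$ is forced to be an isomorphism and $|\Gamma| = 2nk$ by the counting. The assignment sending a matrix to whether it is diagonal or antidiagonal is a homomorphism $G_{n,k} \to C_2$ which is surjective (as $h$ is antidiagonal), so the diagonal subgroup $N$ has index $2$ and $|G_{n,k}| = 2|N|$; moreover $N = \langle g, h^2 \rangle$, since an element $g^i h^j$ is diagonal exactly when $j$ is even. As $h^2 = \omega_{2k}^2 I$ is a central scalar, $N = \langle g \rangle \langle h^2 \rangle$ is a subgroup and $|N| = nk / |\langle g \rangle \cap \langle h^2 \rangle|$. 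The one point requiring care --- and the only place the hypothesis $\gcd(n,k) = 1$ enters --- is that this intersection is trivial: the scalar matrices lying in $\langle g \rangle$ are $I$ (always) together with $-I$ (when $n$ is even), while $\langle h^2 \rangle$ consists of the scalars $\omega_k^j I$, which contain $-I$ only when $k$ is even. Since $n$ and $k$ are coprime they are not both even, so $\langle g \rangle \cap \langle h^2 \rangle = \{I\}$. Thus $|N| = nk$ and $|G_{n,k}| = 2nk$, which together with the upper bound for $|\Gamma|$ and the surjection $\pi$ establishes both the order and the presentation.
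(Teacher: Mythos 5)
Your proof is correct and complete; the paper itself omits the argument entirely (the lemma is stated as ``easy to prove''), so there is no competing approach to compare against. The strategy of combining a von Dyck surjection with the normal-form bound $|\Gamma|\leqslant 2nk$ and an exact count of $|G_{n,k}|$ is the standard one, and you correctly isolate the only delicate point, namely that $\langle g\rangle\cap\langle h^2\rangle$ is trivial because $\gcd(n,k)=1$ prevents $-I$ from lying in both cyclic subgroups.
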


It turns out that the invariant rings $A^{G_{n,k}}$ are frequently not commutative. While we postpone writing down formulas for the generators and relations until sections \ref{comminvsec} and \ref{noncomminvsec}, the following shows which choices of $n$ and $k$ lead to invariant rings that are not commutative.

\begin{prop} \label{commvsnoncomm}
Let $G = G_{n,k}$ be as above. Then $A^{G}$ is commutative if and only if $n$ or $k$ is even.
\end{prop}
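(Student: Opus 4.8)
The plan is to work directly with the monomial basis $\{u^iv^j\}$ of $A=\Bbbk_{-1}[u,v]$, exploiting that the relation $vu=-uv$ makes $A$ into an $\NN^2$-graded algebra with one-dimensional graded pieces. The first fact I would record is the commutation rule $u^iv^j\cdot u^kv^l=(-1)^{jk}u^{i+k}v^{j+l}$, from which $u^iv^j$ and $u^kv^l$ commute if and only if $il+jk$ is even; in particular the span $B$ of all monomials $u^iv^j$ with $i\equiv j\pmod 2$ is a commutative subalgebra (for such monomials $il+jk\equiv 2jk\equiv0$). The whole proposition then reduces to controlling the parities of the exponents that can occur in the support of $A^G$.

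Next I would pin down that support using the two generators separately. Since $g$ acts on $u^iv^j$ by the scalar $\omega_n^{i-j}$, a $g$-invariant element can only involve monomials with $n\mid(i-j)$. For $h$ the calculation is slightly more involved: $h$ sends $u^iv^j$ to a scalar multiple of $u^jv^i$, and comparing the coefficients of $u^iv^j$ and $u^jv^i$ in an $h$-invariant element forces the square of that scalar to be $1$, which amounts to $k\mid(i+j)$. Thus every monomial appearing in $A^G$ satisfies both $n\mid(i-j)$ and $k\mid(i+j)$. This immediately settles the ``if'' direction: if $n$ is even then $n\mid(i-j)$ gives $i\equiv j\pmod2$, and if $k$ is even then $k\mid(i+j)$ gives $i\equiv j\pmod2$; in either case $A^G\subseteq B$, so $A^G$ is commutative.

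For the converse I would assume $n$ and $k$ are both odd (hence coprime) and manufacture two monomials in the support of $A^G$ that do not commute. Solving $i-j\equiv0\pmod n$ and $i+j\equiv0\pmod k$ with $i+j$ odd is elementary once $n,k$ are odd, and produces off-diagonal monomials with $i\not\equiv j\pmod2$; each such monomial genuinely occurs in $A^G$ because on the two-dimensional $h$-stable span $\Bbbk u^iv^j\oplus\Bbbk u^jv^i$ the element $g$ acts trivially and $h$ has an eigenvalue-$1$ eigenvector. I would produce one monomial $p$ of ``type $(1,0)$'' (odd $u$-degree, even $v$-degree) with $\deg_u p-\deg_v p=\pm n$ and one monomial $q$ of ``type $(0,1)$'' with $\deg_u q-\deg_v q=\pm 3n$; these anticommute, and the freedom in the odd parameter $s$ in $i+j=ks$ lets me fix the required parities.

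The main obstacle, and the step I would treat most carefully, is that elements of $A^G$ are sums of monomials, so two anticommuting monomials in the support need not on their own witness non-commutativity: cross terms could cancel. I would resolve this by isolating a single $\NN^2$-graded component. Writing $F_p=p+\lambda p^{h}$ and $F_q=q+\mu q^{h}$ for the invariants containing $p$ and $q$ (where $p^h,q^h$ are the $h$-partners), the coefficient of $u^{\deg_u p+\deg_u q}v^{\deg_v p+\deg_v q}$ in $F_pF_q-F_qF_p$ receives a contribution only from $p\cdot q$ and $q\cdot p$, provided $\deg_u p+\deg_u q\neq\deg_v p+\deg_v q$. That inequality is precisely $(\deg_u p-\deg_v p)+(\deg_u q-\deg_v q)\neq0$, i.e.\ $\pm n\pm 3n\neq0$, which holds as $n\geqslant1$. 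The surviving coefficient is $(-1)^{\deg_v p\,\deg_u q}-(-1)^{\deg_u p\,\deg_v q}=\pm2\neq0$ because $p$ and $q$ anticommute, so $A^G$ is not commutative. The remaining work is the routine parity bookkeeping needed to select $p$ and $q$ of the stated types.
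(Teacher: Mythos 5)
Your proposal is correct and follows essentially the same strategy as the paper: for the ``if'' direction both arguments come down to the fact that monomials $u^iv^j$ with $i\equiv j\pmod 2$ pairwise commute (the paper phrases this as $A^G\subseteq A^{\langle -I\rangle}=\Bbbk[u^2,v^2,uv]$, you derive the parity constraint from the support conditions $n\mid(i-j)$, $k\mid(i+j)$), and for the ``only if'' direction both exhibit two explicit invariants of the form $u^iv^j\pm u^jv^i$ whose commutator survives in a single bidegree. The paper simply expands $ab-ba$ in full for $a=u^iv^j-u^jv^i$ and $b=u^{3i}v^{3j}-u^{3j}v^{3i}$ with $i=\tfrac{mk+n}{2}$, $j=\tfrac{mk-n}{2}$ ($m$ odd, $mk>n$), whereas you isolate one $\NN^2$-graded component in advance to avoid tracking all cross terms; this is the same computation organized slightly differently, and your parity bookkeeping for selecting the two monomial types does go through.
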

\begin{proof}
($\Leftarrow$) First suppose that at least one (and hence exactly one) of $n$ and $k$ is even. If $n$ is even, then $g^{n/2} = \begin{psmallmatrix} -1 & 0 \\ 0 & -1 \end{psmallmatrix}$, while if $k$ is even then $h^k = \begin{psmallmatrix} -1 & 0 \\ 0 & -1 \end{psmallmatrix}$. In either case, call this element $\alpha$. Then
\begin{align*}
A^G \subseteq A^{\langle \alpha \rangle} = \Bbbk[u^2,v^2,uv] \cong \frac{\Bbbk[x,y,z]}{\langle xy+z^2 \rangle},
\end{align*}
where the last ring is commutative. Therefore $A^G$ is commutative. \\
\indent ($\Rightarrow$) As before, it will be convenient to let $\omega$ be a primitive $(2nk)$th root of unity, and to assume $\omega_n = \omega^{2k}$ and $\omega_{2k} = \omega^n$. Now suppose that both $n$ and $k$ are odd, and choose an odd integer $m$ such that $mk > n$. Set $i = \frac{mk+n}{2}$ and $j = \frac{mk-n}{2}$, both of which are positive integers. Moreover, since $i-j = n$ is odd, one of $i$ and $j$ is odd and the other is even. Also observe that $i+j = mk$. Now set $a = u^iv^j - u^jv^i$ and $b = u^{3i}v^{3j} - u^{3j} v^{3i}$. Then
\begin{align*}
g \cdot a = \omega^{2k(i-j)} u^i v^j - \omega^{2k(j-i)} u^j v^i = \omega^{2nk} u^i v^j - \omega^{-2nk} u^j v^i = u^iv^j - u^jv^i = a,
\end{align*}
so $a$ is invariant under the action of $g$. Moreover, noting that $u^i$ and $v^j$ commute since one of $i$ and $j$ is even, 
\begin{align*}
h \cdot a &= \omega^{n(i+j)} v^i u^j - \omega^{n(i+j)} v^j u^i = \omega^{mnk}(u^j v^i - u^i v^j) \\
&= -\omega^{mnk} (u^i v^j - u^j v^i) = \omega^{(m+1)nk} a = a,
\end{align*}
so that $a$ is also invariant under the action of $h$ and hence lies in $\Bbbk_{-1}[u,v]^G$. Similarly, one can show that $b \in A^G$. \\
\indent Finally, we show that $a$ and $b$ do not commute, showing that $A[u,v]^G$ is not commutative. Assuming that $i$ is even and $j$ is odd, we have
\begin{align*}
ab &= (u^iv^j - u^jv^i) (u^{3i}v^{3j} - u^{3j} v^{3i}) \\
&= u^iv^j u^{3i}v^{3j} - u^iv^ju^{3j}v^{3i}-u^jv^iu^{3i}v^{3j} + u^jv^iu^{3j}v^{3i} \\
&= u^{4i}v^{4j} + u^{i+3j}v^{3i+j} - u^{3i+j} v^{i+3j} + u^{4j}v^{4i}
\end{align*}
while
\begin{align*}
ba &= (u^{3i}v^{3j} - u^{3j} v^{3i})(u^iv^j - u^jv^i) \\
&= u^{3i}v^{3j}u^iv^j -u^{3i}v^{3j} u^jv^i - u^{3j}v^{3i} u^iv^j + u^{3j}v^{3i}u^j v^i \\
&= u^{4i}v^{4j} + u^{3i+j}v^{i+3j} - u^{i+3j} v^{3i+j} + u^{4j}v^{4i}.
\end{align*}
Therefore
\begin{align*}
ab-ba = 2 u^{i+3j}v^{3i+j} - 2 u^{3i+j} v^{i+3j} \neq 0.
\end{align*}
If instead $i$ is odd and $j$ is even, then a similar calculation shows that
\begin{align*}
ab-ba = 2 u^{3i+j} v^{i+3j} - 2 u^{i+3j}v^{3i+j} \neq 0,
\end{align*}
and so in either case $a$ and $b$ do not commute, as claimed.
\end{proof}

\subsection{Summary of the classification}
We now summarise our findings from this section.

\begin{thm} \label{smallclassification} 
Suppose that $A$ is a two-dimensional AS regular algebra which is not commutative and that $G$ is a small subgroup of $\Autgr(A)$. Then, up to conjugation of $G$ by an element of $\Autgr(A)$, the possible pairs $(A,G)$ are as follows:
\begin{figure}[h]
{\tabulinesep=1.5pt
\begin{tabu}{c|c|c|c|l}
\normalfont{Case} & $A$ & $G$ & \normalfont{Generators} & \normalfont{Conditions} \\ \hline
\normalfont{(i)} &  $\Bbbk_q[u,v]$ & $\frac{1}{n}(1,a)$ & \hspace{-3pt} $\begin{pmatrix} \omega_n & 0 \\ 0 & \omega_n^a \end{pmatrix} \hspace{-3pt} $ & $q \neq 1$, $1 \leqslant a < n$, $\gcd(a,n) = 1$. \\ \hline
\normalfont{(ii)} &  $\Bbbk_{-1}[u,v]$ & $G_{n,k}$ & $ \hspace{-3pt} \begin{pmatrix} \omega_n & 0 \\ 0 & \omega_n^{-1} \end{pmatrix}$, $\begin{pmatrix} 0 & \omega_{2k} \\ \omega_{2k} & 0 \end{pmatrix} \hspace{-3pt}$ & $k \not\equiv 2 \text{\normalfont{ mod }} 4$, $\gcd(n,k) = 1$. \\ \hline
\normalfont{(iii)} &  $\Bbbk_J[u,v]$ & $\frac{1}{n}(1,1)$ & $ \hspace{-3pt} \begin{pmatrix} \omega_n & 0 \\ 0 & \omega_n \end{pmatrix} \hspace{-3pt} $ & $n \geqslant 2$.  \\
\end{tabu}}
\end{figure}
\end{thm}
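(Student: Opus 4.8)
The plan is to prove Theorem \ref{smallclassification} as a consolidation of the case-by-case analysis carried out in the preceding subsections. The first step is to reduce to the three algebras appearing in the table. Since $A$ is a two-dimensional AS regular algebra, the classification recalled in Section \ref{definitions} shows that $A$ is isomorphic (as a graded algebra) either to a quantum plane $\Bbbk_q[u,v]$ or to the Jordan plane $\Bbbk_J[u,v]$. The hypothesis that $A$ is noncommutative rules out $q = 1$, so it remains only to treat the three mutually exclusive possibilities: $A = \Bbbk_J[u,v]$, $A = \Bbbk_q[u,v]$ with $q \neq \pm 1$, and $A = \Bbbk_{-1}[u,v]$. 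I would organise the proof as these three cases.

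For $A = \Bbbk_J[u,v]$, the lemma classifying the small subgroups of $\Autgr(\Bbbk_J[u,v])$ shows directly that every finite (equivalently, small) subgroup is of the form $\frac{1}{n}(1,1)$ with $n \geqslant 2$; this is exactly case (iii), and since the classification is absolute it holds a fortiori up to conjugation. For $A = \Bbbk_q[u,v]$ with $q \neq \pm 1$, the lemma in Section \ref{not-1classification} shows every small subgroup is $\frac{1}{n}(1,a)$ with $1 \leqslant a < n$ and $\gcd(a,n) = 1$; these are instances of case (i) with $q \neq \pm 1$. In both of these cases there is essentially nothing further to check, as the relevant subgroups have already been pinned down on the nose.

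The substantive content was the analysis of $A = \Bbbk_{-1}[u,v]$, which is where I would focus. Here Theorem \ref{classifyG} has already shown that, up to conjugation by $\Autgr(A)$, the small subgroups are precisely the diagonal groups $\frac{1}{n}(1,a)$ (with $1 \leqslant a < n$, $\gcd(a,n)=1$) together with the groups $G_{n,k}$ satisfying $\gcd(n,k) = 1$ and $k \not\equiv 2 \bmod 4$. The only genuine assembly step is to observe that these two families split cleanly across the table: the diagonal groups $\frac{1}{n}(1,a)$ for $A = \Bbbk_{-1}[u,v]$ are exactly the $q = -1$ specialisation of case (i), and so, together with the $q \neq \pm 1$ groups from the previous paragraph, they fill out the single line ``$q \neq 1$'' of case (i); the groups $G_{n,k}$, which contain antidiagonal elements and hence are not of diagonal type, constitute case (ii).

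Thus the only potential obstacle—establishing exactly which pairs $(n,k)$ yield small groups and eliminating the redundancies among the $G_{n,k}$—is already resolved by Lemmas \ref{qreflem}, \ref{2mod4lem}, \ref{nkleq2}, and \ref{equalgroups} feeding into Theorem \ref{classifyG}. Consequently the proof of the present theorem reduces to recording the trichotomy on $A$, invoking the three classification results, and noting the consolidation of the two diagonal families into case (i); no further computation is required.
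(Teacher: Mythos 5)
Your proposal is correct and matches the paper's treatment exactly: the paper offers no separate argument for this theorem beyond the remark that it summarises the section, relying precisely on the classification of two-dimensional AS regular algebras into quantum and Jordan planes, the two lemmas handling $\Bbbk_J[u,v]$ and $\Bbbk_q[u,v]$ ($q \neq \pm 1$), and Theorem \ref{classifyG} for $\Bbbk_{-1}[u,v]$, with the diagonal families absorbed into case (i). Your consolidation step is the same one the paper performs implicitly, so nothing further is needed.
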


Using Lemma \ref{homologicaldetlem} and Theorem \ref{ausgorthm}, the following is easy to check:

\begin{cor} \label{gorensteincor}
Suppose that $(A,G)$ is a pair from Theorem \ref{smallclassification}. Then the action has trivial homological determinant, and hence $A^G$ is AS Gorenstein, if and only if: 
\begin{itemize}[leftmargin=20pt,topsep=0pt,itemsep=2pt]
\item Case \emph{(i):} $a = n-1$;
\item Case \emph{(ii):} $k=1$;
\item Case \emph{(iii):} $n = 2$.
\end{itemize}
\end{cor}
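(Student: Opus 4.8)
The plan is to reduce everything to a computation of $\hdet$ on the generators of each group, since the ``hence AS Gorenstein'' clause is then immediate. Every group $G$ listed in Theorem \ref{smallclassification} is small, so Theorem \ref{ausgorthm} says that $A^G$ is AS Gorenstein precisely when the action has trivial homological determinant. Thus the only real task is to decide, in each of the three cases, exactly when $\hdet(g) = 1$ for all $g \in G$.

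Because $\hdet \colon G \to \Bbbk^\times$ is a group homomorphism, the action has trivial homological determinant if and only if $\hdet$ equals $1$ on a generating set of $G$. To evaluate $\hdet$ on a generator $g$, I would invoke Lemma \ref{homologicaldetlem}: writing $r$ for the defining relation of $A$, the element $g$ acts on the line $\Bbbk r \subseteq \Bbbk\langle u,v\rangle$ by multiplication by $\hdet(g)$. So in each case it suffices to apply the generators to $r$, using the action convention $g \cdot u = au + cv$ and $g \cdot v = bu + dv$, and to read off the resulting scalar.

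Carrying this out case by case: in Case (i) the relation is $r = vu - quv$, and the generator $g = \begin{psmallmatrix} \omega_n & 0 \\ 0 & \omega_n^a \end{psmallmatrix}$ scales $u \mapsto \omega_n u$, $v \mapsto \omega_n^a v$, so that $g \cdot r = \omega_n^{1+a} r$; hence $\hdet(g) = \omega_n^{1+a}$, which equals $1$ if and only if $n \mid 1+a$, and since $1 \leqslant a < n$ this forces $a = n-1$. In Case (iii) the relation is $r = vu - uv - u^2$ and the scalar generator $g = \begin{psmallmatrix} \omega_n & 0 \\ 0 & \omega_n \end{psmallmatrix}$ gives $g \cdot r = \omega_n^2 r$, so $\hdet(g) = \omega_n^2 = 1$ exactly when $n \mid 2$, i.e. $n = 2$ (as $n \geqslant 2$). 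In Case (ii), where $r = vu + uv$, the diagonal generator $g = \begin{psmallmatrix} \omega_n & 0 \\ 0 & \omega_n^{-1} \end{psmallmatrix}$ satisfies $g \cdot r = r$, so $\hdet(g) = 1$ automatically; the antidiagonal generator $h = \begin{psmallmatrix} 0 & \omega_{2k} \\ \omega_{2k} & 0 \end{psmallmatrix}$ sends $u \mapsto \omega_{2k} v$ and $v \mapsto \omega_{2k} u$, so $h \cdot r = \omega_{2k}^2 r$ and $\hdet(h) = \omega_{2k}^2$, a primitive $k$th root of unity. Thus the action has trivial homological determinant if and only if $\omega_{2k}^2 = 1$, i.e. $k = 1$.

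There is essentially no serious obstacle here; the argument is a short, direct calculation. The one point requiring care is bookkeeping with the action convention $g \cdot u = au + cv$, $g \cdot v = bu + dv$ from Section \ref{definitions}: it is this (transpose-style) convention that makes the antidiagonal $h$ in Case (ii) act by $u \mapsto \omega_{2k} v$, $v \mapsto \omega_{2k} u$, and hence fix $r = vu + uv$ up to the scalar $\omega_{2k}^2$. Once this is set up correctly, each case reduces to a one-line check.
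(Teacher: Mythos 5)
Your proof is correct and follows exactly the route the paper indicates: the paper omits the computation but explicitly says the corollary follows from Lemma \ref{homologicaldetlem} (to evaluate $\hdet$ on generators via the scalar action on the defining relation) and Theorem \ref{ausgorthm} (to pass to the AS Gorenstein conclusion). Your case-by-case scalar computations, including the careful use of the action convention for the antidiagonal generator in case (ii), all check out.
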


The cases in which the action has trivial homological determinant are precisely those considered in \cite{ckwz}, with the exception of $(n,k) = (2,1)$ in case (ii). We will see later (Remark \ref{smallinvringiso}) that, from the perspective of invariant theory, its omission from \cite{ckwz} is not too important. We also remark that, in case (ii) when $k = 1$, the 2-dimensional representation of our groups is different from those appearing in \cite{ckwz}, but they are conjugate under $\Autgr(A)$. This causes a change in some signs when writing down generators for the invariant rings, but has no other effects.


\section{Auslander's Theorem} \label{auslandersec}
\noindent The goal of this section is to show that the Auslander map is an isomorphism for any small subgroup of $\Autgr(A)$, where $A$ is AS regular of dimension 2. That is, we wish to show that if $(A,G)$ is a pair from Theorem \ref{smallclassification}, then the natural homomorphism
\begin{align*}
\phi: A \hash G \to \End_{A^G}(A), \quad \phi(ag)(b) = a (g \cdot b)
\end{align*}
is an isomorphism. We use the same strategy as was used in, for example, \cite{ckwz,bhz2,won} to establish similar isomorphisms; namely, we show that the conditions of the following result holds, which we restate from the introduction:

\begin{thm}[{\cite[Theorem 0.3]{bhz}}]
Let $G$ be a finite group acting on an AS regular, GK-Cohen-Macaulay algebra $A$ with $\GKdim A \geqslant 2$. Let $\overline{g} = \sum_{g \in G} g$, viewed as an element of $A \hash G$. Then the Auslander map is an isomorphism for the pair $(A,G)$ if and only if 
\begin{align*}
\GKdim \big((A \hash G)/\langle \overline{g} \rangle \big) \leqslant \GKdim A - 2.
\end{align*}
\end{thm}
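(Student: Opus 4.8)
The plan is to prove the criterion by studying the Auslander map through the idempotent description of the smash product. Write $B = A \hash G$ and let $e = \frac{1}{|G|}\overline{g} \in B$, an idempotent since $|G|$ is invertible in $\Bbbk$. One has the standard identifications $eBe \cong A^G$ and $Be \cong A$ as $(B,A^G)$-bimodules, under which the Auslander map becomes the canonical homomorphism $\phi \colon B \to \End_{eBe}(Be) = \End_{A^G}(A)$ given by left multiplication. The first step is to note that $\phi$ is always injective: its kernel is the left annihilator of $Be \cong A$, and since the elements of $G$ are distinct automorphisms of $A$ they are linearly independent over $A$ (Dedekind--Artin independence of characters), so no nonzero element of $B$ kills $A$. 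Thus the entire content of the criterion is the surjectivity of $\phi$, equivalently the vanishing of $C \coloneqq \coker \phi$.

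The second step is to locate $C$. It is a finitely generated graded $(B,B)$-bimodule, and the key structural observation is that it is annihilated by the ideal $\langle \overline{g}\rangle = BeB$, so that $C$ is a module over $B/\langle \overline{g}\rangle$: multiplying an arbitrary $A^G$-linear endomorphism of $A$ by $e$ (that is, averaging over $G$) returns an element in the image of $\phi$, and the failure to lie in the image is measured precisely modulo $BeB$. Hence $\GKdim C \leqslant \GKdim(B/\langle\overline{g}\rangle)$, and a closer analysis of the support shows conversely that $C \neq 0$ whenever $\GKdim(B/\langle\overline{g}\rangle) \geqslant \GKdim A - 1$; this is what will eventually supply the ``only if'' direction.

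The third and decisive step invokes the hypotheses. Since $A$ is AS regular and $G$ is finite, $B$ is noetherian with $\GKdim B = \GKdim A \eqqcolon d$, and the GK-Cohen--Macaulay assumption makes $A$ a maximal Cohen--Macaulay, hence reflexive (Serre $S_2$), module over the Cohen--Macaulay ring $A^G$; consequently $\End_{A^G}(A)$ is reflexive as well, and $\phi$ is an inclusion of $S_2$-modules over $A^G$. For such an inclusion, writing $A$ as the intersection of its localisations at the height-one primes shows that $\phi$ is forced to be surjective as soon as $C$ is supported in codimension at least $2$, i.e. as soon as $\GKdim C \leqslant d-2$. Combined with the second step this yields: $\phi$ is an isomorphism if and only if $\GKdim(B/\langle\overline{g}\rangle) \leqslant \GKdim A - 2$. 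I expect the main obstacle to be exactly this third step --- making the reflexivity and $S_2$ reductions rigorous in the noncommutative graded setting, where one must replace naive codimension by grade and local cohomology, and pinning down the sharp relationship between $\GKdim C$ and $\GKdim(B/\langle\overline{g}\rangle)$ that secures the ``only if'' direction; by contrast the injectivity and the identification of $C$ as a $B/\langle\overline{g}\rangle$-module are comparatively routine.
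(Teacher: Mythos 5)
First, a point of orientation: the paper does not prove this statement at all --- it is quoted verbatim from \cite[Theorem 0.3]{bhz} and used as a black box in Section \ref{auslandersec} --- so the comparison must be with the proof in that reference. Your outline does in fact follow the skeleton of that proof (and of Auslander's original argument): pass to the idempotent $e=\frac{1}{|G|}\overline{g}$, identify the Auslander map with $\phi\colon B\to\End_{eBe}(Be)$, observe that $\ker\phi$ and $\coker\phi$ are modules over $B/BeB$, and then let the GK-Cohen--Macaulay hypothesis convert the dimension bound into vanishing. Your step 2 is correct as a formal fact once the sides are tracked: for $f\in\End_{eBe}(Be)$ one has $f\circ\phi(xey)=\phi(f(xe)\gap ey)$ by right $eBe$-linearity, so $\coker\phi$ is killed by $BeB$ acting on the right, and $\ker\phi$ is the left annihilator of $Be$, hence also killed by $BeB$; both therefore have GK dimension at most $\GKdim(B/BeB)$.

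There are, however, two genuine gaps, and they sit exactly where the content of the theorem lies. First, the ``only if'' direction is not argued: the sentence ``a closer analysis of the support shows conversely that $C\neq 0$ whenever $\GKdim(B/\langle\overline{g}\rangle)\geqslant\GKdim A-1$'' simply asserts the hard half, and nothing in your steps produces a nonzero element of $C$ from a large quotient $B/BeB$. Second, step 3 --- which you rightly flag as the main obstacle --- is where the Cohen--Macaulay hypothesis actually does work, and the commutative device of intersecting localisations at height-one primes has no direct substitute here. The way \cite{bhz} closes both gaps simultaneously is to realise the endomorphism ring as a dual of the ideal $BeB$ (via $\Hom_B(Be\otimes_{eBe}eB,B)$, adjunction, and $\Hom_B(Be,B)\cong eB$), so that applying $\Hom_B(-,B)$ to $0\to BeB\to B\to B/BeB\to 0$ identifies $\ker\phi$ with $\Hom_B(B/BeB,B)$ and $\coker\phi$ with a piece of $\Ext^1_B(B/BeB,B)$; the GK-CM property then says these vanish simultaneously if and only if the grade of $B/BeB$ is at least $2$, if and only if $\GKdim(B/BeB)\leqslant\GKdim A-2$, which yields both implications at once. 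A smaller issue: your injectivity argument via independence of automorphisms requires $A$ to be a domain, which is true for the algebras in this paper but is not among the stated hypotheses; the argument consistent with the rest of the proof is that $\ker\phi$ is a one-sided ideal of GK dimension strictly less than $\GKdim B$, which a GK-Cohen--Macaulay algebra cannot have unless it is zero.
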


\indent It is well-known that the algebras $\Bbbk_q[u,v]$ and $\Bbbk_J[u,v]$ satisfy the hypotheses on $A$ given in this theorem. Since in our setting $\GKdim A = 2$, we need to show that $\GKdim (A \hash G)/\langle \overline{g} \rangle =0$, i.e. that $(A \hash G)/\langle \overline{g} \rangle$ is finite dimensional. \\
\indent We split the proof into two parts. We first assume that $G$ is a diagonal subgroup of $\Autgr(A)$, so that we are considering either case (i) or case (iii) from Theorem \ref{smallclassification}. We will then consider case (ii), which requires a more involved argument.

\subsection{Diagonal groups}
Suppose that $A$ is either $\Bbbk_q[u,v]$ ($q \neq 1$) or $\Bbbk_J[u,v]$ and that 
\begin{align*}
G = \frac{1}{n} (1,a) = \bigg \langle \hspace{-3pt} \begin{pmatrix} \omega & 0 \\ 0 & \omega^a \end{pmatrix} \hspace{-3pt} \bigg \rangle,
\end{align*}
where $\omega^{n} = 1$, $1 \leqslant a < n$, and $\gcd(a,n) = 1$. (Necessarily $a=1$ if $A = \Bbbk_J[u,v]$.) Write $g$ for the given generator of $\frac{1}{n} (1,a)$, so that
\begin{align*}
\overline{g} = 1 + g + \dots + g^{n-1}.
\end{align*}
We note that $\omega$ satisfies the following identity:
\begin{align}
\sum_{j = 0}^n \omega^{ij} = \left\{
\begin{array}{ll}
n & \text{if } i \equiv 0 \text{ mod } n, \\
0 & \text{otherwise}.
\end{array} \right. \label{omegaidentity}
\end{align}

\begin{thm} \label{diagausthm}
Suppose that $A$ and $G$ are as above. Then $(A \hash G)/\langle \overline{g} \rangle$ is finite dimensional. In particular, the Auslander map is an isomorphism for the pair $(A,G)$.
\end{thm}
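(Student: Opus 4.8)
The plan is to show directly that $A \hash G / \langle \overline{g} \rangle$ is finite dimensional by exhibiting an explicit spanning set of bounded degree. Since $G$ is diagonal, the smash product $A \hash G$ has a convenient $\Bbbk$-basis consisting of monomials $u^i v^j g^s$ with $0 \leqslant s < n$, and the grading on $A$ interacts nicely with the group action: $g$ acts on the degree-$(i,j)$ monomial $u^i v^j$ by the scalar $\omega^{i + aj}$. The key is to understand the two-sided ideal $\langle \overline{g} \rangle$ well enough to see that it contains all monomials $u^i v^j g^s$ once $i + j$ is large enough.

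First I would compute $\overline{g} \cdot u^i v^j$ and $u^i v^j \cdot \overline{g}$ using the identity (\ref{omegaidentity}). Multiplying $\overline{g}$ on the right by a monomial and using that $g^t u^i v^j = \omega^{t(i+aj)} u^i v^j g^t$, the sum $\sum_{t=0}^{n-1} \omega^{t(i+aj)}$ collapses via (\ref{omegaidentity}) to either $n$ (when $i + aj \equiv 0 \bmod n$) or $0$. This shows that $\overline{g}$ behaves like a projection onto the $G$-invariant part: the element $\overline{g} \hspace{1pt} u^i v^j$ equals $n \hspace{1pt} u^i v^j \overline{g}$ up to the unit scalar precisely when $u^i v^j \in A^G$, and otherwise these products vanish modulo lower terms. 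The plan is to use such relations to rewrite, modulo $\langle \overline{g} \rangle$, any monomial $u^i v^j g^s$ in terms of monomials whose total degree has been strictly reduced, thereby bounding the degrees that can survive.

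Concretely, I would argue that modulo $\langle \overline{g}\rangle$, a monomial $u^i v^j g^s$ is equivalent to a sum of monomials all of which lie in $A^G$ (times group elements), and then show that the invariant monomials $u^i v^j$ with $i + aj \equiv 0 \bmod n$ are themselves killed once their degree is large, because $\overline{g}$ acts on them as multiplication by the nonzero scalar $n$, forcing them into the ideal. The upshot is that only finitely many monomials $u^i v^j g^s$ can be nonzero in the quotient — those with $i + j$ below an explicit bound determined by $n$ and $a$ — so the quotient is finite dimensional. Because the argument is purely about monomial degrees and scalars, the same computation handles both $A = \Bbbk_q[u,v]$ and $A = \Bbbk_J[u,v]$; the Jordan-plane relation $vu = uv + u^2$ only rewrites a monomial as a sum of monomials of the same total degree, so it does not interfere with the degree bound.

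The main obstacle I anticipate is bookkeeping the ideal membership carefully: showing that a high-degree monomial actually lies in $\langle \overline{g}\rangle$ requires producing it as a combination $\sum a_k \overline{g} \hspace{1pt} b_k$, and the vanishing from (\ref{omegaidentity}) means one must combine several monomials of different degrees to isolate a single one. The cleanest route is probably to show that for each $i,j$ with $i + aj \equiv 0 \bmod n$ and $i + j$ sufficiently large, the invariant $u^i v^j$ can be written as a product of lower-degree invariants, one of which is hit by $\overline{g}$; the existence of such a factorisation reduces to a statement about the numerical semigroup of exponents $(i,j)$ with $i + aj \equiv 0 \bmod n$, which is generated in bounded degree. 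Once that semigroup generation bound is in hand, the finite-dimensionality — and hence, by Theorem \ref{introbhz}, the isomorphism of the Auslander map — follows immediately.

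\begin{proof}[Proof sketch]
We exhibit a finite spanning set for the quotient. As $G$ is diagonal, $A \hash G$ has basis $\{u^i v^j g^s : i,j \geqslant 0, \ 0 \leqslant s < n\}$, and $g \cdot u^i v^j = \omega^{i + aj}\hspace{1pt} u^i v^j$. Using (\ref{omegaidentity}), one computes that $\overline{g}$ acts as $n$ times the projection onto $A^G$, so that modulo $\langle \overline{g}\rangle$ every monomial reduces to a $\Bbbk$-combination of invariant monomials $u^i v^j$ with $i + aj \equiv 0 \bmod n$. Any such invariant of sufficiently large degree factors as a product of lower-degree invariants, one carrying a factor annihilated by $\overline{g}$; hence it lies in $\langle \overline{g}\rangle$. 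Only finitely many monomials therefore survive, so $(A \hash G)/\langle \overline{g}\rangle$ is finite dimensional. The Jordan-plane relation preserves total degree, so the same bound applies. By Theorem \ref{introbhz}, the Auslander map is an isomorphism.
\end{proof}
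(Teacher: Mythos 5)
There is a genuine gap, and it comes from conflating the two different roles that $\overline{g}$ plays. As an operator on $A$ (via the $G$-action), $\sum_{t} g^t \cdot a$ is indeed $n$ times the Reynolds projection: it kills non-invariant weight vectors and scales invariants by $n$. But as an element of $A \hash G$, the product $\overline{g}\, a = \sum_{t} (g^t \cdot a)\, g^t = \sum_t \omega^{t\lambda} a\, g^t$ is a sum spread across all group components: it is never zero for $a \neq 0$, and it is never a scalar multiple of $a$. Your two key assertions --- that for non-invariant monomials ``these products vanish modulo lower terms,'' and that invariant monomials of large degree are forced into the ideal ``because $\overline{g}$ acts on them as multiplication by the nonzero scalar $n$'' --- both rest on this conflation and are false in the smash product. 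For an invariant monomial $m$ one only gets $\overline{g}\, m = m\, \overline{g} = \sum_t m g^t \in \langle \overline{g} \rangle$, which does not exhibit $m$ itself as an element of the ideal, and no factorisation $m = m_1 m_2$ with $m_1 \overline{g}\, m_2$ isolates $m$ either: every such product is a combination $\sum_t c_t\, m g^t$ with all $c_t$ nonzero roots of unity. So the proposed route to showing that high-degree (invariant) monomials lie in $\langle \overline{g} \rangle$ does not go through, and the reduction of arbitrary monomials to invariant ones is likewise unjustified.

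The missing idea --- which is essentially the whole of the paper's short proof --- is how to isolate the $g^0$-component from these combinations. You correctly anticipate that ``one must combine several monomials \dots to isolate a single one,'' but the correct combination uses conjugates of the \emph{same} total degree, varying only the split between left and right factors: the $n$ elements $u^{n-1-j}\, \overline{g}\, u^j \in \langle \overline{g} \rangle$ for $0 \leqslant j \leqslant n-1$ each equal $\sum_{i} \omega^{ij} u^{n-1} g^i$, and summing over $j$ and applying (\ref{omegaidentity}) annihilates every $g^i$ with $i \neq 0$, leaving $n\, u^{n-1}$. Hence $u^{n-1} \in \langle \overline{g} \rangle$; the same computation with $v$ uses $\gcd(a,n)=1$ to conclude $v^{n-1} \in \langle \overline{g} \rangle$, and then $A_{\geqslant 2(n-1)} \subseteq \langle \overline{g} \rangle$ gives finite dimensionality directly, with no need to pass through invariant monomials or a numerical-semigroup bound. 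Your remark that the Jordan relation preserves total degree is correct and is the reason the same computation covers case (iii), where necessarily $a=1$.
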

\begin{proof}
Let $0 \leqslant j \leqslant n-1$. The element $u^{n-1-j}  \gap\gap \overline{g} u^j$ lies in $\langle \overline{g} \rangle$, and
\begin{align*}
u^{n-1-j}  \gap\gap \overline{g} u^j = \sum_{i=0}^{n-1} \omega^{ij} u^{n-1} g^i.
\end{align*}
Summing over all $j$, we have
\begin{align*}
\langle \overline{g} \rangle \ni \sum_{j=0}^{n-1} u^{n-1-j}  \gap\gap \overline{g} u^j = \sum_{i=0}^{n-1} \bigg(\gap\sum_{j=0}^{n-1} \omega^{ij} \bigg) u^{n-1} g^i.
\end{align*}
Using (\ref{omegaidentity}), $\sum_{j=0}^{n-1} \omega^{ij}$ is equal to 0 unless $i=0$. Therefore $u^{n-1} \in \langle \overline{g} \rangle$. Similarly,
\begin{align*}
\langle \overline{g} \rangle \ni \sum_{j=0}^{n-1} v^{n-1-j}  \gap\gap \overline{g} v^j = \sum_{i=0}^{n-1} \bigg(\gap\sum_{j=0}^{n-1} \omega^{aij} \bigg) v^{n-1} g^i.
\end{align*}
Since $a$ is coprime to $n$, $ai \equiv 0 \text{ mod } n$ if and only if $i \equiv 0 \text{ mod } n$, and therefore as above we find that $v^{n-1} \in \langle \overline{g} \rangle$. It follows that $A_{\geqslant 2(n-1)}$ is contained in the ideal $\langle \overline{g} \rangle$, and hence $(A \hash G)_{\geqslant 2(n-1)} \subseteq \langle \overline{g} \rangle$. Therefore $(A \hash G)/\langle \overline{g} \rangle$ is finite dimensional, as claimed.
\end{proof}

\subsection{Nondiagonal groups}
We now consider case (ii), so $A = \Bbbk_{-1}[u,v]$ and $G = G_{n,k}$, where $n$ and $k$ are positive coprime integers with $k \not\equiv 2 \text{ mod } 4$. It will be convenient to let $\omega$ be a primitive $(2nk)$th root of unity, and to think of $G$ as being generated by the elements $g$ and $h$, where
\begin{align*}
g = \begin{pmatrix} \omega^{2k} & 0 \\ 0 & \omega^{-2k} \end{pmatrix}, \qquad
h = \begin{pmatrix} 0 & \omega^n \\ \omega^n & 0 \end{pmatrix}.
\end{align*}
By Lemma \ref{Gpropslem}, this is a group of order $2nk$ and has a presentation given by
\begin{align*}
G \cong \langle g,h \mid g^n, h^{2k}, hg = g^{n-1} h \rangle.
\end{align*}
A complete list of elements of $G$ is
\begin{align*}
\Big \{ \gap g^i h^{2j} \gap\gap \Big | \gap\gap 0 \leqslant i \leqslant n{-}1, \gap \gap 0 \leqslant j \leqslant k{-}1 \gap \Big \} \sqcup \Big \{ \gap g^i h^{2j+1} \gap\gap \Big | \gap\gap 0 \leqslant i \leqslant n{-}1, \gap \gap 0 \leqslant j \leqslant k{-}1 \gap \Big \},
\end{align*}
where the first set in this disjoint union consists of diagonal elements, and the second set consists of antidiagonal elements. We will frequently make use of this fact. \\
\indent In this subsection, we show that the Auslander map is an isomorphism for the pair $(A,G)$ using a sequence of lemmas. We first define some elements of $A \hash G$ that will play an important role in the proof. For any integer $\ell$, define
\begin{align*}
G_\ell = \sum_{i = 0}^{n-1} \sum_{j = 0}^{k-1} \omega^{2\ell (nj+ki)} g^i h^{2j}, \qquad
H_\ell = \sum_{i = 0}^{n-1} \sum_{j = 0}^{k-1} \omega^{\ell (n(2j+1) - 2ki)} g^i h^{2j+1}.
\end{align*}
In the following lemma, we will show (see part (5)) that these elements satisfy
\begin{align*}
G_0 u^\ell = u^\ell G_\ell \quad \text{and} \quad H_0 u^\ell = v^\ell H_\ell
\end{align*}
for any $\ell \geqslant 0$, which motivates their definition. We also record a number of other useful properties:

\begin{lem} \label{Gprops} Let $\ell$ be an integer (in parts (5) and (6), also assume that $\ell \geqslant 0$).
\begin{enumerate}[{\normalfont (1)},leftmargin=*,topsep=0pt,itemsep=2pt]
\item $G_\ell = G_{\ell + nkr}$ for any integer $r$.
\item $H_\ell = (-1)^{nr} H_{\ell + nkr}$ for any integer $r$.
\item $\sum_{\ell=0}^{nk-1} G_\ell = nk$.
\item $\overline{g} = G_0 + H_0$.
\item $G_0 u^\ell = u^\ell G_\ell$ and $H_0 u^\ell = v^\ell H_\ell$.
\item Since $n$ and $k$ are coprime, there exist integers $a,b$ with $an+bk=1$. Set $m = an-bk$, which is uniquely determined modulo $2nk$. Then $G_0 v^\ell = v^\ell G_{m\ell}$ and $H_0 v^\ell = u^\ell H_{m \ell}$.
\end{enumerate}
\end{lem}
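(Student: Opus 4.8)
The plan is to reduce all six identities to the basic commutation relations between the generators $u,v$ of $A$ and the group elements inside the smash product $A \hash G$. First I would record how $g$ and $h$ act on generators: from the matrices one reads off $g \cdot u = \omega^{2k} u$, $g \cdot v = \omega^{-2k} v$, $h \cdot u = \omega^n v$ and $h \cdot v = \omega^n u$. Iterating these gives $(g^i h^{2j}) \cdot u = \omega^{2(nj+ki)} u$ and $(g^i h^{2j}) \cdot v = \omega^{2(nj-ki)} v$ on the diagonal elements, and $(g^i h^{2j+1}) \cdot u = \omega^{n(2j+1)-2ki} v$, $(g^i h^{2j+1}) \cdot v = \omega^{n(2j+1)+2ki} u$ on the antidiagonal ones. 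Since $x a = (x \cdot a)\, x$ in $A \hash G$, these translate into the commutation relations $g^i h^{2j}\, u^\ell = \omega^{2\ell(nj+ki)} u^\ell\, g^i h^{2j}$ and $g^i h^{2j+1}\, u^\ell = \omega^{\ell(n(2j+1)-2ki)} v^\ell\, g^i h^{2j+1}$ (together with their $v$-analogues), which is essentially all the input the proof needs.

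With these in hand, parts (4) and (5) are immediate: summing the $u$-relation over $0 \le i \le n-1$ and $0 \le j \le k-1$ gives $G_0 u^\ell = u^\ell G_\ell$ directly (this is precisely the computation motivating the definition of $G_\ell$), and likewise $H_0 u^\ell = v^\ell H_\ell$; setting $\ell = 0$ and recalling that the diagonal and antidiagonal elements together exhaust $G$ yields $\overline{g} = G_0 + H_0$. For (1) and (2) I would use the order of $\omega$: replacing $\ell$ by $\ell + nkr$ multiplies the coefficient of $g^i h^{2j}$ in $G_\ell$ by $\omega^{2nkr(nj+ki)} = 1$, which is (1); in $H_\ell$ the extra factor is $\omega^{nkr(n(2j+1)-2ki)}$, and using $\omega^{nk} = -1$ (with $\omega^{2nk}=1$ killing the $2ki$ contribution) this collapses to the constant $(-1)^{nr}$ independent of $i,j$, giving (2).

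Part (3) is a root-of-unity orthogonality computation. Interchanging summation order, $\sum_{\ell=0}^{nk-1} G_\ell = \sum_{i,j}\big(\sum_{\ell=0}^{nk-1} (\omega^2)^{\ell(nj+ki)}\big) g^i h^{2j}$, and since $\omega^2$ is a primitive $(nk)$th root of unity the inner geometric sum equals $nk$ when $nk \mid nj+ki$ and $0$ otherwise. Reducing the divisibility condition separately modulo $n$ and modulo $k$ and invoking $\gcd(n,k)=1$ forces $i = j = 0$ within the given ranges, leaving the single surviving term $nk \cdot 1 = nk$.

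The hard part will be (6), since moving $v^\ell$ (rather than $u^\ell$) past $G_0$ or $H_0$ flips the sign of the $ki$-term in the exponent relative to the definitions of $G_\ell$ and $H_\ell$, and this flip must be absorbed into the reindexing $\ell \mapsto m\ell$. The device is the B\'ezout pair: writing $an + bk = 1$ and $m = an - bk$, one computes $m-1 = -2bk$ and $m+1 = 2an$, whence $m(nj+ki) - (nj-ki) = 2nk(ai - bj)$ and $m(n(2j+1)-2ki) - (n(2j+1)+2ki) = -2nk(b(2j+1)+2ai)$. Both differences are multiples of $2nk$, so after exponentiating by $\omega$ they vanish, matching the coefficient of each group element in $v^\ell G_{m\ell}$ (resp.\ $u^\ell H_{m\ell}$) with the one produced by the commutation relation. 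I would also note quickly that $m$ is well-defined modulo $2nk$ (altering the B\'ezout pair changes $m$ by a multiple of $2nk$), so by (1) and (2) the expressions $G_{m\ell}$ and $H_{m\ell}$ are unambiguous. The only genuine subtlety is getting these two exponent identities exactly right; everything else is routine bookkeeping with roots of unity.
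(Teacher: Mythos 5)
Your proposal is correct and follows essentially the same route as the paper: parts (1)--(3) via the order of $\omega$ and root-of-unity orthogonality with $\gcd(n,k)=1$, parts (4)--(5) directly from the commutation relations in the smash product, and part (6) by a mod-$2nk$ exponent computation using the B\'ezout pair. The only cosmetic difference is in (6), where the paper multiplies the exponent by $(an+bk)=1$ and simplifies, while you verify the same congruence by showing the difference of exponents is a multiple of $2nk$; your two exponent identities check out.
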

\begin{proof} \leavevmode
\begin{enumerate}[{\normalfont (1)},wide=0pt,topsep=0pt,itemsep=2pt]
\item Since $\omega^{2nk} = 1$,
\begin{align*}
G_{\ell+nkr} &= \sum_{i = 0}^{n-1} \sum_{j = 0}^{k-1} \omega^{2(\ell+nkr)(nj+ki)} g^i h^{2j} = \sum_{i = 0}^{n-1} \sum_{j = 0}^{k-1} \omega^{2\ell(nj+ki)}\omega^{2nkr(nj+ki)} g^i h^{2j} \\
&= \sum_{i = 0}^{n-1} \sum_{j = 0}^{k-1} \omega^{2\ell(nj+ki)} g^i h^{2j} = G_\ell.
\end{align*}
\item In a similar vein to the above, we have
\begin{align*}
H_{\ell+nkr} &= \sum_{i = 0}^{n-1} \sum_{j = 0}^{k-1} \omega^{(\ell+nkr) (n(2j+1) - 2ki)} g^i h^{2j+1} \\
&= \sum_{i = 0}^{n-1} \sum_{j = 0}^{k-1} \omega^{\ell (n(2j+1) - 2ki)} \omega^{n^2kr(2j+1) - 2nk^2ri} g^i h^{2j+1} \\
&= \sum_{i = 0}^{n-1} \sum_{j = 0}^{k-1} \omega^{\ell (n(2j+1) - 2ki)} \omega^{(nr)kr} g^i h^{2j+1} \\
&= (-1)^{nr} H_\ell.
\end{align*}
\item We have 
\begin{align*}
\sum_{\ell=0}^{nk-1} G_\ell = \sum_{i = 0}^{n-1} \sum_{j = 0}^{k-1} \left( \sum_{\ell=0}^{nk-1} \omega^{2\ell (nj+ki)} \right) g^i h^{2j}.
\end{align*}
Writing $\varepsilon = \omega^2$, which is a primitive $nk$th root of unity, the term in parentheses is
\begin{align*}
\sum_{\ell=0}^{nk-1} \varepsilon^{\ell (nj+ki)} = \left \{
\begin{array}{ll}
    nk & \text{if } nj+ki \equiv 0 \text{ mod } nk, \\
    0 & \text{otherwise.}
\end{array}
\right.
\end{align*}
Since $n$ and $k$ are coprime, the only way for $nj+ki$ to be divisible by $nk$ is if $j$ is divisible by $k$ and $i$ is divisible $k$. Since $0 \leqslant i < n$ and $0 \leqslant j < k$, this happens if and only if $i=0=j$, so
\begin{align*}
\sum_{\ell=0}^{nk-1} \varepsilon^{\ell (nj+ki)} = \left \{
\begin{array}{ll}
    nk & \text{if } i=0=j, \\
    0 & \text{otherwise.}
\end{array}
\right.
\end{align*}
Therefore $\sum_{\ell=0}^{nk-1} G_\ell = nk$, as claimed.
\item This is clear.
\item These are both true by construction; indeed, since $2j$ is even,
\begin{align*}
G_0 u^\ell &= \sum_{i = 0}^{n-1} \sum_{j = 0}^{k-1} g^i h^{2j} u^\ell
= \sum_{i = 0}^{n-1} \sum_{j = 0}^{k-1} \omega^{2 \ell nj} g^i u^\ell h^{2j} 
= \sum_{i = 0}^{n-1} \sum_{j = 0}^{k-1} \omega^{2 \ell nj + 2\ell k i} u^\ell g^i h^{2j} 
= u^\ell G_\ell.
\end{align*}
The calculation showing that $H_0 u^\ell = v^\ell H_\ell$ is similar, after noting that, since $2j+1$ is always odd, $h^{2j+1} u^\ell = \omega^{\ell n(2j+1)} v^\ell h^{2j+1}$.
\item This time we provide the calculation for $H_0 v^\ell$, with the calculation for $G_0 v^\ell$ being similar. We have
\begin{align*}
H_0 v^\ell &= \sum_{i = 0}^{n-1} \sum_{j = 0}^{k-1} g^i h^{2j+1} v^\ell 
= \sum_{i = 0}^{n-1} \sum_{j = 0}^{k-1} \omega^{\ell n (2j+1)} g^i u^\ell h^{2j+1} 
= \sum_{i = 0}^{n-1} \sum_{j = 0}^{k-1} \omega^{\ell n (2j+1) + 2\ell k i} u^\ell g^i h^{2j+1}.
\end{align*}
We now consider the exponent of $\omega$. Working modulo $2nk$, we have
\begin{align*}
\ell n (2j+1) + 2\ell k i 
&\equiv \ell(n (2j+1) + 2 k i)(an+bk) 
\equiv \ell ((2j+1)an^2 + 2bk^2 i + bnk) \\
&\equiv \ell ((2j+1)(m+bk)n + 2(an-m)k i + bnk) \\
&\equiv \ell (mn(2j+1) - 2mki + 2bnk) \\
&\equiv m \ell (n(2j+1) - 2ki).
\end{align*}
Therefore 
\begin{align*}
H_0 v^\ell = \sum_{i = 0}^{n-1} \sum_{j = 0}^{k-1} \omega^{m \ell (n(2j+1) - 2ki)} u^\ell g^i h^{2j+1} = u^\ell H_{m\ell},
\end{align*}
as claimed. \qedhere
\end{enumerate}
\end{proof}

\indent Our goal now is to show that both $u^s$ and $v^s$ lie in $\langle \overline{g} \rangle$ for some $s \gg 0$, which will allow us to show that $(A \hash G)/ \langle \overline{g} \rangle$ is finite dimensional. As a first step towards this, we have the following lemma, where we will often invoke the properties established above without mention.

\begin{lem} \label{sumlem}
\leavevmode
\begin{enumerate}[{\normalfont (1)},leftmargin=*,topsep=0pt,itemsep=2pt]
\item $(u^{nk} + v^{nk})G_0 \in \langle \overline{g} \rangle$ or $(u^{nk} - v^{nk})G_0 \in \langle \overline{g} \rangle$.
\item Let $m$ be as in Lemma \ref{Gprops} \emph{(6)}. Fix $\ell \in \{ 1, \dots, nk-1 \}$, and let $r \in \{ 1, \dots, 2nk-1 \}$ be such that $m\ell \equiv r \text{ mod } 2nk$. Then $(u^{\ell+r} + v^{\ell+r})G_\ell \in \langle \overline{g} \rangle$ or $(u^{\ell+r} - v^{\ell+r})G_\ell \in \langle \overline{g} \rangle$.
\end{enumerate}
\end{lem}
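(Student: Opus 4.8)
The plan is to handle each part by producing, inside the two-sided ideal $\langle \overline{g} \rangle$, two explicit elements whose ``group-algebra parts'' are both $G_\ell$ and $H_\ell$ (for the relevant index), and then to combine these two elements so as to cancel the $H_\ell$-contribution, leaving a left-multiple of $G_\ell$. Throughout I will use $\overline{g} = G_0 + H_0$ from Lemma \ref{Gprops}(4), the translation rules of Lemma \ref{Gprops}(5),(6), the periodicity of Lemma \ref{Gprops}(1),(2), and the $(-1)$-commutation $v^b u^a = (-1)^{ab} u^a v^b$ in $A = \Bbbk_{-1}[u,v]$.

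For part (1) I would exploit that $u^{nk}$ \emph{commutes} with $G_0$: indeed $G_0 u^{nk} = u^{nk} G_{nk} = u^{nk} G_0$ by Lemma \ref{Gprops}(5),(1), whereas $H_0 u^{nk} = v^{nk} H_{nk} = (-1)^n v^{nk} H_0$ by Lemma \ref{Gprops}(5),(2). Computing the commutator of $\overline{g}$ with $u^{nk}$ therefore gives
\begin{align*}
u^{nk} \overline{g} - \overline{g} u^{nk} = u^{nk} H_0 - (-1)^n v^{nk} H_0 = (u^{nk} - (-1)^n v^{nk}) H_0 \in \langle \overline{g} \rangle.
\end{align*}
Right-multiplying by $H_0$ and using the elementary group-algebra identity $H_0^2 = nk\, G_0$ (the product of two antidiagonal group elements is diagonal, and each diagonal element is hit exactly $nk$ times) yields $(u^{nk} - (-1)^n v^{nk}) G_0 \in \langle \overline{g} \rangle$. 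According to the parity of $n$ this is either the ``$+$'' or the ``$-$'' alternative, proving part (1).

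For part (2) I would instead start from the two ideal elements $P \coloneqq \overline{g}\, u^\ell$ and $Q \coloneqq \overline{g}\, v^r$. Using Lemma \ref{Gprops}(5),(6) these are
\begin{align*}
P = u^\ell G_\ell + v^\ell H_\ell, \qquad Q = v^r G_{mr} + u^r H_{mr}.
\end{align*}
The crucial point is that $m$ is an involution modulo $2nk$: since $an + bk = 1$ and $m = an - bk$, one computes $m^2 = 1 - 4abnk \equiv 1 \pmod{2nk}$, whence $mr \equiv m^2 \ell \equiv \ell \pmod{2nk}$. By the periodicity of Lemma \ref{Gprops}(1),(2) this forces $G_{mr} = G_\ell$ and $H_{mr} = H_\ell$, so that $P$ and $Q$ share the \emph{same} group-algebra parts $G_\ell, H_\ell$. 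I would then left-multiply to form $u^r P$ and $(-1)^{r\ell} v^\ell Q$; the commutation $v^\ell u^r = (-1)^{r\ell} u^r v^\ell$ makes the two $H_\ell$-terms coincide, and subtracting gives
\begin{align*}
u^r P - (-1)^{r\ell} v^\ell Q = (u^{\ell + r} - (-1)^{r\ell} v^{\ell + r}) G_\ell \in \langle \overline{g} \rangle,
\end{align*}
which is the ``$+$'' or ``$-$'' alternative according to the parity of $r\ell$.

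The main obstacle — and the step requiring the most care — is arranging for the two ideal elements to have matching group-algebra parts, so that a single cancellation isolates $G_\ell$. This is exactly what the congruence $m^2 \equiv 1 \pmod{2nk}$ provides, and I would establish this small number-theoretic fact first; the remaining bookkeeping (tracking the signs $(-1)^n$ and $(-1)^{r\ell}$ coming from the periodicity relations and from the $(-1)$-commutation) is routine, but it is precisely what dictates which of the ``$+$'' or ``$-$'' alternatives holds in each case.
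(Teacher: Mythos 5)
Your proof is correct. For part (2) it is essentially the paper's argument: the paper likewise combines $u^r\overline{g}u^\ell = u^{\ell+r}G_\ell + u^rv^\ell H_\ell$ with $v^\ell\overline{g}v^r = v^{\ell+r}G_{mr} + \delta\, u^rv^\ell H_{mr}$, uses $m^2\equiv 1 \bmod 2nk$ to identify the indices, and adds or subtracts to cancel the $H_\ell$-terms; your observation that $mr\equiv\ell\bmod 2nk$ gives $H_{mr}=H_\ell$ on the nose, so that the only sign is $(-1)^{\ell r}$ from $v^\ell u^r=(-1)^{\ell r}u^r v^\ell$, is just a sharper bookkeeping of the paper's unspecified $\delta$. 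For part (1) you take a genuinely different (though closely related) route: the paper pairs $\overline{g}u^{nk}=u^{nk}G_0+\delta v^{nk}H_0$ with $v^{nk}\overline{g}=v^{nk}G_0+v^{nk}H_0$ and cancels the $H_0$-terms directly, obtaining $(u^{nk}\mp v^{nk})G_0$, whereas you first isolate a multiple of $H_0$ via the commutator $u^{nk}\overline{g}-\overline{g}u^{nk}=(u^{nk}-(-1)^nv^{nk})H_0$ and then convert it into a multiple of $G_0$ by right-multiplying and invoking $H_0^2=nk\,G_0$. That identity is valid ($H_0$ is the sum over the nontrivial coset of the index-two diagonal subgroup, so its square is $nk$ times the sum over the subgroup), and since $\Bbbk$ has characteristic $0$ the scalar $nk$ is invertible. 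Your version has the minor advantages of pinning down which of the two sign alternatives occurs (it is governed by the parity of $n$) and of giving a general device for converting $H_\ell$-multiples into $G_\ell$-multiples inside the ideal; the paper's pairing is marginally shorter. Both arguments are complete.
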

\begin{proof} \leavevmode
\begin{enumerate}[{\normalfont (1)},wide=0pt,topsep=0pt,itemsep=2pt]
\item Both of the following elements lie in $\langle \overline{g} \rangle$:
\begin{gather*}
\overline{g} u^{nk} = G_0 u^{nk} + H_0 u^{nk} = u^{nk} G_{n,k} + v^{nk} H_{nk} = u^{nk} G_{0} + \delta v^{nk} H_0, \\
v^{nk} \overline{g} = v^{nk} G_{0} + v^{nk} H_0,
\end{gather*}
where $\delta \in \{\pm 1\}$. Adding or subtracting one from the other, depending on the sign of $\delta$, we see that the claim holds.
\item Again, the following elements both lie in $\langle \overline{g} \rangle$:
\begin{gather*}
u^r \overline{g} u^{\ell} = u^r G_0 u^{\ell} + u^r H_0 u^{\ell} = u^{\ell+r} G_{\ell} + u^{r} v^\ell H_{\ell}, \\
v^\ell \overline{g} v^{r} = v^\ell G_0 u^r + v^\ell H_0 u^{r} = v^{\ell+r} G_{mr} + \delta u^{r} v^\ell H_{mr},
\end{gather*}
for some $\delta \in \{\pm 1\}$ (the sign of $\delta$ depends on $r$ as in Lemma \ref{Gprops} (2), as well as the fact that $vu = -uv$). Since $m = an-bk$ as in the statement of Lemma \ref{Gprops} (6), working modulo $2nk$ we have 
\begin{align*}
m^2 \equiv (an-bk)^2 \equiv a^2 n^2 - 2nkab + b^2 k^2 \equiv a^2 n^2 + 2nkab + b^2 k^2 \equiv (an+bk)^2 \equiv 1.
\end{align*}
Therefore $mr \equiv m^2 \ell \equiv \ell$, so that $v^\ell \overline{g} v^{r} = v^{\ell+r} G_{\ell} + \delta u^{r} v^\ell H_{\ell}$. The claim now follows by adding or subtracting the above two elements, depending on the sign of $\delta$. \qedhere
\end{enumerate}
\end{proof}

\begin{rem} \label{lplusreven}
The above proof shows that $2nk$ divides $m^2-1$, and hence $m$ must be odd. Therefore we find that $\ell$ and $r$, as in part (2) of the statement, have the same parity, and hence the exponent $\ell+r$ is even. This will be used later.
\end{rem}

\begin{lem} \label{tlem} \leavevmode
\begin{enumerate}[{\normalfont (1)},leftmargin=*,topsep=0pt,itemsep=2pt]
\item $u^k v^k G_0 \in \langle \overline{g} \rangle$.
\item $u^{(n+1)k} v^{(n+1)k} \in \langle \overline{g} \rangle$.
\end{enumerate}
\end{lem}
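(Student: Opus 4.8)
The plan is to treat part (1) as the crux and then obtain part (2) from it by a translation-and-averaging argument. The guiding observation for part (1) is that the monomial $z = u^kv^k$ interacts very simply with the two halves $G_0$ and $H_0$ of $\overline{g}$. Since $g \cdot (u^kv^k) = \omega^{2k\cdot k}\omega^{-2k\cdot k}u^kv^k = u^kv^k$ and $h^2\cdot(u^kv^k) = \omega^{4nk}u^kv^k = u^kv^k$, the element $z$ commutes with every diagonal group element, so $zG_0 = G_0 z$. On the other hand $h\cdot(u^kv^k) = \omega^{2nk}v^ku^k = (-1)^k u^kv^k$, so every antidiagonal group element scales $z$ by $(-1)^k$, giving $zH_0 = (-1)^k H_0 z$.

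First I would use this to prove part (1) when $k$ is odd. Both $\overline{g}z$ and $z\overline{g}$ lie in $\langle\overline{g}\rangle$, and
\[
\overline{g}z - z\overline{g} = \big(1 - (-1)^k\big)H_0 z = 2 H_0 z,
\]
so $H_0 z \in \langle\overline{g}\rangle$; subtracting this from $\overline{g}z = G_0 z + H_0 z$ yields $u^kv^kG_0 = G_0 z \in \langle\overline{g}\rangle$. The step I expect to be the main obstacle is that when $k$ is even this commutator vanishes identically, because $z$ is then central in $A \# G$ and the trick gives no information. Here I would rerun the argument with the smaller monomial $z' = u^{k/2}v^{k/2}$: one checks that $z'$ still commutes with $G_0$, while $h\cdot z' = \omega^{nk}(-1)^{(k/2)^2}u^{k/2}v^{k/2}$ equals $-z'$ precisely because the hypothesis $k \not\equiv 2 \pmod 4$ forces $4 \mid k$ (so that $\omega^{nk} = -1$ and $(k/2)^2$ is even). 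The commutator argument then gives $u^{k/2}v^{k/2}G_0 \in \langle\overline{g}\rangle$, and left-multiplying by $u^{k/2}v^{k/2}$ (the resulting sign $(-1)^{(k/2)^2}$ being $+1$ as $4 \mid k$) upgrades this to $u^kv^kG_0 \in \langle\overline{g}\rangle$. Verifying the exact root-of-unity sign in this even case is the one genuinely delicate computation.

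For part (2) I would first record the commutation rules $G_\ell u^s = u^s G_{\ell+s}$ and $G_\ell v^s = v^s G_{\ell + ms}$, proved exactly as Lemma \ref{Gprops}(5)--(6) but carrying a general index. Then, for each $\ell \in \{0, \dots, nk-1\}$, I multiply the element $u^kv^kG_0 \in \langle\overline{g}\rangle$ from part (1) on the left by $u^{nk-\ell}$ and on the right by $u^\ell v^{nk}$; using $G_0 u^\ell = u^\ell G_\ell$, the identity $G_\ell v^{nk} = v^{nk}G_\ell$ (valid since $mnk \equiv 0 \bmod nk$ and $G_\ell$ has period $nk$ by Lemma \ref{Gprops}(1)), and $vu = -uv$, everything collapses to
\[
(-1)^{k\ell}\,u^{(n+1)k}v^{(n+1)k}G_\ell \in \langle\overline{g}\rangle .
\]
Absorbing the scalar $(-1)^{k\ell}$ and summing over $\ell$, Lemma \ref{Gprops}(3) gives $\sum_\ell G_\ell = nk$, whence $nk\, u^{(n+1)k}v^{(n+1)k} \in \langle\overline{g}\rangle$ and therefore $u^{(n+1)k}v^{(n+1)k} \in \langle\overline{g}\rangle$. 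The point that makes the summation close is that the left/right multipliers are engineered so that the surviving powers of $u$ and $v$ are independent of $\ell$, allowing the factor $G_\ell$ to be averaged away; note that this half of the argument is uniform in $k$, so the parity subtlety lives entirely in part (1).
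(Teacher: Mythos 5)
Your proposal is correct and follows essentially the same route as the paper: part (1) rests on the observation that $u^kv^k$ (or $u^{k/2}v^{k/2}$ when $4\mid k$) commutes with $G_0$ but picks up a sign $-1$ past $H_0$, so that two elements of $\langle\overline{g}\rangle$ can be combined to isolate $u^kv^kG_0$, with the hypothesis $k\not\equiv 2 \bmod 4$ used exactly where the paper uses it; part (2) translates this element by powers of $u$ (and a central $v^{nk}$) to reach $\pm u^{(n+1)k}v^{(n+1)k}G_\ell$ for every $\ell$ and averages via $\sum_\ell G_\ell = nk$. The only differences from the paper's proof are cosmetic (a commutator formulation in place of the paper's direct expansion of $\overline{g}u^kv^k$, and the placement of the $v^{nk}$ factor on the right rather than the left).
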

\begin{proof} \leavevmode
\begin{enumerate}[{\normalfont (1)},wide=0pt,topsep=0pt,itemsep=2pt]
\item First suppose that $k$ is odd. Then
\begin{align*}
\overline{g} u^k v^k 
&= \sum_{i=0}^{n-1} \sum_{j=0}^{2k-1} g^i h^{2j} u^k v^k + \sum_{i=0}^{n-1} \sum_{j=0}^{2k-1} g^i h^{2j+1} u^k v^k \\
&= \sum_{i=0}^{n-1} \sum_{j=0}^{2k-1} \omega^{4nkj} g^i u^k v^k h^{2j} + \sum_{i=0}^{n-1} \sum_{j=0}^{2k-1} \omega^{2nk(2j+1)} g^i v^k u^k h^{2j+1} \\
&= \sum_{i=0}^{n-1} \sum_{j=0}^{2k-1} \omega^{4nkj} u^k v^k g^i h^{2j} + \sum_{i=0}^{n-1} \sum_{j=0}^{2k-1} \omega^{2nk(2j+1)} v^k u^k g^i h^{2j+1} \\
&= u^k v^k G_0 + v^k u^k H_0 \\
&= u^k v^k G_0 - u^k v^k H_0,
\end{align*}
where the final equality follows from the fact that $u^k$ and $v^k$ $(-1)$-commute since $k$ is odd. If instead $k$ is even (and hence $k \equiv 0 \text{ mod } 4$), then a similar calculation shows that
\begin{align*}
u^{k/2} v^{k/2} \overline{g} u^{k/2} v^{k/2} 
&= u^{k/2} v^{k/2} \hspace{-3pt} \left(\sum_{i=0}^{n-1} \sum_{j=0}^{2k-1} \omega^{nkj} u^{k/2} v^{k/2} g^i h^{2j} + \sum_{i=0}^{n-1} \sum_{j=0}^{2k-1} \omega^{2nkj + nk} v^{k/2} u^{k/2} g^i h^{2j+1} \right) \\
&= u^{k/2} v^{k/2}  \left( u^{k/2} v^{k/2} G_0 + \omega^{nk} v^{k/2} u^{k/2} H_0 \right) \\
&= u^{k} v^{k} G_0 - u^{k} v^{k} H_0,
\end{align*}
where the final equality holds since $\omega^{nk} = -1$, and $u^{k/2}$ and $v^{k/2}$ commute since $\frac{k}{2}$ is even. Therefore in either case we have $u^k v^k (G_0-H_0) \in \langle \overline{g} \rangle$. Since obviously $u^k v^k (G_0+H_0) = u^k v^k \overline{g} \in \langle \overline{g} \rangle$, the result follows.
\item Fixing $\ell \in \{ 0, \dots, nk-1 \}$, by part (1) and by Lemma \ref{Gprops} (5), we have
\begin{align*}
\langle \overline{g} \rangle \ni u^{nk-\ell} v^{nk} u^k v^k G_0 u^\ell = \pm u^{(n+1)k} v^{(n+1)k} G_\ell.
\end{align*}
By Lemma \ref{Gprops} (3), the sum of these elements (with appropriate sign choices) is equal to $u^{(n+1)k} v^{(n+1)k}$, and hence this element lies in $\langle \overline{g} \rangle$, as claimed. \qedhere
\end{enumerate}
\end{proof}

We are now in a position to prove the main result of this subsection:

\begin{thm} \label{nondiagausthm}
Suppose that $A = \Bbbk_{-1}[u,v]$ and $G = G_{n,k}$ are as above. Then $(A \hash G)/\langle \overline{g} \rangle$ is finite dimensional. In particular, the Auslander map is an isomorphism for the pair $(A,G)$.
\end{thm}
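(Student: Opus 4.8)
The plan is to verify the hypothesis of the restated \cite[Theorem 0.3]{bhz}: since $\GKdim A = 2$ and $A = \Bbbk_{-1}[u,v]$ is AS regular and GK-Cohen-Macaulay, it suffices to prove that $(A \hash G)/\langle \overline{g} \rangle$ is finite dimensional. Because $A \hash G = \bigoplus_{\gamma \in G} A \gamma$ with $\dim_\Bbbk \Bbbk G < \infty$ and each $\gamma$ invertible, this reduces to exhibiting an integer $s$ with $u^s, v^s \in \langle \overline{g} \rangle$: once this holds, every monomial $u^a v^b$ of degree $\geq 2s-1$ has $a \geq s$ or $b \geq s$, hence lies in the two-sided ideal generated by $u^s$ or $v^s$ and therefore in $\langle \overline{g} \rangle$, so $(A \hash G)_{\geq 2s-1} \subseteq \langle \overline{g} \rangle$.

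To produce such pure powers I would work modulo $I := \langle \overline{g} \rangle$ and show that $u^S G_\ell \in I$ and $v^S G_\ell \in I$ for every $\ell \in \{0, \dots, nk-1\}$ and a single large even $S$. Granting this, left-multiplying the identity $\sum_{\ell=0}^{nk-1} G_\ell = nk$ of Lemma \ref{Gprops} (3) by $u^S$ gives $nk \cdot u^S = \sum_\ell u^S G_\ell \in I$, so $u^S \in I$ as $\Bbbk$ has characteristic $0$; the same argument gives $v^S \in I$.

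The element $u^S G_\ell$ is controlled by combining two families of relations, both built from the index-shifting computations extending Lemma \ref{Gprops} (5)--(6), namely $G_\ell u^c = u^c G_{\ell+c}$ and $G_\ell v^c = v^c G_{\ell+mc}$. First, from $u^k v^k G_0 \in I$ (Lemma \ref{tlem} (1)), right-multiplying by $u^\ell$ to shift the index to $\ell$ and then left-multiplying by arbitrary powers of $u$ and $v$ (which do not shift the index) yields the vanishing region $u^a v^b G_\ell \in I$ whenever $a \geq k+\ell$ and $b \geq k$. Second, Lemma \ref{sumlem} provides a conversion relation $u^{e_\ell} G_\ell \equiv \pm v^{e_\ell} G_\ell$ modulo $I$, where $e_\ell$ is even by Remark \ref{lplusreven}. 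Left-multiplying this relation by $u^{e_\ell}$ and by $v^{e_\ell}$ and chaining through the common mixed term $u^{e_\ell} v^{e_\ell} G_\ell$ (the sign in $v^{e_\ell} u^{e_\ell} = u^{e_\ell} v^{e_\ell}$ being trivial as $e_\ell$ is even) gives the sign-free doubling relation $u^{2 e_\ell} G_\ell \equiv v^{2 e_\ell} G_\ell$. Iterating this doubling produces relations $u^{E} G_\ell \equiv v^{E} G_\ell$ for arbitrarily large even $E$, together with the mixed-term identity $u^{E} v^{E} G_\ell \equiv u^{2E} G_\ell$; once $E \geq k+\ell$ the term $u^E v^E G_\ell$ lies in the vanishing region, forcing $u^{2E} G_\ell \equiv 0$ and likewise $v^{2E} G_\ell \equiv 0$. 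Taking $S$ to be the largest exponent arising this way over the finitely many $\ell$ (and noting $u^{S_\ell} G_\ell \in I$ forces $u^S G_\ell \in I$ for all $S \geq S_\ell$) completes the step.

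I expect the main obstacle to be the passage from mixed monomials to genuinely pure powers. The monomial $u^{(n+1)k} v^{(n+1)k}$ of Lemma \ref{tlem} (2) generates inside $A$ only monomials divisible by both $u^{(n+1)k}$ and $v^{(n+1)k}$, so it can never yield a pure power on its own: the group-algebra factors $G_\ell$ are essential. Their bookkeeping is the delicate part — one must keep the index $\ell$ fixed (right-multiplication shifts it via $G_\ell u^c = u^c G_{\ell+c}$ and $G_\ell v^c = v^c G_{\ell + m c}$, so the conversions are arranged by left-multiplication), control the $(-1)$-commutation signs (tamed by the evenness of $e_\ell$ from Remark \ref{lplusreven}), and run the doubling so that some conversion relation reaches the vanishing region uniformly for every residue $\ell$ before summing via Lemma \ref{Gprops} (3).
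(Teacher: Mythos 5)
Your proposal is correct and follows essentially the same route as the paper's proof: both reduce to placing pure powers $u^S$ and $v^S$ in $\langle \overline{g} \rangle$ by combining the conversion relations of Lemma \ref{sumlem} (squared to kill signs, with Remark \ref{lplusreven} taming the $(-1)$-commutation), the mixed monomials supplied by Lemma \ref{tlem}, and the summation identity $\sum_{\ell} G_\ell = nk$ of Lemma \ref{Gprops} (3). The only difference is cosmetic: the paper first extracts the global element $u^s - v^s \in \langle \overline{g} \rangle$ and then writes $u^{2s} = u^s(u^s - v^s) + u^s v^s$, whereas you keep every computation localized at a fixed $G_\ell$ (deducing $u^{2E}G_\ell \in \langle \overline{g} \rangle$ directly from the vanishing of the mixed term) and only sum over $\ell$ at the very end.
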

\begin{proof}
We first show that $u^s-v^s \in \langle \overline{g} \rangle$ for some positive integer $s$. By Lemma \ref{tlem} (2), there exists an even integer $t$ such that $u^t v^t \in \langle \overline{g} \rangle$ (and hence $u^t v^t G_\ell \in \langle \overline{g} \rangle$ for all $\ell \in \{ 0, \dots, nk-1 \}$). As in Lemma \ref{sumlem} (2), if we fix $\ell \in \{ 1, \dots, nk-1 \}$, then there exists a positive integer $r$ such that $(u^{\ell+r} + v^{\ell+r})G_\ell \in \langle \overline{g} \rangle$ or $(u^{\ell+r} - v^{\ell+r})G_\ell \in \langle \overline{g} \rangle$. Multiplying this element on the left by $u^{\ell+r} - v^{\ell+r}$ or $u^{\ell+r} + v^{\ell+r}$, respectively, and noting that $u^{\ell+r}$ and $v^{\ell+r}$ are both central in $A$ since $\ell + r$ is even (Remark \ref{lplusreven}), we have $(u^{2(\ell+r)} - v^{2(\ell+r)})G_\ell \in \langle \overline{g} \rangle$. Now multiplying this element on the left by $(u^{2(\ell+r)} + v^{2(\ell+r)})$ shows that $(u^{4(\ell+r)} - v^{4(\ell+r)}) \in \langle \overline{g} \rangle$. 
Repeating this process in the obvious way, for each $\ell$ we can find an even integer $s_\ell \geqslant t$ such that $(u^{s_\ell} - v^{s_\ell}) G_\ell \in \langle \overline{g} \rangle$. Set 
\begin{align*}
s = t + \max \{ s_\ell \mid 0 \leqslant \ell \leqslant nk-1 \}.
\end{align*}
Now, for any $\ell \in \{ 0, \dots, nk-1 \}$, since $s_\ell \geqslant t$ and $s-s_\ell \geqslant t$, we have $u^{s-s_\ell} v^{s_\ell} G_\ell, v^{s-s_\ell} u^{s_\ell} G_\ell \in \langle \overline{g} \rangle$. Therefore
\begin{align*}
\langle \overline{g} \rangle &\ni (u^{s-s_\ell} + v^{s-s_\ell})(u^{s_\ell}-v^{s_\ell})G_\ell + u^{s-s_\ell} v^{s_\ell} G_\ell - v^{s-s_\ell} u^{s_\ell} G_\ell \\
&= u^s G_\ell - u^{s-s_\ell} v^{s_\ell} G_\ell + v^{s-s_\ell} u^{s_\ell} G_\ell - v^s G_\ell + u^{s-s_\ell} v^{s_\ell} G_\ell - v^{s-s_\ell} u^{s_\ell} G_\ell \\
&= (u^s-v^s) G_\ell.
\end{align*}
Summing over $\ell$ and applying Lemma \ref{Gprops} (3) shows that $u^s-v^s \in \langle \overline{g} \rangle$. \\
\indent Since $s \geqslant t$ we have $u^s v^s \in \langle \overline{g} \rangle$, and so it follows that 
\begin{align*}
u^{2s} = u^s(u^s-v^s) + u^s v^s \in \langle \overline{g} \rangle,
\end{align*}
and similarly $v^{2s} \in \langle \overline{g} \rangle$. Hence $A_{\geqslant 4s}$ is contained in $\langle \overline{g} \rangle$, and so the same is true of $(A \hash G)_{\geqslant 4s}$. In particular, $(A \hash G)/\langle \overline{g} \rangle$ is finite dimensional, as claimed.
\end{proof}

\subsection{Corollaries}
We conclude this section by noting some corollaries that follow from results in the literature. Following \cite{ueyama2013}, if $R$ is a noetherian graded algebra, then $R$ is a \emph{graded isolated singularity} if $\gldim(\text{qgr } R) < \infty$, where $\text{qgr } R = \gr R / \text{tors-}R$. If $R$ is commutative, then this definition generalises the usual commutative definition of $R$ being an isolated singularity. By \cite[Theorem 2.13, Lemma 2.12]{ample}, it follows that the algebras $A^G$ are graded isolated singularities. \\
\indent Since the Auslander map is an isomorphism for each of the pairs $(A,G)$ appearing in Theorem \ref{smallclassification}, we are also able to deduce the existence of bijections between objects in various module categories related to $A$ and $G$. We note that $M \in \gr A$ is called \emph{initial} if it is generated in degree 0 and $M_{<0} = 0$.

\begin{thm}[{\cite[Lemma 1.6, Proposition 2.3, Proposition 2.5, Corollary 4.5]{ckwz3}}]
Suppose that $(A,G)$ is a pair from Theorem \ref{smallclassification}. Then there are natural bijections between isomorphism classes of the following objects:
\begin{itemize}[topsep=0pt,leftmargin=25pt]
\item Simple $\Bbbk G$-modules;
\item Indecomposable direct summands of $A$ as a left $A^G$-module;
\item Indecomposable, finitely generated, projective, initial left $A \hash G$-modules;
\item Indecomposable, finitely generated, projective, initial left $\End_{A^G}(A)$-modules; and
\item Indecomposable maximal Cohen-Macaulay left $A^G$-modules, up to a degree shift.
\end{itemize}
\end{thm}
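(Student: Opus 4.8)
The plan is to deduce the theorem directly from the cited statements in \cite{ckwz3}, whose hypotheses I would verify hold in our setting rather than to reprove the bijections from scratch. The results of \cite{ckwz3} apply to a finite group $G$ acting by graded automorphisms on a noetherian, connected graded AS regular algebra $A$ for which the Auslander map is an isomorphism and $A^G$ is a graded isolated singularity; under these assumptions they establish precisely the chain of bijections claimed. Thus the entire task reduces to checking that each of these hypotheses is met by every pair $(A,G)$ from Theorem \ref{smallclassification}.

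First I would record that in each case $A$ is either a quantum plane $\Bbbk_q[u,v]$ or the Jordan plane $\Bbbk_J[u,v]$, both of which are noetherian, connected graded, and AS regular of dimension $2$, and that $G$ is a finite subgroup of $\Autgr(A)$ by construction. The crucial new input is that the Auslander map $\phi : A \hash G \to \End_{A^G}(A)$ is an isomorphism, which is exactly the content of Theorems \ref{diagausthm} and \ref{nondiagausthm}. Finally, the fact that $A^G$ is a graded isolated singularity was recorded at the start of this subsection as a consequence of \cite[Theorem 2.13, Lemma 2.12]{ample}. With all three hypotheses in place, the fivefold bijection follows by citing \cite[Lemma 1.6, Proposition 2.3, Proposition 2.5, Corollary 4.5]{ckwz3}.

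It is worth tracing how the individual bijections are assembled, since this clarifies the role of each ingredient. The isomorphism $A \hash G \cong \End_{A^G}(A)$ identifies the two module categories and hence matches up their indecomposable projective initial modules; the standard correspondence between indecomposable projectives over an endomorphism ring $\End_{A^G}(A)$ and the indecomposable direct summands of $A$ as an $A^G$-module supplies the next link; and the structure of the skew group algebra $A \hash G$ accounts for the bijection with simple $\Bbbk G$-modules. The passage to indecomposable maximal Cohen-Macaulay $A^G$-modules up to shift is where the graded isolated singularity hypothesis is essential, as it guarantees that the summands of $A$ exhaust the indecomposable MCM modules. The main obstacle is therefore not any single calculation but confirming that the framework of \cite{ckwz3} applies verbatim to our pairs; once the Auslander map isomorphism and the isolated singularity property are in hand, no further argument is required.
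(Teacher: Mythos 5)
Your proposal matches the paper's treatment exactly: the theorem is deduced by citing the results of \cite{ckwz3}, with the only inputs being the Auslander isomorphism established in Theorems \ref{diagausthm} and \ref{nondiagausthm} and the graded isolated singularity property of $A^G$ recorded from \cite{ample} at the start of that subsection. No further argument is given in the paper, so your verification of hypotheses is precisely the intended proof.
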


\section{A Presentation for Invariants of the Jordan Plane} \label{jordanplaneinvsec}
\noindent The remainder of this paper is devoted to providing presentations for the invariant rings $A^G$, where the pair $(A,G)$ is as in Theorem \ref{smallclassification}. The most interesting case is case (iii) when $A^G$ is noncommutative, which is analysed in Section \ref{noncomminvsec}. \\
\indent In this section, we write $A = \Bbbk_J[u,v]$, and let $G$ be a small subgroup of $\Autgr(A)$. By Lemma 3.2, $G = \frac{1}{n}(1,1)$ for some $n \geqslant 2$. It is easy to see that invariant ring $\Bbbk_J[u,v]^G$ consists of those elements lying in degrees which are multiples of $n$, so $\Bbbk_J[u,v]^G$ is simply the $n$th Veronese of $\Bbbk_J[u,v]$. The main goal of this section is to provide a presentation for this invariant ring. We will in fact show that these rings can be written as factors of AS regular algebras. \\
\indent To achieve this, we take advantage of the fact that $G$ also acts as a small group on the commutative ring $\Bbbk[u,v]$, and that the resulting invariant rings $\Bbbk[u,v]^G$ are well understood. In particular, by the results in Section \ref{typeApres}, if we write
\begin{align*}
x_i \coloneqq u^{n-i} v^i, \quad \text{for } 0 \leqslant i \leqslant n,
\end{align*}
then these elements generate $\Bbbk[u,v]^G$. Moreover, a minimal set of relations between the $x_i$ is given by
\begin{align}
x_i x_j = x_{i-1} x_{j+1} \text{ for all } i,j \text{ satisfying } 1 \leqslant i \leqslant j < n. \label{xirels}
\end{align}
\indent We note that we will occasionally swap between viewing $u$ and $v$ as elements of $\Bbbk_J[u,v]$ and viewing them as elements of the commutative ring $\Bbbk[u,v]$, but we will always make it clear which ring we are working inside. \\
\indent For the remainder of this section, for $\alpha \in \Bbbk$ and $k \in \mathbb{N}$ we use the notation
\begin{gather*}
\binom{\alpha}{k} \coloneqq \frac{\alpha(\alpha-1)\dots (\alpha-k+1)}{k!}.
\end{gather*}
If $\alpha$ is an integer, then this definition agrees with the usual definition of the binomial coefficient when $\alpha \geqslant k$, while $\binom{\alpha}{k} = 0$ when $0 \leqslant \alpha < k$. One useful identity that this notation satisfies is
\begin{gather}
(-1)^k \binom{\alpha}{k} = \binom{k-\alpha-1}{k}, \label{usefulidentity}
\end{gather}
which we will use later on. \\
\indent We first need to write down a formula for the commutator of monomials in $\Bbbk_J[u,v]$ in terms of the standard basis $\{ u^i v^j \mid i,j \geqslant 0 \}$.

\begin{lem} \label{jordanrel}
In the Jordan plane $\Bbbk_J[u,v]$, the following relations hold:
\begin{enumerate}[{\normalfont (1)},leftmargin=*,topsep=0pt,itemsep=2pt]
\item For all positive integers $j$,
\begin{align*}
vu^j = u^j v + j u^{j+1}.
\end{align*}
\item For all positive integers $i$,
\begin{align*}
v^i u = \sum_{k=0}^i k! \binom{i}{k} u^{k+1} v^{i-k}.
\end{align*}
\item For all positive integers $i$ and $j$,
\begin{align*}
v^i u^j = \sum_{k=0}^i k! \binom{j+k-1}{k} \binom{i}{k} u^{j+k} v^{i-k}.
\end{align*}
\end{enumerate}
\end{lem}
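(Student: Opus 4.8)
The plan is to prove the three identities in order, each one bootstrapping off the previous one, with all computations taking place in the Jordan plane $\Bbbk_J[u,v]$ using only the defining relation $vu = uv + u^2$.

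For part (1), I would proceed by induction on $j$. The base case $j=1$ is exactly the defining relation $vu = uv + u^2$. For the inductive step, assuming $vu^j = u^j v + j u^{j+1}$, I would compute $vu^{j+1} = (vu^j)u = (u^j v + ju^{j+1})u = u^j(vu) + ju^{j+1}u = u^j(uv + u^2) + ju^{j+2} = u^{j+1}v + u^{j+2} + ju^{j+2} = u^{j+1}v + (j+1)u^{j+2}$, which is the desired formula. This step is routine.

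For part (2), I would again use induction, this time on $i$. The base case $i=1$ reads $vu = 1! \binom{1}{0} u v + 1! \binom{1}{1} u^2 = uv + u^2$, which holds. For the inductive step, I would write $v^{i+1}u = v(v^i u)$ and substitute the inductive hypothesis, then push the single remaining $v$ past each power of $u$ using part (1). The main bookkeeping obstacle here is the recombination of binomial coefficients: after applying part (1) to each term $v \cdot u^{k+1} v^{i-k}$, one obtains a sum indexed by $k$ whose coefficients must be shown to equal $k! \binom{i+1}{k}$. I expect the key combinatorial identity needed will be a Pascal-type relation of the form $k! \binom{i}{k} + (k-1)\cdot (k-1)! \binom{i}{k-1} + \text{(correction terms)}$ collapsing into $k!\binom{i+1}{k}$; verifying this reindexing carefully is where the real work lies.

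For part (3), the cleanest route is to combine parts (1) and (2). I would fix $i$ and induct on $j$, or alternatively treat $v^i u^j = (v^i u) u^{j-1}$ and iterate; but the most transparent approach is to induct on $i$ using part (1) to commute a single $v$ past $u^j$ at each stage. Concretely, writing $v^{i+1}u^j = v(v^i u^j)$, substituting the inductive hypothesis for $v^i u^j$, and then applying the identity $v u^{j+k} = u^{j+k}v + (j+k)u^{j+k+1}$ from part (1) to each summand, one collects terms and must verify that the resulting coefficients match $k! \binom{j+k-1}{k}\binom{i+1}{k}$. \textbf{The hard part} of the whole lemma is this final coefficient-matching in part (3): the two binomial factors $\binom{j+k-1}{k}$ and $\binom{i}{k}$ interact under the shift in $i$, and one needs the vanishing/normalization conventions for $\binom{\alpha}{k}$ recorded before the lemma (together with the Vandermonde-type or Pascal-type identity $\binom{j+k-1}{k} = \binom{j+k-2}{k} + \binom{j+k-2}{k-1}$) to force the recursion closed. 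Once the combinatorial identity is isolated, its verification is elementary, so I expect the proof to reduce entirely to careful induction plus one clean binomial identity at each level.
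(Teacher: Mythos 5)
Your proposal is correct and follows essentially the same route as the paper: the paper cites parts (1) and (2) from the literature and proves part (3) exactly as you describe, by induction on $i$, writing $v^i u^j = v\cdot v^{i-1}u^j$, applying part (1) to each summand, and recombining the coefficients via the absorption identity $(j+k-1)\binom{j+k-2}{k-1}=k\binom{j+k-1}{k}$ together with Pascal's rule $\binom{i-1}{k}+\binom{i-1}{k-1}=\binom{i}{k}$. Your inductions for (1) and (2) are the standard arguments and are fine.
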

\begin{proof}
Parts (1) and (2) can be found in \cite{iyudu}. Presumably part (3) is also known, but we have been unable to find a reference; for completeness, we provide the proof, which is routine but lengthy. \\
\indent We proceed by induction on $i$. The case $i=1$ is the statement of part (2), so now fix $i \geqslant 2$ and suppose that the result holds for all smaller $i$. Then
\begin{align*}
v^i u^j &= v \cdot v^{i-1} u^j \\
&= v \left( \sum_{k=0}^{i-1} k! \binom{j+k-1}{k} \binom{i-1}{k} u^{j+k} v^{i-1-k} \right) \\
&= \sum_{k=0}^{i-1} k! \binom{j+k-1}{k} \binom{i-1}{k} \big( u^{j+k}v + (j+k)u^{j+k+1} \big) v^{i-1-k} \\
&= \sum_{k=0}^{i-1} k! \binom{j+k-1}{k} \binom{i-1}{k} u^{j+k} v^{i-k} + \sum_{k=0}^{i-1} k! (j+k) \binom{j+k-1}{k} \binom{i-1}{k} u^{j+k+1} v^{i-(k+1)} \\
&= u^j v^i + \sum_{k=1}^{i-1} k! \binom{j+k-1}{k} \binom{i-1}{k} u^{j+k} v^{i-k} \\
& \hspace{35pt} + \sum_{k=1}^{i-1} (k-1)! (j+k-1) \binom{j+k-2}{k-1} \binom{i-1}{k-1} u^{j+k} v^{i-k} \\
& \hspace{35pt} + (i-1)! (i+j-1) \binom{i+j-2}{i-1} u^{i+j} \\
&= u^j v^i + \sum_{k=1}^{i-1} \left( k! \binom{j+k-1}{k} \binom{i-1}{k} + (k-1)! (j+k-1) \binom{j+k-2}{k-1} \binom{i-1}{k-1} \right) u^{j+k} v^{i-k} \\
& \hspace{35pt} + i! \binom{i+j-1}{i} u^{i+j} \\
&= u^j v^i + \sum_{k=1}^{i-1} \left( k! \binom{j+k-1}{k} \binom{i-1}{k} + k! \binom{j+k-1}{k} \binom{i-1}{k-1} \right) u^{j+k} v^{i-k} \\
& \hspace{35pt} + i! \binom{i+j-1}{i} u^{i+j} \\
&= u^j v^i + \sum_{k=1}^{i-1} k! \binom{j+k-1}{k} \left( \binom{i-1}{k} + \binom{i-1}{k-1} \right) u^{j+k} v^{i-k} + i! \binom{i+j-1}{i} u^{i+j} \\
&= u^j v^i + \sum_{k=1}^{i-1} k! \binom{j+k-1}{k} \binom{i}{k} u^{j+k} v^{i-k} + i! \binom{i+j-1}{i} u^{i+j} \\
&= \sum_{k=0}^{i} k! \binom{j+k-1}{k} \binom{i}{k} u^{j+k} v^{i-k},
\end{align*}
where the second equality follows from the inductive hypothesis, the third equality follows from part (1), and where we have suppressed a number of routine binomial coefficient calculations. Hence the result follows by induction.
\end{proof}

Now fix $n$ and let $G = \frac{1}{n}(1,1)$. As with the polynomial ring $\Bbbk[u,v]$, we write
\begin{align*}
x_i \coloneqq u^{n-i} v^i, \quad \text{for } 0 \leqslant i \leqslant n,
\end{align*}
now viewed as elements of $\Bbbk_J[u,v]$. It will actually be more convenient to work with certain scalar multiples of the $x_i$; to this end, we also define
\begin{align}
y_i \coloneqq \frac{(-1)^i}{i!} x_i = \frac{(-1)^i}{i!} u^{n-i} v^i, \quad \text{for } 0 \leqslant i \leqslant n. \label{jordangens}
\end{align}
We first verify that these elements generate the invariant ring $\Bbbk_J[u,v]^G$, which is straightforward:

\begin{lem} \label{jordaninvgen}
The elements $y_i$ for $0 \leqslant i \leqslant n$ (or equivalently, the $x_i$) generate $\Bbbk_J[u,v]^G$.
\end{lem}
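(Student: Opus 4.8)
The plan is to identify the invariant ring with the $n$th Veronese of $A = \Bbbk_J[u,v]$ and then exploit the fact that $A$ is generated in degree one. First I would observe that the generator $g = \omega_n I$ of $G = \frac{1}{n}(1,1)$ acts on a homogeneous element of degree $d$ simply by scaling it by $\omega_n^d$; since $\omega_n$ is a primitive $n$th root of unity, such an element is fixed precisely when $n \mid d$. Hence
\begin{align*}
\Bbbk_J[u,v]^G = \bigoplus_{e \geqslant 0} A_{ne} = A^{(n)},
\end{align*}
recovering the remark made at the start of the section that the invariant ring is the $n$th Veronese.

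Next I would use that $A$ is generated in degree one by $u$ and $v$, so that $A_d = (A_1)^d$ for every $d \geqslant 0$, where $(A_1)^d$ denotes the span of $d$-fold products of degree-one elements. Writing $d = ne$ and grouping the factors into $e$ blocks of length $n$ gives
\begin{align*}
A_{ne} = (A_1)^{ne} = \big( (A_1)^n \big)^e = (A_n)^e.
\end{align*}
Thus every homogeneous component of $A^{(n)}$ is a product of copies of $A_n$, which says exactly that $A^{(n)} = \Bbbk_J[u,v]^G$ is generated as a $\Bbbk$-algebra by its degree-$n$ piece $A_n$.

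Finally I would pin down a basis of $A_n$. Using the standard basis $\{ u^i v^j \mid i,j \geqslant 0 \}$ of $\Bbbk_J[u,v]$, the monomials $x_i = u^{n-i} v^i$ for $0 \leqslant i \leqslant n$ are $n+1$ linearly independent elements; since $\hilb \Bbbk_J[u,v] = (1-t)^{-2}$ forces $\dim_\Bbbk A_n = n+1$, they form a basis of $A_n$. Combining this with the previous step shows that the $x_i$ generate $\Bbbk_J[u,v]^G$, and since the $y_i$ are nonzero scalar multiples of the $x_i$, the same conclusion holds for them.

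The argument is essentially formal, so I do not expect a serious obstacle; the only points requiring a little care are verifying that the invariants are exactly the Veronese (i.e.\ that $g$ really acts by degree-scaling) and that the chosen monomials span all of $A_n$ rather than a proper subspace. Both follow immediately from the scalar action of $g$ and from the fact that the $x_i$ are part of the standard monomial basis in degree $n$.
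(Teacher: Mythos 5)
Your argument is correct. Both you and the paper begin by identifying $\Bbbk_J[u,v]^G$ with the $n$th Veronese $A^{(n)}$ (the generator acts on a degree-$d$ element by the scalar $\omega_n^d$, so the invariants are exactly the components in degrees divisible by $n$). Where you diverge is in the second half: the paper argues constructively, taking an arbitrary basis monomial $u^iv^j$ with $n \mid i+j$, writing $i = an+r$ and $j = bn+s$ with $0 \leqslant r,s < n$, observing that $r+s$ is $0$ or $n$, and exhibiting the monomial explicitly as $x_0^a x_n^b$ or $x_0^a x_s x_n^b$. You instead invoke the general fact that a Veronese of an algebra generated in degree one satisfies $A_{ne} = (A_n)^e$, and then check by a dimension count that the $x_i$ span $A_n$. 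Your route is slightly more abstract and would generalise without change to any connected graded algebra generated in degree one; the paper's route has the minor advantage of producing an explicit factorisation of each invariant monomial, which is in keeping with the computational flavour of the rest of that section. Both are complete proofs, and the scalar passage from the $x_i$ to the $y_i$ is handled correctly in each.
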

\begin{proof}
We prove the statement for the $x_i$. As observed previously, $\Bbbk_J[u,v]^G$ is simply the $n$th Veronese of $\Bbbk_J[u,v]$, so it suffices to show that we can write any element of the form $u^i v^j$, with $i+j \equiv 0 \text{ mod } n$, as a product of the $x_i$. By Euclid's algorithm, $i = an + r$ and $j = bn + s$ for some integers $a,b,r,s$, where $0 \leqslant r,s < n$. Therefore $u^i v^j = (u^n)^a u^r v^s (v^n)^b$. Working modulo $n$, $0 \equiv i+j \equiv (an+r) + (bn+s) \equiv r+s$, and since $0 \leqslant r,s < n$, it follows that either $r=0=s$ or $r+s=n$. In the former case, $u^i v^j = x_0^a x_n^b$, while in the latter case, $u^i v^j = x_0^a x_s x_n^b$. It follows that the $x_i$ generate $\Bbbk_J[u,v]^G$.
\end{proof}

We now wish to give a formula for the commutator $y_j y_i - y_i y_j$. We will make use of the Chu-Vandermonde identity, which states that for any $\alpha,\beta \in \Bbbk$ and any non-negative integer $n$,
\begin{align}
\sum_{k=0}^n \binom{\alpha}{k} \binom{\beta}{n-k} = \binom{\alpha+\beta}{n} \label{vandermonde}.
\end{align}

\begin{prop} \label{kJcommrels}
In $\Bbbk_J[u,v]$,
\begin{align}
\sum_{k=0}^{j} \binom{n-i}{k} y_{j-k} y_{i} = \sum_{\ell=0}^{i} \binom{n-j}{\ell} y_{i-\ell} y_{j} \qquad \text{ for } j > i. \label{xjxirel}
\end{align}
\end{prop}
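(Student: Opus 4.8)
The plan is to expand both sides of (\ref{xjxirel}) in the standard monomial basis $\{u^a v^b\}$ of $\Bbbk_J[u,v]$ using the commutation formula of Lemma \ref{jordanrel}(3), and then to compare coefficients. The first step is to record a general product formula: writing $y_p = \frac{(-1)^p}{p!} u^{n-p} v^p$ and commuting $v^p$ past $u^{n-q}$ via Lemma \ref{jordanrel}(3) (applied with the roles of its indices played by $p$ and $n-q$) gives
\begin{align*}
y_p y_q = \frac{(-1)^{p+q}}{p!\,q!} \sum_{m=0}^{p} m! \binom{n-q+m-1}{m} \binom{p}{m}\, u^{2n-p-q+m} v^{p+q-m}.
\end{align*}
Substituting $p = j-k,\ q = i$ into the left-hand side of (\ref{xjxirel}) and $p = i-\ell,\ q = j$ into the right-hand side then expresses each side as a double sum of monomials $u^{2n-s} v^{s}$, where $s$ denotes the exponent of $v$. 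Since the $u^a v^b$ form a basis, it suffices to compare the coefficient of $u^{2n-s}v^s$ on each side for every $s$.

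Next I would fix $s$ and collect that coefficient. The crucial simplification on the left-hand side is that $(j-k) - m = s - i$ is independent of the summation index $k$, so the factor $m! \binom{j-k}{m}$ collapses to $(j-k)!/(s-i)!$ and the resulting $(j-k)!$ cancels the $1/(j-k)!$ coming from $y_{j-k}$. After this cancellation the coefficient of $u^{2n-s}v^s$ is, up to the factor $1/(i!\,(s-i)!)$, the signed sum
\begin{align*}
\sum_{k} (-1)^{j-k} \binom{n-i}{k} \binom{n-i+m-1}{m}, \qquad m = (i+j-s) - k.
\end{align*}
Rewriting the second factor via (\ref{usefulidentity}) as $(-1)^m \binom{i-n}{m}$ converts the $k$-dependent sign $(-1)^{j-k+m}$ into the constant $(-1)^{i-s}$ and leaves the Chu--Vandermonde convolution $\sum_k \binom{n-i}{k} \binom{i-n}{(i+j-s)-k}$, which by (\ref{vandermonde}) with parameters $n-i$ and $i-n$ equals $\binom{0}{\,i+j-s\,}$.

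The payoff is that $\binom{0}{N}$ vanishes for every positive integer $N$ and equals $1$ when $N = 0$, so the only $v$-degree contributing is $s = i+j$; carrying the remaining constant through shows the left-hand side equals $\frac{(-1)^{i+j}}{i!\,j!}\, u^{2n-i-j} v^{i+j}$. The right-hand side is handled by the identical computation with $i$ and $j$ interchanged: there $(i-\ell) - m = s - j$ is the constant, and Chu--Vandermonde with parameters $n-j$ and $j-n$ again forces $s = i+j$ and yields the same monomial $\frac{(-1)^{i+j}}{i!\,j!}\, u^{2n-i-j} v^{i+j}$. Hence the two sides agree. The main obstacle is purely bookkeeping: one must keep the summation ranges consistent so that (\ref{vandermonde}), whose statement here involves the genuine binomial $\binom{i-n}{m}$ with negative upper argument, applies verbatim. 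In particular I would check that the constraints $0 \leqslant m \leqslant j-k$ inherited from Lemma \ref{jordanrel}(3) restrict $k$ to exactly the range $0 \leqslant k \leqslant i+j-s$ on which the convolution is taken (and symmetrically for $\ell$ on the other side), so that no spurious or missing terms distort the application of the identity.
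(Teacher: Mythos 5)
Your proposal is correct and follows essentially the same route as the paper's proof: expand both products via Lemma \ref{jordanrel}(3), use (\ref{usefulidentity}) to turn the inner sum into a Chu--Vandermonde convolution equal to $\binom{0}{\,i+j-s\,}$, and conclude that each side collapses to $\frac{(-1)^{i+j}}{i!\,j!}u^{2n-i-j}v^{i+j}$. The only cosmetic difference is that you compare coefficients of $u^{2n-s}v^s$ for fixed $s$ whereas the paper reindexes the double sum by $r=k+m$ (i.e.\ $s=i+j-r$); your range check is fine since $0\leqslant m\leqslant j-k$ forces $s\geqslant i$ and hence $i+j-s\leqslant j$.
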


\begin{rem} \label{altidentity}
Observe that the relation (\ref{xjxirel}), can be rewritten as
\begin{align*}
y_j y_i - y_i y_j = \sum_{\ell=1}^{i} \binom{n-j}{\ell} y_{i-\ell} y_{j} - \sum_{k=1}^{j} \binom{n-i}{k} y_{j-k} y_{i},
\end{align*}
and so we do have a formula for the commutator $y_j y_i - y_i y_j$.
\end{rem}

\begin{proof}[Proof of Proposition \ref{kJcommrels}]
We claim that both sides of (\ref{xjxirel}) are equal to
\begin{align*}
\frac{(-1)^{i+j}}{i! j!} u^{2n-i-j} v^{i+j}.
\end{align*}
First consider the left hand side of (\ref{xjxirel}). Using Lemma \ref{jordanrel}, we have
\begin{align*}
\sum_{k=0}^{j} & \binom{n-i}{k}  y_{j-k} y_{i} 
= \sum_{k=0}^{j} \frac{(-1)^{i+j-k}}{i! (j-k)!} \binom{n-i}{k} u^{n-j+k} v^{j-k} u^{n-i} v^i \\
&= \sum_{k=0}^{j} \sum_{m=0}^{j-k} (-1)^{i+j-k} \frac{m!}{i! (j-k)!} \binom{n-i}{k} \binom{n-i+m-1}{m} \binom{j-k}{m} u^{2n-i-j+k+m} v^{i+j-k-m} \\
&= \sum_{k=0}^{j} \sum_{m=0}^{j-k} (-1)^{i+j-k} \frac{1}{i! (j-k-m)!} \binom{n-i}{k} \binom{n-i+m-1}{m} u^{2n-i-j+k+m} v^{i+j-k-m} \\
&= \sum_{r=0}^j (-1)^{i+j} \frac{1}{i! (j-r)!} \sum_{\genfrac{}{}{0pt}{}{k+m=r}{0 \leqslant k \leqslant j}} (-1)^{-k} \binom{n-i}{k} \binom{n-i+m-1}{m} u^{2n-i-j+r} v^{i+j-r} \\
&= \sum_{r=0}^j (-1)^{i+j+r} \frac{1}{i! (j-r)!} \sum_{k=0}^r (-1)^{r-k} \binom{n-i}{k} \binom{n-i+r-k-1}{r-k} u^{2n-i-j+r} v^{i+j-r} \\
&\stackrel{(\ref{usefulidentity})}{=} \sum_{r=0}^j  (-1)^{i+j+r} \frac{1}{i! (j-r)!} \sum_{k=0}^r \binom{n-i}{k} \binom{i-n}{r-k} u^{2n-i-j+r} v^{i+j-r} \\
&\stackrel{(\ref{vandermonde})}{=}\sum_{r=0}^j (-1)^{i+j+r} \frac{1}{i! (j-r)!} \binom{0}{r} u^{2n-i-j+r} v^{i+j-r} \\
&= \frac{(-1)^{i+j}}{i! j!} u^{2n-i-j} v^{i+j}.
\end{align*}
An entirely similar calculation shows that 
\begin{align*}
\sum_{\ell=0}^{i} \binom{n-j}{\ell} y_{i-\ell} y_{j} = \frac{(-1)^{i+j}}{i! j!} u^{2n-i-j} v^{i+j},
\end{align*}
and so we have the claimed equality.
\end{proof}

In addition to the above relations, one should expect to have relations which are similar to those given in (\ref{xirels}). This is indeed the case, as we now show.

\begin{prop} \label{relsinkJ}
In $\Bbbk_J[u,v]$,
\begin{align*}
i y_i y_j = (j+1) y_{i-1} y_{j+1} - \big(n-1-(j-i)\big) y_{i-1} y_{j} \text{ for all } i,j \text{ satisfying } 1 \leqslant i \leqslant j < n.
\end{align*}
\end{prop}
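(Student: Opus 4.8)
The plan is to reduce the stated identity, which is phrased in terms of the scaled generators $y_i$, to a cleaner relation among the $x_i = u^{n-i}v^i$, and then to verify the latter by expanding both sides in the standard monomial basis of $\Bbbk_J[u,v]$. First I would substitute $y_i = \tfrac{(-1)^i}{i!} x_i$ from (\ref{jordangens}) into the three products $i\,y_iy_j$, $(j+1)\,y_{i-1}y_{j+1}$, and $\big(n-1-(j-i)\big)\,y_{i-1}y_j$, and multiply through by the common factor $(-1)^{i+j}(i-1)!\,j!$. All the factorials and signs then collapse, and the claim becomes equivalent to
\[
x_i x_j = x_{i-1} x_{j+1} + \big(n-1-(j-i)\big)\, x_{i-1} x_j
\]
in $\Bbbk_J[u,v]$, for $1 \le i \le j < n$. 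This is the natural noncommutative analogue of the commutative relation (\ref{xirels}), to which it degenerates once the correction term is dropped.

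Next I would expand each product $x_p x_q = u^{n-p} v^p u^{n-q} v^q$ in the basis $\{u^a v^b\}$ using Lemma \ref{jordanrel}(3), which yields
\[
x_p x_q = \sum_{k=0}^{p} k!\,\binom{n-q+k-1}{k}\binom{p}{k}\, u^{2n-p-q+k} v^{p+q-k}.
\]
A short index bookkeeping shows that $x_ix_j$, $x_{i-1}x_{j+1}$, and $x_{i-1}x_j$ all contribute, monomial by monomial, to the terms $u^{2n-i-j+k} v^{i+j-k}$ with $0 \le k \le i$, where $x_{i-1}x_j$ enters through its summation index $k-1$. Comparing the coefficient of $u^{2n-i-j+k}v^{i+j-k}$ on the two sides and writing $N = n-j$ reduces the entire identity (after dividing by $(k-1)!$, the case $k=0$ being an immediate check) to the purely combinatorial statement
\[
k\binom{N+k-1}{k}\binom{i}{k} = k\binom{N+k-2}{k}\binom{i-1}{k} + (N-1+i)\binom{N+k-2}{k-1}\binom{i-1}{k-1}
\]
for each $k$ with $1 \le k \le i$.

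The only real work is this binomial identity, and it is the main (though modest) obstacle. I would prove it by applying Pascal's rule in the forms $\binom{N+k-1}{k} = \binom{N+k-2}{k} + \binom{N+k-2}{k-1}$ and $\binom{i}{k} = \binom{i-1}{k} + \binom{i-1}{k-1}$ to the left-hand side. One of the four resulting terms is exactly $k\binom{N+k-2}{k}\binom{i-1}{k}$, which cancels the first term on the right; the remaining three products I would collapse using the absorption identity $k\binom{m}{k} = (m-k+1)\binom{m}{k-1}$ (applied with $m=N+k-2$ and with $m=i-1$). Each then becomes a scalar multiple of $\binom{N+k-2}{k-1}\binom{i-1}{k-1}$, and the three scalars sum to $(N-1)+(i-k)+k = N-1+i$, which is precisely the required coefficient. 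Since every manipulation is an identity of binomial coefficients valid over $\QQ$, this completes the verification and hence the proof.
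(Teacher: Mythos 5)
Your proof is correct and takes essentially the same route as the paper's: clear the scalars to reduce the claim to $x_i x_j = x_{i-1}x_{j+1} + \big(n-1-(j-i)\big)x_{i-1}x_j$, expand each product via Lemma \ref{jordanrel}(3), and match coefficients of $u^{2n-i-j+k}v^{i+j-k}$. The only difference is that you make explicit the binomial-coefficient identity (proved via Pascal's rule and absorption) that the paper suppresses as ``easily shown.''
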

\begin{proof}
It is easy to see that it is equivalent to show
\begin{align*}
x_i x_j = x_{i-1} x_{j+1} + \big(n-1-(j-i)\big) x_{i-1} x_{j} \text{ for all } i,j \text{ satisfying } 1 \leqslant i \leqslant j < n.
\end{align*}
We verify this equality using Lemma \ref{jordanrel} (3). Again, we suppress some of the routine calculations involving binomial coefficients. We have
\begin{align*}
x_{i-1} x_{j+1} &= u^{n-i+1} v^{i-1} u^{n-j-1} v^{j+1} = \sum_{k=0}^{i-1} k! \binom{n-j+k-2}{k} \binom{i-1}{k} u^{2n-i-j+k} v^{i+j-k},
\end{align*}
and similarly,
\begin{align*}
x_{i-1} x_{j} &= \sum_{k=0}^{i-1} k! \binom{n-j+k-1}{k} \binom{i-1}{k} u^{2n-i-j+k+1} v^{i+j-k-1}, \\
&= \sum_{k=1}^{i} (k-1)! \binom{n-j+k-2}{k-1} \binom{i-1}{k-1} u^{2n-i-j+k} v^{i+j-k}.
\end{align*}
Therefore
\begin{align*}
x_{i-1} & x_{j+1} + \big(n-1 - (j-1)\big) x_{i-1} x_j \\ 
&= u^{2n-i-j} v^{i+j} + \sum_{k=1}^{i-1} c_k u^{2n-i-j+k} v^{i+j-k} + \big(n-1-(j-i)\big) (i-1)! \binom{n-j+i-2}{i-1} u^{2n-j} v^j,
\end{align*}
where 
\begin{align*}
c_{k} = k! \binom{n-j+k-2}{k} \binom{i-1}{k} + \big(n-1-(j-i)\big) (k-1)! \binom{n-j+k-2}{k-1} \binom{i-1}{k-1}.
\end{align*}
It is easily shown that
\begin{align*}
c_{k} = k! \binom{n-j+k-1}{k} \binom{i}{k}
\end{align*}
for all $1 \leqslant k \leqslant i-1$, and that
\begin{align*}
\big(n-1-(j-i)\big) (i-1)! \binom{n-j+i-2}{i-1} = i! \binom{n-j+i-1}{i}.
\end{align*}
Hence 
\begin{align*}
x_{i-1} & x_{j+1} + \big(n-1 - (j-1)\big) x_{i-1} x_j \\ 
&= u^{2n-i-j} v^{i+j} + \sum_{k=1}^{i-1} k! \binom{n-j+k-1}{k} \binom{i}{k} u^{2n-i-j+k} v^{i+j-k} + i! \binom{n-j+i-1}{i} u^{2n-j} v^j \\
&= \sum_{k=0}^{i} k! \binom{n-j+k-1}{k} \binom{i}{k} u^{2n-i-j+k} v^{i+j-k}.
\end{align*}
This last expression is easily seen to be equal to $x_{i} x_{j}$, whence the result.
\end{proof}

This allows us to give a presentation for the invariant ring $\Bbbk_J[u,v]^G$. The proof uses the theory of (noncommutative) Groebner bases, so we briefly recall some terminology and results. We order the words in the free algebra $\Bbbk \langle X_i \rangle$ using degree lexicographic order with $X_0 < X_1 < \dots < X_n$. Given an ideal $K$ of $\Bbbk \langle X_i \rangle$, we write $\text{Lead}(K)$ for the ideal generated by all the leading words of elements of $K$. A \emph{Groebner basis} of $K$ is a subset of $K$ such that the leading terms of this subset generate $\text{Lead}(K)$. In this case, we have $\hilb \Bbbk \langle X_i \rangle/K = \hilb \Bbbk \langle X_i \rangle/\text{Lead}(K)$.

\begin{thm} \label{jordaninvpres}
Let $n \geqslant 2$ and $G = \frac{1}{n}(1,1)$. Then $\Bbbk_J[u,v]^G$ has the presentation
\begin{align*}
\Bbbk_J[u,v]^G \cong \frac{\Bbbk \langle X_0, X_1, \dots, X_n \rangle}{I}
\end{align*}
where the ideal of relations $I$ is generated by
\begin{gather*}
\sum_{k=0}^{j} \binom{n-i}{k} X_{j-k} X_{i} = \sum_{\ell=0}^{i} \binom{n-j}{\ell} X_{i-\ell} X_{j} \qquad \text{ for } j > i \\
i X_i X_j = (j+1) X_{i-1} X_{j+1} - \big(n-1-(j-i)\big) X_{i-1} X_{j} \text{ for all } i,j \text{ satisfying } 1 \leqslant i \leqslant j < n.
\end{gather*}
\end{thm}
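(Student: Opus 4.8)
The plan is a standard Hilbert series squeeze. By Lemma \ref{jordaninvgen} the elements $y_i$ generate $\Bbbk_J[u,v]^G$, and by Proposition \ref{kJcommrels} and Proposition \ref{relsinkJ} they satisfy the two displayed families of relations. Hence, assigning each $X_i$ degree $1$ and grading the invariant ring so that $(\Bbbk_J[u,v]^G)_d = \Bbbk_J[u,v]_{nd}$, there is a surjective homomorphism of graded algebras $\psi \colon B \coloneqq \Bbbk\langle X_0, \dots, X_n \rangle / I \to \Bbbk_J[u,v]^G$ with $X_i \mapsto y_i$. It therefore suffices to prove that $\psi$ is injective, and since both sides are graded and $\psi$ is surjective, it is enough to show $\dim_\Bbbk B_d \leq \dim_\Bbbk (\Bbbk_J[u,v]^G)_d$ for every $d$.

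First I would record the target dimensions. Since $\dim_\Bbbk \Bbbk_J[u,v]_m = m+1$, we have $\dim_\Bbbk (\Bbbk_J[u,v]^G)_d = nd+1$, so $\hilb \Bbbk_J[u,v]^G = \tfrac{1+(n-1)t}{(1-t)^2}$. To bound $\dim_\Bbbk B_d$ from above, I would determine the leading words of the defining relations under degree-lexicographic order with $X_0 < \cdots < X_n$. In the first family, the term $X_j X_i$ (from $k=0$) has the largest first letter $X_j$ because $j > i$, so the leading word is $X_j X_i$ with coefficient $1$. In the second family, the term $X_i X_j$ has the largest first letter $X_i$ since $i > i-1$, so the leading word is $X_i X_j$ with coefficient $i \neq 0$. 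As each relation has an invertible leading coefficient, every word reduces modulo $I$ to a $\Bbbk$-linear combination of \emph{normal} words, i.e.\ those containing none of the leading words $X_a X_b$ ($a > b$) or $X_a X_b$ ($1 \leq a \leq b \leq n-1$) as a factor; hence the normal words span $B$.

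The key combinatorial step is to count the normal words. A word $X_{a_1} \cdots X_{a_d}$ is normal precisely when each consecutive pair $X_{a_t} X_{a_{t+1}}$ avoids both forbidden patterns, which forces $a_t \leq a_{t+1}$ and, whenever $a_t \neq 0$, forces $a_{t+1} = n$. Thus a normal word is non-decreasing and becomes constantly equal to $n$ immediately after its first nonzero entry, so it is either $X_0^d$ or has the form $X_0^p X_c X_n^{d-p-1}$ for a unique $p \in \{0, \dots, d-1\}$ and $c \in \{1, \dots, n\}$. This gives exactly $nd+1$ normal words in degree $d$, whence $\dim_\Bbbk B_d \leq nd+1 = \dim_\Bbbk (\Bbbk_J[u,v]^G)_d$. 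Combined with surjectivity of $\psi$ this forces equality in every degree, so $\psi$ is an isomorphism.

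I expect the main obstacle to be purely bookkeeping: correctly pinning down the two leading words (in particular verifying the nonvanishing of their coefficients, which is exactly where the hypotheses $j > i$ and $i \geq 1$ enter) and then carrying out the normal-word count so that it lands on $nd+1$ on the nose. A pleasant feature of this route is that it sidesteps any verification that the given relations form a Gröbner basis: since normal words span for \emph{any} generating set of $I$, the required upper bound needs no analysis of overlap ambiguities.
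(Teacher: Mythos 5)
Your proof is correct, and the overall skeleton (surjection from $\Bbbk\langle X_0,\dots,X_n\rangle/I$ plus a Hilbert series upper bound read off from leading words) matches the paper's. Where you genuinely diverge is in how the upper bound is established and matched to the target. The paper identifies the leading words of the generators of $I$ with those of the ideal $J$ presenting the commutative invariant ring $\Bbbk[u,v]^G$, asserts that the generators of $J$ form a Groebner basis, and then closes the loop via (the noncommutative) Molien's Theorem, which gives $\hilb \Bbbk_J[u,v]^G = \hilb \Bbbk[u,v]^G$. You instead count the normal words directly: your identification of the leading words ($X_jX_i$ with coefficient $1$ for $j>i$, and $X_iX_j$ with coefficient $i\neq 0$ for $1\leqslant i\leqslant j\leqslant n-1$) is right, your characterisation of the allowed adjacent pairs (first letter $X_0$, or second letter $X_n$) is right, and the resulting count of $nd+1$ normal words in degree $d$ agrees with $\dim_\Bbbk \Bbbk_J[u,v]_{nd}=nd+1$. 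Your route buys self-containedness: it needs neither the Groebner basis claim for $J$ nor Molien's Theorem, only the elementary fact that normal words span the quotient for any generating set with invertible leading coefficients (one direction of the diamond lemma, which, as you note, terminates under deglex). What the paper's route buys is reusability: the same comparison-with-the-commutative-case template is invoked again for Theorem \ref{kqinvariantspres}, where an explicit normal-word count would be messier because the commutative relations there come from Hirzebruch--Jung data rather than from a single Veronese.
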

\begin{proof}
Writing $y_i = \frac{(-1)^i}{i!} u^{n-i} v^i$ for the generators of $\Bbbk_J[u,v]^G$, Propositions \ref{kJcommrels} and \ref{relsinkJ} show that there is a surjection
\begin{align}
\frac{\Bbbk \langle X_0, X_1, \dots, X_n \rangle}{I} \twoheadrightarrow \Bbbk_J[u,v]^G, \quad X_i \mapsto y_i. \label{Jsurj}
\end{align}
To show that this is in fact an isomorphism, it suffices to show that $\Bbbk \langle X_i \rangle/I$ and $\Bbbk_J[u,v]^G$ have the same Hilbert series. Since the above map is a surjection, certainly 
\begin{align*}
\hilb \Bbbk \langle X_i \rangle /I \geqslant \hilb \Bbbk_J[u,v]^G.
\end{align*}
\indent By equation (\ref{xirels}), we know that $\Bbbk[u,v]^G$ can be presented as
\begin{align*}
\Bbbk[u,v]^G \cong \frac{\Bbbk \langle X_0, X_1, \dots, X_n \rangle}{J},
\end{align*}
where $J$ is the ideal of relations with generators
\begin{gather*}
X_j X_i = X_i X_j \text{ for } j > i, \\
X_i X_j = X_{i-1} X_{j+1} \text{ for all } i,j \text{ satisfying } 1 \leqslant i \leqslant j < n.
\end{gather*}
Here we present the commutative ring $\Bbbk[u,v]^G$ as the factor of a free algebra with, in particular, the obvious commutativity relations $X_j X_i = X_i X_j$, so as to more easily draw comparisons with our claimed presentation for $\Bbbk_J[u,v]^G$. With respect to degree lexicographic ordering, the elements on the left hand side of the relations above are the leading words. The generators of $J$ form a Groebner basis, so
\begin{align*}
\hilb \Bbbk \langle X_i \rangle/J = \hilb \Bbbk \langle X_i \rangle/\text{Lead}(J).
\end{align*}
\indent Now observe that the relations which define $I$ can be written as
\begin{gather*}
X_j X_i = -\sum_{k=1}^{j} \binom{n-i}{k} X_{j-k} X_{i} + \sum_{\ell=0}^{i} \binom{n-j}{\ell} X_{i-\ell} X_{j} \qquad \text{ for } j > i \\
X_i X_j = \frac{j+1}{i} X_{i-1} X_{j+1} - \frac{n-1-(j-i)}{i} X_{i-1} X_{j} \text{ for all } i,j \text{ satisfying } 1 \leqslant i \leqslant j < n.
\end{gather*}
The leading words are those which appear on the left hand side of each equality, and these are precisely the leading words for the relations in $J$. Therefore if we find a Groebner basis for $J$, it follows that $\text{Lead}(I) \supseteq \text{Lead}(J)$. Hence we have 
\begin{align*}
\hilb \Bbbk \langle X_i \rangle/I = \hilb \Bbbk \langle X_i \rangle/\text{Lead}(I) \leqslant \hilb \Bbbk \langle X_i \rangle/\text{Lead}(J) = \hilb \Bbbk \langle X_i \rangle/J.
\end{align*}
However, since $\Bbbk[u,v]^G$ and $\Bbbk_J[u,v]^G$ have the same Hilbert series by Molien's Theorem, and we have an inequality $\Bbbk \langle X_i \rangle/J \cong \Bbbk[u,v]^G$, we find 
\begin{align*}
\hilb \Bbbk \langle X_i \rangle/I \leqslant \hilb \Bbbk \langle X_i \rangle/J = \hilb \Bbbk[u,v]^G = \hilb \Bbbk_J[u,v]^G.
\end{align*}
Therefore $\hilb \Bbbk \langle X_i \rangle/I = \hilb \Bbbk_J[u,v]^G$ and so the map in (\ref{Jsurj}) is an isomorphism.
\end{proof}

\begin{cor} \label{jordancor}
Let $n \geqslant 2$ and $G = \frac{1}{n}(1,1)$. Then $\Bbbk_J[u,v]^G$ is a factor of an AS regular algebra of dimension $n+1$.
\end{cor}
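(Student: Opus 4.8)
The plan is to read the desired AS regular algebra directly off the presentation in Theorem \ref{jordaninvpres}. The generators of the ideal $I$ there split into two families: the \emph{commutator relations} (\ref{xjxirel}), which I collect into an ideal $I_1$, and the \emph{straightening relations} $iX_iX_j = (j+1)X_{i-1}X_{j+1} - (n-1-(j-i))X_{i-1}X_j$ of Proposition \ref{relsinkJ}, which I collect into an ideal $I_2$, so that $I = I_1 + I_2$. Setting
\[
B \coloneqq \frac{\Bbbk\langle X_0, \dots, X_n\rangle}{I_1},
\]
Theorem \ref{jordaninvpres} at once gives $\Bbbk_J[u,v]^G \cong B/\overline{I_2}$, so it remains only to prove that $B$ is AS regular of dimension $n+1$. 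This mirrors the commutative picture, in which $\Bbbk[u,v]^G$ is the factor of the AS regular algebra $\Bbbk[X_0, \dots, X_n]$ (the commutator relations degenerating to $X_jX_i = X_iX_j$) by the determinantal ideal generated by the relations $X_iX_j = X_{i-1}X_{j+1}$.

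To prove $B$ is AS regular I would first compute its Hilbert series by a Groebner basis argument, reusing the degree-lexicographic order with $X_0 < \dots < X_n$ from the proof of Theorem \ref{jordaninvpres}. By Remark \ref{altidentity} each commutator relation may be written
\[
X_jX_i = X_iX_j + \sum_{\ell=1}^{i}\binom{n-j}{\ell}X_{i-\ell}X_j - \sum_{k=1}^{j}\binom{n-i}{k}X_{j-k}X_i \qquad (j > i),
\]
and here $X_jX_i$ is the leading word: every remaining word on the right has first letter $X_{i-\ell}$ or $X_{j-k}$, each strictly less than $X_j$. Thus $\text{Lead}(I_1)$ contains all $X_jX_i$ with $j>i$, and there are exactly $\binom{n+1}{2}$ such relations, the correct number for a PBW presentation on $n+1$ generators. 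If these relations form a Groebner basis, the normal words are precisely the ordered monomials $X_{i_1}\cdots X_{i_d}$ with $i_1 \leq \dots \leq i_d$, whence $\hilb B = (1-t)^{-(n+1)}$, the Hilbert series of a polynomial ring in $n+1$ variables.

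The technical heart is therefore to verify that $I_1$ is a Groebner basis, i.e. that the overlap ambiguities $X_kX_jX_i$ with $k > j > i$ reduce to a common normal form (the Diamond Lemma condition). Equivalently — and this is the viewpoint I would adopt, following the intended reading of the introduction — one regards $B$ as the quantisation of the quadratic Poisson bracket on $\Bbbk[X_0,\dots,X_n]$ whose value $\{X_j,X_i\}$ is the right-hand correction term above; the ordered monomials then span $B$ in each degree, and the resolution of the overlap ambiguities is exactly the flatness (Jacobi-type) condition for this quantisation. These quantised Poisson polynomial algebras are precisely the family studied in \cite{lecoutre}, where they are shown to be Noetherian AS regular domains. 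The main obstacle is thus the overlap/flatness verification — or, more economically, matching $B$ against the classification in \cite{lecoutre} and invoking AS regularity there. Once $B$ is shown to be AS regular of dimension $n+1$, the isomorphism $\Bbbk_J[u,v]^G \cong B/\overline{I_2}$ exhibits the invariant ring as a factor of an AS regular algebra of dimension $n+1$, as required.
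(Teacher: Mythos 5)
Your proposal follows essentially the same route as the paper: the paper defines $B$ as the quotient of the free algebra by exactly the commutator relations (\ref{xjxirel}), identifies it with the algebra $R(n,-n)$ of \cite{lecoutre} via \cite[Proposition 3.15]{lecoutre}, and invokes \cite[Proposition 3.8]{lecoutre} for AS regularity of dimension $n+1$, so that Theorem \ref{jordaninvpres} finishes the argument. Your Groebner-basis/PBW digression is not needed (and by itself would only give the Hilbert series, not AS regularity), but you correctly flag that the economical path is to match $B$ against the classification in \cite{lecoutre}, which is precisely what the paper does.
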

\begin{proof}
Define $B \coloneqq \Bbbk[X_0, \dots, X_n]/K$, where $K$ is the ideal of relations generated by
\begin{align*}
\sum_{k=0}^{j} \binom{n-i}{k} X_{j-k} X_{i} = \sum_{\ell=0}^{i} \binom{n-j}{\ell} X_{i-\ell} X_{j} \qquad \text{ for } j > i.
\end{align*}
By \cite[Proposition 3.15]{lecoutre}, $B$ is isomorphic to an algebra denoted $R(n,a)$, where $a=-n$, which is an AS regular algebra of dimension $n+1$ by \cite[Proposition 3.8]{lecoutre}. The result then follows from Theorem \ref{jordaninvpres}.
\end{proof}

We end this section with an example:

\begin{example} \label{jordanexample}
In this example, we set $a = X_0, \hspace{2pt} b= X_1, \hspace{2pt} c = X_2, \hspace{2pt}  \dots$ when writing down presentations for the invariant rings $\Bbbk_J[u,v]^G$. When $n = 2$, the presentation for $\Bbbk_J[u,v]^G$ given in Theorem \ref{jordaninvpres} is
\begin{align*}
\Bbbk_J[u,v]^G \cong 
\frac{\Bbbk \langle a,b,c \rangle}{\left \langle
\begin{array}{cc}
ba + 2a^2 - ab, & ca + 2ba + a^2 - ac \\
cb + b^2 - bc, & b^2 - 2ac + ab
\end{array}
\right \rangle }.
\end{align*}
The algebra
\begin{align*}
B = 
\frac{\Bbbk \langle a,b,c \rangle}{\bigg \langle
\begin{array}{c}
\begin{array}{cc}
ba + 2a^2 - ab, & ca + 2ba + a^2 - ac
\end{array} \\
cb + b^2 - bc
\end{array}
\bigg \rangle },
\end{align*}
is AS regular, and $\Omega = b^2 - 2ac + ab$ is a regular normal element in $B$. Observe that an equivalent presentation for $\Bbbk_J[u,v]^G$ is the following:
\begin{align*}
\Bbbk_J[u,v]^G \cong 
\frac{\Bbbk \langle a,b,c \rangle}{\left \langle
\begin{array}{cc}
ba -ab + 2a^2, & ca - ac + 2ab - 3a^2 \\
cb - bc + b^2, & b^2 - 2ac + ab
\end{array}
\right \rangle },
\end{align*}
and that if we make the change of variables
\begin{align*}
a \mapsto a', \quad b \mapsto -b', \quad c \mapsto \frac{1}{2}c',
\end{align*}
then we obtain the presentation given in \cite[Theorem 5.2]{ckwz}. \\
\indent Now suppose that $n=3$. Then, by Theorem \ref{jordaninvpres},
\begin{align*}
\Bbbk_J[u,v]^G \cong 
\frac{\Bbbk \langle a,b,c,d \rangle}{
\sbox0{$
\begin{array}{ccc}
ba + 3a^2 - ab, & ca + 3ba + 3a^2 - ac, & da + 3ca + 3ba + a^2 - ad \\
cb + 2b^2 + ab - bc - ac, & db + 2cb + b^2 - bd, & dc + c^2 - cd \\
b^2 - 2ac + 2ab, & bc - 3ad + ac, & 2c^2 - 3bd + 2bc
\end{array}$}
\mathopen{\resizebox{1.2\width}{\ht0}{$\Bigg\langle$}}
\usebox{0}
\mathclose{\resizebox{1.2\width}{\ht0}{$\Bigg\rangle$}} }.
\end{align*}
If we omit the three relations in the last row, then the resulting algebra $B$ is AS regular. 
\end{example}

\section{Invariant Rings for Diagonal Subgroups of $\Autgr(\Bbbk_q[u,v])$} \label{kqinvsec}
\noindent In this section, we fix $A = \Bbbk_q[u,v]$ and 
\begin{align*}
G = \frac{1}{n} (1,a) = \bigg \langle \hspace{-3pt} \begin{pmatrix} \omega & 0 \\ 0 & \omega^a \end{pmatrix} \hspace{-3pt} \bigg \rangle,
\end{align*}
where $\omega^{n} = 1$, $1 \leqslant a < n$, and $\gcd(a,n) = 1$. We wish to write down a presentation for the invariant ring $A^G$. Since $G$ acts on $A$ in the same way as on $\Bbbk[u,v]$, one should expect the relations in $A^G$ to be $q$-deformed versions of those in $\Bbbk[u,v]^G$; we will show that this is in fact the case. The proof is quite straightforward if one uses the results in \cite{riemen}, with the only real difficulty stemming from keeping track of the powers of $q$ that appear in the relations. \\
\indent Recall from Section 2 that if we write
\begin{align*}
\frac{n}{n-a} = [\beta_1, \dots, \beta_{d-2}].
\end{align*}
and define two series of integers $i_1, \dots, i_d$ and $j_1, \dots, j_d$ as follows,
\begin{gather*}
\begin{array}{llllll}
i_1 = n, & i_2 = n-a & & \text{and} & & i_k = \beta_{k-2} i_{k-1} - i_{k-2} \text{ for } 3 \leqslant k \leqslant d, \\[2pt]
j_1 = 0, & j_2 = 1 & & \text{and} & & j_k = \beta_{k-2} j_{k-1} - j_{k-2} \text{ for } 3 \leqslant k \leqslant d,
\end{array}
\end{gather*}
\noindent then the invariant ring $\Bbbk[u,v]^{G}$ is minimally generated by the $d$ elements
\begin{align*}
x_k \coloneqq u^{i_k} v^{j_k}, \quad 1 \leqslant k \leqslant d.
\end{align*}
Moreover, a minimal set of relations between the $x_k$ is as follows:
\begin{gather}
\begin{array}{lcl}
x_{k-1} x_{k+1} = x_k^{\beta_{k-1}} & & \text{for } 2 \leqslant k \leqslant d-1, \\[6pt]
x_k x_\ell = x_{k+1}^{\beta_{k} - 1} x_{k+2}^{\beta_{k+1} - 2} \dots x_{\ell-2}^{\beta_{\ell-3} - 2}  x_{\ell-1}^{\beta_{\ell-2} - 1} & & \text{for }  2 \leqslant k+1 < \ell-1 \leqslant d-1.
\end{array} \label{diagrels}
\end{gather}
\indent Now view the $x_k$ as elements of $A^G$. (As in the previous subsection, we will often swap between viewing the $x_k$ as elements of $\Bbbk[u,v]^G$ and of $A^G$.) Using the same arguments as for $\Bbbk[u,v]^G$, one can show that the $x_k$ generate $A^G$. It remains to determine the relations between them. \\
\indent In contrast with the commutative case, when viewed as elements of $A^G$, in general the $x_k$ do not commute. Instead, we have the following:

\begin{lem}
Suppose that $1 \leqslant k < \ell \leqslant d$. Then
\begin{align*}
x_\ell x_k = q^{i_k j_\ell - i_\ell j_k} x_k x_\ell.
\end{align*}
\end{lem}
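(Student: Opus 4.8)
The plan is to reduce the entire statement to a computation in the standard monomial basis $\{ u^i v^j \mid i,j \geqslant 0 \}$ of $\Bbbk_q[u,v]$, using the fact that $x_k = u^{i_k} v^{j_k}$ is already written as such a monomial. The only structural input needed is a ``straightening'' rule that tells us how to move a power of $v$ past a power of $u$.

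First I would establish, by a short induction on the defining relation $vu = quv$, the monomial commutation formula
\begin{align*}
v^{b} u^{c} = q^{bc}\, u^{c} v^{b} \qquad \text{for all } b, c \geqslant 0.
\end{align*}
The base case $v u^{c} = q^{c} u^{c} v$ follows by inducting on $c$ (each pass of $v$ through one factor of $u$ produces one factor of $q$), and then inducting on $b$ gives the general statement, since $v^{b} u^{c} = v^{b-1}(v u^{c}) = q^{c} v^{b-1} u^{c} v = q^{c} q^{(b-1)c} u^{c} v^{b-1} v$.

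With this in hand, I would compute both products directly. Moving $v^{j_\ell}$ past $u^{i_k}$ and $v^{j_k}$ past $u^{i_\ell}$ respectively gives
\begin{align*}
x_\ell x_k = u^{i_\ell} v^{j_\ell} u^{i_k} v^{j_k} = q^{i_k j_\ell}\, u^{i_k + i_\ell} v^{j_k + j_\ell}, \qquad
x_k x_\ell = u^{i_k} v^{j_k} u^{i_\ell} v^{j_\ell} = q^{i_\ell j_k}\, u^{i_k + i_\ell} v^{j_k + j_\ell}.
\end{align*}
Since the underlying standard monomial $u^{i_k + i_\ell} v^{j_k + j_\ell}$ is identical in both expressions (addition of exponents is symmetric), comparing the scalar prefactors yields $x_\ell x_k = q^{i_k j_\ell - i_\ell j_k} x_k x_\ell$, which is exactly the claim.

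There is no genuine obstacle here: the result is a routine consequence of the $q$-commutation in $\Bbbk_q[u,v]$, and does not use the recursive definitions of the $i_k, j_k$ or the Hirzebruch--Jung data at all. The only point requiring care is bookkeeping of the exponents of $q$ — in particular getting the sign of the exponent $i_k j_\ell - i_\ell j_k$ correct, which amounts to noting that it is $x_\ell x_k$ (not $x_k x_\ell$) that acquires the factor $q^{i_k j_\ell}$ relative to the common monomial.
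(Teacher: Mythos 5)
Your proof is correct and matches the paper's argument essentially verbatim: the paper likewise computes $x_\ell x_k = q^{i_k j_\ell} u^{i_k+i_\ell} v^{j_k+j_\ell}$ and $x_k x_\ell = q^{i_\ell j_k} u^{i_k+i_\ell} v^{j_k+j_\ell}$ directly from $vu = quv$ and compares the scalar prefactors. Your explicit justification of the monomial commutation rule $v^b u^c = q^{bc} u^c v^b$ is just a spelled-out version of what the paper calls ``direct calculation.''
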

\begin{proof}
Since $vu = quv$, direct calculation using the definition of the $x_k$ gives
\begin{align*}
x_\ell x_k = u^{i_\ell} v^{j_\ell} u^{i_k} v^{j_k} = q^{i_k j_\ell} u^{i_k + i_\ell} v^{j_k + j_\ell}, \\
x_k x_\ell = u^{i_k} v^{j_k} u^{i_\ell} v^{j_\ell} = q^{i_\ell j_k} u^{i_k + i_\ell} v^{j_k + j_\ell}.
\end{align*}
Therefore,
\begin{align*}
x_\ell x_k = q^{i_k j_\ell - i_\ell j_k} q^{i_\ell j_k} u^{i_k + i_\ell} v^{j_k + j_\ell} = q^{i_k j_\ell - i_\ell j_k} x_k x_\ell, 
\end{align*}
as claimed.
\end{proof}

The remaining relations are $q$-deformed versions of those from (\ref{diagrels}). As mentioned previously, the only difficulty in establishing the relations is keeping track of the powers of $q$. To this end, we record an easy lemma, the proof of which is omitted:

\begin{lem} \label{kqidentities}
Let $a,b,c, a_1, \dots, a_m, b_1, \dots, b_m$ be non-negative integers. Then, in $A$, we have
\begin{gather*}
(u^a v^b)^c = q^{\frac{1}{2}abc(c-1)} u^{ac} v^{bc}, \\
(u^{a_1} v^{b_1}) \dots (u^{a_m} v^{b_m}) = q^{r} u^{\sum a_i} v^{\sum b_i}, \quad \text{where } r = \sum_{i=2}^m \sum_{j=1}^{i-1} a_i b_j.
\end{gather*}
\end{lem}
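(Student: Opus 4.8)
The only relation in $A = \Bbbk_q[u,v]$ is $vu = quv$, so the entire lemma is a bookkeeping exercise in moving each $v$ past each $u$. The plan is to first isolate the single fact that does all the work, namely the ``straightening'' identity
\begin{align*}
v^{b} u^{a} = q^{ab} \gap u^{a} v^{b} \qquad \text{for all } a,b \geqslant 0.
\end{align*}
I would prove this by a routine double induction (or by iterating $vu = quv$): moving one $v$ past $u^a$ picks up $q^a$, and repeating for each of the $b$ copies of $v$ gives the total factor $q^{ab}$. Once this is in hand, everything else is formal.

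Next I would observe that the first identity is a special case of the second: taking $m = c$ and setting $a_i = a$, $b_i = b$ for all $i$ gives
\begin{align*}
r = \sum_{i=2}^{c} \sum_{j=1}^{i-1} a_i b_j = ab \sum_{i=2}^{c} (i-1) = ab \cdot \tfrac{1}{2}c(c-1),
\end{align*}
which is exactly the claimed exponent. Hence it suffices to establish the second identity, and I would do this by induction on $m$. The base case $m=1$ is trivial, since the empty double sum gives $r = 0$.

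For the inductive step, write $A' = \sum_{i=1}^{m-1} a_i$ and $B' = \sum_{i=1}^{m-1} b_i$, and apply the inductive hypothesis to the first $m-1$ factors to get $(u^{a_1}v^{b_1})\cdots(u^{a_{m-1}}v^{b_{m-1}}) = q^{r'} u^{A'} v^{B'}$, where $r' = \sum_{i=2}^{m-1}\sum_{j=1}^{i-1} a_i b_j$. Right-multiplying by $u^{a_m} v^{b_m}$ and using the straightening identity to push $v^{B'}$ past $u^{a_m}$ produces a single factor $q^{a_m B'}$, yielding $q^{\gap r' + a_m B'} u^{A'+a_m} v^{B'+b_m}$. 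The only thing left is to check the exponent: since $a_m B' = \sum_{j=1}^{m-1} a_m b_j$ is precisely the $i=m$ summand of the double sum defining $r$, we get $r' + a_m B' = r$, completing the induction.

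\textbf{Main obstacle.} There is no real difficulty here; the single nontrivial input is the straightening identity $v^b u^a = q^{ab} u^a v^b$, and the only point requiring any care is the index reconciliation $r' + a_m B' = r$ in the inductive step, which amounts to recognising the freshly created term as the top row $i = m$ of the double sum. This is why the authors are content to omit the proof.
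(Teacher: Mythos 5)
Your proof is correct. The paper omits the proof of this lemma entirely (it is introduced as ``an easy lemma, the proof of which is omitted''), and your argument --- the straightening identity $v^b u^a = q^{ab} u^a v^b$, induction on $m$, and recovering the first identity as the special case $a_i = a$, $b_i = b$ --- is precisely the routine computation the authors evidently had in mind.
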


Using this, we can prove the following:

\begin{lem} \label{kqawkwardrels}
In $A$, the following relations hold:

\begin{gather*}
\begin{array}{lll}
x_k^{\beta_{k-1}} = q^{\frac{1}{2} i_k j_k \beta_{k-1}(\beta_{k-1}-1) - i_{k+1} j_{k-1}} x_{k-1} x_{k+1} & & \text{ for } 2 \leqslant k \leqslant d-1, \\[2pt]
q^{r_{k \ell}} x_k x_\ell = x_{k+1}^{\beta_{k} - 1} x_{k+2}^{\beta_{k+1} - 2} \dots x_{\ell-2}^{\beta_{\ell-3} - 2}  x_{\ell-1}^{\beta_{\ell-2} - 1} & & \text{ for } 2 \leqslant k+1 < \ell-1 \leqslant d-1,
\end{array}
\end{gather*}
where
\begin{align*}
r_{k \ell} = \sum_{m=k+1}^{\ell-1} &\frac{1}{2} i_m j_m (\beta_{m-1} - 2 + \delta_{m,k+1} + \delta_{m,\ell-1})(\beta_{m-1} - 3 + \delta_{m,k+1} + \delta_{m,\ell-1}) \\
&+ \sum_{m=k+2}^{\ell-1} \sum_{r=k+1}^{m-1} i_m j_r (\beta_{m-1} - 2 + \delta_{m,\ell-1}) (\beta_{r-1} - 2 + \delta_{r,k+1}) \hspace{7pt} - \hspace{7pt} i_\ell j_k,
\end{align*}
and where $\delta_{s,t}$ is the Kronecker delta.
\end{lem}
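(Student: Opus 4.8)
The plan is to exploit the fact that both sides of each claimed relation, when normalised to the standard form $q^{(\,\cdot\,)} u^a v^b$ using Lemma \ref{kqidentities}, reduce to the \emph{same} monomial $u^a v^b$, so that each identity becomes an equality of the accompanying exponents of $q$. The monomials coincide precisely because of the corresponding commutative relation in (\ref{diagrels}): in $\Bbbk[u,v]$ both sides are literally a single monomial $u^a v^b$, and passing to $A = \Bbbk_q[u,v]$ changes neither the total $u$-degree nor the total $v$-degree, only the power of $q$ acquired upon reordering. Thus the whole content of the lemma is the bookkeeping of these powers of $q$, carried out via the two identities of Lemma \ref{kqidentities}.

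First I would treat the first family as a model computation. Writing $x_k^{\beta_{k-1}} = (u^{i_k}v^{j_k})^{\beta_{k-1}}$ and applying the first identity of Lemma \ref{kqidentities} gives $q^{\frac12 i_k j_k \beta_{k-1}(\beta_{k-1}-1)} u^{i_k \beta_{k-1}} v^{j_k \beta_{k-1}}$, while the second identity applied to $x_{k-1}x_{k+1} = (u^{i_{k-1}}v^{j_{k-1}})(u^{i_{k+1}}v^{j_{k+1}})$ gives $q^{i_{k+1}j_{k-1}} u^{i_{k-1}+i_{k+1}} v^{j_{k-1}+j_{k+1}}$. The recurrences yield $i_{k-1}+i_{k+1} = \beta_{k-1} i_k$ and $j_{k-1}+j_{k+1} = \beta_{k-1} j_k$, so the monomials agree, and comparing prefactors produces exactly the stated exponent $\frac12 i_k j_k \beta_{k-1}(\beta_{k-1}-1) - i_{k+1}j_{k-1}$.

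For the second family I would first rewrite the right-hand exponents uniformly as $e_m = \beta_{m-1} - 2 + \delta_{m,k+1} + \delta_{m,\ell-1}$ for $k+1 \leqslant m \leqslant \ell-1$, which reproduces the boundary values $\beta_k - 1$ and $\beta_{\ell-2}-1$ (the hypothesis $k+1 < \ell-1$ ensures the two deltas never coincide). The $q$-exponent of the right-hand side then splits into two pieces: applying the first identity of Lemma \ref{kqidentities} to each factor $x_m^{e_m}$ contributes $\sum_{m=k+1}^{\ell-1} \tfrac12 i_m j_m e_m(e_m-1)$, which is the first sum in $r_{k\ell}$; and concatenating the resulting monomials $u^{i_m e_m} v^{j_m e_m}$ via the second identity contributes the cross-term sum $\sum_{m=k+2}^{\ell-1}\sum_{r=k+1}^{m-1} (i_m e_m)(j_r e_r)$. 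On the other side one computes $x_k x_\ell = q^{i_\ell j_k} u^{i_k+i_\ell} v^{j_k+j_\ell}$. Since the two monomials agree by (\ref{diagrels}), the desired relation $q^{r_{k\ell}} x_k x_\ell = (\text{right-hand side})$ is equivalent to $r_{k\ell} = (\text{first sum}) + (\text{cross sum}) - i_\ell j_k$, which is the claimed formula.

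The one point requiring genuine care — and the main obstacle — is identifying the cross-term sum with the stated second sum in $r_{k\ell}$. I would note that within the range of that double sum one has $m \geqslant k+2$ and $r \leqslant m-1 \leqslant \ell-2$, so that $\delta_{m,k+1} = 0$ and $\delta_{r,\ell-1} = 0$ throughout; hence $e_m = \beta_{m-1} - 2 + \delta_{m,\ell-1}$ and $e_r = \beta_{r-1} - 2 + \delta_{r,k+1}$ on the summation domain, and $(i_m e_m)(j_r e_r)$ becomes exactly $i_m j_r (\beta_{m-1} - 2 + \delta_{m,\ell-1})(\beta_{r-1}-2+\delta_{r,k+1})$. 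This delta bookkeeping, together with keeping the summation ranges straight, is the only fiddly part; everything else is a direct application of Lemma \ref{kqidentities}.
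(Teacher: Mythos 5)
Your proposal is correct and follows essentially the same route as the paper's proof: normalise both sides to the form $q^{(\cdot)}u^av^b$ via Lemma \ref{kqidentities}, use the commutative relations (\ref{diagrels}) (equivalently the recurrences for $i_k, j_k$) to see that the monomials agree, and match the resulting powers of $q$, with your $e_m$ playing the role of the paper's $\gamma_m$. If anything, your explicit observation that $\delta_{m,k+1}$ and $\delta_{r,\ell-1}$ vanish on the domain of the cross-term double sum is spelled out more carefully than in the paper, which leaves that bookkeeping implicit.
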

\begin{proof}
To prove the first relation, observe that, by Lemma \ref{kqidentities},
\begin{align*}
x_k^{\beta_{k-1}} = (u^{i_k} v^{j_k})^{\beta_{k-1}} = q^{\frac{1}{2} i_k j_k \beta_{k-1}(\beta_{k-1}-1)} u^{i_k \beta_{k-1}} u^{j_k \beta_{k-1}}.
\end{align*}
On the other hand, 
\begin{align*}
x_{k-1} x_{k+1} = u^{i_{k-1}} v^{j_{k-1}} u^{i_{k+1}} v^{j_{k+1}} = q^{i_{k+1} j_{k-1}} u^{i_{k-1} + i_{k+1}} v^{j_{k-1} + j_{k+1}}
\end{align*}
Now, viewing the $x_i$ as elements of $\Bbbk[u,v]^G$, we know that $x_k^{\beta_{k-1}} =  x_{k-1} x_{k+1}$, and hence 
\begin{align*}
i_k \beta_{k-1} = i_{k-1} + i_{k+1} \qquad \text{and} \qquad j_k \beta_{k-1} = j_{k-1} + j_{k+1}.
\end{align*}
(In fact, these identities follow directly from the definition of the $i_k$ and $j_k$ given at the beginning of this section.) Therefore
\begin{align*}
x_k^{\beta_{k-1}} &= q^{\frac{1}{2} i_k j_k \beta_{k-1}(\beta_{k-1}-1)} u^{i_k \beta_{k-1}} u^{j_k \beta_{k-1}} \\
&= q^{\frac{1}{2} i_k j_k \beta_{k-1}(\beta_{k-1}-1) - i_{k+1} j_{k-1}} q^{i_{k+1} j_{k-1}} u^{i_{k-1} + i_{k+1}} v^{j_{k-1} + j_{k+1}} \\
&= q^{\frac{1}{2} i_k j_k \beta_{k-1}(\beta_{k-1}-1) - i_{k+1} j_{k-1}} x_{k-1} x_{k+1},
\end{align*}
as claimed. \\
\indent For the second relation, fix $k$ and $\ell$, and for ease of notation write
\begin{align*}
\gamma_m = \beta_{m-1} - 2 + \delta_{m,k+1} + \delta_{m,\ell-1}
\end{align*}
for $k+1 \leqslant m \leqslant \ell -1$. Notice that $\gamma_m$ is precisely the exponent of $x_m$ appearing on the right hand side of the claimed relation. Then, using Lemma \ref{kqidentities},
\begin{align*}
x&_{k+1}^{\gamma_{k+1}} x_{k+2}^{\gamma_{k+2}} \dots x_{\ell-2}^{\gamma_{\ell-2}}  x_{\ell-1}^{\gamma_{\ell-1}} \\
&= (u^{i_{k+1}} v^{j_{k+1}})^{\gamma_{k+1}} (u^{i_{k+2}} v^{j_{k+2}})^{\gamma_{k+2}} \dots (u^{i_{\ell-2}} v^{j_{\ell-2}})^{\gamma_{\ell-2}} (u^{i_{\ell-1}} v^{j_{\ell-1}})^{\gamma_{\ell-1}} \\
&= q^{a} (u^{i_{k+1}\gamma_{k+1}} v^{j_{k+1}\gamma_{k+1}}) (u^{i_{k+2}\gamma_{k+2}} v^{j_{k+2}\gamma_{k+2}}) \dots (u^{i_{\ell-2}\gamma_{\ell-2}} v^{j_{\ell-2}\gamma_{\ell-2}}) (u^{i_{\ell-1}\gamma_{\ell-1}} v^{j_{\ell-1}\gamma_{\ell-1}}) \\
&= q^a q^b u^c v^d,
\end{align*}
where
\begin{gather*}
\begin{array}{lll}
\displaystyle{a = \sum_{m=k+1}^{\ell-1} \frac{1}{2} i_m j_m \gamma_m (\gamma_m - 1),} & & \displaystyle{b = \sum_{m=k+2}^{\ell-1} \sum_{r=k+1}^{m-1} i_m j_r \gamma_m \gamma_r,} \\[15pt]
\displaystyle{c = \sum_{s=k+1}^{\ell-1} i_s \gamma_s,} & & \displaystyle{d = \sum_{s=k+1}^{\ell-1} j_s \gamma_s}.
\end{array}
\end{gather*}
Observe that $a+b-i_\ell j_k = r_{k \ell}$. On the other hand, $x_k x_\ell = q^{i_\ell j_k} u^{i_k + i_\ell} u^{j_k + j_\ell}$. Again, if we view the $x_i$ as elements of $\Bbbk[u,v]^G$, then we know that $x_k x_\ell = x_{k+1}^{\beta_{k} - 1} x_{k+2}^{\beta_{k+1} - 2} \dots x_{\ell-2}^{\beta_{\ell-3} - 2}  x_{\ell-1}^{\beta_{\ell-2} - 1}$, and hence we have the identities
\begin{align*}
c = i_k + i_\ell \qquad \text{and} \qquad d = j_k + j_\ell.
\end{align*}
Therefore, in $A^G$,
\begin{align*}
x_{k+1}^{\gamma_{k+1}} x_{k+2}^{\gamma_{k+2}} \dots x_{\ell-2}^{\gamma_{\ell-2}}  x_{\ell-1}^{\gamma_{\ell-1}} &= q^{a+b} u^c v^d 
= q^{a+b-i_\ell j_k} q^{i_\ell j_k} u^{i_k + i_\ell} u^{j_k + j_\ell} 
= q^{a+b-i_\ell j_k} x_k x_\ell,
\end{align*}
as claimed.
\end{proof}

This allows us to give a presentation for the invariant ring $A^G$. The proof of this result is similar to that of Theorem \ref{jordaninvpres}, and is therefore omitted.

\begin{thm} \label{kqinvariantspres}
Let $G = \frac{1}{n}(1,a)$, where $1 \leqslant a < n$ and $gcd(a,n) = 1$. Let $q \in \Bbbk^\times$. Then $\Bbbk_q[u,v]^G$ has the presentation
\begin{align*}
\Bbbk_q[u,v]^G \cong \frac{\Bbbk \langle x_1, \dots, x_d \rangle}{I}
\end{align*}
where the ideal of relations $I$ is generated by
\begin{gather*}
\begin{array}{lll}
x_\ell x_k = q^{i_k j_\ell - i_\ell j_k} x_k x_\ell & & \text{for } 1 \leqslant k < \ell \leqslant d, \\[5pt]
x_k^{\beta_{k-1}} = q^{\frac{1}{2} i_k j_k  \beta_{k-1}(\beta_{k-1}-1) - i_{k+1} j_{k-1}} x_{k-1} x_{k+1} & & \text{for } 2 \leqslant k \leqslant d-1, \\[5pt]
q^{r_{k \ell}} x_k x_\ell = x_{k+1}^{\beta_{k} - 1} x_{k+2}^{\beta_{k+1} - 2} \dots x_{\ell-2}^{\beta_{\ell-3} - 2}  x_{\ell-1}^{\beta_{\ell-2} - 1} & & \text{for }  2 \leqslant k+1 < \ell-1 \leqslant d-1,
\end{array}
\end{gather*}
where $r_{k\ell}$ is as in Lemma \ref{kqawkwardrels}.
\end{thm}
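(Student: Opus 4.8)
The plan is to mirror the proof of Theorem \ref{jordaninvpres} essentially verbatim, the two ingredients being a surjection built from the relations already verified and a Hilbert series comparison carried out through a Groebner basis. First, the commutation lemma above together with Lemma \ref{kqawkwardrels} shows that the generators $x_k = u^{i_k} v^{j_k}$, now regarded as elements of $A^G$, satisfy all three families of relations defining $I$; since these elements also generate $A^G$ (by the same argument as in the commutative case), there is a surjection
\begin{align*}
\frac{\Bbbk \langle x_1, \dots, x_d \rangle}{I} \twoheadrightarrow \Bbbk_q[u,v]^G, \qquad x_k \mapsto x_k,
\end{align*}
and hence $\hilb \Bbbk\langle x_k \rangle/I \geqslant \hilb \Bbbk_q[u,v]^G$.

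For the reverse inequality I would fix the degree lexicographic order on $\Bbbk\langle x_1, \dots, x_d\rangle$ and compare with the commutative invariant ring. Writing $\Bbbk[u,v]^G \cong \Bbbk\langle x_1, \dots, x_d\rangle/J$, where $J$ is generated by the commutativity relations $x_\ell x_k = x_k x_\ell$ (for $k < \ell$) together with the relations (\ref{diagrels}), one uses (as in the Jordan plane case) that these generators form a Groebner basis, so that $\hilb \Bbbk\langle x_k\rangle/J = \hilb \Bbbk\langle x_k\rangle/\text{Lead}(J)$ and the leading words of the generators generate $\text{Lead}(J)$. The key observation is that each relation defining $I$ equates the very same pair of monomials, up to a nonzero power of $q$, as the corresponding relation of $J$: the $q$-commutation relation pairs $x_\ell x_k$ with $x_k x_\ell$, the relation $x_k^{\beta_{k-1}} = q^{(\cdots)} x_{k-1}x_{k+1}$ pairs the same monomials as $x_{k-1}x_{k+1} = x_k^{\beta_{k-1}}$, and the long relation pairs $x_k x_\ell$ with $x_{k+1}^{\beta_k-1}\cdots x_{\ell-1}^{\beta_{\ell-2}-1}$. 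Because $q \in \Bbbk^\times$ merely rescales monomials and causes no cancellation, the leading word of each generator of $I$ agrees with that of the corresponding generator of $J$, whence $\text{Lead}(I) \supseteq \text{Lead}(J)$ and
\begin{align*}
\hilb \Bbbk\langle x_k\rangle/I = \hilb \Bbbk\langle x_k\rangle/\text{Lead}(I) \leqslant \hilb \Bbbk\langle x_k\rangle/\text{Lead}(J) = \hilb \Bbbk\langle x_k\rangle/J = \hilb \Bbbk[u,v]^G.
\end{align*}

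To close the argument I would invoke Molien's Theorem: since the trace of a diagonal automorphism is independent of $q$ (Section \ref{not-1classification}), one has $\hilb \Bbbk_q[u,v]^G = \hilb \Bbbk[u,v]^G$. Combined with the two displayed inequalities this forces $\hilb \Bbbk\langle x_k\rangle/I = \hilb \Bbbk_q[u,v]^G$, so the surjection above is an isomorphism. I expect the only genuine work, exactly as in the proof of Theorem \ref{jordaninvpres}, to be the verification that the commutative Riemenschneider relations form a Groebner basis under the chosen order; the $q$-powers themselves are harmless, since they never alter which monomials appear in a relation and therefore never change a leading word.
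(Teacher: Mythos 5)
Your proposal is correct and is essentially the paper's intended argument: the paper explicitly omits this proof on the grounds that it is "similar to that of Theorem \ref{jordaninvpres}," and your adaptation (surjection from the verified relations, leading-word comparison with the Riemenschneider presentation of $\Bbbk[u,v]^G$, and Molien's Theorem to equate the Hilbert series of $\Bbbk_q[u,v]^G$ and $\Bbbk[u,v]^G$) is exactly that omitted argument. Your closing observation that the $q$-powers never change which monomial is the leading word is the right justification for why the Jordan-plane proof transfers verbatim.
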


Since the factor of $\Bbbk \langle x_1, \dots, x_d \rangle$ by the ideal generated by the first line of relations in Theorem \ref{kqinvariantspres} is a quantum polynomial ring, which are well known to be AS regular, the following is immediate:

\begin{cor} \label{kqcor}
Let $G = \frac{1}{n}(1,a)$, where $1 \leqslant a < n$ and $gcd(a,n) = 1$. Let $q \in \Bbbk^\times$. Then $A^G$ is a factor of an AS regular algebra of dimension $d$.
\end{cor}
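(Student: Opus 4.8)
The plan is to read off the required AS regular algebra directly from the presentation of $A^G$ established in Theorem \ref{kqinvariantspres}. That presentation exhibits $A^G$ as $\Bbbk \langle x_1, \dots, x_d \rangle / I$, where $I$ is generated by three families of relations. The key observation is that the first family,
\[
x_\ell x_k = q^{i_k j_\ell - i_\ell j_k} x_k x_\ell \qquad \text{for } 1 \leqslant k < \ell \leqslant d,
\]
consists purely of skew-commutation relations with scalar parameters $q_{k\ell} \coloneqq q^{i_k j_\ell - i_\ell j_k} \in \Bbbk^\times$. I would let $B$ denote the algebra generated by $x_1, \dots, x_d$ subject to only these relations; this is precisely a quantum polynomial ring (multiparameter quantum affine space) on $d$ generators.

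First I would confirm that $B$ is AS regular of dimension $d$. This is standard: the skew parameters are automatically consistent, since interchanging $k$ and $\ell$ negates the exponent $i_k j_\ell - i_\ell j_k$ and hence inverts the scalar, so $B$ can be realised as an iterated Ore extension $\Bbbk[x_1][x_2; \sigma_2] \cdots [x_d; \sigma_d]$, where $\sigma_m$ is the diagonal automorphism determined by $\sigma_m(x_k) = q_{km} x_k$ for $k < m$. An iterated Ore extension of this form over $\Bbbk$ is a noetherian AS regular domain with $\gldim B = \GKdim B = d$; in particular $B$ is AS regular of dimension $d$.

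It then remains only to observe that $A^G$ is a factor of $B$. The second and third families of relations in Theorem \ref{kqinvariantspres} are polynomial expressions in $x_1, \dots, x_d$, and hence determine well-defined elements of $B$; let $J \subseteq B$ be the two-sided ideal they generate. Since $I$ is generated by all three families together, we obtain $A^G \cong \Bbbk \langle x_1, \dots, x_d \rangle / I \cong B/J$, exhibiting $A^G$ as a factor of the AS regular algebra $B$ of dimension $d$, as required. There is essentially no serious obstacle here beyond invoking the standard AS regularity of a quantum polynomial ring; the only point meriting a moment's care is the consistency of the skew parameters defining $B$, which is immediate from the antisymmetry of the exponent $i_k j_\ell - i_\ell j_k$ in $k$ and $\ell$.
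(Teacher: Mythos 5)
Your argument is correct and is essentially identical to the paper's proof: the paper also takes $B$ to be the quantum polynomial ring defined by the first family of skew-commutation relations from Theorem \ref{kqinvariantspres}, invokes its well-known AS regularity, and observes that $A^G$ is the quotient by the remaining relations. Your additional detail on consistency of the skew parameters and the iterated Ore extension realisation is a fine elaboration but not a different route.
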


\section{Commutative Invariant Rings of the Form $\Bbbk_{-1}[u,v]^{G_{n,k}}$} \label{comminvsec}
\noindent In this section, let $n$ and $k$ be positive coprime integers where $k \not\equiv 2 \text{ mod } 4$, and let $G = G_{n,k} = \langle g,h \rangle$, where
\begin{align}
g = \begin{pmatrix} \omega^{2k} & 0 \\ 0 & \omega^{-2k} \end{pmatrix}, \qquad
h = \begin{pmatrix} 0 & \omega^n \\ \omega^n & 0 \end{pmatrix}, \label{Gnkaltpres}
\end{align}
and where $\omega$ is a primitive ($2nk$)th root of unity. We have already seen that the invariant ring $\Bbbk_{-1}[u,v]^G$ is commutative if and only if $n$ or $k$ is even, so we will assume that this is the case in this subsection. When this is the case, we will show that the rings $\Bbbk_{-1}[u,v]^G$ are commutative quotient singularities, and hence are well-understood. More precisely, we shall see that every type $\mathbb{D}$ singularity can be obtained in this way, as well as some type $\mathbb{A}$ singularities.

\subsection{Cyclic quotient singularities} We first consider the type $\mathbb{A}$ cases, which occur when $n \leqslant 2$. If $n=1$ then $k$ has to be even, and since $k \not\equiv 2 \text{ mod } 4$, it follows that $k \equiv 0 \text{\normalfont{ mod} } 4$. If instead $n=2$, then since $n$ and $k$ are coprime, it follows that $k$ is odd. 

\begin{prop} \label{nequals1}
Suppose that $n=1$ and $k$ is even, so $k \equiv 0 \text{\normalfont{ mod} } 4$. Then 
\begin{align*}
\Bbbk_{-1}[u,v]^{G_{n,k}} \cong \Bbbk[u,v]^{\frac{1}{2k}(1,k+1)},
\end{align*}
so that $\Bbbk_{-1}[u,v]^{G_{n,k}}$ is a cyclic quotient singularity.
\end{prop}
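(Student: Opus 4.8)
The plan is to compute $A^{G_{1,k}}$ by taking invariants in stages and then ``untwisting'' the sign so as to land inside a commutative polynomial ring. First I would record the shape of the group: since $n=1$ and $\omega$ is a primitive $2k$th root of unity, the diagonal generator $g = \begin{psmallmatrix}\omega^{2k}&0\\0&\omega^{-2k}\end{psmallmatrix}$ is the identity, so by Lemma \ref{Gpropslem} the group $G_{1,k} = \langle h\rangle$ is cyclic of order $2k$, acting by $h\cdot u = \omega v$ and $h\cdot v = \omega u$. Because $k$ is even, $h^k = \begin{psmallmatrix}-1&0\\0&-1\end{psmallmatrix} =: \alpha$ is central, so $\langle\alpha\rangle \trianglelefteq \langle h\rangle$ and $A^{G_{1,k}} = \big(A^{\langle\alpha\rangle}\big)^{\langle h\rangle/\langle\alpha\rangle}$. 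By the proof of Proposition \ref{commvsnoncomm}, $A^{\langle\alpha\rangle} = \Bbbk[u^2,v^2,uv] \cong \Bbbk[X,Y,Z]/\langle XY+Z^2\rangle$ is commutative, so every ring appearing from this point on is commutative and all subsequent linear changes of variable are legitimate.

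Next I would untwist. The quadric $XY+Z^2=0$ is the $A_1$-cone, so $A^{\langle\alpha\rangle}$ is isomorphic, as a graded algebra, to the $(-I)$-invariants of a commutative polynomial ring $\Bbbk[s,t]$ (with $s,t$ in degree $1$) via the map $\theta\colon u^2\mapsto s^2,\ v^2\mapsto t^2,\ uv\mapsto i\,st$; the scalar $i$ is inserted precisely so that the image of $XY+Z^2$ is $s^2t^2-(st)^2=0$, which is the defining relation of $\Bbbk[s^2,t^2,st]$. Using $h\cdot u^2 = \omega^2 v^2$, $h\cdot v^2 = \omega^2 u^2$ and $h\cdot(uv) = -\omega^2 uv$, I would check that $\theta\circ h\circ\theta^{-1}$ is induced by the linear automorphism $\hat h$ of $\Bbbk[s,t]$ with $\hat h(s) = \omega t$, $\hat h(t) = -\omega s$. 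Here $\hat h^2 = -\omega^2 I$, and since $k\equiv 0 \bmod 4$ one gets $\hat h^k = (-\omega^2)^{k/2} = -I$, so $\alpha$ corresponds to $-I \in \langle\hat h\rangle$; hence $A^{G_{1,k}} \cong \Bbbk[s,t]^{\langle\hat h\rangle}$ with $\langle\hat h\rangle$ cyclic of order $2k$.

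Finally I would identify the type. The eigenvalues of $\hat h$ are $\pm i\omega = \omega^{k/2+1},\,\omega^{3k/2+1}$, using $i = \omega^{k/2}$, so in an eigenbasis $\langle\hat h\rangle$ acts diagonally and $\Bbbk[s,t]^{\langle\hat h\rangle}$ is a cyclic quotient singularity. Choosing $\zeta = \omega^{k/2+1}$ (a primitive $2k$th root, since $\gcd(k/2+1,2k)=1$) as the normalising root, a short congruence — in which $k\equiv 0\bmod 4$ forces $k^2/2\equiv 0\bmod 2k$ — gives $(k+1)(k/2+1)\equiv 3k/2+1\bmod 2k$, i.e. $\omega^{3k/2+1} = \zeta^{k+1}$. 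Thus $\langle\hat h\rangle \cong \tfrac{1}{2k}(1,k+1)$ and the claimed isomorphism follows. The main obstacle is the bookkeeping of the middle paragraph: transporting the $h$-action through $\theta$ and verifying it is induced by the linear map $\hat h$, together with the final arithmetic check that the diagonal weights are exactly $(1,k+1)$ — this is the only place the hypothesis $k\equiv 0\bmod 4$ is genuinely used. As an independent consistency check one can confirm that the two invariant rings have equal Hilbert series via the noncommutative Molien theorem recalled in Section \ref{prelimsec}.
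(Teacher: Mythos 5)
Your proof is correct, and it shares the paper's skeleton: both arguments begin by observing that $g$ is trivial, that $G_{1,k}=\langle h\rangle$ is cyclic of order $2k$ with $h^k=-I$, and both compute invariants in two stages through the commutative quadric cone $A^{\langle h^k\rangle}$. Where you diverge is in the second stage. The paper never leaves the three-generator presentation of the cone: it writes down explicit generators $x,y,z$ of $\Bbbk_{-1}[u,v]^{\langle h^k\rangle}$ satisfying $x^2+y^2+z^2=0$, replaces $h$ by the alternative generator $h^{k/2+1}$ of $G$ (using $\gcd(k/2+1,2k)=1$, the same arithmetic fact you use), and checks that its weights $(\omega^2,\omega^2,\omega^{k+2})$ on $x,y,z$ coincide with the weights of the generator $\gamma$ of $\frac{1}{2k}(1,k+1)$ on the analogous generators of $\Bbbk[u,v]^{\langle\gamma^k\rangle}$; the isomorphism then follows by matching presentations. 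You instead untwist the cone to $\Bbbk[s,t]^{\langle -I\rangle}$ via $uv\mapsto i\,st$, lift the residual automorphism to a genuine linear map $\hat h$ on $\Bbbk[s,t]$, and diagonalise, reading off the weights $(\zeta,\zeta^{k+1})$ directly. Your route has the advantage of identifying the group $\frac{1}{2k}(1,k+1)$ intrinsically from eigenvalues rather than by comparison with a separately computed commutative model, at the cost of the bookkeeping needed to verify that the transported action really is induced by a linear map on two new variables (which you do correctly, and which is where the hypothesis $k\equiv 0\bmod 4$ enters in both arguments). Both proofs are complete; the choice between them is a matter of taste.
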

\begin{proof}
In this case $\omega^{2k} = 1$, so $g$ is the identity and $G_{n,k} = \langle h \rangle$. Observe that $h^k = \begin{psmallmatrix} -1 & 0 \\ 0 & -1 \end{psmallmatrix}$, which is the generator of $\frac{1}{2}(1,1)$. Working inside $\Bbbk_{-1}[u,v]$, set
\begin{align*}
x = \omega^{k/2} (u^2-v^2), \quad y = 2uv, \quad z = u^2 + v^2,
\end{align*}
which generate the commutative invariant ring $\Bbbk_{-1}[u,v]^{h^k}$ and satisfy the single relation $x^2+y^2+z^2 = 0$. We also have
\begin{align*}
h^{\frac{k}{2}+1} \cdot x &= \omega^{k+2}\omega^{k/2}(v^2-u^2) = \omega^2 \omega^{k/2} (u^2-v^2) = \omega^2 x, \\
h^{\frac{k}{2}+1} \cdot y &= \omega^{k+2}(2vu) = \omega^{k+2} \omega^k (2uv) = \omega^2 y, \\
h^{\frac{k}{2}+1} \cdot z &= \omega^{k+2}(v^2+u^2) = \omega^{k+2} z,
\end{align*}
where $h^{k/2+1}$ is also a generator for $G$. \\
\indent Now consider the group $\frac{1}{2k}(1,k+1)$ with generator $\gamma = \begin{psmallmatrix} \omega & 0 \\ 0 & \omega^{k+1} \end{psmallmatrix}$, acting on the commutative ring $\Bbbk[u,v]$ where, as above, $\omega^{2k} = 1$. Notice that $\gamma^k = \begin{psmallmatrix} -1 & 0 \\ 0 & -1 \end{psmallmatrix}$ and that the invariant ring $\Bbbk[u,v]^{\gamma^k}$ is generated by
\begin{align*}
x = \omega^{k/2} (u^2+v^2), \quad y = u^2-v^2, \quad z = 2uv,
\end{align*}
where we emphasise that these elements live inside $\Bbbk[u,v]$. These satisfy $x^2+y^2+z^2 = 0$ and
\begin{align*}
\gamma \cdot x = \omega^2 x , \quad
\gamma \cdot y = \omega^2 y , \quad 
\gamma \cdot z = \omega^{k+2} z.
\end{align*}
Therefore we have a chain of isomorphisms
\begin{align*}
\Bbbk_{-1}[u,v]^{G_{1,k}} &= \left( \Bbbk_{-1}[u,v]^{\langle h^k \rangle} \right)^{\langle h^{k/2+1} \rangle} \cong \left( \frac{\Bbbk[x,y,z]}{\langle x^2+y^2+z^2 \rangle} \right)^{\langle h^{k/2 + 1}\rangle} \\
&\cong \left( \frac{\Bbbk[x,y,z]}{\langle x^2+y^2+z^2 \rangle} \right)^{\langle \gamma \rangle} \cong \left( \Bbbk[u,v]^{\langle \gamma^k \rangle} \right)^{\frac{1}{2k}(1,k+1)} \\
&= \Bbbk[u,v]^{\frac{1}{2k}(1,k+1)},
\end{align*}
as claimed.
\end{proof}

\begin{prop} \label{nequals2}
Suppose that $n=2$ and $k$ is odd. Then
\begin{align*}
\Bbbk_{-1}[u,v] ^{G_{n,k}} \cong \Bbbk[u,v]^{\frac{1}{4k}(1,2k+1)},
\end{align*}
so that $\Bbbk_{-1}[u,v] ^{G_{n,k}}$ is a cyclic quotient singularity.
\end{prop}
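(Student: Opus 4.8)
The plan is to mirror the proof of Proposition \ref{nequals1}, realising both invariant rings as iterated invariants through the common scalar subgroup $\langle -I\rangle$ and matching the residual cyclic actions on a quadric cone. First I would observe that, since $\omega$ is a primitive $(4k)$th root of unity, $\omega^{2k} = -1$, so $g = \begin{psmallmatrix}\omega^{2k} & 0 \\ 0 & \omega^{-2k}\end{psmallmatrix} = -I$. Thus $\langle g\rangle = \langle -I\rangle$, and a short check (using that $\omega^4$ has order $k$ and that $k$ is odd, so that $h^{2j}\in\langle -I\rangle$ only when $k\mid j$) shows that the image of $h$ generates $G_{2,k}/\langle -I\rangle$, a cyclic group of order $2k$. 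Hence $\Bbbk_{-1}[u,v]^{G_{2,k}} = \big(\Bbbk_{-1}[u,v]^{\langle -I\rangle}\big)^{\langle h\rangle}$.

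Since $\Bbbk_{-1}[u,v]^{\langle -I\rangle} = \Bbbk[u^2,v^2,uv]$ is commutative, I would introduce generators $x = \omega^k(u^2-v^2)$, $y = 2uv$, $z = u^2+v^2$, which satisfy $x^2+y^2+z^2 = 0$ (the sign in $(uv)^2 = -u^2v^2$ forces the twist $\omega^k$ onto $u^2-v^2$), and then compute directly from $h\cdot u = \omega^2 v$, $h\cdot v = \omega^2 u$ that $h$ acts diagonally as $(x,y,z)\mapsto(-\omega^4 x,\,-\omega^4 y,\,\omega^4 z)$. I would then run the parallel computation on the commutative side: in $\frac{1}{4k}(1,2k+1) = \langle\gamma\rangle$ one has $\gamma^{2k} = -I$, so $\Bbbk[u,v]^{\frac{1}{4k}(1,2k+1)} = \big(\Bbbk[u,v]^{\langle -I\rangle}\big)^{\langle\gamma\rangle}$ with $\Bbbk[u,v]^{\langle -I\rangle} = \Bbbk[u^2,v^2,uv]$. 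Here $(uv)^2 = +u^2v^2$, so the analogous generators are $X = \omega^k(u^2+v^2)$, $Y = u^2-v^2$, $Z = 2uv$, again satisfying $X^2+Y^2+Z^2=0$, and $\gamma$ acts as $(X,Y,Z)\mapsto(\omega^2 X,\,\omega^2 Y,\,-\omega^2 Z)$.

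The crux is to show these two residual $C_{2k}$-actions on the quadric cone $\{x^2+y^2+z^2=0\}$ coincide after a suitable choice of generator. I would compute that $\gamma^{k+2}$ acts as $(X,Y,Z)\mapsto(-\omega^4 X,\,-\omega^4 Y,\,\omega^4 Z)$ — using $\omega^{2k+4} = \omega^{2k}\omega^4 = -\omega^4$ and $(-1)^{k+2} = -1$ since $k$ is odd — which is identical to the $h$-action under the coordinate identification $x\mapsto X$, $y\mapsto Y$, $z\mapsto Z$. Because $\gcd(k+2,2k) = 1$, the element $\gamma^{k+2}$ generates the same order-$2k$ group as $\gamma$ on the cone, so the two invariant rings agree. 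Assembling the chain
\[
\Bbbk_{-1}[u,v]^{G_{2,k}} \cong \Big(\tfrac{\Bbbk[x,y,z]}{(x^2+y^2+z^2)}\Big)^{\langle h\rangle} \cong \Big(\tfrac{\Bbbk[X,Y,Z]}{(X^2+Y^2+Z^2)}\Big)^{\langle\gamma\rangle} \cong \Bbbk[u,v]^{\frac{1}{4k}(1,2k+1)}
\]
then yields the claim.

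I expect the main obstacle to be purely bookkeeping: getting the placement of the $\omega^k$ twist correct on each side (it sits on $u^2-v^2$ in the $(-1)$-quantum plane but on $u^2+v^2$ in the polynomial ring, precisely because $(uv)^2$ changes sign), and pinning down the exponent $k+2$ so that the two diagonal actions agree on the nose rather than merely up to an unspecified symmetry of the cone. The coprimality $\gcd(k+2,2k)=1$ and the parity identity $(-1)^{k+2}=-1$, both of which rely on $k$ being odd, are exactly the facts that make this matching work; everything else reduces to routine eigenvalue computations.
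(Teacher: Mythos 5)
Your argument is correct and follows essentially the same route as the paper: decompose both invariant rings through the common subgroup $\langle -I\rangle$, present the intermediate ring as the quadric cone $\Bbbk[x,y,z]/\langle x^2+y^2+z^2\rangle$, and match the two residual cyclic actions by replacing a generator with a coprime power. The only (cosmetic) difference is that you substitute $\gamma^{k+2}$ for $\gamma$, whereas the paper instead substitutes $h^{\ell/2}$ (with $\ell/2$ odd and coprime to $2k$) for $h$ so that the eigenvalues match those of $\gamma$ itself; both substitutions are legitimate and yield the same chain of isomorphisms.
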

\begin{proof}
Here we have that $\omega^{4k} = 1$ and 
\begin{align*}
g = \begin{pmatrix} -1 & 0 \\ 0 & -1 \end{pmatrix}, \quad h = \begin{pmatrix} 0 & w^2 \\ w^2 & 0 \end{pmatrix}.
\end{align*}
Observe that the invariant ring $\Bbbk_{-1}[u,v]^{\langle g \rangle}$ is generated by
\begin{align*}
x = \omega^k(u^2-v^2), \quad y = 2uv, \quad z = u^2+v^2,
\end{align*}
and these satisfy the relation $x^2 + y^2 + z^2 = 0$. Now define
\begin{align*}
\ell = \left\{ \begin{array}{cc}
k+1 & \text{if } k \equiv 1 \text{ mod } 4, \\
3k+1 & \text{if } k \equiv 3 \text{ mod } 4,
\end{array} \right.
\end{align*}
so that $\ell/2$ is an odd integer and $2\ell \equiv 2k+2 \text{ mod } 4k$. Moreover, $\ell/2$ is coprime to $2k$ and so $\langle h \rangle = \langle h^{\ell/2} \rangle$. Then
\begin{align*}
h^{\ell/2} \cdot x &= \omega^{2\ell} \omega^k (v^2-u^2) = \omega^{2k+2} \omega^{2k} \omega^k (u^2-v^2) = \omega^2 x, \\
h^{\ell/2} \cdot y &= \omega^{2\ell} (2vu) = \omega^{2k+2} \omega^{2k} (2uv) = \omega^2 y, \\
h^{\ell/2} \cdot z &= \omega^{2\ell} (u^2+v^2) = \omega^{2k+2} z.
\end{align*}
\indent Now instead consider the group $\frac{1}{4k}(1,2k+1)$ with generator $\gamma = \begin{psmallmatrix} \omega & 0 \\ 0 & \omega^{2k+1} \end{psmallmatrix}$, where $\omega^{4k} = 1$, acting on the commutative ring $\Bbbk[u,v]$. Since $\gamma^{2k} = \begin{psmallmatrix} -1 & 0 \\ 0 & -1 \end{psmallmatrix}$, the invariant ring $\Bbbk[u,v]^{\gamma^{2k}}$ is generated by
\begin{align*}
x = w^{3k}(u^2+v^2), \quad y = u^2-v^2, \quad z = 2uv,
\end{align*}
and these satisfy $x^2+y^2+z^2 = 0$. Also, we have
\begin{align*}
\gamma \cdot x = \omega^2 x , \quad
\gamma \cdot y = \omega^2 y , \quad 
\gamma \cdot z = \omega^{2k+2} z,
\end{align*}
and so there is a chain of isomorphisms
\begin{align*}
\Bbbk_{-1}[u,v]^{G_{2,k}} &= \left( \Bbbk_{-1}[u,v]^{\langle g \rangle} \right)^{\langle h^{\ell/2} \rangle} \cong \left( \frac{\Bbbk[x,y,z]}{\langle x^2+y^2+z^2 \rangle} \right)^{\langle h^{\ell/2}\rangle} \\
&\cong \left( \frac{\Bbbk[x,y,z]}{\langle x^2+y^2+z^2 \rangle} \right)^{\langle \gamma \rangle} \cong \left( \Bbbk[u,v]^{\langle \gamma^{2k} \rangle} \right)^{\frac{1}{4k}(1,2k+1)} \\
&= \Bbbk[u,v]^{\frac{1}{4k}(1,2k+1)},
\end{align*}
as claimed.
\end{proof}

In both cases, one can obtain an explicit presentation of $\Bbbk_{-1}[u,v]^{G_{n,k}}$ using the results from Section \ref{typeApres}. In particular, every cyclic quotient singularity of the form $\Bbbk[u,v]^{\frac{1}{4k}(1,2k+1)}$ with $k \geqslant 1$ occurs as an invariant ring of the form $\Bbbk_{-1}[u,v]^{G_{n,k}}$.

\begin{rem} \label{smallinvringiso}
Suppose that $R = \Bbbk[x_1, \cdots, x_n]$ and $G$ and $H$ are small subgroups of $\Autgr(A) \cong \GL(n,\Bbbk)$. By \cite[Theorem 1.9]{behnke}, if $R^G \cong R^H$, then $G$ and $H$ are conjugate. However, we can use the results from above to show a version of this result need not hold if we replace $R$ by an AS regular algebra.  \\
\indent Indeed, let $A = \Bbbk_{-1}[u,v]$ and consider
\begin{align*}
G_{2,1} = \bigg\langle \hspace{-3pt}
\begin{pmatrix} -1 & 0 \\ 0 & -1 \end{pmatrix},
\begin{pmatrix} 0 & -1 \\ -1 & 0 \end{pmatrix}
\hspace{-3pt} \bigg\rangle  .
\end{align*}
Proposition \ref{nequals2} tells us that $\Bbbk_{-1}[u,v]^{G_{2,1}} \cong \Bbbk[u,v]^{\frac{1}{4}(1,3)}$, which is the coordinate ring of a type $\AA_3$ Kleinian singularity. Explicitly, the elements
\begin{align*}
x = (u-v)(u+v)(u-v)(u+v), \quad x = (u+v)(u-v)(u+v)(u-v), \quad z = (u+v)^2
\end{align*}
generate the invariant ring and give rise to an isomorphism
\begin{align*}
\Bbbk_{-1}[u,v]^{G_{2,1}} \cong \frac{\Bbbk[x,y,z]}{\langle xy - z^4 \rangle}.
\end{align*}
However, using Theorem \ref{kqinvariantspres} one can show that $\Bbbk_{-1}[u,v]^{\frac{1}{4}(1,3)}$ is also the coordinate ring of a type $\AA_3$ Kleinian singularity, and so we have an isomorphism 
\begin{align*}
\Bbbk_{-1}[u,v]^{G_{2,1}} \cong \Bbbk_{-1}[u,v]^{\frac{1}{4}(1,3)}.
\end{align*}
The groups $G_{2,1}$ and $\frac{1}{4}(1,3)$ are both small but are certainly not isomorphic (the former is isomorphic to the Klein four group, while the latter is cyclic), let alone conjugate.
\end{rem}

\subsection{Type $\DD$ singularities}
It remains to consider the cases where $n \geqslant 3$. In Section \ref{commutativecase}, we recalled the definition of the group $\mathbb{D}_{m,q}$, where $m$ and $q$ are positive coprime integers with $1 < q < m$. In this subection, it will be convenient to let $\omega$ be a primitive $4q(m-q)$th root of unity, and to write
\begin{align}
\DD_{m,q} = \Bigg\langle \hspace{-3pt} \begin{pmatrix} \omega^{2(m-q)} & 0 \\ 0 & \omega^{-2(m-q)} \end{pmatrix}, \begin{pmatrix} 0 & \omega^{q} \\ \omega^{q} & 0 \end{pmatrix} \hspace{-3pt} \Bigg\rangle, \label{Dmqaltdef}
\end{align}
We now seek to show that, if $n \geqslant 3$ and $\Bbbk_{-1}[u,v]^{G_{n,k}}$ is commutative, then there is a one-to-one correspondence between the invariant rings $\Bbbk_{-1}[u,v]^{G_{n,k}}$ and $\Bbbk[u,v]^{\DD_{m,q}}$. \\
\indent To be more precise, first define
\begin{gather*}
S \coloneqq \{ (n,k) \mid \gcd(n,k) = 1, \gap \gap  n \geqslant 3, \gap \gap n-k \equiv 1 \text{ mod } 2, \gap \gap k \not\equiv 2 \text{ mod } 4 \}, \\
T \coloneqq \{ (m,q) \mid 1 < q < m, \gap \gap \gcd(m,q) = 1 \}.
\end{gather*}
By Theorem \ref{classifyG} and Proposition \ref{commvsnoncomm}, the set $S$ consists of all pairs of integers $(n,k)$ with $n \geqslant 3$ and such that $\Bbbk_{-1}[u,v]^{G_{n,k}}$ is commutative, while the set $T$ consists of all pairs of integers $(m,q)$ which can be used to define the group $\DD_{m,q}$. The aim of this subsection is to construct a bijection $\theta : S \to T$ such that 
\begin{align}
\Bbbk_{-1}[u,v]^{G_{n,k}} \cong \Bbbk[u,v]^{\DD_{\theta(n,k)}}. \label{nkmqiso}
\end{align}
We begin by constructing the map $\theta$.

\begin{lem}
The maps
\begin{align*}
\theta : S \to T, \qquad \gap \gap \theta(n,k) &= \left \{
\begin{array}{ll}
    (n+\frac{k}{2}, n) & \text{\normalfont{if }} n \text{\normalfont{ is odd }} (\Leftrightarrow k \text{\normalfont{ is even)},} \\
    (\frac{n}{2}+k, \frac{n}{2}) & \text{\normalfont{if }} n \text{\normalfont{ is even }} (\Leftrightarrow k \text{\normalfont{ is odd)},}
\end{array} \right. \\
\eta : T \to S, \qquad \eta(m,q) &= \left \{
\begin{array}{ll}
    (q, 2(m-q)) & \text{\normalfont{if }} m-q \equiv 0 \text{\normalfont{ mod }} 2,  \\
    (2q, m-q) & \text{\normalfont{if }} m-q \equiv 1 \text{\normalfont{ mod }} 2, 
\end{array} \right.
\end{align*}
are well-defined, mutually inverse bijections.
\end{lem}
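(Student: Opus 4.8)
The plan is to prove the three assertions in turn, with the parity of the entries doing essentially all of the work. The key preliminary observation is that the defining conditions of $S$ and $T$ are really parity constraints: for $(n,k) \in S$ the requirement $n-k \equiv 1 \pmod 2$ says that exactly one of $n,k$ is even, so the two branches of $\theta$ are exhaustive and mutually exclusive. Moreover, if $n$ is odd then $k$ is even, and $k \not\equiv 2 \pmod 4$ upgrades this to $k \equiv 0 \pmod 4$, so $k/2$ is an (even) integer and $\theta(n,k) = (n+k/2, n)$ makes sense. Dually, for $(m,q) \in T$ the two branches of $\eta$ split on the parity of $m-q$, and I would record at the outset the repeatedly-used fact that, since $\gcd(m,q)=1$, the case $m-q \equiv 0 \pmod 2$ forces both $m$ and $q$ to be odd (they cannot both be even).

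For well-definedness of $\theta$ I would check in each branch that the image satisfies the three conditions cutting out $T$. The inequalities $1 < q < m$ are immediate from $n \geqslant 3$ and the positivity of $k$. Coprimality reduces cleanly via the Euclidean step $\gcd(n+k/2,n) = \gcd(k/2,n)$: since $n$ is odd, any common prime factor of $k/2$ and $n$ would divide $k$ and $n$, contradicting $\gcd(n,k)=1$. The even branch is identical, using $\gcd(n/2+k,n/2) = \gcd(k,n/2) = 1$.

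For well-definedness of $\eta$ the hypotheses $n \geqslant 3$ and $k \not\equiv 2 \pmod 4$ are genuinely consumed. In the branch $m-q \equiv 0 \pmod 2$, the observation that $q$ is odd yields both $\gcd(q, 2(m-q)) = 1$ (combining $\gcd(q,2)=1$ with $\gcd(q,m-q)=\gcd(q,m)=1$) and, since $q>1$ forces $q \geqslant 3$, the condition $n = q \geqslant 3$; moreover $k = 2(m-q)$ with $m-q$ even gives $k \equiv 0 \pmod 4$. In the branch $m-q \equiv 1 \pmod 2$, the entry $k = m-q$ is odd, so $k \not\equiv 2 \pmod 4$ for free, while $n = 2q \geqslant 4$ and $\gcd(2q, m-q) = 1$ follow from $m-q$ being odd and $\gcd(m,q)=1$. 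In both branches $n-k$ has the required odd parity by inspection.

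Finally, to see that $\theta$ and $\eta$ are mutually inverse I would compose them and track which branch is selected at the second step; the crucial point is the parity match. Applying the odd branch of $\theta$ gives $m-q = k/2$, which is even (as $k \equiv 0 \pmod 4$), so $\eta$ lands in its even branch and returns $(q, 2(m-q)) = (n,k)$; applying the even branch of $\theta$ gives $m-q = k$, odd, so $\eta$ lands in its odd branch and returns $(2q, m-q) = (n,k)$. The composition $\theta \circ \eta$ is symmetric: the even branch of $\eta$ produces $n = q$ odd (triggering the odd branch of $\theta$) and the odd branch produces $n = 2q$ even, and in each case direct substitution returns $(m,q)$. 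I expect the only real obstacle to be this bookkeeping of parities — ensuring that each branch of one map lands in the matching branch of the other — together with the single non-formal deduction that $q$ is odd (hence $q \geqslant 3$) in the even branch of $\eta$; everything else is routine substitution.
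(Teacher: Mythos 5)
Your proposal is correct and follows essentially the same route as the paper: branch-by-branch verification of well-definedness using the parity constraints and the Euclidean reduction of the gcd's, followed by the branch-matching check that the compositions are the identity. The minor variations (e.g., computing $\gcd(q,2(m-q))$ via $\gcd(q,2)\gcd(q,m-q)$ rather than via $\gcd(q,2m)$) are cosmetic.
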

\begin{proof}
We first show that $\theta$ is well-defined. Let $(n,k) \in S$, and suppose that $n$ is odd and $k$ is even, so that $\theta(n,k) = (n+\frac{k}{2}, n) \eqqcolon (m,q)$. Since $q = n \geqslant 3$ and $n + \frac{k}{2} > n$, we have $1 < q < m$. Moreover, since $\gcd(n,k) = 1$, we have $\gcd(m,q) = \gcd(n+\frac{k}{2},n) = \gcd(\frac{k}{2},n) = 1$. Therefore $\theta(n,k) \in T$. If instead $n$ is even, so that $k$ is odd, we have $\theta(n,k) = (\frac{n}{2}+k, \frac{n}{2}) \eqqcolon (m,q)$. Here $n \geqslant 4$ so $\frac{n}{2} \geqslant 2$, meaning that $1 < q < m$. As before, $\gcd(m,q) = 1$ follows from the fact that $\gcd(n,k) = 1$. Again we deduce that $\theta(n,k) \in T$. \\
\indent To show that $\eta$ is well-defined, let $(m,q) \in T$, and first suppose that $m-q \equiv 0 \text{ mod } 2$, which means that $m$ and $q$ are both odd. We then have $\eta(m,q) = (q,2(m-q)) \eqqcolon (n,k)$. Now, since $1 < q < m$ and $q$ is odd, we have that $n = q \geqslant 3$. Moreover, since $n$ and $k$ have different parities, $n-k \equiv 1 \text{ mod } 2$. Also, $m-q \equiv 0 \text{ mod } 2$, so $k = 2(m-q) \equiv 0 \text{ mod } 4$. Finally, $\gcd(n,k) = \gcd(q,2(m-q)) = \gcd(q,2m) = \gcd(1,m) = 1$, where the penultimate equality follows since $q$ and $m$ are both odd. It follows that $\eta(m,q) \in S$. \\
\indent If instead $m-q \equiv 1 \text{ mod } 2$, then $\eta(m,q) = (2q,m-q) \eqqcolon (n,k)$. Since $q \geqslant 2$, we have $n = 2q \geqslant 3$, and clearly $2q$ and $m-q$ have different parities, so $n-k \equiv 1 \text{ mod } 2$. Since $m-q \equiv 1 \text{ mod } 2$, it follows that $k = m-q \not \equiv 2 \text{ mod } 4$. Finally, using the fact that $n-q$ is odd, $\gcd(n,k) = \gcd(2q,n-q) = \gcd(q,n-q) = \gcd(n,q) = 1$. Therefore $\eta(m,q) \in S$. \\
\indent Finally, it is straightforward to check that $\theta$ and $\eta$ are mutual inverses after noting that, if the pair $(n,k)$ satisfies the conditions in the first (respectively, second) line in the definition of $\theta$, then the pair $(m,q) = \theta(n,k)$ satisfies the conditions in the first (respectively, second) line in the definition of $\eta$.
\end{proof}

We now seek to establish the isomorphism (\ref{nkmqiso}). We split the proof of this into two cases depending on the parity of $n$ (and hence of $k$). The proof when $n$ is odd is straightforward due to the fact that $G_{n,k} = \DD_{\theta(n,k)}$ in this case.

\begin{prop} \label{noddiso}
Suppose that $(n,k) \in S$ and that $n$ is odd and $k$ is even (hence $k \equiv 0 \text{\normalfont{ mod }} 4$). Then there is an isomorphism
\begin{align*}
\Bbbk_{-1}[u,v]^{G_{n,k}} \cong \Bbbk[u,v]^{\DD_{\theta(n,k)}}.
\end{align*}
\end{prop}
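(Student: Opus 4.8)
The plan is to reduce the statement to a computation about a single matrix group and then to mimic the proof of Proposition \ref{nequals1}. First I would pin down the group equality that makes the odd case special. Writing $\theta(n,k) = (m,q) = (n + \tfrac{k}{2},\, n)$ and letting $\omega$ be a primitive $2nk$th root of unity, I note that $2nk = 4q(m-q)$, so the same $\omega$ may be used to present both $G_{n,k}$ as in (\ref{Gnkaltpres}) and $\DD_{m,q}$ as in (\ref{Dmqaltdef}). The antidiagonal generators then coincide, since $\omega^q = \omega^n$. For the diagonal generators I would check that $\DD_{m,q}$'s generator $\mathrm{diag}(\omega^{k},\omega^{-k})$ equals $g^{(n+1)/2}h^{k}$ inside $G_{n,k}$ (using $h^k = -I$ and $n$ odd), while conversely $g = \mathrm{diag}(\omega^{2k},\omega^{-2k})$ is its square. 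These mutual inclusions give $G_{n,k} = \DD_{\theta(n,k)} =: \Gamma$ as subgroups of $\GL(2,\Bbbk)$. Consequently $\Bbbk[u,v]^{\DD_{\theta(n,k)}} = \Bbbk[u,v]^{\Gamma}$, and the task becomes to produce an isomorphism $\Bbbk_{-1}[u,v]^{\Gamma} \cong \Bbbk[u,v]^{\Gamma}$ for one and the same group $\Gamma$.

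Next I would exploit that $-I = h^{k}\in\Gamma$, which holds because $k\equiv 0 \bmod 4$. Since $\langle -I\rangle$-invariants are exactly the even-degree (second Veronese) parts, and these are commutative in both algebras by the proof of Proposition \ref{commvsnoncomm}, I have $\Bbbk_{-1}[u,v]^{\Gamma} = (\Bbbk_{-1}[u,v]^{\langle -I\rangle})^{\bar\Gamma}$ and $\Bbbk[u,v]^{\Gamma} = (\Bbbk[u,v]^{\langle -I\rangle})^{\bar\Gamma}$, where $\bar\Gamma = \Gamma/\langle -I\rangle$. Both $\langle -I\rangle$-invariant rings are generated by $u^2, v^2, uv$; in $\Bbbk_{-1}[u,v]$ one has $(uv)^2 = -u^2v^2$ while in $\Bbbk[u,v]$ one has $(uv)^2 = u^2v^2$, so after inserting suitable fourth-root-of-unity scalars both become copies of the quadric cone $\Bbbk[x,y,z]/\langle x^2+y^2+z^2\rangle$, exactly as in Proposition \ref{nequals1}.

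The crux is to choose the coordinates $x,y,z$ on each side so that the residual $\bar\Gamma$-action, expressed through the images of $g$ and $h$, is literally identical on the two copies of the cone; the desired isomorphism is then $x\mapsto x$, $y\mapsto y$, $z\mapsto z$, and taking $\bar\Gamma$-invariants yields (\ref{nkmqiso}). The only discrepancy to absorb is that $h$ sends $uv \mapsto \omega^{2n} uv$ in $\Bbbk[u,v]$ but $uv \mapsto -\omega^{2n}uv$ in $\Bbbk_{-1}[u,v]$, because $vu = -uv$; on the squares $u^2, v^2$ the two actions already agree. As in the cyclic case I would cancel this sign by relabelling which of $u^2\pm v^2$ and $uv$ plays the role of each coordinate on the commutative side and inserting the appropriate phases, so that the two images of $\bar\Gamma$ inside the conformal orthogonal group of the form $x^2+y^2+z^2$ become the same finite subgroup.

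I expect this last matching to be the main obstacle. Unlike the cyclic groups of Propositions \ref{nequals1} and \ref{nequals2}, here $\bar\Gamma$ is nonabelian, so aligning a single generator is not enough: one must arrange that $g$ and $h$ are simultaneously carried to the same pair of (conformal) orthogonal transformations. The antidiagonal elements of the two actions have differing eigenvalue patterns, and the point that removes this apparent obstruction is that the scalar $iI = h^{k/2}$ lies in $\Gamma$ (again using $4 \mid k$); multiplying by it permutes the eigenvalue multiplicities and lets one verify that the two subgroups of the conformal orthogonal group genuinely coincide. Having matched the actions, the equality of the resulting invariant subrings is immediate, which is precisely why the odd case is \emph{straightforward} relative to the even case treated in Proposition \ref{neveniso}, where $G_{n,k}$ and $\DD_{\theta(n,k)}$ are no longer equal as matrix groups.
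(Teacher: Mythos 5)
Your proposal is correct and follows essentially the same route as the paper: establish $G_{n,k}=\DD_{\theta(n,k)}$ (the paper uses one inclusion plus an order count where you use mutual inclusion and explicit words in the generators), pass to the $\langle -I\rangle$-invariants to land both rings on a quadric cone, and match the two residual actions there using the fact that $h^{k/2}$ is a scalar matrix. The one computation you defer is exactly the paper's device: choosing $y=-v^2$ on the noncommutative side (your ``inserted phase'') converts the discrepancy $uv\mapsto -\omega^{2n}uv$, which by itself is the unrealizable reflection $\mathrm{diag}(1,1,-1)$ on the cone, into the global scalar $-1$, which \emph{is} the image of $h^{k/2}\in G_{n,k}$ (this is where $k\equiv 0 \bmod 4$ enters), so the two image subgroups of automorphisms of the cone coincide and the invariants agree.
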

\begin{proof}
Let $(m,q) \coloneqq \theta(n,k)$, so $m = n + \frac{k}{2}$ and $q = n$. Observe that $4q(m-q) = 2nk$, and hence the roots of unity appearing in (\ref{Gnkaltpres}) and (\ref{Dmqaltdef}) are the same; we write $\omega$ for this common root of unity. Since $m-q = \frac{k}{2}$ and $q = n$, we have
\begin{align*}
G_{n,k} = \Bigg\langle \hspace{-3pt} \begin{pmatrix} \omega^{2k} & 0 \\ 0 & \omega^{-2k} \end{pmatrix}, \begin{pmatrix} 0 & \omega^{n} \\ \omega^{n} & 0 \end{pmatrix} \hspace{-3pt}\Bigg\rangle , \qquad 
\DD_{\theta(n,k)} = \Bigg\langle \hspace{-3pt} \begin{pmatrix} \omega^{k} & 0 \\ 0 & \omega^{-k} \end{pmatrix}, \begin{pmatrix} 0 & \omega^{n} \\ \omega^{n} & 0 \end{pmatrix}\hspace{-3pt} \Bigg\rangle . 
\end{align*}
Clearly $G_{n,k} \subseteq \DD_{\theta(n,k)}$. Moreover, we have $|\DD_{m,q}| = 4q(m-q) = 2nk = |G_{n,k}|$, and hence $G_{n,k} = \DD_{\theta(n,k)}$. Notice that both groups contain $\begin{psmallmatrix} 0 & w^n \\ w^n & 0 \end{psmallmatrix}^k = \begin{psmallmatrix} -1 & 0 \\ 0 & -1 \end{psmallmatrix} \eqqcolon \alpha$, and that
\begin{align*}
\Bbbk_{-1}[u,v]^{\langle \alpha \rangle} = \Bbbk[u^2,-v^2,uv] \cong \frac{\Bbbk[x,y,z]}{\langle xy - z^2 \rangle}, \qquad
\Bbbk[u,v]^{\langle \alpha \rangle} = \Bbbk[u^2,v^2,uv] \cong \frac{\Bbbk[x,y,z]}{\langle xy - z^2 \rangle},
\end{align*}
where the induced actions of $G_{n,k}$ and $\DD_{\theta(n,k)}$ on $\Bbbk[x,y,z]/\langle xy - z^2 \rangle$ are the same. Therefore,
\begin{align*}
\Bbbk_{-1}[u,v]^{G_{n,k}} &= \big(\Bbbk_{-1}[u,v]^{\langle \alpha \rangle}\big)^{G_{n,k}} \cong \big(\Bbbk[u,v]^{\langle \alpha \rangle}\big)^{\DD_{\theta(n,k)}} = \Bbbk[u,v]^{\DD_{\theta(n,k)}},
\end{align*}
as claimed.
\end{proof}

We now turn our attention to the case when $n$ is even, where the proof is more involved.

\begin{prop} \label{neveniso}
Suppose that $(n,k) \in S$ and that $n$ is even and $k$ is odd. Then there is an isomorphism
\begin{align*}
\Bbbk_{-1}[u,v]^{G_{n,k}} \cong \Bbbk[u,v]^{\DD_{\theta(n,k)}}.
\end{align*}
\end{prop}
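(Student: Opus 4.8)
The plan is to mimic the strategy of Proposition \ref{noddiso}, but to quotient by a \emph{different} central subgroup. The naive approach via $\langle -I\rangle$ (which worked for odd $n$ because there $G_{n,k}=\DD_{\theta(n,k)}$) breaks down here: a direct computation shows that when $n$ is even and $k>1$ the groups $G_{n,k}$ and $\DD_{\theta(n,k)}$ are distinct subgroups of $\GL(2,\Bbbk)$ of the common order $2nk$, neither containing the other. Writing $(m,q)\coloneqq\theta(n,k)=(\tfrac n2+k,\tfrac n2)$, one checks $4q(m-q)=2nk$, so the roots of unity in (\ref{Gnkaltpres}) and (\ref{Dmqaltdef}) agree; call this common primitive $(2nk)$th root $\omega$. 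Crucially, both groups share the \emph{same} diagonal generator $g=\mathrm{diag}(\omega^{2k},\omega^{-2k})$, and since $\omega^{2k}$ is a primitive $n$th root of unity, $\langle g\rangle=\tfrac1n(1,n-1)\leqslant\SL(2,\Bbbk)$, with $-I=g^{n/2}$ lying in both groups. The idea is to pass to invariants under this common cyclic subgroup $\langle g\rangle$, after which the residual groups become \emph{cyclic}, which is exactly what makes the eigenvalue bookkeeping tractable.

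First I would compute $W\coloneqq\Bbbk_{-1}[u,v]^{\langle g\rangle}$ and $W'\coloneqq\Bbbk[u,v]^{\langle g\rangle}$. Each is generated by $u^n,v^n,uv$, and both are isomorphic to the type $\mathbb{A}_{n-1}$ singularity $\Bbbk[x,y,z]/\langle xy-z^n\rangle$; the commutativity of $W$ uses that $n$ is even (so $u^n,v^n,uv$ pairwise commute in $\Bbbk_{-1}[u,v]$), and the sign $(-1)^{n/2}$ arising from $(uv)^n=(-1)^{n/2}u^nv^n$ is removed by rescaling a generator. The residual groups $G_{n,k}/\langle g\rangle$ and $\DD_{\theta(n,k)}/\langle g\rangle$ are both cyclic of order $2k$, generated by the images $\bar h_G$ and $\bar h_D$ respectively; each acts on its copy of the $\mathbb{A}_{n-1}$ singularity by interchanging $u^n$ and $v^n$ up to scalars and by scaling $uv$. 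Thus the whole problem reduces to showing that these two $C_{2k}$-actions on the $\mathbb{A}_{n-1}$ singularity are conjugate through a graded algebra isomorphism $\tau\colon W\to W'$.

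The main obstacle is this comparison of the residual cyclic actions, and it is where both parity hypotheses are spent. A direct calculation gives that $\bar h_G$ scales $uv$ by $-\omega^{2n}=\omega^{n(k+2)}$ (the sign coming from $vu=-uv$), whereas $\bar h_D$ scales $uv$ by $\omega^n$; these disagree for $k>1$, so $\bar h_G$ and $\bar h_D$ cannot be matched directly. However, since $k$ is odd one has $\gcd(k+2,2k)=1$, so $\bar h_D^{\,k+2}$ is \emph{also} a generator of the cyclic group $\DD_{\theta(n,k)}/\langle g\rangle$, and a computation shows it scales $uv$ by exactly $\omega^{n(k+2)}$, matching $\bar h_G$. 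One then checks that the squares $\bar h_G^{\,2}$ and $\bar h_D^{\,2(k+2)}$ act by the same scalar on $u^n$: this amounts to $\omega^{n^2k}=1$, which holds precisely because $n$ is even. Granting these two eigenvalue compatibilities, I would produce $\tau$ by rescaling the generators $u^n,v^n,uv$: solving the resulting linear conditions (intertwining on $u^n,v^n,uv$ together with preservation of the relation $xy=z^n$) leaves enough freedom, since $\Bbbk$ is algebraically closed, to choose the three scalars. The delicate part is keeping the various signs and the $\omega$-exponents consistent throughout, but no genuinely new idea is needed once the generator $\bar h_D^{\,k+2}$ is identified.

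Finally, because $\bar h_G$ and $\bar h_D^{\,k+2}$ generate the same residual cyclic group and $\tau$ intertwines them, taking invariants yields
\begin{align*}
\Bbbk_{-1}[u,v]^{G_{n,k}}=W^{\langle\bar h_G\rangle}\cong W'^{\langle\bar h_D^{\,k+2}\rangle}=W'^{\langle\bar h_D\rangle}=\Bbbk[u,v]^{\DD_{\theta(n,k)}},
\end{align*}
which is the desired isomorphism.
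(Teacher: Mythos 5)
Your proposal is correct and follows essentially the same route as the paper: both pass to invariants under the common diagonal subgroup $\langle g\rangle=\langle a\rangle$ to reduce to the $\mathbb{A}_{n-1}$ singularity $\Bbbk[x,y,z]/\langle xy-z^n\rangle$, replace the antidiagonal generator of $\DD_{\theta(n,k)}$ by its $(k+2)$th power (valid since $k$ is odd) so that the scalars on $uv$ agree, and use that $n$ is even to verify the remaining eigenvalue compatibility ($\omega^{n^2k}=1$) before matching the actions by rescaling generators (the paper does this explicitly via the choice $y=\pm v^n$ according to $n\bmod 4$).
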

\begin{proof}
Let $(m,q) \coloneqq \theta(n,k)$, so that $m = \frac{n}{2} + k$ and $q = \frac{n}{2}$. Then $4q(m-q) = 2nk$ and so, as in the proof of Proposition \ref{noddiso}, the roots of unity appearing in (\ref{Gnkaltpres}) and (\ref{Dmqaltdef}) are the same, and we write $\omega$ for this common root of unity. Since $m-q = k$ and $q = \frac{n}{2}$, we have
\begin{align*}
G_{n,k} = \Bigg\langle \hspace{-3pt} \begin{pmatrix} \omega^{2k} & 0 \\ 0 & \omega^{-2k} \end{pmatrix}, \begin{pmatrix} 0 & \omega^{n} \\ \omega^{n} & 0 \end{pmatrix} \hspace{-3pt}\Bigg\rangle, \qquad 
\DD_{\theta(n,k)} = \Bigg\langle \hspace{-3pt} \begin{pmatrix} \omega^{2k} & 0 \\ 0 & \omega^{-2k} \end{pmatrix}, \begin{pmatrix} 0 & \omega^{n/2} \\ \omega^{n/2} & 0 \end{pmatrix} \hspace{-3pt}\Bigg\rangle . 
\end{align*}
Writing $a$ and $b$ for the generators of $\DD_{\theta(n,k)}$ in the order given above, we have
$\Bbbk[u,v]^{\DD_{\theta(n,k)}} = \big( \Bbbk[u,v]^{\langle a \rangle} \big)^{\langle b \rangle}$. Notice that, since $b$ has order $4k$ and $\gcd(4k, k+2) = \gcd(-8,k+2) = 1$ (using that $k$ is odd), we have $\langle b \rangle = \langle b^{k+2} \rangle$ and hence $\Bbbk[u,v]^{\DD_{\theta(n,k)}} = \big( \Bbbk[u,v]^{\langle a \rangle} \big)^{\langle b^{k+2} \rangle}$. Now, if we set $x=u^n$, $y=v^n$, and $z = uv$, then we obtain an isomorphism
\begin{align*}
\Bbbk[u,v]^{\langle a \rangle} \cong \frac{\Bbbk[x,y,z]}{\langle xy-z^n \rangle}.
\end{align*}
The action of $b^{k+2}$ on this ring is as follows:
\begin{align}
b^{k+2} \cdot x = \omega^{\frac{1}{2} n^2(k+2)} y, \quad b^{k+2} \cdot y = \omega^{\frac{1}{2} n^2(k+2)} x, \quad b^{k+2} \cdot z = \omega^{n(k+2)} z. \label{bkplustwoaction}
\end{align}
\indent We now turn our attention to the invariant ring $\Bbbk_{-1}[u,v]^{G_{n,k}}$. Clearly if we write $g$ and $h$ for the generators of $G_{n,k}$ in the order given above, then $\Bbbk_{-1}[u,v]^{G_{n,k}} = \big( \Bbbk_{-1}[u,v]^{\langle g \rangle} \big)^{\langle h \rangle}$. We split our analysis into two cases. First suppose that $n \equiv 0 \text{ mod } 4$. Setting $x=u^n$, $y=v^n$, and $z = uv$, we obtain an isomorphism 
\begin{align*}
\Bbbk_{-1}[u,v]^{\langle g \rangle} \cong \frac{\Bbbk[x,y,z]}{\langle xy-z^n \rangle}.
\end{align*}
Here, the fact that $n \equiv 0 \text{ mod } 4$ ensures that the relation is $xy-z^n$ rather than $xy+z^n$. The action of $h$ on this ring is given by
\begin{align}
h \cdot x = \omega^{n^2} y, \quad h \cdot y = \omega^{n^2} x, \quad h \cdot z = \omega^{2n} vu = \omega^{2n + nk} uv = \omega^{n(k+2)} z. \label{hactionnequiv0}
\end{align}
Since $n \equiv 0 \text{ mod } 4$, $\frac{n}{2}$ is an even integer and so
\begin{align*}
\frac{1}{2} n^2 (k+2) \equiv n^2 + \frac{n}{2} nk \equiv n^2 \text{ mod } 2nk.
\end{align*}
Therefore, comparing the actions given in (\ref{bkplustwoaction}) and (\ref{hactionnequiv0}), we obtain isomorphisms
\begin{align*}
\Bbbk_{-1}[u,v]^{G_{n,k}} &= \big( \Bbbk_{-1}[u,v]^{\langle g \rangle} \big)^{\langle h \rangle} \cong \left( \frac{\Bbbk[x,y,z]}{\langle xy-z^n \rangle} \right)^{\langle h \rangle} \\
&= \left( \frac{\Bbbk[x,y,z]}{\langle xy-z^n \rangle} \right)^{\langle b^{k+2} \rangle} \cong \big( \Bbbk[u,v]^{\langle a \rangle} \big)^{\langle b^{k+2} \rangle} = \Bbbk[u,v]^{\DD_{\theta(n,k)}}.
\end{align*}
\indent If instead $n \equiv 2 \text{ mod } 4$, then we can use a similar approach. In this case, if we set $x=u^n$, $y=-v^n$, and $z = uv$, then we obtain an isomorphism 
\begin{align*}
\Bbbk_{-1}[u,v]^{\langle g \rangle} \cong \frac{\Bbbk[x,y,z]}{\langle xy-z^n \rangle}.
\end{align*}
(This time the fact that $n \equiv 2 \text{ mod } 4$ and $y = -v^n$ ensures that the correct relation is $xy-z^n$.) The action of $h$ in this case is given by
\begin{align}
h \cdot x = \omega^{n^2} v^n = \omega^{n^2 + nk} y, \quad h \cdot y = -\omega^{n^2} u^n = \omega^{n^2 + nk} x, \quad h \cdot z = \omega^{n(k+2)} z. \label{hactionnequiv2}
\end{align}
Since $n \equiv 2 \text{ mod } 4$, $\frac{n}{2}$ is an odd integer and so
\begin{align*}
\frac{1}{2} n^2 (k+2) \equiv n^2 + \frac{n}{2} nk \equiv n^2 + nk \text{ mod } 2nk.
\end{align*}
Hence, comparing the actions given in (\ref{bkplustwoaction}) and (\ref{hactionnequiv2}), and arguing as in the previous case, we obtain an isomorphism 
\begin{gather*}
\pushQED{\qed} 
\Bbbk_{-1}[u,v]^{G_{n,k}} \cong \Bbbk[u,v]^{\DD_{\theta(n,k)}}. \qedhere
\end{gather*}
\end{proof}

\begin{rem}
The proof of Proposition \ref{noddiso} shows that $G_{n,k} \cong \DD_{\theta(n,k)}$ when $n$ is odd. On the other hand, when $n$ is even, $G_{n,k}$ and $\DD_{\theta(n,k)}$ have the following presentations:
\begin{gather*}
G_{n,k} = \langle g,h \mid g^n = 1 = h^{2k}, hg = g^{n-1} h \rangle, \\
\DD_{\theta(n,k)} = \langle a,b \mid a^n = 1, a^{n/2} = b^{2k}, ba = a^{n-1} b \rangle, 
\end{gather*}
and one can show that these groups are not isomorphic.
\end{rem}

\section{Noncommutative Invariant Rings of the Form $\Bbbk_{-1}[u,v]^{G_{n,k}}$} \label{noncomminvsec}
\noindent Throughout, $A = \Bbbk_{-1}[u,v]$ and $G = G_{n,k}$ as in Theorem \ref{classifyG}, and we write $g$ amd $h$ for the generators of $G$ as in (\ref{Gnkaltpres}). We now seek to obtain generators for the invariant rings $A^G$ when this ring is not commutative; by Proposition \ref{commvsnoncomm}, this happens precisely when $n$ and $k$ are both odd, and we will assume that this is the case throughout this section. \\
\indent We begin by identifying a $\Bbbk$-basis for the invariants.

\begin{prop}
A $\Bbbk$-basis for $A^G$ is given by
\begin{align*}
\Big\{ u^i v^j + (-1)^{(i+1)(j+1)+1} u^j v^i \hspace{2pt} \Big | \hspace{3pt} i,j, \geqslant 0, \hspace{3pt} i-j \equiv 0 \text{\emph{ mod }} n \text{ and } i+j \equiv 0 \text{\emph{ mod }} k \Big\}. 
\end{align*}
\end{prop}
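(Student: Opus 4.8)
The plan is to compute $A^G$ as $(A^g)^h$, exploiting the fact that both generators act "monomially" on the standard basis $\{u^iv^j \mid i,j \geqslant 0\}$ of $A = \Bbbk_{-1}[u,v]$. Since $g$ and $h$ are graded algebra automorphisms, I would first record their action on monomials. One has $g \cdot (u^iv^j) = \omega^{2k(i-j)}u^iv^j$, and, using $vu = -uv$,
\[
h \cdot (u^iv^j) = (\omega^n v)^i(\omega^n u)^j = \omega^{n(i+j)}(-1)^{ij}\,u^jv^i .
\]
The first formula shows immediately that $A^g = \sspan\{u^iv^j \mid n \mid (i-j)\}$. Because the relation $hg = g^{n-1}h$ gives $hgh^{-1} = g^{-1} \in \langle g\rangle$, the automorphism $h$ normalises $\langle g\rangle$ and hence preserves $A^g$, so that $A^G = A^g \cap A^h = (A^g)^h$.

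Next I would decompose $A^g$ into $h$-stable pieces. The second formula shows $h$ carries the line $\Bbbk u^iv^j$ to $\Bbbk u^jv^i$, so the monomials spanning $A^g$ fall into two-dimensional $h$-orbits $\{u^iv^j, u^jv^i\}$ (when $i \neq j$) together with one-dimensional pieces $\Bbbk u^iv^i$; since $(A^g)^h$ is the direct sum of the invariants of these pieces, it suffices to analyse each. For an off-diagonal pair I would set $c := \omega^{n(i+j)}(-1)^{ij}$ and impose $h$-invariance on $\alpha u^iv^j + \beta u^jv^i$, which forces $\alpha = c\beta$ and $\beta = c\alpha$. A nonzero solution exists if and only if $c^2 = 1$, and then the invariant line is spanned by $u^iv^j + c\,u^jv^i$. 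A direct check gives $c^2 = \omega^{2n(i+j)}$, which equals $1$ precisely when $k \mid (i+j)$ — exactly the second congruence in the statement.

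The one genuinely delicate step is to identify the scalar $c$ with the sign appearing in the statement, and it is here that the hypothesis "$k$ odd" is essential. Writing $i+j = kt$ and using $\omega^{nk} = -1$ gives $\omega^{n(i+j)} = (-1)^t$, so $c = (-1)^{t+ij}$; since $k$ is odd, $t \equiv kt = i+j \pmod 2$, whence $c = (-1)^{ij+i+j} = (-1)^{(i+1)(j+1)+1}$. I expect the remaining work to be bookkeeping of edge cases. On the diagonal, $u^iv^i$ is $h$-invariant iff $\omega^{ni(k+2)} = 1$, i.e. (using $\gcd(k+2,2k) = 1$, again from $k$ odd) iff $2k \mid i$; and the listed element $u^iv^i + (-1)^{(i+1)^2+1}u^iv^i = (1+(-1)^i)u^iv^i$ is nonzero exactly when $i$ is even, which combined with the congruence $k \mid i$ is equivalent to $2k \mid i$. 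Thus the diagonal contributions match the displayed family precisely.

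Finally I would assemble the basis, noting two cosmetic points. Interchanging $i$ and $j$ leaves the sign $(-1)^{(i+1)(j+1)+1}$ unchanged and (as $c = \pm 1$) sends $u^iv^j + c\,u^jv^i$ to $\pm$ itself, so each off-diagonal invariant line is listed once up to sign; and elements arising from distinct unordered pairs $\{i,j\}$ involve disjoint monomials and are therefore linearly independent. Combined with the orbit decomposition, which guarantees these exhaust $(A^g)^h$, this shows the nonzero members of the displayed set form a $\Bbbk$-basis of $A^G$. The main obstacle throughout is purely the careful tracking of signs (the factor $(-1)^{ij}$ from $vu = -uv$ interacting with the root-of-unity powers), rather than anything structurally difficult.
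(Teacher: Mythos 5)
Your proposal is correct, and its computational core is identical to the paper's: both arguments reduce to the two monomial formulas $g\cdot(u^iv^j)=\omega^{2k(i-j)}u^iv^j$ and $h\cdot(u^iv^j)=\omega^{n(i+j)}(-1)^{ij}u^jv^i$, and both hinge on the same sign identification $(-1)^{ij}\omega^{n(i+j)}=(-1)^{(i+1)(j+1)+1}$ when $k\mid(i+j)$, which uses $k$ odd exactly as you say. The packaging differs: the paper applies the Reynolds operator $\rho(a)=\sum_{x\in G}x\cdot a$ to the monomial basis and invokes its surjectivity onto $A^G$ (cited from the Kirkman--Kuzmanovich--Zhang literature), splitting the group sum over the coset decomposition $G=\{h^mg^\ell\}$; you instead compute $A^G=(A^g)^h$ in two stages, decompose $A^g$ into the $h$-stable planes $\Bbbk u^iv^j\oplus\Bbbk u^jv^i$ and lines $\Bbbk u^iv^i$, and solve the resulting $2\times 2$ eigenvector problem. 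Your route is marginally more self-contained (no appeal to the averaging projection) and makes the dichotomy $c^2=1$ versus $c^2\neq1$ visible as an eigenvalue condition, whereas the paper's character-sum computation gets the vanishing for free from the geometric series identity; the two are otherwise interchangeable. One small point in your favour: you explicitly address the fact that the displayed family lists each off-diagonal invariant line twice (for $(i,j)$ and $(j,i)$, agreeing up to sign) and contains the zero vector when $i=j$ is odd, which the paper's ``it follows that $A^G$ has the claimed basis'' glosses over.
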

\begin{proof}
By \cite{kkzGor}, the Reynolds operator
\begin{align*}
\rho : A \to A^G, \quad \rho(a) = \sum_{x \in G} x \cdot a,
\end{align*}
is surjective. Therefore, to find a $\Bbbk$-basis for $A^G$ it suffices to evaluate $\rho$ on the $\Bbbk$-basis for $A$ given by $\{ u^i v^j \mid i,j \geqslant 0 \}$. Fixing $i,j \geqslant 0$, we have
\begin{align*}
\sum_{\ell=0}^{n-1} g^\ell \cdot (u^i v^j) = \sum_{\ell=0}^{n-1} \omega^{2k \ell (i-j)} u^i v^j =
\left \{
\begin{array}{cl}
n u^i v^j & \text{if } i-j \equiv 0 \text{ mod } n, \\
0 & \text{otherwise}, 
\end{array} \right.
\end{align*}
using (\ref{omegaidentity}) and noting that $\omega^{2k}$ is a primitive $n$th root of unity. Similarly, using the fact that $\omega^{2n}$ is a primitive $k$th root of unity 
\begin{align*}
\sum_{m=0}^{2k-1} h^m \cdot (u^i v^j) &= \sum_{m=0}^{k-1} h^{2m} \cdot (u^i v^j) + \sum_{m=0}^{k-1} h^{2m+1} \cdot (u^i v^j) \\
&= \sum_{m=0}^{k-1} \omega^{2mn(i+j)} u^i v^j + \sum_{m=0}^{k-1} \omega^{(2m+1)n(i+j)} v^i u^j \\
&= \sum_{m=0}^{k-1} \omega^{2mn(i+j)} u^i v^j + (-1)^{ij} \omega^{n(i+j)} \sum_{m=0}^{k-1} \omega^{2mn(i+j)} u^j v^i \\
&= \left \{
\begin{array}{cl}
k \Big( u^i v^j + (-1)^{ij} \omega^{n(i+j)} u^j v^i \Big) & \text{if } i+j \equiv 0 \text{ mod } k, \\
0 & \text{otherwise}, 
\end{array} \right. \\
&= \left \{
\begin{array}{cl}
k \Big( u^i v^j + (-1)^{(i+1)(j+1)+1} u^j v^i \Big) & \text{if } i+j \equiv 0 \text{ mod } k, \\
0 & \text{otherwise}, 
\end{array} \right. 
\end{align*}
since if $i+j \equiv 0 \text{ mod } k$, then $(-1)^{ij} \omega^{n(i+j)} = (-1)^{ij} (-1)^{i+j} = (-1)^{(i+1)(j+1)+1}$. Therefore, since $G = \{ h^m g^\ell \mid 0 \leqslant m \leqslant 2k-1, \gap 0 \leqslant \ell \leqslant n-1 \}$, we have
\begin{align*}
\rho(u^i v^j) &= \sum_{m=0}^{2k-1} \sum_{\ell=0}^{n-1} h^m g^\ell \cdot (u^i v^j) \\
&= \left \{
\begin{array}{cl}
nk \Big( u^i v^j + (-1)^{(i+1)(j+1)+1} u^j v^i \Big) & \text{if } i-j \equiv 0 \text{ mod } n \text{ and } i+j \equiv 0 \text{ mod } k, \\
0 & \text{otherwise}. 
\end{array} \right. 
\end{align*}
It follows that $A^G$ has the claimed $\Bbbk$-basis.
\end{proof}

We now write the $\Bbbk$-basis from the above proposition in a modified form, which will make it easier to write down a set of generators. Without loss of generality, we may assume that $i \geqslant j$. By our assumptions on $i$ and $j$, we have $i-j = ns$ and $i+j = kt$ for some non-negative integers $s,t$. Clearly $i = \frac{1}{2}(kt+ns)$ and $j = \frac{1}{2}(kt-ns)$, and we also have
\begin{align}
u^i v^j + (-1)^{(i+1)(j+1)+1} u^j v^i &= (u^{i-j} + (-1)^{(i-j)j + (i+1)(j+1) + 1} v^{i-j} ) (uv)^j \nonumber \\
&= (u^{i-j} + (-1)^{i} v^{i-j} ) (uv)^j. \label{altpres}
\end{align}
If $i = j$, then (\ref{altpres}) is nonzero if and only if $i \equiv 0 \equiv j \text{ mod } 2k$, and so the elements we obtain are powers of $(uv)^{2k}$. Otherwise $i > j$, in which case (\ref{altpres}) is equal to
\begin{align*}
(u^{ns} + (-1)^{\frac{1}{2}(kt+ns)} v^{ns}) (uv)^{\frac{1}{2}(kt-ns) } = (u^{ns} + (-1)^{r+ns} v^{ns}) (uv)^{r},
\end{align*}
where $r = \frac{1}{2}(kt-ns)$. We summarise our findings below:

\begin{prop} \label{altbasis}
A $\Bbbk$-basis for $A^G$ is given by
\begin{align*}
\Big\{ (uv)^{2ki} \hspace{2pt} \Big | \hspace{3pt} i \geqslant 1 \Big\}
\cup
\Big\{ (u^{ns} + (-1)^{r+ns} v^{ns}) (uv)^{r} \hspace{2pt} \Big | \hspace{3pt} r \geqslant 0, \gap \gap s,t \geqslant 1, \gap \gap 2r+ns = kt \Big\}.
\end{align*}
\end{prop}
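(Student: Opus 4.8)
The plan is to start from the $\Bbbk$-basis of $A^G$ obtained in the previous proposition, namely the elements $u^i v^j + (-1)^{(i+1)(j+1)+1} u^j v^i$ indexed by pairs $(i,j)$ with $i,j \geqslant 0$, $i - j \equiv 0 \pmod n$ and $i + j \equiv 0 \pmod k$, and simply to repackage this indexing set. First I would observe that interchanging $i$ and $j$ leaves the sign $(-1)^{(i+1)(j+1)+1}$ unchanged, so the element indexed by $(j,i)$ is $\pm$ the element indexed by $(i,j)$; hence I may assume $i \geqslant j$ without altering the span, and the resulting elements remain linearly independent since they involve distinct standard monomials $u^a v^b$. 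This reduces the problem to reparametrising the pairs with $i \geqslant j$.

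For such a pair both $i - j$ and $i + j$ are non-negative and divisible by $n$ and $k$ respectively, so I would write $i - j = ns$ and $i + j = kt$ with $s,t \geqslant 0$, equivalently $i = \tfrac12(kt + ns)$ and $j = \tfrac12(kt - ns) \eqqcolon r \geqslant 0$. Integrality of $i,j$ forces $kt \equiv ns \pmod 2$, which, since $n$ and $k$ are odd, is the condition $s \equiv t \pmod 2$; this is exactly the constraint encoded by the relation $2r + ns = kt$ in the statement. When $i > j$ (that is, $s \geqslant 1$, whence also $t \geqslant 1$) I would invoke the factorisation (\ref{altpres}), which rewrites the basis element, up to a nonzero scalar, as $(u^{ns} + (-1)^{i} v^{ns})(uv)^{r}$; since $i = r + ns$ the sign is $(-1)^{r+ns}$, producing precisely the second family in the statement with $s,t \geqslant 1$, $r \geqslant 0$ and $2r + ns = kt$.

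It then remains to treat the degenerate diagonal case $i = j$ (i.e.\ $s = 0$) separately. Here the basis element collapses to $(1 + (-1)^{(i+1)^2+1}) u^i v^i = (1 + (-1)^i) u^i v^i$, which vanishes when $i$ is odd and equals $2u^i v^i$ when $i$ is even. The surviving divisibility condition is $2i \equiv 0 \pmod k$, and since $k$ is odd this reads $k \mid i$; combined with $i$ even and $\gcd(2,k)=1$ it forces $2k \mid i$. The trivial pair $i = j = 0$ recovers the unit, while writing $i = 2k\ell$ with $\ell \geqslant 1$ and recalling that $u^{2k\ell}v^{2k\ell}$ is a nonzero scalar multiple of $(uv)^{2k\ell}$ yields exactly the first family $\{(uv)^{2ki} \mid i \geqslant 1\}$.

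Finally I would verify that the assignment $(i,j) \mapsto (r,s,t)$ for $i > j$, together with $(i,i) \mapsto \ell$ for $i = j$, is a bijection between the reduced index set and the index set of the two families, so that passing from the old basis to the new set merely rescales each element by a nonzero constant and hence preserves the basis property. The main obstacle I anticipate is the sign and scalar bookkeeping in applying (\ref{altpres}): one must confirm that the exponent $(i-j)j + (i+1)(j+1)+1$ genuinely reduces to $i \pmod 2$ and that the $\pm 1$ introduced when commuting $u$'s and $v$'s past one another is an overall nonzero scalar, together with the parity-and-divisibility analysis pinning down precisely which diagonal terms survive.
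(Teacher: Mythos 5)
Your argument is correct and takes essentially the same route as the paper, which derives this proposition in the discussion immediately preceding its statement: reduce to $i \geqslant j$ using the symmetry of the sign, reparametrise via $i-j=ns$ and $i+j=kt$, apply the factorisation (\ref{altpres}) with the exponent reducing to $i \bmod 2$, and treat the diagonal case $i=j$ separately, where $k$ odd forces $2k \mid i$ and yields the powers of $(uv)^{2k}$. Your sign and parity bookkeeping agrees with the paper's computation.
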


\indent We now seek to use Proposition \ref{altbasis} to find a set of generators for $A^G$. The form of the generators depends on whether $n > k$ or $n < k$, and so we need to divide the argument into these two cases. The only case which this does not consider is when $n= 1 = k$, but this is covered by \cite[Remark 2.6]{downup}.

\subsection{The case $n > k$} First suppose that $n > k$. Write
\begin{align*}
\frac{n}{\frac{1}{2}(n+k)} = [\gamma_1, \dots, \gamma_{d-1}]
\end{align*}
for the Hirzebruch-Jung continued fraction expansion of $\frac{n}{\frac{1}{2}(n+k)}$ (obviously one can write this fraction as $\frac{2n}{n+k}$, but the given presentation is in lowest terms). Since $n > k$, we note that $\gamma_1 = 2$. Also define
\begin{align*}
\beta_i = \left\{
\begin{array}{cc}
\gamma_3 + 1 & \text{if } i = 3, \\
\gamma_i & \text{if } i \neq 3.
\end{array}  \right.
\end{align*}
Define three series of integers $r_1, \dots, r_{d-1}$, $s_1, \dots, s_{d-1}$, and $t_1, \dots, t_{d-1}$  as follows:
\begin{gather*}
\begin{array}{lll | ll}
s_1 = 1 & s_2 = 1 & & & s_i = \beta_{i} s_{i-1} - s_{i-2} \text{ for } 3 \leqslant i \leqslant d-1, \\[2pt]
t_1 = 2\beta_2+1 & t_2 = 2\beta_2-1 & & & t_i = \beta_{i} t_{i-1} - t_{i-2} \text{ for } 3 \leqslant i \leqslant d-1, \\
\end{array} \\
r_i = \tfrac{1}{2} (kt_i - ns_i) \text{ for } 1 \leqslant i \leqslant d-1,
\end{gather*}
\noindent where the entries to the right of the vertical line only exist when $d > 2$, which happens if and only if $k \geqslant 3$, if and only if the action of $G$ on $A$ has nontrivial homological determinant. Also observe that the $r_i$ obey the same recurrence relation as the $s_i$ and $t_i$. We now collect some properties of these series:

\begin{lem}
If $k \geqslant 3$ then
\begin{align*}
[\beta_3, \beta_4, \dots, \beta_{d-1}] = \frac{k\beta_2 - \frac{1}{2}(n-k)}{k\beta_2 - \frac{1}{2}(n+k)}.
\end{align*}
\end{lem}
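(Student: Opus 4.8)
The plan is to peel off the first two terms of the Hirzebruch--Jung expansion of $\frac{n}{\frac12(n+k)}$ and then to absorb the single modification $\beta_3 = \gamma_3 + 1$. Throughout I write $M = \frac12(n+k)$, so that $\frac{n}{M} = [\gamma_1, \dots, \gamma_{d-1}]$; since $n > k$ the leading term is $\gamma_1 = 2$, and the hypothesis $k \geqslant 3$ guarantees that the expansion has at least three terms, so that $[\gamma_3, \dots, \gamma_{d-1}]$ and $[\beta_3, \dots, \beta_{d-1}]$ are genuine nonempty expansions. The only tool required is the defining recursion $[a_1, \dots, a_m] = a_1 - 1/[a_2, \dots, a_m]$, which I use in the inverted form $[a_2, \dots, a_m] = \bigl(a_1 - [a_1, \dots, a_m]\bigr)^{-1}$; this is legitimate at each stage because every tail is itself a valid Hirzebruch--Jung expansion (all entries $\geqslant 2$), so the denominators involved are positive.

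First I would apply this identity with $a_1 = \gamma_1 = 2$, obtaining $[\gamma_2, \dots, \gamma_{d-1}] = \frac{M}{2M - n} = \frac{M}{k}$, where the last equality uses the arithmetic identity $2M - n = k$. Applying the identity a second time, and recalling that $\beta_2 = \gamma_2$ by definition, yields
\[
[\gamma_3, \dots, \gamma_{d-1}] = \frac{k}{\gamma_2 k - M} = \frac{k}{k\beta_2 - \frac12(n+k)}.
\]

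Finally, since $\beta_3 = \gamma_3 + 1$ while $\beta_i = \gamma_i$ for all $i \geqslant 4$, and since increasing the leading term of a Hirzebruch--Jung expansion by $1$ increases its value by exactly $1$ (again immediate from the recursion), I have $[\beta_3, \dots, \beta_{d-1}] = [\gamma_3, \dots, \gamma_{d-1}] + 1$. Adding $1$ to the fraction above and using $k + \bigl(k\beta_2 - \frac12(n+k)\bigr) = k\beta_2 - \frac12(n-k)$ to simplify the numerator produces the claimed expression. I do not expect a genuine obstacle: the argument is a clean double truncation together with the bookkeeping of the single perturbation $\beta_3 = \gamma_3 + 1$. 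The only points needing care are the initial facts $\gamma_1 = 2$ and $2M - n = k$, and the verification that each truncated tail remains a valid Hirzebruch--Jung expansion so that the inverted recursion applies and no denominator vanishes; all of these follow from $n > k$, $k \geqslant 3$, and the normalisation conditions on the $\gamma_i$.
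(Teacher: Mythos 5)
Your proof is correct and follows essentially the same route as the paper: the paper sets $\alpha = [\gamma_3,\dots,\gamma_{d-1}]$, solves $\tfrac{2n}{n+k} = 2 - \tfrac{1}{\beta_2 - 1/\alpha}$ for $\alpha$, and adds $1$ to account for $\beta_3 = \gamma_3+1$, which is exactly your double truncation plus the final shift. The only difference is cosmetic (iterating the inverted recursion twice rather than solving one compound equation), and your extra care about nonvanishing denominators is a harmless refinement the paper omits.
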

\begin{proof}
Let $\alpha = [\gamma_3, \gamma_4, \dots, \gamma_{d-1}] = [\beta_3-1, \beta_4, \dots, \beta_{d-1}]$, so that the quantity we wish to determine is $\alpha+1$. Then, since $\beta_1 = 2$, 
\begin{align*}
\frac{2n}{n+k} = 2- \frac{1}{\beta_2 - \frac{1}{\alpha}} = \frac{\alpha (2\beta_2-1) - 2}{\alpha \beta_2 - 1},
\end{align*}
which rearranges to give
\begin{align*}
\alpha = \frac{k}{k \beta_2 - \frac{1}{2}(n+k)}.
\end{align*}
Therefore,
\begin{align*}
[\beta_3, \beta_4, \dots, \beta_{d-1}] = \alpha+1 = \frac{k + k \beta_2 - \frac{1}{2}(n+k)}{k \beta_2 - \frac{1}{2}(n+k)} = \frac{k\beta_2 - \frac{1}{2}(n-k)}{k\beta_2 - \frac{1}{2}(n+k)},
\end{align*}
as claimed.
\end{proof}

\begin{lem} \label{propsofsiti} The $r_i$, $s_i$, and $t_i$ have the following properties:
\begin{enumerate}[{\normalfont (1)},leftmargin=*,topsep=0pt,itemsep=2pt]
\item $r_1 = k\beta_2 - \frac{1}{2}(n-k)$, $r_2 = k\beta_2 - \frac{1}{2}(n+k)$, $r_{d-2} = 1$ and $r_{d-1} = 0$. Moreover, $r_1 > r_2 > \dots > r_{d-2} > r_{d-1}$.
\item $r_{i} s_{i+1} - r_{i+1} s_{i} = k$ and $r_{i} t_{i+1} - r_{i+1} t_{i} = n$ for $1 \leqslant i \leqslant d-2$.
\item $s_{d-1} = k$ and $t_{d-1} = n$.
\end{enumerate}
\end{lem}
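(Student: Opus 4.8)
The plan is to exploit the single structural fact underlying all three sequences: $r_i$, $s_i$, and $t_i$ each obey the \emph{same} second-order linear recurrence $x_i = \beta_i x_{i-1} - x_{i-2}$ on the range $3 \leqslant i \leqslant d-1$, and $r_i = \tfrac12(k t_i - n s_i)$ is merely a fixed linear combination of the other two. First I would dispatch the two explicit values in (1) by substituting the initial data: $r_1 = \tfrac12\big(k(2\beta_2+1) - n\big) = k\beta_2 - \tfrac12(n-k)$ and $r_2 = \tfrac12\big(k(2\beta_2-1) - n\big) = k\beta_2 - \tfrac12(n+k)$, which are immediate.

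For part (2) I would use the discrete Wronskian (Casoratian) $W_i = r_i s_{i+1} - r_{i+1} s_i$. Because the coefficient of $x_{i-2}$ in the recurrence is exactly $-1$, substituting $r_{i+1} = \beta_{i+1} r_i - r_{i-1}$ and $s_{i+1} = \beta_{i+1} s_i - s_{i-1}$ collapses $W_i$ to $r_{i-1} s_i - r_i s_{i-1} = W_{i-1}$, valid for every $i$ in the range $2 \leqslant i \leqslant d-2$ where the recurrence applies. Hence $W_i$ is constant, and evaluating at the base gives $W_1 = r_1 s_2 - r_2 s_1 = r_1 - r_2 = k$. The identical computation with $t$ in place of $s$, using $r_1 t_2 - r_2 t_1 = 2\beta_2(r_1 - r_2) - (r_1 + r_2) = 2\beta_2 k - (2k\beta_2 - n) = n$, yields the second identity in (2).

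The crux is the endpoint evaluation $r_{d-1} = 0$ and $r_{d-2} = 1$, from which (3) and the rest of (1) follow. Here I would invoke the lemma just proved, giving $\tfrac{r_1}{r_2} = [\beta_3, \dots, \beta_{d-1}]$ when $k \geqslant 3$ (the case $k = 1$ forces $d = 3$ and $\beta_2 = \tfrac{n+k}{2k}$, which I would check by hand gives $(r_1, r_2) = (1,0)$). A short computation shows $\gcd(r_1, r_2) = \gcd(r_2, r_1 - r_2) = \gcd(r_2, k) = \gcd(\tfrac{n+k}{2}, k) = 1$, the last step using $\gcd(n,k) = 1$ and that $n, k$ are both odd; positivity of the continued fraction forces $r_1 > r_2 > 0$. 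Thus $r_1/r_2$ is in lowest terms, and the sequence $r_1, r_2, r_3, \dots$ generated by $r_i = \beta_i r_{i-1} - r_{i-2}$ is exactly the remainder-numerator sequence of the Hirzebruch--Jung algorithm for $r_1/r_2$; since the expansion has $d-3$ terms, this algorithm terminates with $r_{d-2} = 1$ and $r_{d-1} = 0$.

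Finally, with $r_{d-2} = 1$ and $r_{d-1} = 0$ in hand, part (3) is immediate from part (2) at $i = d-2$, where the two relations reduce to $s_{d-1} = k$ and $t_{d-1} = n$. The chain $r_1 > r_2 > \dots > r_{d-1}$ I would then obtain by downward induction: the base case is $r_{d-2} = 1 > 0 = r_{d-1}$, and if $r_j > r_{j+1} \geqslant 0$ with $r_j > 0$, then rewriting the recurrence as $r_{j-1} = \beta_{j+1} r_j - r_{j+1}$ and using $\beta_{j+1} \geqslant 2$ gives $r_{j-1} \geqslant 2 r_j - r_{j+1} > r_j > 0$. I expect the endpoint step to be the only real obstacle: making the identification of the $r_i$ with the Hirzebruch--Jung numerator sequence precise — pinning down the index conventions, the lowest-terms claim, and the small-$k$ base case — is where the content lies, whereas the Casoratian and monotonicity arguments are routine.
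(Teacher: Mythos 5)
Your proposal is correct and follows essentially the same route as the paper: direct substitution for $r_1,r_2$, an inductive (Casoratian) telescoping for the identities in (2), the identification of $(r_i)$ with the Hirzebruch--Jung remainder sequence of $r_1/r_2=[\beta_3,\dots,\beta_{d-1}]$ to get $r_{d-2}=1$, $r_{d-1}=0$, and then (3) from (1) and (2) at $i=d-2$. The paper compresses the middle steps to ``easy by induction'' and ``general theory''; your coprimality check $\gcd(r_1,r_2)=\gcd(\tfrac{n+k}{2},k)=1$ and the explicit $k=1$ base case are exactly the details needed to make that invocation rigorous.
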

\begin{proof}
\leavevmode
\begin{enumerate}[{\normalfont (1)},wide=0pt,topsep=0pt,itemsep=2pt]
\item Using the definition of the $r_i$,
\begin{gather*}
r_1 = \frac{1}{2}\big( k(2\beta_2 + 1) -n \big) = k\beta_2 - \frac{1}{2}(n-k) \\
r_2 = \frac{1}{2}\big( k(2\beta_2 - 1) -n \big) = k\beta_2 - \frac{1}{2}(n+k).
\end{gather*}
Then, since $\frac{r_1}{r_2} = [\beta_3, \beta_4, \dots, \beta_{d-1}]$ and the $r_i$ satisfy $r_i = \beta_{i-1} r_{i-1} - r_{i-2}$ for $3 \leqslant i \leqslant d-1$, it follows from general theory that $r_1 > r_2 > \dots > r_{d-2} > r_{d-1}$, with $r_{d-2} = 1$ and $r_{d-1} = 0$.
\item These are both easy to prove by induction.
\item To determine the value of $s_{d-1}$, we use parts (1) and (2):
\begin{align*}
s_{d-1} = 1 \cdot s_{d-1} - 0 \cdot s_{d-2} = r_{d-2} s_{d-1} - r_{d-1} s_{d-2} = k.
\end{align*}
Similarly, we can use the identity $r_{d-2} t_{d-1} - r_{d-1} t_{d-2} = n$ to show that $t_{d-1} = n$. \qedhere
\end{enumerate}
\end{proof}

We also require the following two technical lemmas:

\begin{lem} \label{technicallemma2}
Suppose that the triple $(r,s,t)$ satisfies $2r+ns = kt$ and that $0 \leqslant r < 2k$ and $s,t \geqslant 1$. If $\frac{r}{s} > \frac{r_i}{s_i}$ for some $i$ with $1 \leqslant i \leqslant d-1$, then $i > 1$ and $r \geqslant r_{i-1}$.
\end{lem}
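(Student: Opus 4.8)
The plan is to work on the plane $\Pi = \{(r,s,t) : 2r+ns=kt\}$, on which both the given triple $(r,s,t)$ and every triple $(r_j,s_j,t_j)$ lie (the latter because $r_j = \frac{1}{2}(kt_j-ns_j)$ by definition, so $2r_j+ns_j=kt_j$). Projecting to the $(r,s)$-coordinates, the integral points of $\Pi$ are exactly the lattice $L = \{(r,s)\in\ZZ^2 : 2r+ns\equiv 0 \pmod{k}\}$. Since $k$ is odd, $2$ is invertible modulo $k$, so the homomorphism $\ZZ^2 \to \ZZ/k$, $(r,s)\mapsto 2r+ns$, is surjective and $L$ has index $k$ in $\ZZ^2$. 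By Lemma \ref{propsofsiti}(2), $r_{i-1}s_i - r_i s_{i-1} = k$ for $2 \le i \le d-1$, so the vectors $(r_{i-1},s_{i-1})$ and $(r_i,s_i)$ generate a sublattice of $L$ of the same covolume $k$ as $L$ itself, hence form a $\ZZ$-basis of $L$. This structural fact underlies the whole argument.

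First I would show $i>1$; equivalently, that no admissible triple satisfies $\frac{r}{s} > \frac{r_1}{s_1} = r_1$ (recall $s_1=1$). From Lemma \ref{propsofsiti}(1) one has $r_1 - r_2 = k$ and $r_2 \ge r_{d-1}=0$, hence $r_1 \ge k$. If $s\ge 2$, then $\frac{r}{s}\le \frac{r}{2} < \frac{2k}{2}=k\le r_1$, using $r<2k$. If $s=1$, then $r=\frac{1}{2}(kt-n)$ with $t$ odd (forced since $n,k$ are odd), and $r_1=\frac{1}{2}(kt_1-n)$ with $t_1=2\beta_2+1$ odd; were $r>r_1$ we would have $t\ge t_1+2$, whence $r\ge r_1+k\ge 2k$, contradicting $r<2k$. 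Thus $\frac{r}{s}\le r_1$ in all cases, so the hypothesis $\frac{r}{s}>\frac{r_i}{s_i}$ forces $i\neq 1$.

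It then remains to prove $r\ge r_{i-1}$, which I would establish by induction on $i$ for $2\le i\le d-1$. Using the basis above, write $(r,s)=\lambda(r_{i-1},s_{i-1})+\mu(r_i,s_i)$ with $\lambda,\mu\in\ZZ$; Cramer's rule gives $\lambda k = rs_i - s r_i$ and $\mu k = r_{i-1}s - s_{i-1}r$. The hypothesis $\frac{r}{s}>\frac{r_i}{s_i}$ (with $s,s_i>0$) yields $rs_i - sr_i>0$, so $\lambda\ge 1$. If $\mu\ge 0$, then since $r_{i-1},r_i\ge 0$ we get $r=\lambda r_{i-1}+\mu r_i\ge r_{i-1}$, as wanted. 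If $\mu<0$, then $r_{i-1}s-s_{i-1}r<0$, i.e. $\frac{r}{s}>\frac{r_{i-1}}{s_{i-1}}$; in the base case $i=2$ this would give $\frac{r}{s}>\frac{r_1}{s_1}=r_1$, contradicting the previous paragraph, so $\mu\ge 0$ there and $r\ge r_1$, while for $i>2$ the inductive hypothesis applied to $i-1$ gives $r\ge r_{i-2}>r_{i-1}$ by the strict monotonicity in Lemma \ref{propsofsiti}(1). This closes the induction.

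The main obstacle is the lattice-basis fact: one must recognise that the admissible triples are precisely the integral points of $\Pi$, compute the covolume of $L$ to be $k$ (which is exactly where the oddness of $k$ enters), and match it with the determinant identity of Lemma \ref{propsofsiti}(2) to conclude that $(r_{i-1},s_{i-1}),(r_i,s_i)$ is a basis. After that, everything reduces to the sign of the coordinate $\mu$ together with the elementary bound $r_1\ge k$, with the induction only needed to propagate the case $\mu<0$ down to the already-settled base case. Minor points to verify carefully are the parity and integrality bookkeeping in the $s=1$ subcase, and the nonnegativity (resp. positivity) of the relevant $r_j$ and $s_j$, all of which follow from Lemma \ref{propsofsiti}(1).
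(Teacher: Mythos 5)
Your proof is correct, but it takes a genuinely different route from the paper's. Both arguments ultimately rest on the same two facts --- the determinant identity $r_{i-1}s_i - r_i s_{i-1} = k$ from Lemma \ref{propsofsiti}(2) and the divisibility $k \mid rs_i - r_i s$ --- but you obtain the latter by exhibiting $(r_{i-1},s_{i-1}),(r_i,s_i)$ as a $\ZZ$-basis of the index-$k$ lattice $\{(r,s) \in \ZZ^2 : 2r+ns \equiv 0 \pmod{k}\}$ (this is where the oddness of $k$ enters for you), whereas the paper extracts it from the identity $n(r_i s - r s_i) = k(r_i t - r t_i)$ together with $\gcd(n,k)=1$. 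From there the logical structures diverge: the paper argues by contradiction, taking $i$ minimal with $\frac{r}{s} > \frac{r_i}{s_i}$, supposing $r < r_{i-1}$, deducing $rs_i - r_i s \geqslant k$ and hence $s_{i-1} > s$, and then deriving $\frac{r}{s} > \frac{r_{i-1}}{s_{i-1}}$ to contradict minimality. You instead read the conclusion off the sign of the coordinate $\mu$ in the decomposition $(r,s) = \lambda(r_{i-1},s_{i-1}) + \mu(r_i,s_i)$: when $\mu \geqslant 0$ the bound $r \geqslant r_{i-1}$ is immediate from $\lambda \geqslant 1$ and $r_j \geqslant 0$, and the case $\mu < 0$ is pushed down by induction to the already-settled claim that $\frac{r}{s} \leqslant \frac{r_1}{s_1}$; your induction plays precisely the role of the paper's minimal choice of $i$. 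Your handling of the $s=1$ subcase of that first claim (via the parity of $t$ and the bound $r_1 \geqslant k$, the latter following from $r_1 - r_2 = k$ and $r_2 \geqslant 0$) is a clean substitute for the paper's computation with $n = ka - b$; the only thing to note is that the paper's more explicit version of that computation is reused in Remark \ref{technicalrem}, which is needed later in the proof of Theorem \ref{ngtkgens}, so if one adopted your argument one would want to check separately that $s=1$ forces $(r,t) \in \{(r_1,t_1),(r_2,t_2)\}$ (which your parity argument does in fact yield with a little extra work). Overall the lattice formulation is more conceptual and gives a direct, rather than contradiction-based, proof of the main inequality.
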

\begin{proof}
Let $(r,s,t)$ be as in the statement. We first show that $i > 1$. Observing that $\beta_2 = \lceil \frac{n+k}{2k} \rceil$, we need to show that 
\begin{align*}
\frac{r}{s} \leqslant \frac{r_1}{s_1} = k \left \lceil \frac{n+k}{2k} \right \rceil - \frac{1}{2}(n-k).
\end{align*}
If $s \geqslant 2$, then
\begin{align*}
\frac{r}{s} \leqslant \frac{r}{2} < \frac{2k}{2} = k = k \left( \frac{n+k}{2k} \right) - \frac{1}{2}(n-k) \leqslant k \left \lceil \frac{n+k}{2k} \right \rceil - \frac{1}{2}(n-k),
\end{align*}
as required. So now suppose that $s=1$. Write $n$ uniquely in the form $n = ka - b$ where $a$ is odd and $b \in \{0, 2, \dots, 2(k-1)\}$. Then
\begin{align*}
k \left \lceil \frac{n+k}{2k} \right \rceil - \frac{1}{2}(n-k) = k \left \lceil \frac{a+1}{2} - \frac{b}{2k} \right \rceil - \frac{k(a-1) - b}{2} = \frac{k(a+1)}{2} - \frac{k(a-1) - b}{2} = k + \frac{b}{2}.
\end{align*}
Rearranging the equality $2r + n = kt$ gives $r = k \left(\frac{t-a}{2}\right) + \frac{b}{2}$. Since $n$ and $k$ are both odd, $t$ is also odd, so $\frac{t-a}{2}$ is an integer. But since $r < 2k$, the previous equality forces $\frac{t-a}{2}$ to be at most $1$. Therefore
\begin{align}
r = k \left(\frac{t-a}{2}\right) + \frac{b}{2} \leqslant k + \frac{b}{2} = k \left \lceil \frac{n+k}{2k} \right \rceil + \frac{1}{2}(k-n). \label{rinequality}
\end{align}
This shows that $\frac{r}{s} \leqslant \frac{r_1}{s_1}$. \\
\indent Now assume that $\frac{r}{s} > \frac{r_i}{s_i}$ for some $i$ (where necessarily $i > 1$ by the above). Since, by Lemma \ref{propsofsiti}, the $r_i$ and $s_i$ satisfy $r_{i-1} s_{i} - r_{i} s_{i-1} = k$ for $2 \leqslant i \leqslant d-1$, it follows that $\frac{r_{i}}{s_{i}} < \frac{r_{i-1}}{s_{i-1}}$ for $2 \leqslant i \leqslant d-1$, so we may assume that $i$ has been chosen minimal subject to $\frac{r}{s} > \frac{r_i}{s_i}$. In particular, we have
\begin{align}
\frac{r_i}{s_i} < \frac{r}{s} \leqslant \frac{r_{i-1}}{s_{i-1}}. \label{rsinequality}
\end{align}
Seeking a contradiction, assume that $r < r_{i-1}$, so that $r_{i-1} - r \geqslant 1$. 
Consider the equations
\begin{align*}
2r + ns = kt \qquad \text{and} \qquad 2r_i + n s_i = kt_i;
\end{align*}
multiplying the former by $r_i$ and the latter by $r$ and then subtracting gives
\begin{align}
n(r_i s - r s_i) = k(r_i t - r t_i). \label{rtidentity}
\end{align}
Since $n$ and $k$ are coprime, this forces $r s_i - r_i s$ to be a multiple of $k$. Since (\ref{rsinequality}) implies that $rs_i - r_i s > 0$, we must therefore have $rs_i - r_i s \geqslant k$. Combining this with Lemma \ref{propsofsiti} (2), we find that
\begin{align*}
rs_i - r_i s \geqslant r_{i-1} s_{i} - r_{i} s_{i-1}.
\end{align*}
This rearranges to gives
\begin{align*}
r_i (s_{i-1} - s) \geqslant s_i (r_{i-1} - r) > 0,
\end{align*}
where the second inequality follows since $r_{i-1} - r > 0$ by assumption. In particular, $s_{i-1} > s$. Then, using Lemma \ref{propsofsiti} (2),
\begin{align*}
\frac{r}{s} &= \frac{r s_i - r_i s}{ss_i} + \frac{r_i}{s_i} \geqslant \frac{k}{ss_i} + \frac{r_i}{s_i} > \frac{k}{s_i s_{i-1}} + \frac{r_i}{s_i} = \frac{k + r_i s_{i-1}}{s_i s_{i-1}} = \frac{r_{i-1} s_i}{s_i s_{i-1}} = \frac{r_{i-1}}{s_{i-1}}.
\end{align*}
Hence $\frac{r}{s} > \frac{r_{i-1}}{s_{i-1}}$, contradicting the minimality of $i$, so we must have $r \geqslant r_{i-1}$.
\end{proof}

\begin{rem} \label{technicalrem}
Suppose that $s=1$ and $(r,s,t)$ satisfy the hypotheses of Lemma \ref{technicallemma2}, and let $a$ and $b$ be as in the above proof. It is straightforward to check that $a = 2 \beta_2 -1$. Before equation (\ref{rinequality}), we saw that $\frac{t-a}{2}$ was an integer which was at most 1. If $\frac{t-a}{2} \leqslant -1$, then
\begin{align*}
r = k \left( \frac{t-a}{2} \right) + \frac{b}{2} \leqslant -k + (k-1) = -1,
\end{align*}
contradicting the fact that $r > 0$. It follows that $\frac{t-a}{2}$ is either $0$ or $1$, i.e. $t = a = 2 \beta_2 -1 = t_2$ or $t = a+2 = 2 \beta_2 + 1 = t_1$. This in turn forces $r = r_2$ or $r = r_1$, respectively. We will require this fact in the proof of Theorem \ref{ngtkgens}.
\end{rem}

\begin{lem} \label{tripleslemma}
Suppose that the triple $(r,s,t)$ satisfies $2r + ns = kt$ with $0 \leqslant r < 2k$, $s,t \geqslant 1$. Then there exist non-negative integers $c_1, \dots, c_{d-1}$ such that
\begin{align}
r = \sum_{i=1}^{d-1} c_i r_i, \qquad s = \sum_{i=1}^{d-1} c_i s_i, \qquad t = \sum_{i=1}^{d-1} c_i t_i. \label{cisums}
\end{align}
\end{lem}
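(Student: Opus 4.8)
The plan is to prove the statement by induction on $s$, peeling off one generator triple at a time in a greedy fashion. The key observation that collapses the problem to two dimensions is that the relation $2r+ns=kt$ holds for $(r,s,t)$ and for every generator $(r_i,s_i,t_i)$ (this is exactly the definition $r_i=\tfrac12(kt_i-ns_i)$); consequently, once I produce non-negative integers $c_i$ with $r=\sum c_i r_i$ and $s=\sum c_i s_i$, the computation $k\sum c_i t_i=\sum c_i(2r_i+ns_i)=2r+ns=kt$ forces $t=\sum c_i t_i$ automatically. So it suffices to realise the pair $(r,s)$ as a non-negative integer combination of the pairs $(r_i,s_i)$, and the third coordinate takes care of itself.

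The base case $s=1$ is Remark \ref{technicalrem}: the only triples satisfying the hypotheses with $s=1$ are $(r_1,1,t_1)$ and $(r_2,1,t_2)$, which are themselves generators. For the inductive step I take $s\geq 2$ and let $i$ be the smallest index in $\{1,\dots,d-1\}$ with $r_i\leq r$ (which exists since $r_{d-1}=0$). The crucial claim is that $s_i\leq s$. For $i=1$ this is immediate as $s_1=1\leq s$; for $2\leq i\leq d-2$ I argue by contradiction, since if $s<s_i$ then $r\geq r_i>0$ gives $\tfrac{r}{s}\geq\tfrac{r_i}{s}>\tfrac{r_i}{s_i}$, so Lemma \ref{technicallemma2} yields $r\geq r_{i-1}$, contradicting the minimality of $i$; and for $i=d-1$ the minimality forces $r<r_{d-2}=1$, hence $r=0$, whereupon $ns=kt$ with $\gcd(n,k)=1$ gives $k\mid s$ and so $s\geq k=s_{d-1}$. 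With $s_i\leq s$ in hand I subtract the generator: the remainder $(r',s',t')=(r-r_i,\,s-s_i,\,t-t_i)$ has $r'\geq 0$, $s'\geq 0$, $r'\leq r<2k$, and whenever $s'\geq 1$ it satisfies $t'=(2r'+ns')/k\geq n/k>0$ hence $t'\geq 1$; thus it again meets the hypotheses but with strictly smaller $s'<s$, and the inductive hypothesis applies.

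The main obstacle, and the only genuinely delicate point, is the boundary case $s'=0$: I must rule out a non-zero remainder on the line $s=0$, since a triple such as $(k,0,2)$ satisfies $2r+ns=kt$ yet is plainly not a non-negative combination of generators (all $s_i\geq 1$). I claim $s'=0$ forces $r'=0$. Indeed $s'=0$ gives $2r'=kt'$, and since $k$ is odd with $0\leq r'<2k$ this leaves only $r'\in\{0,k\}$. To exclude $r'=k$, i.e. $r=r_i+k$, I use the determinant identity $r_{i-1}s_i-r_is_{i-1}=k$ of Lemma \ref{propsofsiti} together with the monotonicity $s_{i-1}\leq s_i$ (immediate from $s_1=s_2=1$ and $\beta_i\geq 2$): for $i\geq 2$ this yields $r_{i-1}=\tfrac{k+r_is_{i-1}}{s_i}\leq\tfrac{k}{s_i}+r_i\leq r_i+k=r$, contradicting the minimality of $i$. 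For $i=1$ I note instead that $r_1=k\beta_2-\tfrac12(n-k)\geq k$ (using $\beta_2\geq\tfrac{n+k}{2k}$), so $r=r_1+k\geq 2k$ would violate $r<2k$. Hence $r'=0$ in every case, the remainder vanishes, and the induction closes.
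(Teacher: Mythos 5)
Your proof is correct and follows essentially the same strategy as the paper's: greedily subtract the generator $(r_i,s_i,t_i)$ with $i$ minimal such that $r_i\leqslant r$, using Lemma \ref{technicallemma2} to guarantee $s_i\leqslant s$, and rule out a nonzero remainder on the line $s'=0$. The only differences are cosmetic — you induct on $s$ rather than $r$, observe (nicely) that the $t$-coordinate is forced by the linear relation, and exclude $r'=k$ via the determinant identity $r_{i-1}s_i-r_is_{i-1}=k$ where the paper instead argues directly that $r'=r-r_i<k$ — and all of these steps check out.
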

\begin{proof}
We prove this by induction on $r$. If $r=0$, then the triple $(r,s,t)$ satisfies $ns=kt$ and so, since $n$ and $k$ are coprime, $s = mk$ and $t = mn$ for some non-negative integer $m$. By Lemma \ref{propsofsiti}, we know that $r_{d-1} = 0$, $s_{d-1} = k$, and $t_{d-1} = n$, and so the claim follows by setting $c_{d-1} = m$ and $c_i = 0$ for $i \neq d-1$. \\
\indent Now suppose that $r \geqslant 1$. Since $r_1 > r_2 > \dots > r_{d-1}$, choose $i$ minimal subject to $r - r_i \geqslant 0$. By construction, $r_i$ satisfies $r_i \leqslant r < r_{i-1}$. By Lemma \ref{technicallemma2}, it follows that $\frac{r}{s} \leqslant \frac{r_i}{s_i}$, and hence $r s_i \leqslant r_i s$. Since $r_i \leqslant r$, this also forces $s_i \leqslant s$. Then, using (\ref{rtidentity}),
\begin{align*}
0 \leqslant n(r_i s - r s_i) = k(r_i t - r t_i) \leqslant kr(t - t_i),
\end{align*}
and so $t \geqslant t_i$. \\
\indent Now set $(r',s',t') = (r-r_i, s-s_i, t-t_i)$, which is a triple satisfying $r', s', t' \geqslant 0$, and $2r' + ns' = kt'$. If $s' = t' = 0$ then necessarily $r' = 0$, and so setting $c_i = 1$ and $c_j = 0$ for $j \neq i$ gives the result. If $s', t' > 0$, then applying the induction hypothesis to the triple $(r',s',t')$ gives a decomposition of the form (\ref{cisums}), and hence also one for the triple $(r,s,t)$. It remains to exclude the possibility that exactly one of $s'$ and $t'$ is 0. Indeed, if $t' = 0$ then $2r' + ns' = 0$, and so $r' = 0 = s'$. If instead $s' = 0$, then $2r' = kt'$, and since $k$ is odd we have $r' = mk$ and $t' = 2m$ for some non-negative integer $m$. We claim that necessarily $r' < k$, which will force $m=0$, and hence $r'=0=t'$. To see this, first notice that if $r < k$ then this is immediate, so suppose that $r \geqslant k$. As noted in the proof of Lemma \ref{technicallemma2}, $\beta_2 = \lceil \frac{n+k}{2k} \rceil$, and so $\beta_2 < \frac{n+k}{2k} + 1$. It follows that 
\begin{align*}
r_1 = k \beta_2 - \frac{1}{2}(n-k) < k \bigg( \frac{n+k}{2k} + 1 \bigg) - \frac{1}{2}(n-k) = 2k,
\end{align*}
and since $r_2 = r_1 - k$, we also have $r_2 < k$. Therefore, if $r \geqslant k$, then exactly one of $r_1$ and $r_2$ satisfies $0 \leqslant r - r_i < k$, and this is our choice for $r_i$. Therefore $r' = r-r_i < k$, which finishes the proof.
\end{proof}

We are finally able to write down a list of generators for $A^G$.

\begin{thm} \label{ngtkgens}
Suppose $ n > k$. Then the elements
\begin{align*}
x_i = \big(u^{ns_i} + (-1)^{r_i + ns_i} v^{ns_i}\big) (uv)^{r_i}, \quad 1 \leqslant i \leqslant d-1, \qquad \text{and} \qquad x_d = (uv)^{2k}
\end{align*}
generate $A^G$.
\end{thm}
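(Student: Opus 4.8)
The plan is to show that the subalgebra $B \coloneqq \langle x_1, \dots, x_d \rangle$ of $A^G$ is all of $A^G$. By Proposition \ref{altbasis} it suffices to prove that every element of the displayed $\Bbbk$-basis lies in $B$. The first family $(uv)^{2ki}$ causes no trouble: since $(uv)^2 = -u^2 v^2$ is central in $A$, the generator $x_d = (uv)^{2k}$ is central, and $(uv)^{2ki} = x_d^i \in B$. For a second-family element $w_{r,s} \coloneqq (u^{ns} + (-1)^{r+ns} v^{ns})(uv)^r$ with $2r + ns = kt$, I would first reduce to the range $0 \leqslant r < 2k$: writing $r = 2kq + r_0$ with $0 \leqslant r_0 < 2k$, centrality of $x_d$ together with $(-1)^{r+ns} = (-1)^{r_0 + ns}$ gives $w_{r,s} = w_{r_0,s}\, x_d^{\,q}$ (and one checks $w_{r_0,s}$ is again a genuine basis element), so it is enough to treat $0 \leqslant r < 2k$.

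The heart of the argument is a triangularity statement with respect to the \emph{weight}, i.e.\ the $u$-degree minus the $v$-degree of a monomial. For $0 \leqslant r < 2k$, Lemma \ref{tripleslemma} supplies non-negative integers $c_1, \dots, c_{d-1}$ with $(r,s,t) = \sum_i c_i (r_i, s_i, t_i)$. I would expand the product $\prod_i x_i^{c_i}$ over the sign choices arising from the two monomials of each factor $x_i = u^{ns_i}(uv)^{r_i} + (-1)^{r_i + ns_i} v^{ns_i}(uv)^{r_i}$. Each resulting term has weight equal to $n$ times a signed sum $\sum \pm s_i$ over the $\sum_i c_i$ factors; since every $s_i \geqslant 1$, the maximal weight $ns = n\sum_i c_i s_i$ is attained only by the choice selecting the $u$-monomial in every factor. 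Using $vu = -uv$ and the identity $(uv)^p = (-1)^{\lfloor p/2\rfloor} u^p v^p$ to reorder, that distinguished term is a \emph{nonzero} scalar multiple of $u^i v^j$ with $i = r+ns$, $j = r$, which is the top monomial of $w_{r,s}$. As the whole product lies in $A^G$ and the total degree $kt = k\sum_i c_i t_i$ is fixed, it is a $\Bbbk$-combination of basis elements $w_{r',s'}$ with $t' = t$ and $s' \leqslant s$; matching the coefficient of $u^i v^j$ yields
\begin{equation*}
\prod_i x_i^{c_i} = c\, w_{r,s} + \sum_{s' < s} (\ast)\, w_{r',s'}, \qquad c \neq 0,
\end{equation*}
where the weight-$0$ contribution collapses to a single invariant power of $uv$, hence a power of $x_d$.

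I would conclude by induction on $s$. The base case $s = 0$ is exactly the family of powers of $x_d$, which lies in $B$. For the inductive step, the reduction above lets me assume $0 \leqslant r < 2k$, and the displayed identity expresses $w_{r,s}$ as $c^{-1}$ times $\prod_i x_i^{c_i}$ minus a $\Bbbk$-combination of basis elements with strictly smaller $s'$. By the induction hypothesis these latter elements lie in $B$ (note the hypothesis covers them for \emph{all} magnitudes of $r'$, which is what is needed since the lower terms typically have $r' \geqslant 2k$), and therefore $w_{r,s} \in B$.

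I expect the main obstacle to be the bookkeeping in the triangularity step: carefully tracking the signs produced by $vu = -uv$ together with the parities entering through $(-1)^{r_i + ns_i}$ and through $(uv)^p$, so as to confirm that the diagonal coefficient $c$ is genuinely nonzero and that the weight-$0$ part is a single power of $uv$. The conceptual content—that multiplication of the generators is upper-triangular for the weight filtration with invertible diagonal—is clean; the risk lies entirely in the sign calculations, which is presumably why the argument is organised around the arithmetic established in Lemmas \ref{propsofsiti} and \ref{tripleslemma}.
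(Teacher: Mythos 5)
Your proof is correct, and its skeleton --- reduce to the basis of Proposition \ref{altbasis}, strip off powers of $x_d$ to reach $0 \leqslant r < 2k$, invoke Lemma \ref{tripleslemma}, and induct on $s$ --- coincides with the paper's. The inductive step is executed differently, though. The paper splits the coefficient vector $(c_i)$ into two nonzero halves $(a_i)$ and $(b_i)$, forms the corresponding invariants $x_a$ and $x_b$ (which have strictly smaller $s$-parameters and so lie in the subalgebra by induction), and expands the four-term product $x_a x_b = \pm x \pm w'$, where $w'$ is a single explicit basis element with parameter $s_a - s_b < s$; the base case $s = 1$ is then handled separately via Remark \ref{technicalrem}. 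You instead multiply the generators themselves, $\prod_i x_i^{c_i}$, and argue by a leading-term computation in the weight (i.e.\ $u$-degree minus $v$-degree) filtration that this product equals a nonzero scalar times $w_{r,s}$ plus a combination of basis elements with strictly smaller $s'$: since each $s_i \geqslant 1$, the top weight $ns$ is attained only by the all-$u$ choice, whose coefficient is $\pm 1$ because $A$ is a domain. Your version buys a uniform treatment of the cases $\sum_i c_i = 1$ and $s = 1$ (no appeal to Remark \ref{technicalrem} is needed) and makes the ``upper-triangular with invertible diagonal'' structure explicit, at the cost of controlling all cross terms at once rather than two at a time; the paper's two-factor product confines the sign bookkeeping to a single four-term expansion. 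Your remark that the induction hypothesis must cover lower-order terms with $r' \geqslant 2k$ is exactly right, and the paper's proof relies on the same point when it applies the inductive hypothesis to the element at (\ref{finalelement}), whose $(uv)$-exponent $r + ns_b$ may well exceed $2k$.
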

\begin{proof}
Since $2r_i + n s_i = k t_i$, the $x_i$ have the form given in Proposition \ref{altbasis} and hence are elements of $A^G$. So now suppose that $x = \big(u^{ns} + (-1)^{r + ns} v^{ns}\big) (uv)^{r}$ is an element of $A^G$, where $2r + ns = kt$; we wish to show that $x$ can be generated by $x_1, \dots, x_d$. Since $x_d = (uv)^{2k}$, we may assume that $r < 2k$. We prove by induction on $s$ that this is possible. \\
\indent When $s=1$, Remark \ref{technicalrem} shows that the only possibilities are $(r,s,t) = (r_1,s_1,t_1)$ or $(r,s,t) = (r_2,s_2,t_2)$, and so $x = x_1$ or $x=x_2$. So now suppose that $s \geqslant 2$ and that the result holds for smaller $s$. By Lemma \ref{tripleslemma}, there exist non-negative integers $c_i$ such that 
\begin{align*}
r = \sum_{i=1}^{d-1} c_i r_i, \qquad s = \sum_{i=1}^{d-1} c_i s_i, \qquad t = \sum_{i=1}^{d-1} c_i t_i.
\end{align*}
If $\sum_{i=1}^{d-1} c_i = 1$, then $r=r_i$, $s=s_i$, and $t=t_i$ for some $i$, and hence $x = x_i$. Else $\sum_{i=1}^{d-1} c_i \geqslant 2$, and so we can choose integers $a_i, b_i \geqslant 0$ with $a_i+b_i = c_i$ for all $i$, and $\sum_{i=1}^{d-1} a_i, \hspace{2pt} \sum_{i=1}^{d-1} b_i \geqslant 1$. Now define
\begin{align*}
r_a = \sum_{i=1}^{d-1} a_i r_i, \qquad s_a = \sum_{i=1}^{d-1} a_i s_i, \qquad t_a = \sum_{i=1}^{d-1} a_i t_i,
\end{align*}
and define $r_b, s_b, t_b$ analogously. Without loss of generality, we may assume that $s_a \geqslant s_b$. Observe that $r = r_a + r_b$, $s = s_a + s_b$, and $t = t_a + t_b$, and that $2r_a + ns_a = kt_a$ and $2r_b + ns_b = kt_b$. Therefore the elements
\begin{align*}
x_a \coloneqq \big(u^{ns_a} + (-1)^{r_a + ns_a} v^{ns_a}\big) (uv)^{r_a} \quad \text{and} \quad \big(u^{ns_b} + (-1)^{r_b + ns_b} v^{ns_b}\big) (uv)^{r_b}
\end{align*}
lie in $A^G$, and since $s_a, s_b < s$, they can be generated by the $x_i$ by the induction hypothesis. We then have
\begin{align}
x_a x_b &= \big(u^{ns_a} + (-1)^{r_a + ns_a} v^{ns_a}\big) (uv)^{r_a} \big(u^{ns_b} + (-1)^{r_b + ns_b} v^{ns_b}\big) (uv)^{r_b} \nonumber \\
&= \pm \big(u^{ns_a} + (-1)^{r_a + ns_a} v^{ns_a}\big) \big(u^{ns_b} + (-1)^{r_b + ns_b}  v^{ns_b}\big) (uv)^{r} \nonumber \\
&= \pm (u^{ns} + (-1)^{r+ns} v^{ns})(uv)^r  \pm ( (-1)^{r_b+ns_b} u^{ns_a}v^{ns_b} + (-1)^{r_a+ns_a} v^{ns_a}u^{ns_b} ) (uv)^{r} \nonumber \\
&= \pm \gap x \hspace{3pt} \pm \hspace{2pt} \big( u^{n(s_a-s_b)} u^{ns_b} v^{ns_b} + (-1)^{r+ns+n^2s_b^2} v^{n(s_a-s_b)} u^{ns_b} v^{ns_b}  \big) (uv)^{r} \nonumber \\
&= \pm \gap x \hspace{3pt} \pm \hspace{2pt} \big( u^{n(s_a-s_b)} + (-1)^{r+ns+n s_b} v^{n(s_a-s_b)}  \big) (uv)^{r+ns_b} \nonumber \\
&= \pm \gap x \hspace{3pt} \pm \hspace{2pt} \big( u^{n(s_a-s_b)} + (-1)^{r+n s_b + n(s_a-s_b)} v^{n(s_a-s_b)}  \big) (uv)^{r+ns_b}, \label{finalelement}
\end{align}
where we use $\pm$ to indicate that one can keep track of the powers of $-1$ occurring at each step, but they do not affect the argument and so we neglect to do so. Now, the right hand element at (\ref{finalelement}) corresponds to the triple $(r',s',t') = (r+ns_b, s_a-s_b, t)$, and it is easy to check that $2r' + ns' = kt'$. In particular $s' < s$, and so the inductive hypothesis tells us that this element is generated by the $x_i$. Since the same was true of $x_a$ and $x_b$, it follows that $x$ can be generated by the $x_i$, completing the proof.
\end{proof}


\begin{example}
First suppose that $n \geqslant 3$ is odd and $k = 1$, so that $G_{n,k}$ is the dihedral group of order $2n$. Then the Hirzebruch-Jung continued fraction expansion of $\frac{n}{\frac{1}{2}(n+k)}$ is
\begin{align*}
\frac{n}{\frac{1}{2}(n+1)} = 2 - \frac{1}{\frac{1}{2}(n+1)} = [2,\tfrac{1}{2}(n+1)].
\end{align*}
We therefore obtain the following $\beta$-, $r$-, $s$-, and $t$-series:
\begin{center}
\begin{tabular}{c|c c c c c c}
$i$ & 1 & 2 \\ \hline 
$\beta_i$ & 2 & $\tfrac{1}{2}(n+1)$  \\
$r_i$ & 1 & 0 \\
$s_i$ & 1 & 1 \\
$t_i$ & $n+2$ & $n$ 
\end{tabular}
\end{center}
By Theorem \ref{ngtkgens}, the invariant ring $A^G$ is generated by the three elements
\begin{align*}
x_1 = (u^n+v^n)uv, \quad x_2 = (u^n-v^n), \quad x_3 = (uv)^2.
\end{align*}
\indent This case has previously been considered in \cite{ckwz}. Indeed, the above generators agree with those given in \cite[Table 3 (d)]{ckwz}, up to a change of sign for one generator, which is due to the fact that the representation of $G$ we consider is conjugate to the representation used in \cite{ckwz}. \\
\indent For a more involved example, suppose instead that $n = 17$ and $k = 11$. The Hirzebruch-Jung continued fraction expansion of $\frac{n}{\frac{1}{2}(n+k)}$ is then
\begin{align*}
\frac{17}{5} = [2,2,2,2,3,2].
\end{align*}
We therefore obtain the following $\beta$-, $r$-, $s$-, and $t$-series:
\begin{center}
\begin{tabular}{c|c c c c c c}
$i$ & 1 & 2 & 3 & 4 & 5 & 6 \\ \hline 
$\beta_i$ & 2 & 2 & 3 & 2 & 3 & 2 \\
$r_i$ & 19 & 8 & 5 & 2 & 1 & 0 \\
$s_i$ & 1 & 1 & 2 & 3 & 7 & 11 \\
$t_i$ & 5 & 3 & 4 & 5 & 11 & 17 
\end{tabular}
\end{center}
Observe that these series have all the properties predicted by Lemma \ref{propsofsiti}. Then, following the recipe given in Theorem \ref{ngtkgens}, a set of generators for $A^G$ is given by the following seven elements:
\begin{gather*}
x_1 = (u^{17} + v^{17}) (uv)^{19}, \quad x_2 = (u^{17} - v^{17}) (uv)^8, \quad x_3 = (u^{34} - v^{34}) (uv)^5, \\
x_4 = (u^{51} - v^{51}) (uv)^2, \quad x_5 = (u^{119} + v^{119}) (uv), \quad x_6 = (u^{187} - v^{187}) uv, \quad x_7 = (uv)^{22}.
\end{gather*}
\end{example}

It is natural to ask whether one can use the preceding results to give presentations for the invariant rings $\Bbbk_{-1}[u,v]^{G_{n,k}}$. While it is possible to achieve this for specific examples by hand or by using computer assistance, we have been unable to find a way to write down a presentation for arbitrary $n$ and $k$. 
We give an example to highlight some of the difficulties, even for small $n$ and $k$.

\begin{example}
Suppose that $n = 7$ and $k=3$. The Hirzebruch-Jung continued fraction expansion of $\frac{n}{\frac{1}{2}(n+k)}$ is
\begin{align*}
\frac{7}{5} = [2,2,3].
\end{align*}
By Theorem \ref{ngtkgens}, $A^G$ is generated by
\begin{align*}
a \coloneqq (u^7 - v^7)(uv)^4, \quad b \coloneqq (u^7 + v^7)(uv), \quad c \coloneqq u^{21}-v^{21}, \quad d \coloneqq (uv)^6.
\end{align*}
One can verify that $a,b,c,d$ satisfy the relations
\begin{gather*}
ba + ab + 4d^2,  \quad ca + ac - 2b^4 - 4d^3,  \quad cb + bc - 2b^2d, \\
da-ad, \quad  db-bd,  \quad dc-cd, \\
a^2 + b^2d, \quad ab^2 + cd + bd^2,  \quad ac + abd - b^4.
\end{gather*}
Therefore, if we define
\begin{align*}
B \coloneqq 
\frac{\Bbbk \langle a,b,c,d \rangle}{\left \langle
\begin{array}{ccc}
ba + ab + 4d^2, & ca + ac - 2b^4 - 4d^3, & cb + bc - 2b^2d \\
da-ad, & db-bd, & dc-cd
\end{array}
\right \rangle },
\end{align*}
and let $I$ be the following two-sided ideal of $B$,
\begin{align*}
I \coloneqq \langle a^2 + b^2d, \hspace{5pt} ab^2 + cd + bd^2, \hspace{5pt} ac + abd - b^4 \rangle,
\end{align*}
then there is a surjection of graded algebras
\begin{align*}
\phi : B/I \twoheadrightarrow A^G.
\end{align*}
\indent We first claim that $B$ is AS regular. To show this, we first determine the Hilbert series of $B$. Ordering the words in $\Bbbk \langle a,b,c,d \rangle$ using degree lexicographic order with $a < b < c < d$, it is straightforward to check that the overlap relations between the leading terms of the relations defining $B$ resolve. Therefore $B$ has a $\Bbbk$-basis given by
\begin{align*}
\{ a^i b^j c^\ell d^m \mid i,j,\ell,m \geqslant 0 \},
\end{align*}
and so 
\begin{align*}
\hilb B = \frac{1}{(1-t^{15})(1-t^9)(1-t^{21})(1-t^{12})}.
\end{align*}
In particular, $d$ is a right nonzerodivisor, and since it is central, it is a regular central element. Finally the ring
\begin{align*}
B/\langle d \rangle = 
\frac{\Bbbk \langle a,b,c \rangle}{\langle
ba + ab, \quad ca + ac - 2b^4, \quad cb + bc
\rangle }
\end{align*}
is easily seen to be AS regular of dimension 3, so $B$ is AS regular of dimension 4 by \cite[Proposition 2.1]{chirv}. \\
\indent We now claim that the map $\phi$ is an isomorphism; since we already know that this map is a surjection, it suffices to check that $B/I$ and $A^G$ have the same Hilbert series. Using Molien's Theorem, a computer calculation can be used to verify that
\begin{align*}
\hilb A^G = \frac{1-t^{30} - t^{33} - t^{36} + t^{48} + t^{51}}{(1-t^{15})(1-t^{9})(1-t^{21})(1-t^{12})}.
\end{align*}
On the other hand, to determine the Hilbert series of $B/I$, one can check that the following sequence of graded right $B$-modules is exact,
\begin{align*}
0 \to B[-48] \oplus B[-51] \xrightarrow{\psi} B[-30] \oplus B[-33] \oplus B[-36] \xrightarrow{\eta} B \to B/I \to 0,
\end{align*}
where the maps $\eta$ and $\psi$ are given by left multiplication by the following matrices:
\begin{align*}
\eta = \begin{pmatrix}
a^2 + b^2d & ab^2 + cd + bd^2 & ac + abd - b^4
\end{pmatrix}, \qquad 
\psi = \begin{pmatrix}
-b^2 & bd+c \\ 
a & -b^2 \\
d & a
\end{pmatrix}.
\end{align*}
This implies that
\begin{align*}
\hilb B/I = (1-t^{30} - t^{33} - t^{36} + t^{48} + t^{51}) \hilb B = \frac{1-t^{30} - t^{33} - t^{36} + t^{48} + t^{51}}{(1-t^{15})(1-t^{9})(1-t^{21})(1-t^{12})},
\end{align*}
and hence that we have an isomorphism
\begin{align*}
B/I \cong A^G.
\end{align*}
\end{example}

In the above example, in particular we saw that $A^G$ was a factor of an AS-regular algebra. We expect that this is always the case.

\subsection{The case $n<k$}
Now suppose that $n < k$. As with the case when $n > k$, write \begin{align*}
\frac{n}{\frac{1}{2}(n+k)} = [\gamma_1, \dots, \gamma_{d-1}]
\end{align*}
Since $n < k$, we note that $\gamma_1 = 1$. Also define
\begin{align*}
\beta_i = \left\{
\begin{array}{cc}
\gamma_2 + 2 & \text{if } i = 2, \\
\gamma_i & \text{if } i \neq 2.
\end{array}  \right.
\end{align*}
Define three series of integers $r_1, \dots, r_{d}$, $s_1, \dots, s_{d}$, and $t_1, \dots, t_{d}$  as follows:
\begin{gather*}
\begin{array}{lll ll}
s_1 = 1 & s_2 = 1 & & & s_i = \beta_{i-1} s_{i-1} - s_{i-2} \text{ for } 3 \leqslant i \leqslant d, \\[2pt]
t_1 = 3 & t_2 = 1 & & & t_i = \beta_{i-1} t_{i-1} - t_{i-2} \text{ for } 3 \leqslant i \leqslant d, \\
\end{array} \\
r_i = \tfrac{1}{2} (kt_i - ns_i) \text{ for } 1 \leqslant i \leqslant d.
\end{gather*}

\noindent In particular, note that the $r_i$ obey the same recurrence relation as the $s_i$ and $t_i$. \\
\indent At this point, the proof proceeds in a similar fashion to the case when $n > k$. We now state analogues of the results for that case, only providing proofs when the details are sufficiently different.

\begin{lem}
We have
\begin{align*}
\frac{\frac{1}{2}(3k-n)}{\frac{1}{2}(k-n)} = [\beta_2, \beta_3, \dots, \beta_{d-1}].
\end{align*}
\end{lem}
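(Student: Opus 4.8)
The plan is to follow the same strategy as in the corresponding lemma for the case $n > k$: introduce an auxiliary continued fraction, use the recursive definition of the Hirzebruch-Jung expansion together with the value $\gamma_1 = 1$ to evaluate it in closed form, and then account for the shift $\beta_2 = \gamma_2 + 2$.

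First I would set $\alpha \coloneqq [\gamma_2, \gamma_3, \dots, \gamma_{d-1}]$. Because $\gamma_1 = 1$, the defining recursion for Hirzebruch-Jung fractions gives
\[
\frac{n}{\frac{1}{2}(n+k)} = \frac{2n}{n+k} = 1 - \frac{1}{\alpha}.
\]
Solving for $\alpha$ yields $\frac{1}{\alpha} = 1 - \frac{2n}{n+k} = \frac{k-n}{n+k}$, so that $\alpha = \frac{n+k}{k-n}$. Here $n < k$ guarantees $\frac{2n}{n+k} < 1$, so that the expansion does not terminate at $\gamma_1$; in particular $d \geqslant 3$, and the fraction $[\beta_2, \dots, \beta_{d-1}]$ has at least one term and is well-posed. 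This closed form is the exact analogue of the one obtained in the $n > k$ lemma.

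Next I would use the fact that $\beta_i = \gamma_i$ for all $i \geqslant 3$ while $\beta_2 = \gamma_2 + 2$. Since the leading term of a Hirzebruch-Jung fraction enters additively, i.e. $[a_1, a_2, \dots, a_m] = a_1 - 1/[a_2, \dots, a_m]$, replacing $\gamma_2$ by $\gamma_2 + 2$ and leaving $\gamma_3, \dots, \gamma_{d-1}$ unchanged simply adds $2$ to the value:
\[
[\beta_2, \beta_3, \dots, \beta_{d-1}] = (\gamma_2 + 2) - \frac{1}{[\gamma_3, \dots, \gamma_{d-1}]} = 2 + [\gamma_2, \gamma_3, \dots, \gamma_{d-1}] = 2 + \alpha.
\]
Substituting $\alpha = \frac{n+k}{k-n}$ then gives
\[
[\beta_2, \dots, \beta_{d-1}] = 2 + \frac{n+k}{k-n} = \frac{2(k-n) + (n+k)}{k-n} = \frac{3k-n}{k-n} = \frac{\frac{1}{2}(3k-n)}{\frac{1}{2}(k-n)},
\]
which is exactly the claimed identity.

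I do not expect any serious obstacle here: the argument is a short manipulation of the definition of the continued fraction expansion, and the hard work (producing the closed form for $\alpha$) mirrors the $n>k$ case. The only points requiring a little care are checking that $\beta_2, \dots, \beta_{d-1}$ genuinely form a valid Hirzebruch-Jung expansion, which holds since $\beta_2 = \gamma_2 + 2 \geqslant 4$ and $\beta_i = \gamma_i \geqslant 2$ for $i \geqslant 3$, and confirming that the degenerate case $d = 2$ cannot occur; both follow immediately from $n < k$ and $\gamma_1 = 1$. Everything downstream of the identity $\alpha = \frac{n+k}{k-n}$ is routine bookkeeping.
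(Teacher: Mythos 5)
Your proof is correct, and it is exactly the argument the paper intends: the paper omits the proof of this lemma as being analogous to the $n>k$ case, and your computation is the direct adaptation of that earlier proof (peeling off $\gamma_1=1$ to get $[\gamma_2,\dots,\gamma_{d-1}]=\tfrac{n+k}{k-n}$, then using that the leading term enters additively to absorb the shift $\beta_2=\gamma_2+2$). The arithmetic checks out, e.g.\ $2+\tfrac{n+k}{k-n}=\tfrac{3k-n}{k-n}$, and your remarks on $d\geqslant 3$ and the validity of the resulting expansion are sound.
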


\begin{lem} The $r_i$, $s_i$, and $t_i$ have the following properties:
\begin{enumerate}[{\normalfont (1)},leftmargin=*,topsep=0pt,itemsep=2pt]
\item $r_1 = \frac{1}{2}(3k-n)$, $r_2 = \frac{1}{2}(k-n)$, $r_{d-2} = 1$ and $r_{d} = 0$. Moreover, $r_1 > r_2 > \dots > r_{d-1} > r_{d}$.
\item $r_{i} s_{i+1} - r_{i+1} s_{i} = k$ and $r_{i} t_{i+1} - r_{i+1} t_{i} = k$ for $1 \leqslant i \leqslant d-1$.
\item $s_{d} = k$ and $t_{d} = n$.
\end{enumerate}
\end{lem}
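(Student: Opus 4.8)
The plan is to mirror the proof of Lemma~\ref{propsofsiti} almost verbatim; the only genuine differences are the shifted base data ($\gamma_1 = 1$, the modification $\beta_2 = \gamma_2 + 2$, and $t_1 = 3$, $t_2 = 1$) and the fact that the three series now run to index $d$ rather than $d-1$. I would prove the parts in the order (2), (1), (3), since part~(2) is purely formal and feeds into both of the others.

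For part~(2), the key observation is that $r$, $s$, and $t$ all satisfy the \emph{same} second-order linear recurrence $x_i = \beta_{i-1} x_{i-1} - x_{i-2}$ for $3 \leqslant i \leqslant d$. For any two solutions $x,y$ of a fixed recurrence of this shape, the Casoratian $x_i y_{i+1} - x_{i+1} y_i$ is independent of $i$: substituting $x_{i+1} = \beta_i x_i - x_{i-1}$ and $y_{i+1} = \beta_i y_i - y_{i-1}$ collapses $x_i y_{i+1} - x_{i+1} y_i$ to $x_{i-1} y_i - x_i y_{i-1}$. Since this reduction is valid whenever the recurrence holds at index $i+1$, the quantities $r_i s_{i+1} - r_{i+1} s_i$ and $r_i t_{i+1} - r_{i+1} t_i$ are constant on the whole range $1 \leqslant i \leqslant d-1$, so it suffices to evaluate them at $i=1$. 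Using $r_1 = \tfrac12(3k-n)$, $r_2 = \tfrac12(k-n)$, $s_1 = s_2 = 1$ and $t_1 = 3$, $t_2 = 1$, the base-case computation returns $r_1 s_2 - r_2 s_1 = k$ and $r_1 t_2 - r_2 t_1 = n$.

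For part~(1), the formulas for $r_1$ and $r_2$ are immediate from the definition $r_i = \tfrac12(kt_i - ns_i)$ and the initial values of the $s$- and $t$-series. The substantive content---strict monotonicity $r_1 > r_2 > \dots > r_d$ together with the terminal values $r_{d-1} = 1$ and $r_d = 0$---I would deduce from the preceding lemma, which identifies $\tfrac{r_1}{r_2} = \tfrac{\frac12(3k-n)}{\frac12(k-n)}$ with the Hirzebruch--Jung expansion $[\beta_2, \dots, \beta_{d-1}]$ (all entries $\geqslant 2$), exactly as the ``general theory'' step is invoked in Lemma~\ref{propsofsiti}: the remainder sequence of an HJ expansion in lowest terms strictly decreases and terminates in $1$ then $0$. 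Here I must track the index shift, since the series carry one extra term compared with the $n > k$ case, so the value $1$ sits at the index directly preceding $r_d = 0$.

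Finally, part~(3) follows by specialising part~(2) to $i = d-1$ and inserting the terminal values from part~(1): the identity $r_{d-1} s_d - r_d s_{d-1} = k$ becomes $1 \cdot s_d - 0 \cdot s_{d-1} = s_d$, giving $s_d = k$, and likewise $r_{d-1} t_d - r_d t_{d-1} = n$ forces $t_d = n$. I expect the only real obstacle to lie in part~(1): confirming that the modified initial data ($\beta_2 = \gamma_2 + 2$, $t_1 = 3$) really do make $\tfrac{r_1}{r_2}$ equal to the continued fraction asserted in the preceding lemma, and keeping the off-by-one in the terminal index straight relative to the $n > k$ case. Parts~(2) and~(3) are then forced once the Casoratian identity is in hand.
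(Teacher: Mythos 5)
Your argument is correct and is essentially the proof the paper intends: the paper proves only the $n>k$ analogue (Lemma \ref{propsofsiti}) --- part (1) via the continued-fraction identity from the preceding lemma together with the standard remainder-sequence fact, part (2) by the constancy of the Casoratian (the ``easy induction''), part (3) by specialising (2) at the terminal indices --- and states the $n<k$ version without further proof. The values you compute in fact expose two typos in the printed statement: the terminal value is $r_{d-1}=1$ rather than $r_{d-2}=1$ (the latter is incompatible with $r_{d-2}>r_{d-1}>r_d=0$ for integers), and the second Casoratian is $r_i t_{i+1}-r_{i+1}t_i=n$ rather than $k$ (the value $n$ is exactly what part (3) needs to yield $t_d=n$). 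Your corrected versions are the right ones, and the index bookkeeping you flag as the only real risk does work out: the recurrence here reads $x_i=\beta_{i-1}x_{i-1}-x_{i-2}$, so the remainder sequence started at $(r_1,r_2)$ is governed by $[\beta_2,\dots,\beta_{d-1}]$ and terminates one step later than in the $n>k$ case.
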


The statement of Lemma \ref{tripleslemma} also holds when $n < k$ (and this can be proved using an analogue of Lemma \ref{technicallemma2}) which allows us to write down generators for $A^G$ in this case:

\begin{thm} \label{nltkgens}
Suppose $ n < k$. Then the elements
\begin{align*}
x_i = \big(u^{ns_i} + (-1)^{r_i + ns_i} v^{ns_i}\big) (uv)^{r_i}, \quad 1 \leqslant i \leqslant d
\end{align*}
generate $A^G$.
\end{thm}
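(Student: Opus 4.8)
The plan is to mirror the proof of Theorem \ref{ngtkgens} as closely as possible, the one genuinely new feature being that the pure power $(uv)^{2k}$ is no longer among the listed generators and so must be produced from $x_1, \dots, x_d$. Write $B \coloneqq \langle x_1, \dots, x_d \rangle$ for the subalgebra they generate. By Proposition \ref{altbasis} it suffices to show that $B$ contains every element $(uv)^{2ki}$ with $i \geq 1$, together with every element $(u^{ns} + (-1)^{r+ns} v^{ns})(uv)^r$ satisfying $2r + ns = kt$ and $s,t \geq 1$. Exactly as when $n>k$, the multiplication identity behind \eqref{finalelement} shows that the product of two such basis elements, with parameters $(r_a, s_a, t_a)$ and $(r_b, s_b, t_b)$ and $s_a \geq s_b$, equals $\pm$ the basis element with parameters $(r_a + r_b,\, s_a + s_b,\, t_a + t_b)$ plus $\pm$ the basis element with parameters $(r_a + r_b + n s_b,\, s_a - s_b,\, t_a + t_b)$; when $s_a = s_b$ the second summand degenerates to a scalar multiple of the central pure power $(uv)^{r_a + r_b + n s_b}$.

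First I would dispose of the pure powers. The two generators with $s=1$ are $x_1$ and $x_2$, with $(r_1, t_1) = (\tfrac12(3k-n), 3)$ and $(r_2, t_2) = (\tfrac12(k-n), 1)$, so that $r_1 + r_2 + n = 2k$ and $t_1 + t_2 = 4$. Applying the multiplication identity to $x_1 x_2$ and to $x_2 x_1$, both products lie in $B$ and are $\Bbbk$-linear combinations of the single $s=2$ basis element $w \coloneqq (u^{2n} - v^{2n})(uv)^{2k-n}$ (parameters $(2k-n,2,4)$) and the central element $(uv)^{2k}$. Tracking the signs, using that $n$ and $k$ are both odd, the coefficient of $w$ switches sign between $x_1 x_2$ and $x_2 x_1$ while that of $(uv)^{2k}$ does not; hence $x_1 x_2 + x_2 x_1$ is a nonzero scalar multiple of $(uv)^{2k}$. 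This places $(uv)^{2k}$ — and therefore every $(uv)^{2ki} = ((uv)^{2k})^i$, since $(uv)^{2k}$ is central — inside $B$, while the commutator $x_1 x_2 - x_2 x_1$ simultaneously yields $w \in B$.

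With $(uv)^{2k} \in B$ the remainder is a transcription of the $n>k$ argument. Given any basis element $(u^{ns} + (-1)^{r+ns} v^{ns})(uv)^r$, factoring out the largest central power $((uv)^{2k})^m$ reduces to the case $0 \leq r < 2k$; the sign $(-1)^{r+ns}$ is unchanged because $2km$ is even. One then inducts on $s$: the base case $s = 1$ forces $(r,t) \in \{(r_1,t_1),(r_2,t_2)\}$ by the $n<k$ analogue of Remark \ref{technicalrem}, so the element is $x_1$ or $x_2$; for $s \geq 2$ one applies the $n<k$ form of Lemma \ref{tripleslemma} to write $(r,s,t) = \sum c_i (r_i, s_i, t_i)$, splits this decomposition into two sub-sums of strictly smaller $s$, and uses the multiplication identity, whose second summand again has strictly smaller $s$ (or is a pure power already in $B$) and so is covered by the inductive hypothesis.

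The main obstacle is the sign bookkeeping in the middle step: verifying, for general odd coprime $n < k$, that the anticommutator $x_1 x_2 + x_2 x_1$ really does kill the $w$-component and retain the $(uv)^{2k}$-component — equivalently, that the noncommutativity of the two lowest generators is concentrated entirely in the $s$-positive part of their product. Once this single identity is checked, every other step is the same as in Theorem \ref{ngtkgens}, which is presumably why the result is stated here without a separate proof.
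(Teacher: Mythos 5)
Your proposal is correct and follows the paper's own proof essentially verbatim: the paper likewise reduces to showing that $(uv)^{2k}$ lies in the subalgebra generated by the $x_i$, computes $x_1x_2$ and $x_2x_1$ explicitly, and uses the opposite parities of $\tfrac12(3k-n)$ and $\tfrac12(k-n)$ to see that the anticommutator $x_1x_2+x_2x_1$ equals $(-1)^{(n-1)/2}\,4\,(uv)^{2k}$, after which the argument of Theorem \ref{ngtkgens} is repeated. Your sign analysis (the $w$-component flips sign between the two products while the $(uv)^{2k}$-component does not) is exactly the content of the paper's displayed computation, so there is no gap.
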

\begin{proof}
The only difference between the proof of this theorem and that of Theorem \ref{ngtkgens} is that we must first show that the claimed generators generate $(uv)^{2k}$, before proceeding as we did before. Since $s_1 = 1 = s_2$, and $r_1 = \frac{1}{2}(3k-n$) and $r_2 = \frac{1}{2}(k-n)$, we have
\begin{align*}
x_1 = (u^n + (-1)^{\frac{1}{2}(3k+n)} v^n)(uv)^{\frac{1}{2}(3k-n)} \quad \text{and} \quad x_2 = (u^n + (-1)^{\frac{1}{2}(k+n)} v^n)(uv)^{\frac{1}{2}(k-n)}.
\end{align*}
Direct calculation gives
\begin{gather*}
x_1 x_2 = (-1)^{\frac{1}{2}(3k-n)}\big( u^{2n} + (-1)^{\frac{1}{2}(k+n)} 2u^n v^n - v^{2n}  \big)(uv)^{2k-n} \\
x_2 x_1 = (-1)^{\frac{1}{2}(k-n)}\big( u^{2n} + (-1)^{\frac{1}{2}(3k+n)} 2u^n v^n - v^{2n}  \big)(uv)^{2k-n}.
\end{gather*}
Since $\frac{1}{2}(3k-n)$ and $\frac{1}{2}(k-n)$ have different parities, we have
\begin{align*}
x_1 x_2 + x_2 x_1 = 4 u^n v^n (uv)^{2k-n} = (-1)^{\frac{n-1}{2}} 4 (uv)^{2k},
\end{align*}
and so the $x_i$ generate $(uv)^{2k}$. The remainder of the proof is similar to that of Theorem \ref{ngtkgens}.
\end{proof}

\noindent \textbf{Acknowledgements.} The author is a postdoctoral fellow at the University of Waterloo and is supported by the NSERC. The author is grateful to Jason Bell and Michael Wemyss for helpful discussions, and to Susan J. Sierra for suggesting that the algebras in Section \ref{jordanplaneinvsec} might be factors of the algebras studied in \cite{lecoutre}.

\bibliographystyle{amsplain}
\bibliography{bibliography}
\end{document}